\documentclass[11pt]{amsart}
\usepackage{amsfonts}
\usepackage{amssymb}
\usepackage{amsmath}
\usepackage{amsthm}
\usepackage{euscript}
\usepackage{graphics}
\usepackage{verbatim}
\usepackage{color}
\usepackage{hyperref}

\usepackage[margin=1.05in]{geometry}

\newcommand{\E}{\mathbb E}

\newcommand{\W}{\mathbb W^\theta_N}

\newcommand{\Wb}{\mathbb W_N}

\newcommand{\D}{\mathcal D^2}

\newcommand{\Dr}{\mathcal D}

\newcommand{\B}{\mathcal B}

\newcommand{\z}{\mathbf z}
\newcommand{\w}{\mathbf w}

\renewcommand{\P}{{\mathbb P_N}}

\DeclareMathOperator{\arccot}{arccot}

\newcommand{\Pp}{{\mathbb P}}

\newcommand{\Pq}{{\mathbb P_N^q}}

\newcommand{\m}{\mathfrak m}

\newcommand{\M}{\mathbb M_N}

\renewcommand{\t}{\mathbf t}
\renewcommand{\v}{\mathbf v}

\newcommand{\eps}{\varepsilon}

\newtheorem{theorem}{Theorem}[section]
\newtheorem{proposition}[theorem]{Proposition}
\newtheorem{lemma}[theorem]{Lemma}
\newtheorem{corollary}[theorem]{Corollary}

\theoremstyle{definition}
\newtheorem{definition}[theorem]{Definition}
\newtheorem{assumption}{Assumption}

\theoremstyle{remark}
\newtheorem{remark}[theorem]{Remark}

\renewcommand{\Re}{\mathbf{Re}}

\renewcommand{\i}{\mathbf i}

\sloppy

   \title{Gaussian asymptotics of discrete $\beta$--ensembles}

   \author{Alexei Borodin}

\address[Alexei Borodin]{Department of Mathematics, Massachusetts Institute of Technology, Cambridge, MA, USA, and
 Institute for Information Transmission Problems of Russian Academy of Sciences, Moscow, Russia. E-mail: borodin@math.mit.edu}

\author{Vadim Gorin}

\address[Vadim Gorin]{Department of Mathematics, Massachusetts Institute of Technology, Cambridge, MA, USA, and
 Institute for Information Transmission Problems of Russian Academy of Sciences, Moscow, Russia. E-mail: vadicgor@gmail.com}

\author{Alice Guionnet}

\address[Alice Guionnet]{Department of Mathematics, Massachusetts Institute of Technology, Cambridge, MA, USA. E-mail: guionnet@math.mit.edu}


\begin{document}
\maketitle

\begin{abstract}
 We introduce and study stochastic $N$--particle ensembles which are
 discretizations for general--$\beta$ log-gases of random matrix theory.
 The examples include random tilings, families of
non--intersecting paths,  $(z,w)$--measures, etc.
  We prove that under technical assumptions on general analytic potential,
  the global fluctuations for such ensembles are asymptotically
 Gaussian as $N\to\infty$. The covariance is universal and coincides with its counterpart in
 random matrix theory.

 Our main tool is an appropriate discrete version of the Schwinger--Dyson (or loop) equations, which
 originates in the work of Nekrasov and his collaborators.
\end{abstract}



\tableofcontents

\section{Introduction}

\subsection{Continuous log--gases}

A general--$\beta$ log-gas is a probability distribution on $N$--tuples of reals
$x_1< x_2<\dots< x_N$ with density proportional to
\begin{equation}
\label{eq_log_gas} \prod_{1\le i \le j\le N} (x_j-x_i)^{\beta} \prod_{i=1}^N \exp(-N V(x_i)),
\end{equation}
where $V(x)$ is a continuous function called \emph{potential}. For $V(x)=x^2$ and
$\beta=1,2,4$, the density \eqref{eq_log_gas} describes the joint distribution of
the eigenvalues of random matrices from Gaussian Orthogonal/Unitary/Symplectic
Ensemble; much more general potentials $V(x)$ are widespread and extensively studied
in random matrix theory and beyond, see the books \cite{Meh}, \cite{For},
\cite{AGZ}, \cite{ABF}, \cite{PS}.

Under weak assumptions on the potential, the ensembles \eqref{eq_log_gas} exhibit a
Law of Large Numbers as $N\to\infty$, which means that the (random) empirical
measure $\mu_N$ defined via
$$
 \mu_N=\frac{1}{N} \sum_{i=1}^{N} \delta_{x_i}
$$
converges (weakly, in probability) to a non-random \emph{equilibrium measure} $\mu$.
For $\beta=1,2,4$ and $V(x)=x^2$, this statement dates back to the work of Wigner
\cite{Wigner}, and $\mu$ is known in this case as the Wigner \emph{semicircle law}.
The results for generic $V(x)$ were established much later, see
 \cite{BPS}, \cite{BeGu}, \cite{Johansson_CLT}.

The next order asymptotics asks about \emph{global fluctuations}, i.e.\ how the functional
$\mu_N-\mu$ behaves as $N\to\infty$. A natural approach here is to take (sufficiently smooth)
functions $f(x)$ and consider the asymptotic behavior of random variables
\begin{equation}
\label{eq_global_fluctuations}
  N\left( \int f(x) \mu_N(dx) -\int f(x) \mu(dx) \right), \quad  N\to\infty.
\end{equation}
For quite general  potentials $V(x)$ the limits of \eqref{eq_global_fluctuations} are Gaussian with
\emph{universal} covariance depending only on the support of the equilibrium measure $\mu$. In the
breakthrough paper \cite{Johansson_CLT} Johansson proved such a statement for general $\beta>0$ and
wide class of potentials under the assumption that $\mu$ has a single interval of support. Further
developments have led to establishing such results for \emph{generic} analytic potentials, see
\cite{KS}, \cite{BoG1}, \cite{Shch}, \cite{BoG2}. Note that when the support of $\mu$ has several
intervals one needs to be careful as an additional \emph{discrete} component might appear. This
does not happen if one deterministically fixes the \emph{filling fractions}, which are the numbers
of particles in each interval of the support. In the one-interval case the limiting covariance can
be identified with that of a $1d$ section of the two--dimensional Gaussian Free Field, see
\cite{B_GFF}, \cite{BG_GFF} for the details.

While for certain specific choices of potentials $V(x)$ as well as for special
values $\beta=1,2,4$ there are several different methods for establishing central
limit theorems for global fluctuations, all the developments for generic $\beta$ and
$V(x)$ rely on the analysis of \emph{loop equations} (also known as Schwinger--Dyson
equations). These are equations for certain observables of the log--gases
\eqref{eq_log_gas}, which originated and have been widely used in physics
literature, cf.\ \cite{Mi}, \cite{AM}, \cite{Ey}, \cite{CE} and references therein.
More precisely, the observables of interest are the Stieltjes transforms of the
empirical measure $\mu_N$ and the Schwinger--Dyson equations involve their
cumulants. \emph{Formally assuming} that these cumulants have a converging  $N^{-1}$
expansion as $N\to\infty$, one can use the equations to derive recursively the
asymptotics of the cumulants starting from the equilibrium measure. These recursive
relations are sometimes called the \emph{topological recursion} because, at least
when $\beta=2$, they mimic the relations between maps of different genus, see e.g.
\cite{EO} and allow to recover the result
 from 't Hooft and Br\'ezin-Itzykson-Parisi-Zuber \cite{BIPZ} showing that matrix integrals can be seen as  generating functions of
 maps; for the general $\beta$ recursion see \cite{CE}.

Loop equations  were introduced to the mathematical community by Johansson in
\cite{Johansson_CLT} to derive the Gaussian behavior; his results were significantly
extended in the later work \cite{KS}, \cite{BoG1}, \cite{Shch}, \cite{BoG2}. The
loop equations, or rather ``their spirit'', also have further applications far
beyond the central limit theorems for global fluctuations, e.g.\ they were used in
the recent work on \emph{local universality} for random matrices, see \cite{BEY},
\cite{BFG}. We would like to emphasize an important distinction between the
approaches of physics and mathematics literature: the latter operates not with
formal expansions, but with converging asymptotic expansions. This requires \emph{a
priori} estimates, whose derivations rely on a set of tools different from the loop
equations themselves.

The Schwinger-Dyson equations for general--$\beta$ log--gases are obtained by integration by parts
and derivation with respect to the potential. In fact, they can be derived in a much more general
continuous  setting, for instance when the underlying measure is the Haar measure on a compact Lie
group, see e.g.\ \cite[(5.4.29)]{AGZ}, and further used to derive the topological asymptotic
expansions of related matrix models, see \cite{CGM}, \cite{GN}. For a recent application of the
Schwinger-Dyson equations to the lattice gauge theory see \cite{Chat}.

\subsection{Integrable discretization of log--gases}

The \emph{discrete} versions of the distribution \eqref{eq_log_gas} (with $x_i$'s living on a
lattice) at $\beta=2$ arise in numerous problems of $2d$ statistical mechanics. Examples include
random tilings (cf.\ Figure \ref{Fig_tilings} and \cite{CLP}, \cite{Johansson_paths}, \cite{Gor},
\cite{BKMM}), stochastic systems of non--intersecting paths (cf.\ \cite{KOR}, \cite{BBDT}), last
passage percolation (cf.\ \cite{Johansson_Annals}), interacting particle systems (cf.\
\cite{Johansson_shape}, \cite{BF}).

\begin{figure}[t]
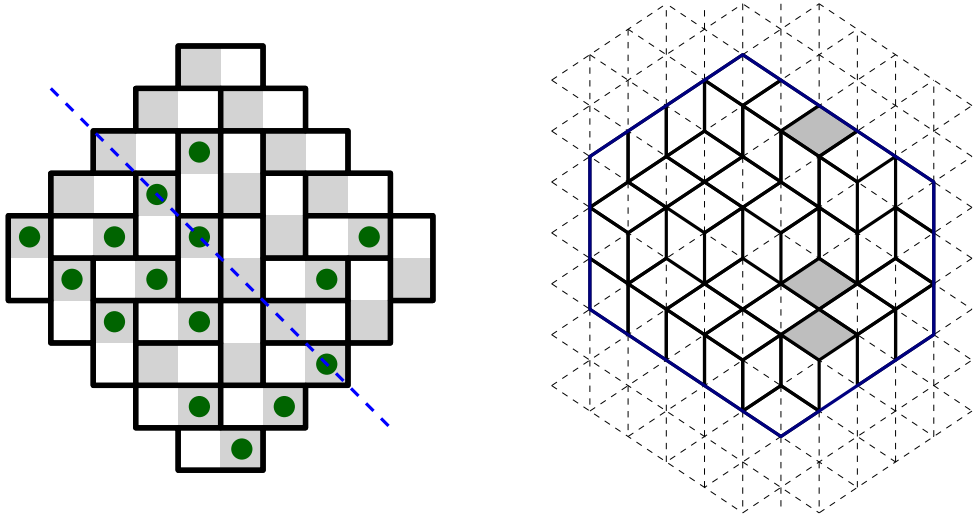

\scalebox{1.0}{\includegraphics{Aztec_particles.pdf}}\quad \quad \quad \quad
\scalebox{0.6}{\includegraphics{hexagon_simple.pdf}}
 \caption{The discrete particle systems arising in uniformly random domino tilings of the Aztec
 diamond (left panel) and uniformly random lozenge tilings of a hexagon (right panel). The
 distributions of 3 particles on both pictures have the form \eqref{eq_log_gas} with $N=3$, $\beta=2$ and
 suitable (different) potentials $V(x)$.
\label{Fig_tilings}}
\end{figure}

The law of large numbers for such systems (even for general values of $\beta$) can
be established by essentially the same methods as for the continuous ones, cf.\
\cite{Johansson_shape}, \cite{Johansson_paths}, \cite{Feral}. However, the situation
is drastically different for the study of the asymptotics for global fluctuations.
While for some very specific \emph{integrable} choices of the potential at $\beta=2$
the central limit theorem was proven (cf.\ \cite{BF}, \cite{Petrov_GFF}, \cite{BD}),
 until now no general approach that would work for generic $\beta$ and $V(x)$
existed. We note however the works \cite[Chapter 12]{Hora}, \cite{DFer}, \cite{Moll}
where the global Gaussian asymptotics was proven for certain (very concrete)
discrete general $\beta$ probabilistic models originating in the representation
theory. Moreover, it was not clear whether the CLT in the case of general potential
should be the same as in the continuous case, or there should be some significant
differences (indeed, for instance the \emph{local} limits in the bulk must be
different). The main technical difficulty lied in the absence of a nice
generalization of the loop equations to the discrete setting.

In \cite{Ey2}, \cite{Ey3}, Eynard proposed to interpret laws of random partitions,
which include discrete $\beta=2$ analogues of \eqref{eq_intro_distribution}, as
matrix models.  One of the key points in his interpretation is to view discrete sums
as highly oscillatory continuous integrals. Based on this identification, he
conjectured that the asymptotic expansions in this case are described by the same
Schwinger-Dyson equations with initial step given by the equilibrium measure of the
model, see e.g \cite[Section 2.4, Section 2.7.1]{Ey2}. In particular, the
fluctuations in the context of the central limit theorem should be universal.
However, so far this approach has not yet progressed much beyond the predictions.

\medskip

The central goal of this article is to study global fluctuations of the empirical measure for
discrete analogues of general-$\beta$ log--gases. One outcome is that indeed these fluctuations are
universal and described by the same covariance as for their continuous counterparts. Our analysis
is based on appropriate discrete versions of the Schwinger-Dyson equations, which (unlike in the
continuous setting) do not appear as a direct consequence of integration by parts or perturbative
arguments.

The search for the discrete loop equations starts with identifying a good discrete analogue of the
general $\beta$ distribution \eqref{eq_log_gas}. For that we fix a parameter $\theta>0$ and a
positive real--valued function $w(x;N)$\footnote{$w(x;N)$ should decay at least as
$|x|^{-(2N-2)\theta-1- const}$ with $const>0$ as $|x|\to\infty$.}. Consider the probability
distribution
\begin{equation}
\label{eq_intro_distribution}
 \P(\ell_1,\dots,\ell_N)=\frac{1}{Z_N} \prod_{1\le i<j \le N}
 \frac{\Gamma(\ell_j-\ell_i+1)\Gamma(\ell_j-\ell_i+\theta)}{\Gamma(\ell_j-\ell_i)\Gamma(\ell_j-\ell_i+1-\theta)}
 \prod_{i=1}^N w(\ell_i;N)
\end{equation}
on ordered $N$--tuples $\ell_1<\ell_2<\dots<\ell_N$, such that
$\ell_i=\lambda_i+\theta i$ and $\lambda_1\le \lambda_2\dots\le \lambda_N$ are
integers. We refer to $\ell_i$'s as the positions of $N$ particles. Let us note that
if $\theta\ne 1$, then $\ell_i$ do not sit on the fixed lattice.

Note that if $\theta=1$, then \eqref{eq_intro_distribution} has the same form as
\eqref{eq_log_gas} with $\beta=2$. Similarly, $\theta=1/2$ leads to
\eqref{eq_log_gas} with $\beta=1$\footnote{However, observe that the lattice where
particles sit at $\theta=1/2$ is not $\mathbb Z$.}.
 More generally, if we set $\ell_i = L x_i$, then
as $L\to\infty$, the ratio of Gamma-functions in \eqref{eq_intro_distribution}
behaves as $(\ell_j-\ell_i)^{2\theta}$ and mimics \eqref{eq_log_gas} with
$\beta=2\theta$.

The most important reason to view \eqref{eq_intro_distribution} as a correct \emph{integrable}
discretization of the continuous log--gas \eqref{eq_log_gas} is the following observation which is
the starting point of the results in the present paper.

\begin{theorem}[Nekrasov's equation] \label{Theorem_discrete_loop_intro}
 Consider the probability distribution \eqref{eq_intro_distribution}, and assume that
$$
 \frac{w(x;N)}{w(x-1;N)} = \frac{\phi^+_N(x)}{\phi^-_N(x)}
$$
 and define
\begin{equation} \label{eq_observable_intro}
R_N(\xi)=\phi^-_N(\xi)\cdot \E_{\P}\left[\prod_{i=1}^N\left(1-\frac{\theta}{\xi-\ell_i}\right)
\right]
+\phi^+_N(\xi)\cdot
\E_{\P}\left[\prod_{i=1}^N\left(1+\frac{\theta}{\xi-\ell_i-1}\right)\right].
\end{equation}
If $\phi^\pm_N(\xi)$ are holomorphic in a domain $\mathcal M_N \subset\mathbb C$, then so is
$R_N(\xi)$. Moreover, if $\phi^\pm_N(\xi)$ are polynomials of degree at most $d$, then so is
$R_N(\xi)$.
\end{theorem}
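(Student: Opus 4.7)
The plan is to view $R_N(\xi)$ as an \emph{a priori} meromorphic function on $\mathcal M_N$ and verify that all its singularities there are removable via a residue-cancellation argument. Expanding the expectations in \eqref{eq_observable_intro} as finite sums over configurations $\ell$ with positive $\mathbb P$-weight shows that the only possible poles of $R_N$ inside $\mathcal M_N$ are simple, located at values $\xi_0$ such that $\xi_0=\ell_i$ in some admissible configuration (from the first term) or $\xi_0=\ell_i+1$ (from the second term). Holomorphy on $\mathcal M_N$ therefore reduces to showing that at each such $\xi_0$ the residues of the two terms cancel.

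Fix $\xi_0$ and $k\in\{1,\ldots,N\}$. The configurations $\ell$ with $\ell_k=\xi_0$ contribute
\[
-\theta\,\phi^-_N(\xi_0)\sum_{\ell:\,\ell_k=\xi_0}\mathbb P(\ell)\prod_{j\neq k}\!\left(1-\frac{\theta}{\xi_0-\ell_j}\right)
\]
to the residue of the first term at $\xi_0$, while the configurations $\tilde\ell$ with $\tilde\ell_k=\xi_0-1$ contribute
\[
+\theta\,\phi^+_N(\xi_0)\sum_{\tilde\ell:\,\tilde\ell_k=\xi_0-1}\mathbb P(\tilde\ell)\prod_{j\neq k}\!\left(1+\frac{\theta}{\xi_0-\tilde\ell_j-1}\right)
\]
to the residue of the second term. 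I would match these two sums via the particle-shift bijection $\ell\mapsto\tilde\ell$ defined by $\tilde\ell_k=\ell_k-1$ and $\tilde\ell_j=\ell_j$ for $j\neq k$. When the shifted configuration is admissible, the hypothesis $w(\xi_0;N)/w(\xi_0-1;N)=\phi^+_N(\xi_0)/\phi^-_N(\xi_0)$ accounts for the potential; and, setting $F(d):=\Gamma(d+1)\Gamma(d+\theta)/[\Gamma(d)\Gamma(d+1-\theta)]$ and $d=\xi_0-\ell_j$ for $\ell_j<\xi_0$, the ratio of interaction factors in $\mathbb P(\tilde\ell)/\mathbb P(\ell)$ at the index $j$ reduces to
\[
\frac{F(d-1)}{F(d)}=\frac{(d-1)(d-\theta)}{d(d-1+\theta)}=\frac{1-\theta/d}{1+\theta/(d-1)},
\]
which is exactly the per-$j$ factor needed for the two sums to agree; for $\ell_j>\xi_0$ an analogous computation with $F(d+1)/F(d)$ supplies the matching factor.

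The exceptional configurations on which the shift is not admissible must be treated separately, but they drop out because the corresponding residue factor vanishes. If $\ell_k=\xi_0$ and $\lambda_{k-1}=\lambda_k$ (so the shifted $\tilde\ell$ is not strictly ordered), then $\ell_{k-1}=\xi_0-\theta$ and the factor $1-\theta/(\xi_0-\ell_{k-1})$ in the first-term residue equals zero; symmetrically, if $\tilde\ell_k=\xi_0-1$ and $\tilde\lambda_{k+1}=\tilde\lambda_k$, then $\tilde\ell_{k+1}=\xi_0-1+\theta$ and the factor $1+\theta/(\xi_0-\tilde\ell_{k+1}-1)$ in the second-term residue equals zero. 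Thus the boundary terms contribute nothing, the bijection is exhaustive, and summing over $k$ gives vanishing residue at $\xi_0$, proving holomorphy of $R_N$ on $\mathcal M_N$.

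For the polynomial claim, once $R_N$ is known to be entire, I expand each factor inside the expectations as $1+O(|\xi|^{-1})$ uniformly over the finitely many supporting configurations, so that $R_N(\xi)=\phi^-_N(\xi)+\phi^+_N(\xi)+O(|\xi|^{d-1})$ as $|\xi|\to\infty$ whenever $\deg\phi^\pm_N\leq d$; an entire function with this polynomial growth is a polynomial of degree at most $d$. The main technical obstacle lies in the residue matching: one must verify the Gamma-function identity in the correct form on each side of $\xi_0$, keep the signs in the two contributions straight, and confirm that the boundary configurations produce the necessary vanishing factors at exactly the right index.
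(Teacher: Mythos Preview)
Your proposal is correct and follows essentially the same route as the paper's proof of Theorem~\ref{Theorem_discrete_loop}: identify the potential simple poles, compute residues index by index, and cancel them via the shift $\ell_k\mapsto\ell_k-1$ using the Gamma-ratio identity (your $F(d-1)/F(d)$ is exactly the paper's \eqref{eq_x6}--\eqref{eq_x7}), then invoke Liouville for the polynomial claim. The only cosmetic difference is in the treatment of inadmissible shifted configurations: the paper observes that the formula for $\mathbb P$ extends to give zero there, while you instead note that the residue \emph{factor} $1\pm\theta/(\xi_0-\ell_j)$ vanishes at the obstructing index; both mechanisms are valid and in fact coincide, since the vanishing interaction term in $\mathbb P$ and the vanishing residue factor come from the same zero.
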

Theorem \ref{Theorem_discrete_loop_intro} is essentially due to Nekrasov and his collaborators, as
it is a variant of similar statements in \cite{Nekrasov}, \cite{Nek_Pes}, \cite{Nek_PS}. For the
proof see Theorem \ref{Theorem_discrete_loop} below.
\medskip

It is reasonable to ask how one could guess the form of the measure \eqref{eq_intro_distribution},
which would lead to Theorem \ref{Theorem_discrete_loop_intro}. The integrability properties of such
measures, including the product of the ratios of Gamma functions as in
\eqref{eq_intro_distribution}, can be traced to their connections to representation theory and
symmetric functions. The same products of Gamma functions appear in the evaluation formulas for
Jack symmetric polynomials (cf.\ \cite[Chapter VI, Section 10]{Mac}) and in problems of asymptotic
representation theory (cf.\ \cite{Olsh_hyper}). Another trace of integrability is the existence of
\emph{discrete Selberg integrals}, which are evaluation formulas for the partition function $Z_N$
in \eqref{eq_intro_distribution} for special choices of the weight $w(x;N)$. Examples of such
evaluations can be found in \cite[Section 2]{Olsh_hyper}, \cite[Section 2.2]{GS}. The latter
reference also explains the degeneration to the conventional Selberg integral, which is the
computation of the normalization constants for continuous log--gases \eqref{eq_log_gas} with
specific choices of the potential $V(x)$, see \cite[Chapter 17]{Meh}, \cite[Chapter 4]{For}.
 On the
other hand, for a ``naive'' discrete version, when one takes the same formula \eqref{eq_log_gas} as
a definition of a discrete log--gas, we are not aware of the existence of similar evaluation
formulas (outside $\beta=2$, when \eqref{eq_log_gas} and \eqref{eq_intro_distribution} coincide).

\subsection{Global fluctuations of discrete log--gases: one cut case}
We proceed to our results on global fluctuations for the distributions
\eqref{eq_intro_distribution} as $N\to\infty$. For that we need to postulate how the weight
$w(x;N)$ changes as $N\to\infty$. Our methods work for a variety of possibilities here, but to
simplify the exposition we stick to the following assumption in this section:
\begin{equation} \label{eq_intro_weight}
 w(x;N)=\exp\left (-N V \left(\frac{x}{N} \right) \right),
\end{equation}
where $V(z)$ is an analytic function of real argument $z$ such that for $|z|$ large
enough, $V(z)$ is monotone and satisfies
\begin{equation} \label{eq_intro_decay_of_weight}
V(z)> c\ln(|z|),\quad \text{ where } c>2\theta.
\end{equation}

Our first result is the law of large numbers for the empirical measures $\mu_N$ defined via
$$
 \mu_N=\frac{1}{N} \sum_{i=1}^N \delta_{\ell_i/N},\quad \quad  (\ell_1,\dots,\ell_N)\text{ is }
 \P-\text{distributed.}
$$
\begin{theorem} \label{theorem_LLN_intro}
 There exists a deterministic absolutely continuous compactly supported probability measure $\mu(x)dx$ with $0\le \mu(x)\le \theta^{-1}$,
 such that $\mu_N$ converges to $\mu(x)dx$ as $N\to\infty$, in the sense that for any compactly
 supported Lipshitz function $f(x)$ the following convergence in probability holds:
 $$
  \lim_{N\to\infty} \int_{\mathbb R} f(x) \mu_N(dx) =\int_{\mathbb R} f(x) \mu(x) dx.
 $$
\end{theorem}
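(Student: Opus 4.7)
The plan is to establish the law of large numbers by a variational / large-deviations argument, adapting the classical strategy of Ben Arous--Guionnet and Johansson to the discrete setting (in the spirit of \cite{Feral} and \cite[Ch.~12]{Hora}). The first step is to extract the leading-order log-density. Using that $\ell_{i+1}-\ell_i\geq \theta$ and hence $\ell_j-\ell_i\geq \theta(j-i)$, Stirling's formula yields
$$
\log \frac{\Gamma(\ell_j-\ell_i+1)\,\Gamma(\ell_j-\ell_i+\theta)}{\Gamma(\ell_j-\ell_i)\,\Gamma(\ell_j-\ell_i+1-\theta)} = 2\theta\,\log(\ell_j-\ell_i) + O\!\left(\tfrac{1}{\ell_j-\ell_i+1}\right),
$$
uniformly in the gap. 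Combining with \eqref{eq_intro_weight} and setting $x_i=\ell_i/N$, the log-density becomes
$$
\log \mathbb P_N = N^2\!\left(\theta \iint_{x\ne y}\log|x-y|\,\mu_N(dx)\,\mu_N(dy) - \int V(x)\,\mu_N(dx)\right) + o(N^2) - \log Z_N.
$$

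The natural energy functional is therefore
$$
\mathcal E(\mu) = -\theta \iint \log|x-y|\,\mu(dx)\,\mu(dy) + \int V(x)\,\mu(dx),
$$
which I minimize over Borel probability measures on $\mathbb R$ subject to the density constraint $0\le \mu(x)\le \theta^{-1}$ Lebesgue-a.e. This constraint is forced by the discrete structure: the spacing $\ell_{i+1}-\ell_i\ge \theta$ rescales to $\theta/N$, so any weak limit of $\mu_N$ must be absolutely continuous with density at most $\theta^{-1}$. Existence, uniqueness, and compact support of the minimizer $\mu$ follow from standard potential theory: $\mathcal E$ is strictly convex on probability measures supported in any fixed compact set, weakly lower-semicontinuous, and coercive thanks to \eqref{eq_intro_decay_of_weight} (which also supplies tightness of $\{\mu_N\}$), while the constraint set $\{0\le \mu \le \theta^{-1}\}$ is convex and weak-$*$ closed. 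Concentration is then established by proving, for any compactly supported Lipschitz $f$ and $\delta>0$,
$$
\mathbb P_N\!\left(\Bigl|\int f\,d\mu_N - \int f(x)\,\mu(x)\,dx\Bigr| > \delta\right) \le e^{-c(\delta)N^2},
$$
by combining a matching Laplace evaluation $\log Z_N = -N^2\mathcal E(\mu)+o(N^2)$, obtained from an explicit near-minimizing integer configuration sampled from $\mu$, with the upper bound that any configuration's weight is $\exp(-N^2\mathcal E(\mu_N)+o(N^2))$. Strict convexity of $\mathcal E$ then converts deviation of $\mathcal E(\mu_N)$ from $\mathcal E(\mu)$ into deviation of $\mu_N$ from $\mu$ in a suitable weak metric, from which convergence against Lipschitz test functions follows.

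The main technical obstacle is handling the logarithmic singularity on the diagonal when passing from the discrete sum $\sum_{i\ne j}\log(\ell_j-\ell_i)$ to the continuous double integral $\iint\log|x-y|\,\mu_N\mu_N$, which diverges for any atomic measure. The standard remedy is to regularize $\mu_N$ by convolving with a bump of scale $N^{-1+\eta}$ and to use the a priori spacing $\theta/N$ (inherited from the shift $\ell_i=\lambda_i+\theta i$) to control the remaining short-range contribution by $O(N\log N)=o(N^2)$. A secondary difficulty is producing matching upper and lower bounds on $\log Z_N$ sharp enough to isolate the unique minimizer; this requires a careful discretization of the minimizer $\mu$ into a valid integer configuration whose energy is within $o(N^2)$ of $\mathcal E(\mu)$, which is feasible precisely because the constraint $\mu\le \theta^{-1}$ matches the discrete packing constraint. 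Both issues have been addressed in closely related discrete $\beta$-ensembles in \cite{Feral} and \cite[Ch.~12]{Hora}, and the arguments adapt to our weights $w(x;N)$ with only cosmetic modifications under hypotheses \eqref{eq_intro_weight}--\eqref{eq_intro_decay_of_weight}.
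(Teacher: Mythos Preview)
Your proposal is correct and follows essentially the same large-deviations strategy as the paper (Theorems~\ref{Theorem_LLN} and~\ref{Theorem_LDP_support}, via Proposition~\ref{Prop_pseudodistance_bound_general}): Stirling to extract the leading energy, the constrained variational problem with density bound $\theta^{-1}$, a quantile-based lower bound on $Z_N$, and concentration from strict convexity of the logarithmic energy. The only implementation differences are that the paper regularizes at the finer scale $N^{-p}$ with $p>2$ rather than $N^{-1+\eta}$, makes the convexity step explicit by completing the square into the pseudodistance $\mathcal D$ plus a linear term controlled via the sign of the effective potential $F_V$ on voids and saturated regions, and handles the reduction from $\mathbb R$ to a compact interval separately in Section~\ref{Section_support}.
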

 In fact, we prove a more general statement where, in particular, $V(x)$ does not have to be
 analytic, see Theorems \ref{Theorem_LLN} and \ref{Theorem_LDP_support} below. The measure $\mu(x)dx$ is
  the equilibrium measure, and it can be found as a solution to a variational problem. At
 $\theta=1$, Theorem \ref{Theorem_LLN} reduces to results of \cite{Johansson_shape}, \cite{Johansson_paths},
 \cite{Feral}. For general values of $\theta$, additional arguments are required, and we present them.
\smallskip

Note the condition $\mu(x)\le\theta^{-1}$, which is not present in the continuous log--gases, and
arises from the fact that the minimal distance between adjacent particles in
\eqref{eq_intro_distribution} is at least $\theta$. This is a specific feature of the discrete
models.

\smallskip

At this moment we need to make certain assumptions on the equilibrium measure. A
\emph{band}\footnote{We follow the terminology from \cite{BKMM}.} of $\mu(x)$ is a maximal interval
$(\alpha,\beta)$ such that $0<\mu(x)<\theta^{-1}$ on $(\alpha,\beta)$. From the random matrix
literature (cf.\ \cite{BDE}, \cite{Shch}, \cite{BoG2}) one expects that the global fluctuations are
qualitatively different depending on whether $\mu(x)$ has one or more bands. Here we stick to the
\emph{one band} case.

\begin{theorem} \label{Theorem_CLT_intro}
 Assume \eqref{eq_intro_weight}, \eqref{eq_intro_decay_of_weight}, that $\mu(x)$ has a unique band
 $(a_-,a_+)$, and that (technical) Assumption \ref{Assumption_simple} from Section \ref{Section_list_of_assumptions} holds. Take any $m\ge 1$ bounded
 analytic functions $f_1,\dots,f_m$ on $\mathbb R$. Then the $m$ random variables
$$
{\mathcal L}_{f_j}= \sum_{i=1}^{N} \bigl( f_j(\ell_i)-\E_{\P}f_j(\ell_i) \bigr),\quad \quad
(\ell_1,\dots,\ell_N)
 \text{ is } \P\text{--distributed,}
$$
converge (in distribution and in the sense of moments) to centered Gaussian random variables with
explicit covariance depending on $a_\pm$ and $\theta$, and given in
\eqref{eq_linear_covariance_convex} below. In particular, if $f_i(x)=(z_i-x)^{-1}$, $z_i\in\mathbb
C\setminus \mathbb R$, $i=1,\dots,m$, then
\begin{equation} \label{eq_intro_cov}
 \lim_{N\to\infty} \E_{\P} {\mathcal L}_{f_i} {\mathcal L}_{f_j}
=-\frac{1}{2\theta(z_i-z_j)^2} \left(1-\frac{z_i z_j
-\frac{1}{2}(a_-+a_+)(z_+z_j)+a_+ a_-}
 {\sqrt{(z_i-a_-)(z_i-a_+)}\sqrt{(z_j-a_-)(z_j-a_+)}}\right),
\end{equation}
\end{theorem}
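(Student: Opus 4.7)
The plan is to exploit Theorem~\ref{Theorem_discrete_loop_intro} to derive and then iteratively solve a linearized Schwinger--Dyson equation for the Stieltjes transform of $\mu_N - \mu$. The multivariate Gaussian limit then follows by running the same scheme on higher cumulants.

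Set $G_N(z) = \frac{1}{N}\sum_i (z - \ell_i/N)^{-1}$, $G_\mu(z) = \int (z-x)^{-1}\,\mu(dx)$, and $\widetilde G_N = G_N - G_\mu$. The choice \eqref{eq_intro_weight} gives $\phi^+_N(\xi)/\phi^-_N(\xi) = \exp(-V(\xi/N) + V((\xi-1)/N))$, so $\phi^\pm_N$ can be chosen analytic in any domain where $V$ is, while the tail assumption \eqref{eq_intro_decay_of_weight} allows one to approximate $V$ by polynomials and thereby replace ``$R_N$ holomorphic'' by ``$R_N$ a polynomial of controlled degree''. Writing
\begin{equation*}
 \prod_{i=1}^N\Bigl(1 - \tfrac{\theta}{\xi - \ell_i}\Bigr) = \exp\Bigl(-\theta\, G_N(\xi/N) + \mathcal E^-_N(\xi)\Bigr), \quad
 \prod_{i=1}^N\Bigl(1 + \tfrac{\theta}{\xi - \ell_i - 1}\Bigr) = \exp\Bigl(\theta\, G_N(\xi/N) + \mathcal E^+_N(\xi)\Bigr),
\end{equation*}
where $\mathcal E^\pm_N$ are explicit series of order $1/N$ in higher power sums $\frac{1}{N}\sum_i (\xi/N - \ell_i/N)^{-k}$, and taking expectations, the polynomiality (resp.\ holomorphicity) of $R_N$ becomes a functional identity linking $\mathbb E G_N$ with the covariance $\mathrm{Var}(G_N)$ and higher observables.

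Subtracting off the deterministic limit, which is satisfied by $G_\mu$ and which characterizes the band $(a_-, a_+)$, turns this into a linearized equation
\begin{equation*}
 \mathcal K\,[\,\mathbb E \widetilde G_N\,](z) \;=\; (\text{quadratic in }\widetilde G_N) \;+\; O(N^{-2}),
\end{equation*}
where $\mathcal K$ is a linear operator acting on functions holomorphic on $\mathbb C \setminus [a_-, a_+]$ and decaying at infinity. Assumption~\ref{Assumption_simple} ensures invertibility of $\mathcal K$ on this space; the inverse $\mathcal K^{-1}$ is a Cauchy-type integral operator whose kernel involves exactly the factor $\sqrt{(z - a_-)(z - a_+)}$ appearing in \eqref{eq_intro_cov}. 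A bootstrap then runs: start from a crude concentration bound $\widetilde G_N(z) = O(N^{-1/2+\epsilon})$ at fixed distance from $[a_-, a_+]$, extracted from a large-deviation refinement of Theorem~\ref{theorem_LLN_intro}; plug it into the quadratic term; invert $\mathcal K$ to sharpen to $\mathbb E \widetilde G_N(z) = O(N^{-1})$; and iterate to obtain the full asymptotic expansion $\mathbb E G_N = G_\mu + N^{-1} G^{(1)} + \cdots$.

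For the joint CLT I would apply the same procedure to connected multi-point cumulants: derive the corresponding loop equation for $\mathbb E[G_N(z_1) \cdots G_N(z_k)]_c$, invert $\mathcal K$ in each variable, and check that the second cumulant converges on scale $N^{-2}$ to a nontrivial bilinear form built from $\mathcal K^{-1}$, while cumulants of order $\geq 3$ decay faster. Undoing the Stieltjes transform via $f_j(\ell_i) = \frac{1}{2\pi\,\i}\oint_\gamma f_j(Nz)(z - \ell_i/N)^{-1}\,dz$ on a contour $\gamma$ enclosing the particles converts this into the Gaussian limit for $\mathcal L_{f_j}$, and specializing to $f_i(x) = (z_i - x)^{-1}$ with a double residue reproduces \eqref{eq_intro_cov}. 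The main obstacle is the a priori concentration that seeds the bootstrap: in the continuous setting it comes from convexity or integration by parts, but neither is available on a lattice, so it must itself be extracted from the Nekrasov equation, which requires careful tracking of the exponentials in \eqref{eq_observable_intro} and of contour deformations staying within the analyticity region of $V$.
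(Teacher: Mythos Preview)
Your outline matches the paper's strategy closely: Nekrasov's equation, expansion of the products into $\exp(\pm\theta G_N + O(1/N))$, a linearized equation whose inversion brings in $\sqrt{(z-a_-)(z-a_+)}$ via Assumption~\ref{Assumption_simple}, a weak a~priori bound bootstrapped into $O(1)$ control of $N\widetilde G_N$, and vanishing of higher cumulants. Two points of departure are worth flagging.

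First, your final paragraph misidentifies the source of the seed estimate. You had it right earlier: the a~priori bound $\Delta G_N = O(N^{1/2+\epsilon})$ comes from a large-deviation argument (an explicit lower bound on the partition function combined with the variational characterization of $\mu$, yielding exponential concentration of $\mathcal D(\tilde\mu_N,\mu)$; cf.\ Proposition~\ref{Prop_pseudodistance_bound_general} and Corollary~\ref{Corrollary_tail_bound_general}). It does \emph{not} come from the Nekrasov equation itself; the Nekrasov equation is only used for the self-improving step. The discrete large-deviation machinery is standard and does not require convexity or integration by parts.

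Second, rather than writing separate loop equations for each connected moment, the paper accesses all cumulants at once by deforming the weight multiplicatively, $w(\ell_i)\mapsto w(\ell_i)\prod_a(1+t_a/(v_a-\ell_i/N))$, and observing that $\partial_{t_1}\cdots\partial_{t_k}\big|_{t=0}$ of the deformed expectation of $\Delta G_N(u)$ is exactly the joint cumulant of $NG_N(u),NG_N(v_1),\dots,NG_N(v_k)$ (Lemma~\ref{Lemma_cumulant}). This keeps the analysis inside a single Nekrasov equation with modified $\phi^\pm_N$ and avoids rederiving a hierarchy. Your approach would work too, but this device is cleaner. Finally, the polynomial approximation of $V$ is unnecessary (holomorphicity of $R_N$ already suffices for the contour argument), and you should make explicit the reduction from the infinite lattice to a bounded interval via the exponential tail bound of Theorem~\ref{Theorem_LDP_support}, which is how the paper passes from the setting of Theorem~\ref{Theorem_CLT_intro} to that of Theorem~\ref{Theorem_CLT}.
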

\begin{remark} We prove below in Theorem \ref{Theorem_LDP_support} that with exponentially high probability all particles
$\ell_i$ are inside an interval $[-DN,DN]$. Further, it is enough to assume in Theorem
\ref{Theorem_CLT_intro} that $f_i$ are analytic only in $[-D,D]$. Moreover, we believe (but do not
prove) that the analyticity assumption can be replaced by sufficient smoothness.
\end{remark}
\begin{remark} The second order asymptotic expansion of the \emph{mean} $\E_{\P}  \left(\sum_{i=1}^{N} f_j(\ell_i) \right)$ can be also analyzed, cf.\ Theorem \ref{Theorem_main_general} with $k=1$, $m=0$.
\end{remark}
The proof of Theorem \ref{Theorem_CLT_intro} is a combination of Theorem \ref{Theorem_CLT} and
Theorem \ref{Theorem_LDP_support} in the main text.

The technical Assumption \ref{Assumption_simple}  from Section \ref{Section_list_of_assumptions} is
a statement that a certain function produced from the equilibrium measure $\mu(x)dx$ has no zeros.
In Section \ref{Section_convex} we show that this assumption always holds when $V(x)$ is a convex
function. A somewhat similar assumption appears in the work of Johansson \cite{Johansson_CLT} on
the central limit theorem for global fluctuations of continuous log--gases.

Let us emphasize that the limiting covariance in Theorem \ref{Theorem_CLT_intro} depends only on
the support of the equilibrium measure, but it is not sensitive to other features. The same
phenomenon is known in the random matrix setting. Moreover, comparing \eqref{eq_intro_cov} with
expressions in \cite[Theorem
 4.2]{Johansson_CLT}, \cite[Chapter 3]{PS}, we conclude that the covariance is \emph{precisely the
 same} as for the continuous log--gases. Thus, the discreteness of the model is invisible on the
 level of the central limit theorem.

The covariance in Theorem \ref{Theorem_CLT_intro} can be related to that of a section of the 2d
Gaussian Free Field in the upper half--plane with Dirichlet boundary conditions. One way to predict
that is by noticing that sections in lozenge tilings models with several specific boundary
conditions yield distributions of the form \eqref{eq_intro_distribution} with $\theta=1$, see
Section \ref{Section_hex} for one example. On the other hand, there exist several results on the
appearance of the Gaussian Free Field in the asymptotics for global fluctuations in lozenge
tilings, cf.\ \cite{Kenyon}, \cite{BF}, \cite{Petrov_GFF}, \cite{BuGo2}.

Subsequent work \cite{KS}, \cite{BoG1} for continuous log--gases showed that Johansson's technical
assumption on the equilibrium measure can be relaxed and replaced by certain weaker assumptions,
which hold for \emph{generic} analytic potentials. We hope that a similar thing can also be done in
the present discrete setting, but this would require further investigations.

\subsection{Weight supported on several finite intervals} In several applications the weight
$w(x;N)$ is not defined on the whole real line, but instead it is supported by a union of several
disjoint intervals $(a_i(N),b_i(N))$, $i=1,\dots,k$. In other words, the particles $\ell_i$ are now
confined to the union of these intervals $\bigcup_{i=1}^N (a_i(N),b_i(N))$. For instance, this
happens in tilings models, see Section \ref{Section_hex}.

At this point we have an additional choice: One could either \emph{fix} the filling fractions
$n_i(N)$, which are the numbers of particles $\ell_i$ in each of the intervals $(a_i(N),b_i(N))$,
 or not. In the present paper we stick to the former and fix $n_i(N)$.

We refer to Sections \ref{Section_setup} and \ref{Section_LLN} below for details of the exact
assumptions that we impose on the weight $w(x;N)$, intervals $(a_i(N),b_i(N))$, and filling
fractions $n_i(N)$. Here we will only briefly summarize the results obtained in such a framework.

The first result is the law of large numbers, which is the exact analogue of Theorem
\ref{theorem_LLN_intro}, see Theorem \ref{Theorem_LLN} for the details. As before, for the
asymptotics of the global fluctuations we need to have some information about the bands of the
equilibrium measure $\mu(x)dx$. We assume that there is \emph{one band per interval}.

\begin{theorem}[Theorem \ref{Theorem_CLT}] \label{Theorem_CLT_intro_multicut} Assume that all the data specifying the model satisfies Assumptions
\ref{Assumptions_basic}--\ref{Assumption_vanishing} in Section \ref{Section_list_of_assumptions}
below. In particular, the equilibrium measure $\mu(x) dx$ has $k$ bands $(\alpha_i,\beta_i)$,
$i=1,\dots,k$,
 one per interval of the support of the model.
 Take $m\ge 1$ functions
$f_1(z),\dots,f_m(z)$, which are analytic in a neighborhood of $\cup_{i=1}^k[a_i(N)/N, b_i(N)/N]$
for large $N$. Then as $N\to\infty$ the joint moments of the $m$ random variables
$$
{\mathcal L}_{f_j}= \sum_{i=1}^{N} \bigl( f_j(\ell_i)-\E_{\Pp_{N}}f_j(\ell_i) \bigr),\quad \quad
(\ell_1,\dots,\ell_N)
 \text{ is } \Pp_{N}\text{--distributed,}
$$
approximate those of centered Gaussian random variables with asymptotic covariance depending only
on $\theta, \alpha_1,\dots,\alpha_k, \beta_1,\dots,\beta_k$ and given by
\eqref{eq_linear_covariance}.
\end{theorem}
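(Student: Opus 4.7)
The plan is a loop-equation scheme built on the Nekrasov identity of Theorem \ref{Theorem_discrete_loop_intro} as the discrete substitute for integration by parts. First I would choose $\phi^\pm_N$ to be polynomials with $\phi_N^+(x)/\phi_N^-(x) = w(x;N)/w(x-1;N)$ on the support and with zeros at the endpoints $a_i(N),b_i(N)$ enforcing the hard-wall behaviour; the analyticity of $V$ together with the multi-interval support of $w$ permits this with degree bounded in $N$. Theorem \ref{Theorem_discrete_loop_intro} then forces $R_N(\xi)$ to be a polynomial of that same bounded degree, yielding one polynomial identity valid for all $\xi$. Expanding the two products in \eqref{eq_observable_intro} as $\exp\bigl(\sum_i\log(1\mp\theta/(\xi-\ell_i))\bigr)$ and reorganising in terms of the Stieltjes transform $G_N(\xi)=\sum_i(\xi-\ell_i)^{-1}$ and its higher self-convolutions, this identity becomes the master loop equation coupling the cumulants of $G_N$.

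Before linearising I need a priori rigidity. Theorem \ref{Theorem_LDP_support} confines the particles to $[-DN,DN]$ with exponential probability; combining Theorem \ref{Theorem_LLN} with concentration estimates for the log-gas measure should yield $|G_N(\xi)-N G_\mu(\xi)|\le N^{\alpha}\,\mathrm{polylog}(N)$ for some $\alpha<1$, uniformly on contours a fixed positive distance from $\bigcup_i[\alpha_i,\beta_i]$. This estimate is the single most critical technical input: it controls the quadratic self-convolution terms that arise when one expands the exponentials in Nekrasov's identity, and so closes the loop equation at leading order.

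Next I linearise around $\mu$. Writing $G_N=NG_\mu+\delta G_N$ in the Nekrasov identity, the leading balance reproduces the spectral-curve equation $\phi^-_N e^{-\theta G_\mu}+\phi^+_N e^{\theta G_\mu}=R_\infty$, which recovers the equilibrium measure and its band endpoints $\alpha_i,\beta_i$. At the next order $\delta G_N$ satisfies $\mathcal{K}[\delta G_N](\xi)=T(\xi)+E(\xi)$, where $\mathcal{K}$ is the master operator built from the derivative of the spectral curve along the bands, $T$ encodes the polynomial-remainder / test-function data, and $E$ is quadratic in $\delta G_N$ and hence of order $N^{-1+\eps}$ by the rigidity step. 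The one-band-per-interval assumption together with Assumption \ref{Assumption_simple} guarantee that $\mathcal{K}$ has square-root behaviour at each endpoint and no interior zeros on the bands, so a Riemann--Hilbert / Tricomi-type inversion produces an explicit solution $\delta G_N=\mathcal{K}^{-1}[T]+o(1)$, uniquely determined on the subspace $\{\oint_{A_i}\delta G_N=0,\ i=1,\dots,k\}$ of functions respecting the fixed filling fractions. Fixing the $n_i(N)$ is essential here because $\ker\mathcal{K}$ is exactly the $(k-1)$-dimensional span of modes that redistribute mass between bands.

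Finally I bootstrap. The nonlinear error term in the linearised equation couples the $m$-point cumulant of $G_N$ to the $(m+1)$-point cumulant with a factor $N^{-1}$ gained from rigidity, so induction on $m$ produces the standard topological hierarchy: the $k$-th cumulant is $O(N^{2-k})$, all $k\ge 3$ vanish, and only the covariance survives. Converting contour integrals of $\delta G_N$ against the analytic $f_j$ into the linear statistics $\mathcal{L}_{f_j}$ by Cauchy's formula, the explicit form of $\mathcal{K}^{-1}$ reproduces the universal covariance \eqref{eq_linear_covariance}. The main obstacle is the combined rigidity-plus-inversion step in the multicut regime: sharp enough off-support concentration, consistent matching of the filling-fraction constraints with $\ker\mathcal{K}$, and the verification via Assumption \ref{Assumption_simple} that $\mathcal{K}$ remains controllable at every endpoint, all must align so that the quadratic error stays strictly smaller than the leading Gaussian contribution.
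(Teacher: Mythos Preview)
Your proposal is correct and follows essentially the same strategy as the paper: Nekrasov's equation, a weak a priori concentration bound from large deviations, linearisation around $G_\mu$ with the operator whose symbol is $Q_\mu(z)=H(z)\prod_i\sqrt{(z-\alpha_i)(z-\beta_i)}$, inversion on the subspace where the period integrals vanish (the paper packages this inversion as the explicit map $\Upsilon$ rather than invoking Riemann--Hilbert language), and a self-improving bootstrap that drives higher cumulants to zero.

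Two minor discrepancies are worth flagging. First, the paper does \emph{not} insist that $\phi^\pm_N$ be polynomials; Assumption~\ref{Assumptions_ratio} only asks for holomorphy on a neighbourhood $\mathcal M$ of the support, and the argument uses only that $R_N(Nz)$ and $R_\mu(z)$ are holomorphic \emph{inside} the small contours $\gamma_i$ (so their contour integrals vanish), never that they are polynomials --- this matters for examples such as the convex potential on $\mathbb R$ in Section~\ref{Section_convex}, where $\phi^+(z)=e^{-\kappa V'(z)}$ is not polynomial. Second, where you sketch a direct induction on the cumulant order, the paper instead deforms the weight by factors $\prod_a(1+t_a/(v_a-\ell_i/N))$ and obtains the $(m+1)$-point cumulant as $\partial_{t_1}\cdots\partial_{t_m}$ of the expectation of $\Delta G_N$ under the deformed measure at $t=0$ (Lemma~\ref{Lemma_cumulant}); this has the practical advantage that one only ever analyses a \emph{single} Nekrasov equation (for the deformed measure) and then differentiates, rather than writing down a separate hierarchy at each level. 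Both routes lead to the same place.
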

For $k=1$, the limiting covariance coincides with that of Theorem \ref{Theorem_CLT_intro}. For
$k>1$ it is no longer expressible through elementary functions, e.g.\ for $k=2$ elliptic functions
appear in the formulas. We again emphasize that the covariance depend only on the end--points of
the bands  and is not sensitive to other features of the equilibrium measure. Furthermore, the
covariance is the same as for the continuous log--gases, cf.\ \cite{Shch}, \cite{BoG2}.

Similarly to the $k=1$ case, one could try to identify the covariance with that of a canonically
defined random field. We believe that there should be a link to sections of the Gaussian Free Field
in a domain with $k-1$ holes, but we postpone this discussion till a future publication.

\subsection{Organization of the paper}

In Section \ref{Section_toy} we present our approach and results: Nekrasov's
equation, Law of Large Numbers and Central Limit Theorem --- in the simplest yet
non-trivial case when $\theta=1$ and $\phi^\pm_N$ in Theorem
\ref{Theorem_discrete_loop_intro} are linear functions. The measure $\P$ in this
case is known as the Krawtchouk orthogonal polynomial ensemble; it appeared and was
studied in numerous previous articles. In particular, the Central Limit Theorem for
global fluctuations in this specific case can be also obtained by other methods: see
\cite{CJY}, \cite{BD}, and \cite{BuGo2}\footnote{We remark that while the approaches
of these articles admit various generalizations, to the best of our knowledge, they
do not extend further in the direction of the present paper.}.

In Sections \ref{Section_setup}--\ref{Section_non_van} we explain a much more general framework, in
which the same ideas work and lead to the Law of Large Numbers and Central Limit Theorem. In these
sections $\theta$ is an arbitrary positive number, and the weight $w(x;N)$ and functions
$\phi^{\pm}_N$ are general (subject to certain technical assumptions).

In Section \ref{Section_examples} we specialize the general framework to certain specific examples,
which include lozenge tilings and $(z,w)$--measures from asymptotic representation theory. We
explain how all the technical assumptions are checked in all these examples.

Finally, Section \ref{Section_support} explains how the case of the infinite support of $w(x;N)$
can be reduced to the case of the bounded support (which is studied in Sections
\ref{Section_setup}--\ref{Section_non_van}) through large deviations estimates.

\bigskip

\bigskip

\subsection{Acknowledgements} We are very grateful to Nikita Nekrasov for numerous fruitful
discussions which led us to  Theorem \ref{Theorem_discrete_loop_intro} in its present form. We also
would like to thank Andrei Okounkov for drawing our attention to the existence of discrete loop
equations. We are grateful to two anonymous referees for their useful comments.

A.~B. was partially supported by the NSF grant DMS-1056390. V.~G.\ was partially supported by the
NSF grant DMS-1407562. A.~G.~ was partially supported  by the Simons Foundation and by the NSF
Grant DMS-1307704.

\section{Toy example: $\theta=1$ binomial weight}

\label{Section_toy}

The aim of this section is to describe our method in the simplest, yet non-trivial
case of the binomial weight and $\theta=1$. The resulting $N$--particle ensemble is
known as the \emph{Krawtchouk orthogonal polynomial ensemble}. It appeared in the
literature before in the connection with uniformly random domino tilings of the
Aztec diamond (cf.\ \cite{Johansson_paths}), last passage percolation (cf.\
\cite{Johansson_Annals}), stochastic systems of non--intersecting paths (cf.\
\cite{KOR}, \cite{BBDT}), and with representation theory of the
infinite--dimensional unitary group $U(\infty)$ (cf.\ \cite[Section 5]{BO1},
\cite[Section 4]{Bor}).

In the subsequent sections we will extend our approach to more general cases, but
the methodology and many key ideas remain the same.

\bigskip

Fix two integers $0<N\le M$ and consider the space $\Wb$ of $N$--tuples of integers
$$
 0\le \ell_1<\ell_2<\dots<\ell_N\le M.
$$
We define a probability distribution $\Pp_{N}$ on $\Wb$ through
\begin{equation}
\label{eq_Binomial_ensemble}
 \Pp_{N}(\ell_1,\dots,\ell_N)=\frac{1}{Z(N,M)} \prod_{1\le i<j \le N}
 (\ell_i-\ell_j)^2 \prod_{i=1}^{N} {M \choose \ell_i}.
\end{equation}
We remark that the partition function $Z(N,M)$ is explicitly known in this case:
$$
 Z(N,M)= 2^{N(M-N+1)} (M!)^N  \prod_{j=0}^{N-1} \frac{ j!}{(M-j)!}.
$$
 However, in a generic situation
there are typically no simple formulas for the partition function and we are not going to use its
explicit form.

Our analysis of the distribution $\Pp_{N}$ is based on the following $\theta=1$ version of Theorem
\ref{Theorem_discrete_loop}. This is essentially due to \cite{Nekrasov}, \cite{Nek_Pes},
\cite{Nek_PS} and we will call the main statement the \emph{Nekrasov's equation}. It should be
viewed as a discrete space analogue of the Schwinger--Dyson (also known as ``loop'') equations.

\begin{proposition} \label{Proposition_discrete_loop_1}
 Let $\P$ be a probability distribution on $N$--tuples $0\le \ell_1<\dots<\ell_N\le M$ such that
 $$
  \P(\ell_1,\dots,\ell_N)=\frac{1}{Z} \prod_{1\le i<j\le N} (\ell_i-\ell_j)^2
  \prod_{i=1}^N w(\ell_i),
 $$
 where
 $$
 \frac{w(x)}{w(x-1)} = \frac{\phi^+_N(x)}{\phi^-_N(x)},
$$
 and $\phi^{\pm}_N(x)$ are analytic functions in a complex neighborhood of
 $[-1,M+1]$ which are
 positive on $[0,M]$ and satisfy $\phi^+_N(M+1)=\phi^-_N(0)=0$.
 Define
\begin{equation} \label{eq_expectation_Nekrasov_1}
R_N(\xi)=\phi^-_N(\xi)\cdot
\E_{\P}\left[\prod_{i=1}^N\left(1-\frac{1}{\xi-\ell_i}\right)
\right]+\phi^+_N(\xi)\cdot
\E_{\P}\left[\prod_{i=1}^N\left(1+\frac{1}{\xi-\ell_i-1}\right)\right].
\end{equation}
Then $R_N(\xi)$ is analytic in the same complex neighborhood of $[-1,M+1]$. If $\phi^\pm_N(\xi)$
are polynomials of degree at most $d$, then $R_N(\xi)$ is also a polynomial of degree at most $d$.
\end{proposition}
\begin{proof} The possible singularities of $R_N(\xi)$ are simple poles at points
$m=0,1,2,\dots,M$. Let us compute a residue at such a point.

The expectation $\E_\P$ in \eqref{eq_expectation_Nekrasov_1} is a sum over all
possible configurations $(\ell_1,\dots,\ell_N)\in\Wb$. Such a configuration
contributes to the residue at $m$ if either $\ell_i=m$ or $\ell_i+1=m$ for some
$i=1,\dots,N$.

We separately analyze the contributions appearing from each $i=1,\dots,N$, which we
now fix. Given a particle configuration $\ell=(\ell_1,\dots,\ell_N)$, let $\ell^+$
denote the configuration with $i$th coordinate increased by $1$ and let $\ell^-$
denote the configuration with $i$th coordinate decreased by $1$. Note that, in
principle $\ell^+$ (similarly $\ell^-$) might fail to be in $\Wb$, as the
coordinates might coincide. However, in this case the formula for $\P(\ell^+)$ still
applies and gives zero\footnote{This is where we need the condition of
$\phi^+_N(M+1)=\phi^-_N(0)=0$, which translates into $w(M+1)=w(-1)=0$.}.

The contribution to the residue of $R_N$ at $m\in\{0,\dots,M\}$, arising from the
$i$th coordinate of $(\ell_1,\dots,\ell_N)$ is
\begin{multline}
\label{eq_sum_res}
 -\sum_{\ell\in\Wb\mid \ell_i=m} \phi^-_N(m) \P(\ell_1,\dots,\ell_N) \prod_{j\ne i}
 \left(1-\frac{1}{m-\ell_j}\right)\\ + \sum_{\ell\in\Wb\mid \ell_i=m-1} \phi^+_N(m)
 \P(\ell_1,\dots,\ell_N) \prod_{j\ne i} \left(1+\frac{1}{m-\ell_j-1}\right)
\end{multline}
But using the definition of $\P$ we see that
\begin{multline*}
\phi^-_N(m) \P(\ell_1,\dots\ell_{i-1},m,\ell_{i+1},\dots,\ell_N) \prod_{j\ne i}
 \left(1-\frac{1}{m-\ell_j}\right)
\\ =\phi^+_N(m)
 \P(\ell_1,\dots\ell_{i-1},m-1,\ell_{i+1},\dots,\ell_N) \prod_{j\ne i}
 \left(1+\frac{1}{m-\ell_j-1}\right).
\end{multline*}
We conclude that for each $\mu$, the terms with $\ell=\mu$ and $\ell=\mu^+$ (or
$\ell=\mu^-$ and $\ell=\mu$) in the first and second sum in \eqref{eq_sum_res}
cancel out and the total residue is zero.

For the polynomiality statement it suffices to notice that if $\phi^\pm_N(\xi)$ are polynomials of
degree at most $d$, then $R_N(\xi)$ is an entire function which grows as $O(\xi^d)$ as
$\xi\to\infty$. Hence, by Liouville's theorem $R_N(\xi)$ is a polynomial.
\end{proof}

Note that for the distribution \eqref{eq_Binomial_ensemble} the functions $\phi^\pm_N(x)$ can be
chosen to be linear and we set
\begin{equation}
 \phi^-_N(x)=\frac{x}{N},\quad \phi^+_N(x)=\frac{M+1}{N}-\frac{x}{N}.
\end{equation}
 Thus, in a sense, \eqref{eq_Binomial_ensemble} is one of the simplest possible
distribution in the framework of Proposition \ref{Proposition_discrete_loop_1}.

In this section we aim to study the asymptotics of the distributions $\Pp_{N}$ as
$N\to\infty$. The parameter $M$ will also depend on $N$. We fix $\m>1$ and set
$M=\lfloor \m N \rfloor$.

\subsection{Law of Large Numbers}

\label{Section_LLN_1}

As $N\to\infty$, a certain law of large numbers holds for the measures $\Pp_{N}$.
Let us introduce a random probability measure $\mu_N$ on $\mathbb R$ via
\begin{equation}
 \mu_N=\frac{1}{N}\sum_{i=1}^N \delta\left(\frac{\ell_i}{N}\right),\quad
 (\ell_1,\dots,\ell_N) \text{ is } \Pp_{N}
 \text{-distributed.} \label{eq_empirical_mes_1}
\end{equation}
The measure $\mu_N$ is often referred to as the \emph{empirical measure} of the
point configuration $\ell_1,\dots,\ell_N$. Note that our definitions imply the
condition $\ell_{i+1}-\ell_i\ge 1$, which shows that for any interval $[p,q]$, its
$\mu_N$--measure is bounded from above by $(q-p+1/N)$.

The idea for the proof of the law of large numbers for $\mu_N$ as $N\to\infty$ is to establish the
large deviations principle for the measures $\Pp_{N}$, which would show that the measure is
concentrated on $\ell_i$ which maximize the probability density \eqref{eq_Binomial_ensemble}. This
was done rigorously in \cite{Johansson_shape}, \cite{Johansson_paths}, \cite{Feral}, see also
Section \ref{Section_weak_estimate_1} below for some details. The explicit formula for
$\lim_{N\to\infty} \mu_N$ is then obtained as a solution of a variational problem, this solution
for our weight was found earlier in \cite[Example 4.2]{DS}. The developments of these articles are
summarized in the following proposition.

\begin{proposition} \label{Proposition_LLN_bin}
The measures $\mu_N$ converge (weakly, in
 probability) to a deterministic absolutely continuous measure $\mu_{\m}(x)dx$,
 which is called \emph{equilibrium measure}. For $\m\ge 2$, the density of the
 measure $\mu_\m(x)dx$ is
 $$
  \mu_\m(x)=\begin{cases}
  \dfrac{1}{\pi}\arccot\left(\dfrac{\m-2}{2\sqrt{\m-1-\left(x-\m/2\right)^2}}\right),&
  \left|x-\frac{\m}{2}\right|<\sqrt{\m-1},\\
  0,& \left|x-\frac{\m}{2}\right|\ge \sqrt{\m-1},
  \end{cases}
 $$
 and for $1<\m\le 2$,
 $$
  \mu_\m(x)=\begin{cases}
  \dfrac{1}{\pi}\arccot\left(\dfrac{\m-2}{2\sqrt{\m-1-\left(x-\m/2\right)^2}}\right),&
  \left|x-\frac{\m}{2}\right|<\sqrt{\m-1},\\
  1,& \frac{\m}{2}\ge \left|x -\frac{\m}{2}\right|\ge \sqrt{\m-1},\\
  0, & \left|x-\frac{\m}{2}\right|\ge \frac{\m}{2},
  \end{cases}
 $$
where $\arccot(y)$ is the inverse cotangent function.
\end{proposition}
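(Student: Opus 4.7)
The plan is to identify the Stieltjes transform $G(z)=\int(z-x)^{-1}\mu_\m(dx)$ of the claimed limiting measure as the unique solution of an algebraic equation arising from the $N\to\infty$ limit of Nekrasov's identity (Proposition \ref{Proposition_discrete_loop_1}), and then to recover the density via Stieltjes--Perron inversion. Tightness of $\{\mu_N\}$ is immediate since every rescaled particle $\ell_i/N$ lies in $[0,\m]$; moreover the remark preceding the proposition shows that any weak limit has density bounded above by $1$, and this a priori ceiling is exactly what produces the saturated piece in the $1<\m\le 2$ regime.

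Specialize Proposition \ref{Proposition_discrete_loop_1} to $\xi=Nz$ with $z\in\mathbb C\setminus[0,\m]$, so that $\phi^-_N(Nz)/N=z$ and $\phi^+_N(Nz)/N\to \m-z$ uniformly on compacts. Using the Taylor expansion
$$
\log\prod_{i=1}^{N}\!\Big(1-\frac{1}{Nz-\ell_i}\Big)=-G_N(z)+O\!\left(\tfrac{1}{N}\right),\qquad G_N(z)=\int\frac{\mu_N(dx)}{z-x},
$$
and its analogue for the $+$ product, dividing Nekrasov's identity by $N$ and passing to the limit gives
\begin{equation}\label{eq_plan_alg}
z\,e^{-G(z)}+(\m-z)\,e^{G(z)}=Q(z),
\end{equation}
with $Q(z)$ a polynomial of degree at most one. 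Expanding the left side at $z\to\infty$ against $G(z)=z^{-1}+m_1 z^{-2}+\cdots$ shows the leading term is the constant $\m-2$ (vanishing $z^1$ coefficient); this forces $Q(z)\equiv\m-2$ identically, and simultaneously pins down $m_1=\m/2$. In my view the main technical obstacle is not this asymptotic matching but rather the upgrade that makes it legitimate: Nekrasov's identity controls expectations of \emph{products}, whereas the limit involves $e^{-G(z)}$, so one must establish the concentration estimate $\mathrm{Var}_{\Pp_N}\!\big[\prod_i\!\big(1-(Nz-\ell_i)^{-1}\big)\big]=o(1)$. Two routes are available: invoke exponential LDP/concentration for Krawtchouk-type ensembles from \cite{Johansson_shape}, \cite{Feral}; or derive a direct second-moment bound by applying Proposition \ref{Proposition_discrete_loop_1} at two nearby arguments and subtracting.

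Equation \eqref{eq_plan_alg}, read as a quadratic in $y=e^{G(z)}$, is $(\m-z)y^2-(\m-2)y+z=0$, with discriminant $4[(z-\m/2)^2-(\m-1)]$, and the normalization $y\to 1$ at infinity picks out
$$
e^{G(z)}=\frac{(\m-2)-2\sqrt{(z-\m/2)^2-(\m-1)}}{2(\m-z)}.
$$
Uniqueness of this branch upgrades subsequential to full weak convergence $\mu_N\to\mu_\m$. On the band $|x-\m/2|<\sqrt{\m-1}$ the square root is purely imaginary, so the boundary values $y(x\pm \i 0)$ are complex conjugate roots with $|y|^2=x/(\m-x)$ and $y+\bar y=(\m-2)/(\m-x)$; writing $y(x+\i 0)=\sqrt{x/(\m-x)}\,e^{-\i\pi\mu_\m(x)}$ and using the algebraic identity $4x(\m-x)-(\m-2)^2=4[(\m-1)-(x-\m/2)^2]$ yields $\cot(\pi\mu_\m(x))=(\m-2)/\bigl(2\sqrt{(\m-1)-(x-\m/2)^2}\bigr)$, which is the claimed $\arccot$ formula. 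Outside $[0,\m]$ the discriminant is positive and the selected branch yields a positive real $y$, so $\mu_\m\equiv 0$ there. In the intermediate region $\sqrt{\m-1}<|x-\m/2|<\m/2$, nonempty exactly when $1<\m\le 2$, the discriminant is positive but the branch is a \emph{negative} real $y$, whence $G(x+\i 0)=\log|y|-\i\pi$, giving $\mu_\m\equiv 1$ there, consistent with the a priori density ceiling. This exhausts the three cases of the proposition.
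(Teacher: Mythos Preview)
Your argument is correct, and it differs from the paper's proof chiefly in how the explicit density is obtained. The paper's proof is short: it invokes the large deviations principle of \cite{Johansson_shape}, \cite{Johansson_paths}, \cite{Feral} for the law of large numbers, identifies $\mu_\m$ as the maximizer of the constrained logarithmic energy functional, and then simply quotes the density from \cite[Example~4.2]{DS}. You instead derive the density by passing Nekrasov's identity to the limit, solving the resulting quadratic for $e^{G}$, and reading off $\mu_\m$ via Stieltjes--Perron; the Vieta identities $y\bar y=x/(\m-x)$, $y+\bar y=(\m-2)/(\m-x)$ give the $\arccot$ cleanly, and the sign of the real root detects the saturated region. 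This limiting algebraic computation is essentially what the paper does \emph{after} the proposition, in Proposition~\ref{Prop_Q_and_R} (phrased there via $R_\m$, $Q_\m$), so you are pushing that computation one step further to recover $\mu_\m$ itself rather than citing \cite{DS}.

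Where the two routes coincide is the concentration input. To replace $\E_{\Pp_N}\bigl[\prod_i(1-(Nz-\ell_i)^{-1})\bigr]$ by $e^{-G(z)}$ for a \emph{deterministic} $G$ you must know that subsequential weak limits of $\mu_N$ are non-random; tightness alone does not give this. Your first route, invoking the LDP/concentration of \cite{Johansson_shape}, \cite{Feral}, is exactly what the paper uses (made precise later in Section~\ref{Section_weak_estimate_1}). Your proposed second route --- a direct variance bound from Nekrasov at two arguments --- is not a genuine alternative as stated: extracting second-moment information from the loop equation requires the deformed-measure differentiation of Lemma~\ref{Lemma_cumulant} together with the self-improving bootstrap of Section~\ref{Section_estimates}, and that bootstrap is seeded by the a~priori estimate of Lemma~\ref{Lemma_a_priory_1}, which itself comes from the LDP. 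So both proofs ultimately rest on the same concentration step; what your route buys is a self-contained derivation of the density formula, at the cost of the Stieltjes inversion bookkeeping.
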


A convenient way of working with the equilibrium measure $\mu_{\m}(x)$ is through its Stieltjes
transform $G_{\m}(z)$ defined through
\begin{equation} \label{eq_St_transform_1}
 G_\m(z)=\int_{-\infty}^{\infty} \frac{\mu_\m(x) \,dx}{z-x}.
\end{equation}
Observe that \eqref{eq_St_transform_1} makes sense for all $z$ outside the support of $\mu_\m(x)$,
and $G_\m(z)$ is holomorphic there. Further, as $z\to\infty$, we have $G_\mu(z)\sim z^{-1}$. An
explicit formula for $G_\m(z)$ can be readily extracted from Proposition \ref{Prop_Q_and_R} below.

We introduce two functions
$$
 R_\m(z)=z\exp(-G_\m(z))+(\m-z)\exp(G_\m(z)),
$$
$$
 Q_\m(z)=z\exp(-G_\m(z))-(\m-z)\exp(G_\m(z)).
$$
for $z\in\mathbb C\setminus[0,\m]$ and consider their analytic continuations.

\begin{proposition} \label{Prop_Q_and_R} For any $\m>1$,
$$
 R_\m(z)=\m-2, \quad \quad
 Q_\m(z)= 2 \sqrt{(z-\m/2)^2-(\m-1)},
$$
with the branch of the square root chosen so that $Q_\m(z)\sim 2z$ as $z\to\infty$.
\end{proposition}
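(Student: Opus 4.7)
The plan is to apply Proposition~\ref{Proposition_discrete_loop_1} (Nekrasov's equation) at finite $N$ and then pass to the limit $N\to\infty$ using the Law of Large Numbers (Proposition~\ref{Proposition_LLN_bin}). With the Krawtchouk choice $\phi^-_N(x)=x/N$, $\phi^+_N(x)=(M+1-x)/N$, both $\phi^\pm_N$ are polynomials of degree $1$, so Proposition~\ref{Proposition_discrete_loop_1} says $R_N(\xi)$ is a polynomial of degree at most $1$ in $\xi$. The first key step is to pin this polynomial down exactly: expanding
$$\prod_{i=1}^N\Bigl(1-\tfrac{1}{\xi-\ell_i}\Bigr)=1-\tfrac{N}{\xi}+O(\xi^{-2}),\qquad \prod_{i=1}^N\Bigl(1+\tfrac{1}{\xi-\ell_i-1}\Bigr)=1+\tfrac{N}{\xi}+O(\xi^{-2})$$
at $\xi=\infty$ (under the expectation), the $\xi^1$ terms of $\phi^-_N\cdot\prod^-$ and $\phi^+_N\cdot\prod^+$ are $\xi/N$ and $-\xi/N$ and cancel, while the $\xi^0$ terms give $-1+(M+1)/N-1$. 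Combined with the polynomiality bound of degree $\le 1$, this forces
$$R_N(\xi)\equiv \frac{M+1}{N}-2.$$

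Next I would pass to the limit with $\xi=Nz$, $z\in\mathbb C\setminus[0,\mathfrak m]$. By Proposition~\ref{Proposition_LLN_bin}, the empirical measure $\mu_N$ converges weakly in probability to $\mu_{\mathfrak m}(x)\,dx$, and since $x\mapsto 1/(z-x)$ is bounded and continuous on a neighborhood of $[0,\mathfrak m]$ for such $z$,
$$\frac{1}{N}\sum_{i=1}^N\log\!\Bigl(1-\tfrac{1}{N(z-\ell_i/N)}\Bigr)^{-1}\;\cdot\;N\;\longrightarrow\; G_{\mathfrak m}(z).$$
Hence $\prod_i(1-\tfrac{1}{Nz-\ell_i})\to\exp(-G_{\mathfrak m}(z))$ and $\prod_i(1+\tfrac{1}{Nz-\ell_i-1})\to\exp(G_{\mathfrak m}(z))$ in probability. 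Upgrading to expectations requires a uniform estimate: the products are bounded analytic functions of $\ell_i/N$ for $z$ off $[0,\mathfrak m]$ (with $N$-independent bound coming from the a~priori confinement of $\ell_i/N$ to a fixed compact set, which itself follows from the large deviations argument behind Proposition~\ref{Proposition_LLN_bin}), so dominated convergence applies. Taking the limit in $N^{-1}R_N(Nz)$… wait, more carefully: the prefactors are already $\frac{\xi}{N}=z$ and $\frac{M+1-\xi}{N}\to\mathfrak m-z$, so
$$R_{\mathfrak m}(z)=ze^{-G_{\mathfrak m}(z)}+(\mathfrak m-z)e^{G_{\mathfrak m}(z)}=\lim_{N\to\infty}R_N(Nz)=\mathfrak m-2,$$
giving the first identity.

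The second identity follows by algebra. Since
$$R_{\mathfrak m}(z)^2-Q_{\mathfrak m}(z)^2=\bigl(R_{\mathfrak m}+Q_{\mathfrak m}\bigr)\bigl(R_{\mathfrak m}-Q_{\mathfrak m}\bigr)=\bigl(2ze^{-G_{\mathfrak m}}\bigr)\bigl(2(\mathfrak m-z)e^{G_{\mathfrak m}}\bigr)=4z(\mathfrak m-z),$$
we obtain
$$Q_{\mathfrak m}(z)^2=(\mathfrak m-2)^2-4z(\mathfrak m-z)=4\bigl[(z-\mathfrak m/2)^2-(\mathfrak m-1)\bigr].$$
To fix the branch of the square root, note that $G_{\mathfrak m}(z)=O(z^{-1})$ as $z\to\infty$, whence $Q_{\mathfrak m}(z)=ze^{-G_{\mathfrak m}}-(\mathfrak m-z)e^{G_{\mathfrak m}}\sim 2z-\mathfrak m\sim 2z$, matching the prescribed branch $Q_{\mathfrak m}(z)\sim 2z$.

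The main obstacle is the second step: rigorously upgrading the LLN-type convergence of the random products to convergence of their expectations, uniformly enough in $z$ that the asymptotic equality of polynomials in $\xi$ descends to the limiting identity. This requires the a priori compact confinement of the support of $\mu_N$ (available from the large deviations principle invoked in Proposition~\ref{Proposition_LLN_bin}), together with a standard dominated convergence argument for $z$ in a complex neighborhood disjoint from $[0,\mathfrak m]$. Analyticity of both sides on $\mathbb C\setminus[0,\mathfrak m]$ then extends the identity throughout the domain.
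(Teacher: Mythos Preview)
Your proof is correct and follows essentially the same route as the paper: Nekrasov's equation forces $R_N$ to be a degree--$\le 1$ polynomial, the $\xi\to\infty$ asymptotics pin it down, the Law of Large Numbers transfers this to $R_{\mathfrak m}$, and the algebraic identity $R_{\mathfrak m}^2-Q_{\mathfrak m}^2=4z(\mathfrak m-z)$ then yields $Q_{\mathfrak m}$. Your additional step of computing $R_N(\xi)\equiv (M+1)/N-2$ \emph{exactly} at finite $N$ is a pleasant refinement over the paper, which only argues that $R_{\mathfrak m}$ is linear and then evaluates its limit at infinity; your version makes the passage to the limit entirely transparent.

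One remark: your ``main obstacle'' about a~priori confinement of the support of $\mu_N$ is a non-issue in this particular model. By definition $0\le \ell_i\le M$, so $\ell_i/N\in[0,M/N]\subset[0,\mathfrak m+1]$ deterministically; no large-deviations input is needed, and the products $\prod_i(1\mp 1/(Nz-\ell_i))$ are uniformly bounded for $z$ in any compact set disjoint from $[0,\mathfrak m]$ by elementary estimates. Dominated convergence then applies immediately.
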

\begin{proof}
 We first observe that $R_\m(z)$ is, in fact, a linear polynomial.
 Indeed, this is the
 $N\to\infty$ limit of $R_N(Nz)$ in Proposition \ref{Proposition_discrete_loop_1} for the measure
 $\Pp_{N}$. The fact that $G_\m(z)\sim z^{-1}$ as $z\to\infty$, implies
 $\lim_{z\to\infty} R_\m(z)=\m-2$, and thus $R_\m(z)=\m-2$.
 Further, observe that the definition of $Q_\m$ and $R_\m$ implies
 $$
  Q_\m(z)^2-R_\m(z)^2=-4 z(\m-z).
 $$
 Therefore,
 $$
  Q_\m(z)^2=4 z^2-4z\m+(\m-2)^2=4(z-\m/2)^2-4(\m-1).\qedhere
 $$
\end{proof}

\subsection{Second order expansion}
\label{Section_Second_order_1} Define the Stieltjes transform $G_N(z)$ of the
prelimit empirical measure \eqref{eq_empirical_mes_1} through
\begin{equation} \label{eq_St_transform_pre}
 G_N(z)=\int_{-\infty}^{\infty} \frac{1}{z-x}\, \mu_N(dx)=\frac{1}{N}
 \sum_{i=1}^N
 \frac{1}{z-\ell_i/N},\quad (\ell_1,\dots,\ell_N) \text{ is }
 \Pp_{N}\text{-distributed}.
\end{equation}

We aim to study how $G_N(z)$ approximate $G_\m(z)$ as $N\to\infty$. For that we introduce a
deformed version of the same function. Take $2k$ parameters $\t=(t_1,\dots,t_k)$,
$\v=(v_1,\dots,v_k)$ such that $v_a+t_a-y\ne 0$ for all $a=1,\dots,k$ and all $y\in[0,\m]$ and let
the deformed distribution $\Pp_{N}^{\t,\v}$ be defined through
\begin{multline}\label{eq_distr_deformed_1}
 \Pp_{N}^{\t,\v}(\ell_1,\dots,\ell_N)=\frac{1}{Z(N,M;\t,\v)}
\prod_{1\le i<j\le N} (\ell_i-\ell_j)^2 \prod_{i=1}^N \left[ {M\choose \ell_i}
\prod_{a=1}^{k}\left(1+\frac{t_a}{v_a-\ell_i/N}\right)\right].
\end{multline}
If $k=0$, then $\Pp_{N}^{\t,\v}=\P$ is the undeformed measure. In general,
$\Pp_{N}^{\t,\v}$ may be a complex--valued measure. The normalizing constant
$Z(N,M;\t,\v)$ in \eqref{eq_distr_deformed_1} is chosen so that the total mass of
$\Pp_{N}^{\t,\v}$ is $1$, i.e.\ $\sum_{\ell\in \Wb} \Pp_{N}^{\t,\v}(\ell)=1$. Let us
note that the numbers $t_a$ are always chosen small enough, which guarantees that
$Z(N,M;t,v)\neq 0$.

Observe that $\Pp_{N}^{\t,\v}$ satisfies the assumptions of Proposition
\ref{Proposition_discrete_loop_1} with
\begin{eqnarray}
\label{eq_phin_pl_1}
 \phi^+_N(x)&=&\left(\frac{M+1}{N}-\frac{x}{N}\right)\cdot\prod_{a=1}^k
 \biggl(v_a-\frac{x}{N}+\frac{1}{N}\biggr)\biggl(t_a+v_a-\frac{x}{N}\biggr),\\
\label{eq_phin_min_1}
  \phi^-_N(x)&=&\frac{x}{N}\cdot\prod_{a=1}^k
 \biggl(v_a-\frac{x}{N}\biggr)\biggl(t_a+v_a-\frac{x}{N}+\frac{1}{N}\biggr).
\end{eqnarray}
As above, we  set $M=\lfloor \m N \rfloor$, and omit it from the notations. We also define
$$
 \psi_N^+(z)=\bigl(\lfloor \m N\rfloor+1-\m N\bigr)+ \sum_{a=1}^k \frac{\m-z}{v_a-z}, \quad \quad \psi_N^-(z)= \sum_{a=1}^k
 \frac{z}{t_a+v_a-z}.
$$
Clearly,
$$
 \phi^+_N(Nz)=
 \left(\m-z+\frac{\psi_N^+(z)}{N}+O\left(\frac{1}{N^2}\right)\right) \cdot \prod_{a=1}^k\bigl(v_a-z\bigr)\bigl(t_a+v_a-z\bigr),
$$
$$
 \phi^-_N(Nz)=
 \left(z+\frac{\psi_N^-(z)}{N}+O\left(\frac{1}{N^2}\right)\right) \cdot \prod_{a=1}^k\bigl(v_a-z\bigr)\bigl(t_a+v_a-z\bigr).
$$

We further define $\mu_N^{\t,\v}$ as the empirical distribution of $\Pp_{N}^{\t,\v}$
and set
\begin{multline} \label{eq_St_transform_deformed}
 G_N(z)=
 \int_{-\infty}^{\infty} \frac{1}{z-x}\, \mu_N^{\t,\v}(dx)
 =\frac{1}{N}
 \sum_{i=1}^N
 \frac{1}{z-\ell_i/N},\quad (\ell_1,\dots,\ell_N) \text{ is }
 \Pp_{N}^{\t,\v}\text{-distributed}.
\end{multline}
Define
\begin{equation} \label{eq_Delta_GN}
 \Delta G_N(z)=N\bigl(G_N(z)-G_\m(z)\bigr).
\end{equation}
Note that we often omit the dependence on $\t,\v$ from the notations, to keep them
concise.

 The definition of the deformed measure $ \Pp_{N}^{\t,\v}$ is motivated by the
 following observation. It was used before in the related context in random matrix
 theory, cf.\ \cite{Mi}, \cite{Ey}.
\begin{lemma} \label{Lemma_cumulant} For any $k\ge 1$, the $k$th mixed derivative
\begin{equation}
\label{eq_cumulant}
 \frac{\partial^k}{\partial t_1 \partial t_2\dots \partial t_k}  \E_{\Pp_{N}^{\t,\v}} \bigl(  \Delta G_N(u)\bigr)
 \Biggr|_{t_a=0,\, 1\le a\le k}
\end{equation}
is the joint cumulant of $k+1$ random variables $N G_N (u)$, $N G_N(v_1)$, \dots, $NG_N(v_k)$ with
respect to the measure $\P$.
\end{lemma}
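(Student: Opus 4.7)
The plan is to rewrite $\E_{\Pp_{N}^{\t,\v}}[\Delta G_N(u;\t,\v)]$ as a ratio of expectations under the undeformed measure $\Pp_N$, and then identify the iterated $\t$-derivative with a joint cumulant via the standard cumulant generating function identity.

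First I would introduce the Radon--Nikodym factor
$$
 W(\t,\v) = \prod_{a=1}^{k}\prod_{i=1}^{N}\left(1+\frac{t_a}{v_a-\ell_i/N}\right),
$$
so that by construction $\Pp_{N}^{\t,\v}(\ell) = W(\t,\v)\,\Pp_N(\ell)/\E_{\Pp_N}[W(\t,\v)]$. Since the functional $\Delta G_N(u;\t,\v)$ depends on $\ell$ but not on the deformation parameters $\t,\v$, one has
$$
 \E_{\Pp_{N}^{\t,\v}}[\Delta G_N(u;\t,\v)] = \frac{\E_{\Pp_N}[\Delta G_N(u)\,W(\t,\v)]}{\E_{\Pp_N}[W(\t,\v)]}.
$$

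The key computation is the first-order expansion of $W$ near $\t=0$. Setting $X_a := N G_N(v_a)$, a direct differentiation gives
$$
 \frac{\partial}{\partial t_a}\bigg|_{t_a=0}\prod_{i=1}^{N}\left(1+\frac{t_a}{v_a-\ell_i/N}\right) = \sum_{i=1}^{N}\frac{1}{v_a-\ell_i/N} = X_a.
$$
Because the mixed partial $\partial_{t_1}\cdots\partial_{t_k}|_{\t=0}$ differentiates each $t_a$ exactly once, it sees $W$ only modulo the ideal $(t_1^2,\dots,t_k^2)$; in this quotient
$$
 W(\t,\v) \equiv \prod_{a=1}^{k}(1+t_a X_a) \equiv \exp\Bigl(\sum_{a=1}^{k} t_a X_a\Bigr).
$$
Hence the mixed partial at $\t=0$ is unchanged if we replace $W(\t,\v)$ by the exponential $\exp(\sum_a t_a X_a)$.

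Writing $A = \Delta G_N(u) = NG_N(u) - NG_\m(u)$, this reduces the problem to
$$
 \partial_{t_1}\cdots\partial_{t_k}\bigg|_{\t=0}\frac{\E_{\Pp_N}[A\,\exp(\sum_a t_a X_a)]}{\E_{\Pp_N}[\exp(\sum_a t_a X_a)]} = \partial_s\partial_{t_1}\cdots\partial_{t_k}\bigg|_{s=0,\,\t=0}\log\E_{\Pp_N}\Bigl[\exp\Bigl(sA + \sum_{a=1}^k t_a X_a\Bigr)\Bigr],
$$
which by the standard cumulant generating function identity is the joint cumulant of $A, X_1,\dots,X_k$. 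Since $A$ differs from $NG_N(u)$ only by the deterministic constant $-NG_\m(u)$ and joint cumulants of order $\geq 2$ are translation invariant in each argument, this equals the joint cumulant of $NG_N(u), NG_N(v_1),\dots, NG_N(v_k)$, as claimed. The only step requiring a bit of care is the product-to-exponential reduction modulo $(t_a^2)$, which I expect is the main potential pitfall; the rest is formal manipulation with cumulant generating functions.
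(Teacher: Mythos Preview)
Your proof is correct and follows essentially the same approach as the paper: both express the deformed expectation as a ratio $\E_{\Pp_N}[\Delta G_N(u)\,W]/\E_{\Pp_N}[W]$, use the key observation that $\prod_i(1+t_a/(v_a-\ell_i/N))=\exp(t_a\,NG_N(v_a))+O(t_a^2)$ to replace $W$ by an exponential modulo second-order terms, and then invoke the cumulant generating function together with translation invariance of cumulants. The paper presents these steps in the reverse order (starting from the cumulant definition and arriving at the deformed measure), but the content is identical.
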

\begin{proof}
 We first note that the joint cumulants are invariant under addition of constants. Therefore, we can
 replace $N G_N (u)$ by $\Delta G_N(u)$ for the purpose of computing the cumulants.

 Further recall that one way to define the joint cumulant of $k+1$ bounded random variables $\xi_0,\dots,\xi_{k}$
 is through
 $$
  \frac{\partial^{k+1}}{\partial t_0 \partial t_1 \dots \partial t_k} \ln\left( \E
  \exp\left(\sum_{i=0}^{k} t_i \xi_i \right)\right)\Biggr|_{t_0=t_1=\dots=t_k=0}.
 $$
 Taking the derivative with respect to $t_0$ explicitly, we can rewrite this also as
 $$
    \frac{\partial^{k}}{\partial t_1 \dots \partial t_k} \frac{ \E
  \left(\xi_0 \exp\left(\sum_{i=1}^{k} t_i \xi_i \right) \right)}{\E
  \left(\exp\left(\sum_{i=1}^{k} t_i \xi_i \right)\right)}
  \Biggr|_{t_1=\dots=t_k=0}.
 $$
 Setting $\xi_0=N \Delta G_N (u)$, $\xi_i=N G_N(v_i)$, $i=1,\dots,k$  and observing that
$$
 \exp\bigl(t N G_N(v)\bigr)=\prod_{i=1}^N\left(1+
 \frac{t}{v-\ell_i/N}\right)+O(t^2), \quad t\to 0,
$$
 we get the desired statement.
\end{proof}

\begin{theorem} \label{Theorem_main_1} Fix $k=0,1,\dots$ and choose $k$ complex numbers
$\v=(v_1,\dots,v_k)\in (\mathbb C\setminus [0,\m])^k$. Then, as $N\to\infty$,
\begin{multline} \label{eq_first_order_1}
 \frac{\partial^k}{\partial t_1\cdots \partial t_k} \E_{\Pp_{N}^{\t,\v}} \bigl(  \Delta
 G_N(u)\bigr) \Bigr|_{t_a=0,\, 1\le a\le k}
 = o(1)+ \frac{\partial^k}{\partial t_1\cdots \partial t_k}\Biggl[ \frac{1}{2\pi\i \cdot 2
{\sqrt{(u-\m/2)^2-(\m-1)}} }\\ \times    \oint_{ \gamma_{[0,\m]}} \frac{dz}{(u-z)}
  \cdot\Biggl(\psi_N^-(z) e^{- G_\m(z)} +\psi_N^+(z) e^{
G_\m(z)} + \sqrt{(z-\m/2)^2-(\m-1)}\cdot \frac{\partial}{\partial z}G_\m(z)\Biggr)
\Biggr]_{t_a=0,\, 1\le a\le k},
\end{multline}
where $\gamma_{[0,\m]}$ is a simple positively--oriented contour enclosing the
segment $[0,\m]$ (the points $u$ and $v_1,\dots,v_k$ are outside the contour). The
remainder $o(1)$ is uniform over $u, v_1,\dots,v_k$ in compact subsets of the
unbounded component of $\mathbb C\setminus \gamma_{[0,\m]}$. The $k=0$ case is that
we take no derivatives in \eqref{eq_first_order_1}.
\end{theorem}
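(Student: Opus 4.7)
The plan is to prove the stronger undifferentiated identity
\[
\E_{\Pp_{N}^{\t,\v}}\bigl[\Delta G_N(u;\t,\v)\bigr] = \frac{1}{2\pi\i\, Q_\m(u)} \oint_{\gamma_{[0,\m]}} \frac{F(z;\t,\v)}{u-z}\,dz + o(1),
\]
with $F(z;\t,\v) := \psi_N^-(z)e^{-G_\m(z)} + \psi_N^+(z)e^{G_\m(z)} + \tfrac12\, G_\m'(z) Q_\m(z)$, where the $o(1)$ is uniform in $(u,\v)$ on compact subsets of the unbounded component of $\mathbb C\setminus\gamma_{[0,\m]}$ and in $\t$ on a small complex neighborhood of the origin. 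The $k$-th $\t$-derivative claim at $\t=0$ then follows by Cauchy's integral formula in $\t$, since both sides are analytic in $\t$ in such a neighborhood and differ by $o(1)$.

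To derive the base identity I would first Taylor-expand
\[
\prod_{i=1}^N\!\Bigl(1 \mp \tfrac{1}{Nz-\ell_i-\delta}\Bigr) = \exp\!\Bigl(\mp G_N(z) \pm \tfrac{G_N'(z)}{2N} + O(N^{-2})\Bigr),\qquad \delta\in\{0,1\},
\]
and exchange expectation with exponential via an a priori variance bound $\mathrm{Var}(G_N(z))=O(N^{-2})$, to obtain $E^\mp(z) = e^{\mp G_\m(z)}\bigl(1 \mp \tfrac{\E\Delta G_N(z)}{N} \pm \tfrac{G_\m'(z)}{2N}\bigr) + O(N^{-2})$. Combining with the scale expansions $\phi^+_N(Nz) = (\m-z+\psi_N^+(z)/N)P(z)+O(N^{-2})$ and $\phi^-_N(Nz) = (z+\psi_N^-(z)/N)P(z)+O(N^{-2})$ for $P(z):=\prod_a(v_a-z)(t_a+v_a-z)$ already given in the excerpt, Nekrasov's identity (Proposition \ref{Proposition_discrete_loop_1}) rearranges to
\[
Q_\m(z)\,\E\Delta G_N(z;\t,\v) = F(z;\t,\v) - N\bigl[R_N(Nz)/P(z) - R_\m(z)\bigr] + O(N^{-1}),
\]
uniformly for $z$ in compact subsets of $\mathbb C\setminus[0,\m]$.

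The crux is to identify the limit of the rational function $H_N(u):=N[R_N(Nu)/P(u)-R_\m(u)]$. Since $R_N$ is a polynomial of degree at most $1+2k$, $H_N$ is rational in $u$ whose only finite singularities are simple poles at $v_a$ and $t_a+v_a$, with constant asymptotic value at infinity. Using $\phi^-_N(Nv_a)=0$ and $\phi^+_N(N(t_a+v_a))=0$, Nekrasov's formula gives the explicit evaluations $R_N(Nv_a)=\phi^+_N(Nv_a)E^+(v_a)$ and $R_N(N(t_a+v_a))=\phi^-_N(N(t_a+v_a))E^-(t_a+v_a)$; taking $N\to\infty$ via the LLN of Proposition \ref{Proposition_LLN_bin} identifies the residues of $H_N$ at $v_a$ and $t_a+v_a$ as $-(\m-v_a)e^{G_\m(v_a)}$ and $-(t_a+v_a)e^{-G_\m(t_a+v_a)}$ respectively, while matching the constant term at infinity from the expansion above yields the value $\lfloor\m N\rfloor+1-\m N$. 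A rational function with prescribed simple poles, residues and value at infinity is unique, so $H_N\to H$ locally uniformly, where
\[
H(u) = (\lfloor\m N\rfloor+1-\m N) - \sum_{a=1}^k\frac{(\m-v_a)e^{G_\m(v_a)}}{u-v_a} - \sum_{a=1}^k\frac{(t_a+v_a)e^{-G_\m(t_a+v_a)}}{u-t_a-v_a}.
\]
A direct residue calculation---deforming $\gamma_{[0,\m]}$ outward to a large circle and collecting contributions at $u$, at $v_a$ and $t_a+v_a$ (from the poles of $\psi_N^\pm$ inside $F$), and at infinity (where $F\to\lfloor\m N\rfloor+1-\m N$)---shows $H(u) = F(u) - \tfrac{1}{2\pi\i}\oint_{\gamma_{[0,\m]}} F(z)/(u-z)\,dz$. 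Substituting into the Nekrasov expansion and dividing by $Q_\m(u)$ completes the derivation of the base identity.

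The main obstacle will be establishing the a priori bounds $\E\Delta G_N(z)=O(1)$ and $\mathrm{Var}(G_N(z))=O(N^{-2})$ uniformly in $\t$ in a small complex neighborhood of $0$. Since $\Pp_{N}^{\t,\v}$ is generally a signed (complex) measure for $\t\neq 0$, standard log-concavity or concentration arguments do not apply directly. I expect to handle this by first establishing the bounds at $\t=0$ via the large-deviation analysis underlying Proposition \ref{Proposition_LLN_bin}, and then extending to complex $\t$ by exploiting the polynomial structure of $R_N$ and the derived Nekrasov expansion itself as a self-consistent equation from which improved bounds on $\E\Delta G_N$ can be bootstrapped.
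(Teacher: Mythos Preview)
Your algebraic outline matches the paper's heuristic in Section~\ref{Section_heuristics_1}: expand the Nekrasov products, feed in the $1/N$ expansion of $\phi_N^\pm$, divide by $P(z)(u-z)$, and integrate over $\gamma_{[0,\m]}$. Your explicit computation of $H_N(u)=N[R_N(Nu)/P(u)-R_\m(u)]$ via its residues is correct but more than is needed: since $R_N(Nz)$ is a polynomial and the zeros $v_a,\,t_a+v_a$ of $P(z)$ lie \emph{outside} $\gamma_{[0,\m]}$, the function $H_N(z)/(u-z)$ has no poles inside $\gamma_{[0,\m]}$ and its contour integral simply vanishes --- the paper uses just that.

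The genuine gap is the one you flag at the end, and your proposed resolution will not close it as stated. You want $\E_{\Pp_N^{\t,\v}}|\Delta G_N|^2=O(1)$ uniformly for $\t$ in a fixed complex neighborhood of $0$, so that Cauchy's formula in $\t$ controls the derivatives. But for $\t$ (or $\v$) nonreal, $\Pp_N^{\t,\v}$ is a signed complex measure whose total-variation norm is in general of order $\exp(cN|\t|)$; the large-deviation estimate behind Proposition~\ref{Proposition_LLN_bin} relies on positivity, and there is no mechanism that transfers the $\t=0$ concentration to this regime by absolute-value bounds. Your ``bootstrap via Nekrasov'' is exactly what the paper does, but the bootstrap runs on a different quantity than you propose.

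The paper's device is to \emph{differentiate first, estimate second}. One keeps the full Taylor series $\E_{\Pp_N^{\t,\v}}\bigl[e^{\pm\Delta G_N/N}-1\bigr]=\sum_{h\ge1}\frac{(\pm1)^h}{h!\,N^h}\,\E_{\Pp_N^{\t,\v}}\bigl[(\Delta G_N)^h\bigr]$ rather than truncating, and then applies $\partial_{t_1}\cdots\partial_{t_k}|_{\t=0}$ term by term. By Lemma~\ref{Lemma_cumulant} and its extension \eqref{eq_moment_via_derivative}, each such derivative of a deformed expectation is a joint cumulant $M_c(\,\cdot\,,\,NG_N(v_1),\dots,NG_N(v_k))$ under the \emph{undeformed} probability measure $\P$. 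The resulting identity \eqref{eq_x28} therefore involves only moments and cumulants of $\Delta G_N$ with respect to $\P$, where the weak a-priori bound of Lemma~\ref{Lemma_a_priory_1} (genuine large deviations for a positive measure) is available as the seed. The self-improving iteration of Section~\ref{Section_estimates} then feeds these bounds back into \eqref{eq_x35} to upgrade $O(N^{n(1/2+\eps)})$ to $O(1)$ on all joint moments $\E_{\P}\prod_a|\Delta G_N(v_a)|$ (Proposition~\ref{Prop_uniform_bound}). At no stage does one need uniform-in-$\t$ control of the deformed measure; the passage to cumulants eliminates that difficulty entirely.
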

\begin{remark} \label{Remark_non_trivial_cases}The only cases, where the right--hand side of \eqref{eq_first_order_1} is
meaningful are $k=0,1$, since its $N\to\infty$ limit is zero for $k \ge 2$. Indeed, only $\psi^-_N$
depends on $t_a$, and moreover it is a sum of functions of single variables $t_a$, $a=1,\dots,k$;
therefore, all mixed partial derivatives vanish. However, we present Theorem \ref{Theorem_main_1}
in this way, since that's the form which appears in our proofs. For $k=1$ we will compute the limit
of \eqref{eq_first_order_1} below in Theorem \ref{Theorem_Stil_CLT}. For $k=0$ and generic $\m$ the
right--hand side of \eqref{eq_first_order_1} has no limit, as $\bigl(\lfloor \m N\rfloor+1-\m
N\bigr)$ in the definition of $\psi_N^+$ oscillates.
\end{remark}


\begin{remark}
 It might seem a bit unexpected that for $k=0$ the right--hand side of
 \eqref{eq_first_order_1} does not vanish when $\theta=1$. Indeed, in the context of
 random matrices this corresponds to $\beta=2$ case, where the mean is known to
 vanish (cf.\ \cite{Johansson_CLT}). For our model the non-zero mean can be traced
 back to two features. First, due to discreteness we can not adjust $M=\lfloor \m
 N\rfloor$ so that $\bigl(\lfloor \m N\rfloor+1-\m N\bigr)$ becomes zero for all
 $N$. Second, the logarithm of the weight, $\ln \left({M\choose x} \right)$, has a
 non-trivial $N\to\infty$ asymptotic expansion, which affects the result.
\end{remark}

Before providing a proof of Theorem \ref{Theorem_main_1}, let us give its important corollaries.
Note that if we set $k=0$ in Theorem \ref{Theorem_main_1}, then it gives the limit behavior for
$\E_{\Pp_{N}}\Delta G_N(z)$. In fact, it also gives  the Central Limit Theorem for $G_N(z)$.

\begin{theorem}\label{Theorem_Stil_CLT} The random field $N\Bigl(G_N(z)-\E_{\Pp_{N}} G_N(z)\Bigr)$, $z\in \mathbb
C \setminus [0,\m]$, converges as $N\to\infty$ (in the sense of joint moments, uniformly in $z$ in
compact subsets of $\mathbb C\setminus [0,\m]$) to a centered complex Gaussian random field with
second moment
\begin{multline} \label{eq_limit_covariance}
 \lim_{N\to\infty} N^2\Bigl( \E_{\Pp_{N}}\bigl(  G_N(u) G_N(v)\bigr) -\E_{\Pp_{N}} G_N(u)
 \,\E_{\Pp_{N}} G_N(v) \Bigr)=:{\mathcal C}(u,v)
\\ =-\frac{1}{2(u-v)^2} \left(1-\frac{uv -\frac{1}{2}(a_-+a_+)(u+v)+a_+ a_-}
 {\sqrt{(u-a_-)(u-a_+)}\sqrt{(v-a_-)(v-a_+)}}\right),
\end{multline}
where $a_\pm=\frac{\m}{2}\pm\sqrt{\m-1}$.
\end{theorem}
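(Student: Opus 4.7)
My plan is to combine Lemma \ref{Lemma_cumulant} with Theorem \ref{Theorem_main_1} and apply the method of joint cumulants. Lemma \ref{Lemma_cumulant} identifies the $k$-fold mixed derivative \eqref{eq_cumulant} with the joint cumulant of $NG_N(u), NG_N(v_1), \dots, NG_N(v_k)$. Since centered joint moments are polynomial expressions in joint cumulants, convergence of the second cumulant together with vanishing of all cumulants of order $\ge 3$ would yield convergence in the sense of joint moments to the corresponding centered complex Gaussian field (by Wick's rule). So it is enough to verify (a) that all limiting cumulants of order $\ge 3$ vanish, and (b) that the limiting two-point cumulant equals $\mathcal{C}(u,v)$.

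Claim (a) is essentially the first remark following Theorem \ref{Theorem_main_1}. Each $t_a$ enters $\psi_N^-(z)=\sum_b z/(t_b+v_b-z)$ and $\psi_N^+(z)$ only through the single summand indexed by $a$, and the remaining pieces of the bracketed integrand in \eqref{eq_first_order_1} (the factors $e^{\pm G_\m(z)}$ and the square-root term) are $\mathbf{t}$-independent. Hence for any $k \ge 2$, $\partial_{t_1}\cdots\partial_{t_k}$ of the bracket at $\mathbf{t}=0$ vanishes identically, so the right-hand side of \eqref{eq_first_order_1} reduces to $o(1)$, and all limiting cumulants of order $\ge 3$ are zero. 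This gives Gaussianity of the limiting field.

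For (b), set $k=1$ and $v_1=v$. One computes directly that $\partial_{t_1}\psi_N^-(z)|_{t_1=0} = -z/(v-z)^2$ and $\partial_{t_1}\psi_N^+(z)|_{t_1=0}=0$, so Theorem \ref{Theorem_main_1} specializes to
\begin{equation*}
\mathcal{C}(u,v) \;=\; \frac{-1}{4\pi\i\sqrt{(u-a_-)(u-a_+)}}\oint_{\gamma_{[0,\m]}} \frac{z\,e^{-G_\m(z)}}{(u-z)(v-z)^2}\,dz.
\end{equation*}
Proposition \ref{Prop_Q_and_R} gives $z\,e^{-G_\m(z)} = \tfrac{\m-2}{2} + \sqrt{(z-\m/2)^2-(\m-1)}$, which is analytic off $[a_-,a_+]$ and grows at most linearly at infinity, while $1/[(u-z)(v-z)^2] = O(z^{-3})$. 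Consequently I would push $\gamma_{[0,\m]}$ outward to infinity: this crosses the external simple pole at $z=u$ and the double pole at $z=v$, and since the integrand is $O(z^{-2})$ at infinity the residue there vanishes, so the contour integral equals $-2\pi\i(\mathrm{Res}_{z=u} + \mathrm{Res}_{z=v})$ of $z\,e^{-G_\m(z)}/[(u-z)(v-z)^2]$. The main obstacle is the algebraic bookkeeping in these residues: the double-pole contribution at $z=v$ requires differentiating both $z\,e^{-G_\m(z)}$ (which pulls in $G_\m'(v)$, computable by implicit differentiation from Proposition \ref{Prop_Q_and_R}) and the square root, and the resulting sum must be rearranged using $a_++a_-=\m$ and $a_+a_-=\tfrac{\m^2}{4}-(\m-1)$ to collapse into the manifestly $u\leftrightarrow v$ symmetric form \eqref{eq_limit_covariance}.
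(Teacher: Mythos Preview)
Your proposal is correct and follows essentially the same route as the paper: identify the mixed $t$-derivatives with cumulants via Lemma \ref{Lemma_cumulant}, observe that for $k\ge 2$ the bracketed integrand in \eqref{eq_first_order_1} depends on each $t_a$ only through a single additive term so mixed derivatives vanish, and for $k=1$ evaluate the resulting contour integral via residues at $u$ and $v$. One small simplification the paper exploits and you might as well: having written $z\,e^{-G_\m(z)}=\tfrac{1}{2}\bigl(R_\m(z)+Q_\m(z)\bigr)$, note that the $R_\m$ piece is a constant, hence its contribution is holomorphic \emph{inside} $\gamma_{[0,\m]}$ and integrates to zero directly---so only the $Q_\m$ part survives, and the double-pole residue at $z=v$ just requires differentiating $\sqrt{(z-a_-)(z-a_+)}$, with no need to invoke $G_\m'$.
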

\begin{remark}
 Note that the covariance ${\mathcal C}(u,v)$ has no singularity at $u=v$, since the
 right--hand side of \eqref{eq_limit_covariance} has a finite $u\to v$ limit.
\end{remark}

\begin{remark}
 Since $\overline{G_N(u)}=G_N(\overline u)$, the formula \eqref{eq_limit_covariance} is sufficient
 for determining the asymptotic covariance of the random field $G_N(u)$.
\end{remark}

\begin{proof}[Proof of Theorem \ref{Theorem_Stil_CLT}]
 We start from the result of Lemma \ref{Lemma_cumulant} for the joint cumulant of $k+1$ random variables $N G_N (u)$, $N G_N(v_1)$, \dots, $NG_N(v_k)$.
In particular, if $k=1$, then we get the covariance of $N G_N (u)$ and $N G_N(v_1)$.

For $k>1$, differentiating \eqref{eq_first_order_1} we see that the result vanishes
as $N\to\infty$, see Remark \ref{Remark_non_trivial_cases}. This implies the
asymptotic Gaussianity of the random field $N\Bigl(G_N(z)-\E_{\Pp_{N}}
G_N(z)\Bigr)$.

In the case $k=1$, differentiating \eqref{eq_first_order_1} we see that the
covariance given by \eqref{eq_cumulant} is (recall that $u$ and $v_1$ lie outside
the integration contour)
\begin{multline}
 \frac{1}{2\pi\i \cdot 2
{\sqrt{(u-\m/2)^2-(\m-1)}} } \oint_{ \gamma_{[0,\m]}} \frac{dz}{(u-z)}
  \cdot\Biggl( -\frac{z}{(v_1-z)^2} e^{- G_\m(z)} \Biggr)
\\= \frac{1}{2\pi\i \cdot 4
{\sqrt{(u-\m/2)^2-(\m-1)}} } \oint_{ \gamma_{[0,\m]}} \frac{dz}{(z-u)(v_1-z)^2}
  \cdot\Biggl(R_\m(z)+Q_\m(z) \Biggr).
\end{multline}
The term with $R_\m(z)$ integrates to $0$, as it has no singularities inside $\gamma_{[0,\m]}$. The
term with $Q_\m$ is computed as the sum of the residues at $z=u$ and at $z=v_1$, which gives the
desired covariance formula.
\end{proof}
Theorem \ref{Theorem_Stil_CLT} implies the central limit theorem for general analytic linear
statistics.

\begin{corollary} \label{Corollary_CLT_linear}  Let the distribution $\P$ be given by
\eqref{eq_Binomial_ensemble} with $M=\lfloor \m N\rfloor$. Take $k$ real valued functions
$f_1(z),\dots,f_k(z)$ on $[0,\m]$, which can be extended to holomorphic functions in a complex
neighborhood $\B$ of $[0,\m]$. Then as $N\to\infty$ the $k$ random variables
$$
{\mathcal L}_{f_j}= \sum_{i=1}^{N} \bigl( f_j(\ell_i)-\E_{\Pp_{N}}f_j(\ell_i) \bigr),\quad
(\ell_1,\dots,\ell_N)
 \text{ is } \Pp_{N}\text{--distributed,}
$$
converge in the sense of moments to centered Gaussian random variables with covariance
\begin{equation}
\label{eq_linear_covariance_1}
 \lim_{N\to\infty} \E_{\Pp_{N}} {\mathcal L}_{f_i} {\mathcal L}_{f_j}= \frac{1}{(2\pi\i)^2}
 \oint_{\gamma_{[0,\m]}} \oint_{\gamma_{[0,\m]}} f_i(u) f_j(v) {\mathcal C}(u,v) du dv,
\end{equation}
where ${\gamma_{[0,\m]}}$ is a positively oriented contour in $\B$ which encloses $[0,\m]$, and
${\mathcal C}(u,v)$ is given by \eqref{eq_limit_covariance}.
\end{corollary}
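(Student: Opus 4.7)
The plan is to reduce the CLT for linear statistics to the CLT for the Stieltjes transform $G_N(z)$, which is already established in Theorem \ref{Theorem_Stil_CLT}, via a Cauchy contour integral representation.

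First, I would observe that the particles $\ell_i$ satisfy $\ell_i/N \in [0, \mathfrak{m}]$ deterministically (since $\ell_i \le M \le \mathfrak{m} N$). Hence, for any positively oriented contour $\gamma \subset \mathcal B$ enclosing $[0, \mathfrak{m}]$, Cauchy's formula gives $f_j(\ell_i/N) = \frac{1}{2\pi \i} \oint_{\gamma} \frac{f_j(z)}{z - \ell_i/N}\, dz$. Summing over $i$ and subtracting the expectation yields the key identity
\[
\mathcal L_{f_j} = \frac{1}{2\pi \i} \oint_{\gamma} f_j(z) \cdot N\bigl(G_N(z) - \E_{\Pp_N} G_N(z)\bigr)\, dz.
\]
Thus every linear statistic is an explicit linear functional of the centered Stieltjes transform.

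Next, to compute the joint moments $\E_{\Pp_N}\bigl[\mathcal L_{f_{i_1}} \cdots \mathcal L_{f_{i_p}}\bigr]$, I would apply Fubini to exchange expectation with the $p$-fold contour integral:
\[
\E_{\Pp_N}\bigl[\mathcal L_{f_{i_1}} \cdots \mathcal L_{f_{i_p}}\bigr] = \frac{1}{(2\pi \i)^p} \oint_{\gamma}\!\!\cdots\!\!\oint_{\gamma} \left(\prod_{s=1}^{p} f_{i_s}(z_s)\right) \E_{\Pp_N}\!\left[\prod_{s=1}^{p} N\bigl(G_N(z_s) - \E_{\Pp_N}G_N(z_s)\bigr)\right] dz_1 \cdots dz_p.
\]
Now Theorem \ref{Theorem_Stil_CLT} asserts that the integrand (the joint centered moment of $NG_N$ at points on $\gamma$) converges to the $p$-th joint moment of the Gaussian field $\xi$ with covariance $\mathcal C(u,v)$, uniformly for $z_s$ in the compact set $\gamma$. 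Combining uniform convergence with the bounded length of $\gamma$ justifies passing the $N \to \infty$ limit inside the contour integral, giving convergence to the corresponding Gaussian $p$-point moment integrated against $\prod_s f_{i_s}(z_s)$.

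By Wick's theorem applied to the limiting Gaussian field, this $p$-point moment is a sum over pairings of products of two-point covariances $\mathcal C(u,v)$. Since the integral distributes over the pairing decomposition, the limit of $\E_{\Pp_N}\bigl[\mathcal L_{f_{i_1}} \cdots \mathcal L_{f_{i_p}}\bigr]$ is exactly the Wick sum built from the covariances $\frac{1}{(2\pi \i)^2} \oint \oint f_{i_a}(u) f_{i_b}(v) \mathcal C(u,v)\, du\, dv$. This is precisely the moment structure of centered jointly Gaussian variables with covariance \eqref{eq_linear_covariance_1}, proving the claim. The only technical point, and the closest thing to an obstacle, is verifying that the uniform convergence in joint moments provided by Theorem \ref{Theorem_Stil_CLT} is strong enough to swap limit with the $p$-fold contour integral; but since $\gamma$ is a fixed compact curve in $\mathcal B$ bounded away from $[0,\mathfrak m]$ and the $f_{i_s}$ are continuous on $\gamma$, this exchange is routine.
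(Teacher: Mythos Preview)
Your proof is correct and follows essentially the same approach as the paper: represent $\mathcal L_{f_j}$ as a contour integral of $f_j$ against $N(G_N-\E G_N)$ via Cauchy's formula, then pass the limit inside the integral using the uniform moment convergence from Theorem~\ref{Theorem_Stil_CLT}. The paper's proof is more terse, simply noting that integrals of jointly Gaussian variables are Gaussian, whereas you spell out the Fubini step and the Wick pairing explicitly, but the argument is the same.
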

\begin{remark}
 The covariance \eqref{eq_linear_covariance_1} has the same form as for random matrices and log--gases in the one cut
 regime. It depends only on the restrictions of functions $f_j$ onto the interval $[a_-,a_+]$ and can be rewritten in several other equivalent forms, cf.\ \cite[Theorem
 4.2]{Johansson_CLT}, \cite[Chapter 3]{PS}, \cite[Section 4.3.3]{AGZ}.
\end{remark}
\begin{proof}[Proof of Corollary \ref{Corollary_CLT_linear}] Observe that
$$
 {\mathcal L}_{f}=\frac{N}{2\pi\i} \oint_{\gamma_{[0,\m]}} f(z) \left( G_N(z)-\E_{\P} G_N(z)\right)
 dz.
$$
Therefore, all the moments of ${\mathcal L}_{f}$ are obtained from the centered
moments of $G_N(z)$ by integration. Since the latter converge uniformly in $z$ on
the integration contour, so do the former. It remains to use the fact that the
integrals of jointly Gaussian random variables are also Gaussian.
\end{proof}
\begin{remark}
 Similarly to Corollary \ref{Corollary_CLT_linear}, Theorem \ref{Theorem_main_1} can be used to obtain the first two terms in the $N\to\infty$ asymptotic
 expansion of $\E_{\Pp_{N}} \sum_{i=1}^{N} f(\ell_i)$ for functions $f$
 holomorphic in a neighborhood of $[0,\m]$.
\end{remark}

\medskip

The rest of this section is devoted to the proof of Theorem \ref{Theorem_main_1}.

\subsection{Heuristic argument for Theorem \ref{Theorem_main_1}}

\label{Section_heuristics_1}

In this section we present a sketch of the proof for Theorem \ref{Theorem_main_1} in which we omit
 crucial bounds on remainders in the asymptotic formulas. These bounds will be
established further on.

\smallskip

We start from the statement of Proposition \ref{Proposition_discrete_loop_1} for the measures
$\Pp_{N}^{\t,\v}$. Making the change of variables $\xi=Nz$, we have for $z$ outside $[0,\m]$
\begin{multline}\label{eq_first_term_1}
 \prod_{i=1}^{N}\left(1-\frac{1}{\xi-\ell_i} \right)=
 \exp\left(\sum_{i=1}^{N} \ln\left(1-\frac{1}{N}\cdot
 \frac{1}{z-\ell_i/N}\right)\right)
\\
 =
  \exp\left(-G_N(z)+\frac{1}{2N}\cdot \frac{\partial}{\partial z}G_N(z)+O\left(\frac1{N^2}\right) \right)
\\
 =
  \exp\left(-G_\m(z) -\frac{1}{N} \Delta G_N(z)+\frac{1}{2N}\cdot \frac{\partial}{\partial z}G_N(z)+O\left(\frac1{N^2}\right)
  \right),
\end{multline}
where the remainder is uniform over $z$ in compact subsets of $\mathbb C\setminus [0,\m]$.
Similarly,
\begin{multline} \label{eq_second_term_1}
 \prod_{i=1}^{N}\left(1+\frac{1}{\xi-\ell_i-1} \right)=
 \exp\left(\ln\left(1+\frac{1}{N}\cdot
   \frac{1}{z-\ell_i/N-1/N}\right)\right)
 \\
 =
  \exp\left( G_\m(z)+\frac{1}{N} \Delta G_N(z)-\frac{1}{2N}\cdot \frac{\partial}{\partial z}G_N(z)+O\left(\frac1{N^2}\right)
  \right).
\end{multline}

Recalling the definition of $R_\m(z)$, we conclude that the function $R_N(Nz)$ from Proposition
\ref{Proposition_discrete_loop_1} can be written in the following form
\begin{multline} \label{eq_x19}
 R_N(Nz)=\prod_{a=1}^k (v_a-z)(t_a+v_a-z) \cdot \Biggl[ R_\m(z)\\ +
 z e^{-G_\m (z)}\, \E_{\Pp_{N}^{\t,\v}} \left( \exp\left( \frac{1}{N} \Delta G_N(z) \right) -1 \right)
 +(\m-z)e^{G_\m (z)}\, \E_{\Pp_{N}^{\t,\v}} \left( \exp\left(-\frac{1}{N} \Delta G_N(z) \right) -1 \right)
\\ + \frac{\psi_N^-(z)}{N} e^{-G_\m(z)} +\frac{\psi_N^+(z)}{N} e^{
G_\m(z)}
\\+
\dfrac{z e^{-G_\m(z)}}{N} \left( \frac{1}{2} \frac{\partial}{\partial
z}G_\m(z)\right) +
 \dfrac{(\m-z) e^{G_\m(z)}}{N} \left(
-\frac{1}{2} \frac{\partial}{\partial z}G_\m(z)\right) \Biggr]
+o\left(\frac{1}{N}\right),
\end{multline}
where $\psi_N^\pm$ appeared from the second terms in $1/N$ expansion of $\phi^{\pm}_N$ from
\eqref{eq_phin_pl_1}, \eqref{eq_phin_min_1}. We further want to simplify the expression in the
second line of \eqref{eq_x19}, by replacing $e^{h}-1$ by $h$ under expectations. For that we note a
simple inequality, which we will use  with $n=2$:
\begin{equation}
\label{eq_exponent_bound}
 \left|e^h-\sum_{j=0}^{n-1} \frac{h^j}{j!}\right|\le |h|^n e^{|h|},\quad h\in\mathbb C, \quad n=1,2,\dots.
\end{equation}
We will later establish in Section \ref{Section_estimates} that uniformly in $z$ in compact subsets
of $\mathbb C\setminus[0,\m]$ we have
\begin{equation}
\label{eq_crucial_estimate_1}
 \E_{\Pp_{N}^{\t,\v}}\left( \left|\frac{1}{N} \Delta G_N(z) \right|^2 \exp\left( \left|\frac{1}{N} \Delta
 G_N(z)\right|\right) \right)=o\left(\frac1N\right), \quad N\to\infty.
\end{equation}

We therefore can rewrite \eqref{eq_x19} as
\begin{multline} \label{eq_x20}
 R_N(Nz)= \prod_{a=1}^k (v_a-z)(t_a+v_a-z) \cdot \Biggl[ R_\m(z)
 +\frac{Q_\m(z)}{N} \E_{\Pp_{N}^{\t,\v}} \left(  \Delta G_N(z) \right)
\\ + \frac{\psi_N^-(z)}{N} e^{-G_\m(z)} +\frac{\psi_N^+(z)}{N} e^{ G_\m(z)}
+ \dfrac{z e^{-G_\m(z)}}{N} \left( \frac{1}{2} \frac{\partial}{\partial z}G_\m(z)\right) +
 \dfrac{(\m-z) e^{ G_\m(z)}}{N} \left(
-\frac{1}{2} \frac{\partial}{\partial z}G_\m(z)\right)\Biggr]
+o\left(\frac{1}{N}\right),
\end{multline}
where the estimate of the remainder is uniform over $z$ in compact subsets of $\mathbb
C\setminus[0,\m]$.

Let us now fix $u$ outside the contour $\gamma_{[0,\m]}$ enclosing the interval $[0,\m]$, divide
\eqref{eq_x20} by
$$2\cdot 2\pi \i \cdot (u-z)\cdot \prod_{a=1}^k
(v_a-z)(t_a+v_a-z)$$
  and integrate over $\gamma_{[0,\m]}$. Since both $R_N(Nz)$ and
$R_\m(z)$ are holomorphic inside the contour, the integrals of the corresponding terms vanish. From
the rest we get, with the help of Proposition \ref{Prop_Q_and_R},
\begin{multline} \label{eq_x21}
 \frac{1}{2\pi\i} \oint_{ \gamma_{[0,\m]}}
 \frac{\sqrt{(z-a_-)(z-a_+)}}{u-z}
 \cdot \E_{\Pp_{N}^{\t,\v}} \left(  \Delta G_N(z)
 \right) dz \\ =- \frac{1}{2\pi\i} \oint_{\gamma_{[0,\m]}} \frac{dz}{2(u-z)}
  \cdot\Biggl(\psi_N^-(z) e^{-G_\m(z)} +\psi_N^+(z) e^{
G_\m(z)}
\\+
ze^{-G_\m(z)} \left( \frac{1}{2} \frac{\partial}{\partial z}G_\m(z)\right) +
 (\m-z) e^{G_\m(z)} \left(
 -\frac{1}{2} \frac{\partial}{\partial z}G_\m(z)\right) \biggr) + o(1).
\end{multline}
For the first line of \eqref{eq_x21}, note that $\E_{\Pp_{N}^{\t,\v}} \left(  \Delta
G_N(z) \right)$ is analytic outside the contour of integration and decays as $1/z^2$
when $z\to\infty$. Therefore, we can compute the integral as (minus) the residue at
$z=u$, which is
$$
\sqrt{(u-a_-)(u-a_+)}
 \cdot \E_{\Pp_{N}^{\t,\v}} \left(  \Delta G_N(u) \right).
$$
This leads, via differentiation in $t$'s, to the desired formula \eqref{eq_first_order_1} of
Theorem \ref{Theorem_main_1}.

Let us point out the parts of the above argument  that are not yet rigorous and
 whose justification is necessary to complete the proof of Theorem
\ref{Theorem_main_1}:
\begin{itemize}
\item We need to show that the bound \eqref{eq_crucial_estimate_1} is valid.
\item We need to prove that all the remainders remain small when we differentiate
with respect to variables $t_a$, $a=1,\dots,k$.
\end{itemize}

\subsection{A weak a priori estimate.}

\label{Section_weak_estimate_1}

The following lemma is a first step for establishing the desired estimates.

\begin{lemma} \label{Lemma_a_priory_1}
 Fix a positive integer $n$ and take a compact set $A\subset \mathbb C\setminus [0,\m]$.
 Then for every $\eps>0$ there exists a constant $C>0$ such that for every $z_1,\dots,z_n\in A$ and every $N=1,2,\dots$ we
 have
 \begin{equation}
 \label{eq_weak_estimate}
  \E_{\Pp_{N}} \left| \prod_{i=1}^n  \Delta G_N(z_i)\right| \le C \cdot N^{n(\frac{1}{2}+\eps)},
 \end{equation}
 with $\Delta G_N(z)$ defined by \eqref{eq_Delta_GN} with $k=0$.
\end{lemma}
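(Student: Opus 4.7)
The plan is to deduce the bound from a standard concentration estimate for the empirical measure $\mu_N$ around the equilibrium $\mu_\m$. By H\"older's inequality,
\begin{equation*}
\E_{\Pp_{N}}\left|\prod_{i=1}^n \Delta G_N(z_i)\right| \le \prod_{i=1}^n\bigl(\E_{\Pp_{N}}|\Delta G_N(z_i)|^n\bigr)^{1/n},
\end{equation*}
so it suffices to prove the single-variable bound $\E_{\Pp_{N}}|\Delta G_N(z)|^n \le C_n N^{n(1/2+\eps)}$ for each fixed $z$ at positive distance from $[0,\m]$ and each $n\ge 1$.

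First I would rewrite the density of $\Pp_N$ in energy form. Applying Stirling's formula to the binomial yields $\log\binom{M}{\ell} = -N V(\ell/N) + O(\log N)$ uniformly in $\ell\in\{0,\dots,M\}$, where $V(x)=x\log x + (\m-x)\log(\m-x)-\m\log\m$ is smooth and strictly convex on $(0,\m)$. Consequently
\begin{equation*}
\Pp_{N}(\ell_1,\dots,\ell_N) = \exp\bigl(-N^2\mathcal{E}(\mu_N) + O(N\log N)\bigr),
\end{equation*}
with $\mathcal{E}(\mu) = -\iint\log|x-y|\,d\mu(x)\,d\mu(y) + \int V\,d\mu$. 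Since $\mathcal{E}$ has its unique minimizer at $\mu_\m$ and the second variation $\mathcal{I}(\nu) = -\iint\log|x-y|\,d\nu(x)\,d\nu(y)$ is a nonnegative quadratic form on signed measures of zero total mass, the standard partition-function comparison (bounding $Z$ from below by restricting to configurations close to the equidistribution of $\mu_\m$) gives a concentration estimate of the form
\begin{equation*}
\Pp_{N}\bigl(\mathcal{I}(\mu_N-\mu_\m)\ge t\bigr)\le \exp\bigl(-cN^2 t + C N\log N\bigr), \qquad t\ge C\tfrac{\log N}{N}.
\end{equation*}
For $z$ at positive distance from $[0,\m]$ the function $x\mapsto 1/(z-x)$ is analytic in a neighborhood of the support, so a Fourier/Parseval inequality yields a deterministic bound $|G_N(z)-G_\m(z)|^2 \le C(z)\,\mathcal{I}(\mu_N-\mu_\m)$. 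Integrating the tail against this bound produces $\E_{\Pp_{N}}|\Delta G_N(z)|^n = O((N\log N)^{n/2})$, which is comfortably stronger than \eqref{eq_weak_estimate}.

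The principal technical obstacle is that $\mu_N$ is an atomic measure, so $\mathcal{I}(\mu_N-\mu_\m)$ diverges on the diagonal. The standard workaround is to convolve $\mu_N$ with a narrow bump of width $N^{-1+\delta}$ and verify that both the regularized energy $\mathcal{E}(\tilde\mu_N)$ and the regularized Stieltjes transform agree with their unsmoothed counterparts up to errors $O(N^{-1}\log N)$; the discreteness enters at this step through the uniform bound $\mu_N([p,q])\le q-p+1/N$ noted right before the lemma. This regularization together with the partition-function comparison is precisely the mechanism of the large-deviations arguments in \cite{Johansson_shape}, \cite{Johansson_paths}, \cite{Feral}, so no ingredients beyond those are needed to produce the weak estimate \eqref{eq_weak_estimate}.
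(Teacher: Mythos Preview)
Your approach is essentially the paper's: it proves the lemma by first establishing a concentration inequality for $\mathcal D(\tilde\mu_N,\mu_\m)$ (Proposition~\ref{Prop_pseudodistance_bound}) via the energy representation and a partition-function lower bound, then converts this via Plancherel into a tail bound for linear statistics (Corollary~\ref{Corrollary_tail_bound}), exactly as you outline. One detail to fix: your regularization scale $N^{-1+\delta}$ is too coarse---since the minimal gap between particles is $1/N$, a convolution at scale larger than $1/N$ spoils the comparison of off-diagonal log-terms; the paper takes width $N^{-p}$ with $p>2$, which makes both the energy and the Stieltjes-transform perturbations negligible.
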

\begin{remark}
 We will show in the next section that $C\cdot N^{n(\frac{1}{2}+\eps)}$ in the righthand side of \eqref{eq_weak_estimate} can be
 replaced by a constant. However, to produce such a sharp estimate we need to start from this
 weaker one.
\end{remark}

The proof of Lemma \ref{Lemma_a_priory_1} that we now present is similar to the argument of
\cite[Lemma 3.4.5]{BoG2}, which follows the ideas of \cite{MMS}.

Before we proceed we need to recall the characterization of the equilibrium measure
$\mu_\m$.
 Introduce the functional $I[\mu]$ of a measure $\mu$ on $[0,\m]$ via
 \begin{equation} \label{eq_functional}
  I[\mu]=\iint\limits_{\begin{smallmatrix} 0\le x,y \le \m\\ x\ne y\end{smallmatrix}} \ln|x-y| d\mu(x)d\mu(y)-\int_0^\m V(x) d\mu(x),
 \end{equation}
 where
 $$
  V(x)=x\ln(x)+(\m-x)\ln(\m-x).
 $$
 Note that if the measure $\mu$ has atoms, then it is important to exclude the diagonal in the integral
 \eqref{eq_functional}, i.e.\ integrate only over $x\ne y$.

 The variational characterization of the measure $\mu_\m(x)dx$\footnote{Throughout the paper the density of a measure $\mu$ is denoted $\mu(x)$.} (see \cite{Johansson_shape}, \cite{Johansson_paths}, \cite{Feral})
 yields that $\mu_\m(x) dx$ is the unique minimizer of $I[\mu]$
 among the probability measures of density at most $1$.

 Define the functions $F_\m(x)$ through
 \begin{equation}
 \label{eq_effective_pot}
  F_\m(x)=2\int_0^\m \ln|x-t|\mu_\m(t)dt-V(x).
 \end{equation}
 Then varying the functional $I[\mu]$ at $\mu_\m$, one proves (cf.\ \cite{DS}, \cite{Feral}, \cite{ST}) that
 there exists a real number $f$ such that $F_\m(x)-f=0$ on $S=\{0\le x\le \m \mid
 0<\mu_\m(x)<1\}$, $F_\m(x)-f<0$ on the complement of the support of $\mu_\m$, and
 $F_\m(x)-f>0$ when the density $\mu_\m(x)$ is equal to $1$.

\bigskip

Now take any two compactly supported absolutely continuous probability measures with uniformly
bounded densities $\nu(dx)=\nu(x)dx$ and $\rho(dx)=\rho(x)dx$ and define $\Dr(\nu,\rho)$ through
\begin{equation}
\label{eq_quadratic_main}
 \D(\nu,\rho)=-\int_{\mathbb R}\int_{\mathbb R}\ln|x-y| (\nu(x)-\rho(x))(\nu(y)-\rho(y))dxdy.
\end{equation}
There is an alternative formula for $\Dr(\nu(x),\rho(x))$ in terms of Fourier transforms, cf.\
\cite{BeGu}:
\begin{equation}
\label{eq_quadratic_alternative}
 \Dr(\nu,\rho)=\sqrt{\int_0^\infty \frac{1}{t} \left|\int_{\mathbb R} e^{\i tx}
 (\nu(x)-\rho(x))dx\right|^2 dt}.
\end{equation}

Fix a parameter $p>2$ and let $\tilde \mu_N$ denote the convolution of the empirical measure
$\mu_N$ given by \eqref{eq_empirical_mes_1} with uniform measure on the interval $[0,N^{-p}]$.

\begin{proposition} \label{Prop_pseudodistance_bound}
 There exists $C\in\mathbb R$ such that for all $\gamma>0$ and all $N\ge 1$ we have
 $$
  \Pp_{N}
  \Bigl( \Dr(\tilde\mu_N,\mu_\m)\ge \gamma\Bigr)\le\exp\bigl( CN\ln(N)
  -\gamma^2 N^2\bigr).
 $$
\end{proposition}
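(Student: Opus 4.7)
The strategy is the standard energy--comparison argument for log--gases, adapted to the discrete setting via the smoothed measure $\tilde\mu_N$. Starting from \eqref{eq_Binomial_ensemble} and writing $\ell_i=Nx_i$, I would first apply Stirling's formula to the binomial coefficient uniformly on $[0,M]$ to get $\log\binom{M}{\ell_i}=-NV(\ell_i/N)+O(\log N)$ with $V$ as in \eqref{eq_functional}. Collecting factors gives, uniformly over $\ell\in\Wb$,
\[
\log\Pp_N(\ell)=-\log Z(N,M)+N^2\!\left(\iint_{x\ne y}\!\log|x-y|\,d\mu_N(x)d\mu_N(y)-\int V\,d\mu_N\right)+O(N\log N).
\]

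Next, I would pass from $\mu_N$ to $\tilde\mu_N$ inside the double integral. Because the particles $\ell_i$ are distinct integers, the minimum atom spacing of $\mu_N$ is $1/N$, which dominates the smoothing window $N^{-p}$ with $p>2$; a direct Taylor expansion of $\log|x-y|$ around each pair then yields
\[
\iint_{x\ne y}\log|x-y|\,d\mu_N d\mu_N=\iint\log|x-y|\,d\tilde\mu_N d\tilde\mu_N+O\!\left(\frac{\log N}{N}\right),
\]
while Lipschitz continuity of $V$ on $[0,\m]$ gives $\int V\,d\mu_N=\int V\,d\tilde\mu_N+O(N^{-p})$. Expanding $\iint\log|x-y|\,d\tilde\mu_N d\tilde\mu_N$ around $\mu_\m$ and using $2\int\log|x-y|\,d\mu_\m(y)=F_\m(x)+V(x)$ from \eqref{eq_effective_pot} produces
\[
\log\Pp_N(\ell)=-\log Z(N,M)+N^2 I[\mu_\m]+N^2\!\int(F_\m-f)\,d(\tilde\mu_N-\mu_\m)-N^2\Dr(\tilde\mu_N,\mu_\m)^2+O(N\log N).
\]

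The linear term is non-positive because $F_\m-f$ vanishes on the band, is strictly negative off the support of $\mu_\m$ (where $\tilde\mu_N-\mu_\m\ge 0$), and strictly positive on the saturated region $\{\mu_\m=1\}$ (where $\tilde\mu_N-\mu_\m\le 0$, inherited from $\mu_N([x,x+1/N])\le 1/N$ forced by the integer spacing, up to a negligible boundary discrepancy from the tiny smoothing). To complement this upper bound I would establish $\log Z(N,M)\ge N^2 I[\mu_\m]-C_1 N\log N$ by running the same expansion in reverse for a single deterministic configuration $\ell^*$ whose empirical measure approximates $\mu_\m$ within $O(1/N)$, for instance at quantiles of $\mu_\m$. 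Combining the two estimates produces the pointwise bound $\log\Pp_N(\ell)\le -N^2\Dr(\tilde\mu_N,\mu_\m)^2+CN\log N$, and summing over the at most $\binom{M+1}{N}\le e^{C'N}$ possible configurations in the event $\{\Dr(\tilde\mu_N,\mu_\m)\ge\gamma\}$ yields the claim.

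The main technical obstacle is the justification of the sign of the linear term in the saturated regime $1<\m\le 2$, since after convolution at scale $N^{-p}\ll 1/N$ the smoothed measure $\tilde\mu_N$ can locally have density of order $N^{p-1}$ and no longer satisfies a literal density-one bound. The resolution is that the coarse-grained estimate $\mu_N([x,x+1/N])\le 1/N$ together with the continuity of $F_\m-f$ confines the defect to a neighborhood of size $O(N^{-p})$ around each particle, contributing a total of $O(N^{2-p}\log N)=o(N\log N)$, which is absorbed into the remainder since $p>2$. All remaining steps --- Stirling for the binomial, the reverse expansion for the lower bound on $Z(N,M)$, and the combinatorial union bound --- are routine.
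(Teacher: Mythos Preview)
Your overall strategy is exactly the paper's: rewrite the density via Stirling as $\exp\bigl(N^2 I[\mu_N]+O(N\ln N)\bigr)/Z$, lower-bound $Z$ by evaluating at the quantiles of $\mu_\m$, pass to the smoothed $\tilde\mu_N$, complete the square to extract $-\D^2(\tilde\mu_N,\mu_\m)$ plus the linear term $\int(F_\m-f)\,d(\tilde\mu_N-\mu_\m)$, sign the latter via the variational characterization, and union-bound over $\binom{M+1}{N}=e^{O(N)}$ configurations.

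The one place that needs repair is the saturated-region bound. Your diagnosis---that the difficulty is the density spike of order $N^{p-1}$ coming from the smoothing, with a defect of size $O(N^{2-p}\ln N)$ confined to $N^{-p}$-neighborhoods of the particles---misidentifies the issue. Even before smoothing, $\mu_N-\mu_\m$ on $S'=\{\mu_\m=1\}$ is a sum of point masses minus Lebesgue measure and is \emph{not} $\le 0$ as a signed measure; the smoothing scale $N^{-p}$ is irrelevant here. What you actually have is the integrated bound $(\tilde\mu_N-\mu_\m)\bigl([a,a+1/N]\bigr)\le 0$ for intervals of length $1/N$ in $S'$. The paper partitions $S'$ into such intervals and, on each, replaces $F_\m-f$ by its value at an endpoint (legitimate because the interval integral of the signed measure is $\le 0$), controlling the discrepancy by the oscillation of $F_\m$ over a $1/N$-interval. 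Since $F_\m'(x)=2\int\frac{\mu_\m(t)}{x-t}\,dt-V'(x)$ has only logarithmic singularities (note $V(x)=x\ln x+(\m-x)\ln(\m-x)$ is \emph{not} Lipschitz---another small slip in your write-up), that oscillation is $O(\ln N/N)$; summing over $O(N)$ intervals gives $\int_{S'}(F_\m-f)\,d(\tilde\mu_N-\mu_\m)\le O(\ln N/N)$, hence $O(N\ln N)$ after the $N^2$ prefactor. The controlling scale is $1/N$, not $N^{-p}$.
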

\begin{proof}
 Observe that for every $N$--tuple $0\le \ell_1<\ell_2<\dots<\ell_N\le M$ we have,
 using Stirling's formula for factorials,
\begin{equation} \label{eq_x29}
\Pp_{N}(\ell_1,\dots,\ell_N) =\dfrac{\exp\Bigl(2 N(N-1)\ln(N)+ N^2 I\bigl[ {\rm mes}[\ell_1,\dots,\ell_N] \bigr]
+O(N\ln(N))\Bigr)} {Z(N,\lfloor \m
 N\rfloor)},
\end{equation}
 where
$$
{\rm mes}[\ell_1,\dots,\ell_N]=\frac{1}{N}\sum_{i=1}^N \delta\left(\frac{\ell_i}{N}\right).
$$
Let us obtain a lower bound for the partition function $Z(N,\lfloor \m
 N\rfloor)$ in \eqref{eq_x29}. For that let $x_i$,
 $i=1,\dots,N$ be quantiles of $\mu_\m$ defined through
 $$
  \int_0^{x_i} \mu_\m(x) dx = \frac{i-1/2}{N},\quad 1\le i\le N.
 $$
 Since $\mu_\m(x)\le 1$, $x_{i+1}-x_i\ge 1/N$ and therefore, the numbers $\lfloor N
 x_i\rfloor$, $1\le i \le N$ are all distinct. We can then write
 \begin{equation} \label{eq_x54}
  Z(N,\lfloor \m
 N\rfloor)\ge
 \exp\Bigl(2N(N-1)\ln(N)+ N^2 I\bigl[ {\rm mes}[\lfloor N
 x_1\rfloor,\dots,\lfloor N
 x_N\rfloor] \bigr] +O(N\ln(N))\Bigr).
 \end{equation}
 We claim that \eqref{eq_x54} can be transformed into
 $$
  \exp\Bigl(2N(N-1)\ln(N)+ N^2 I\bigl[ \mu_\m  \bigr] +O(N \ln(N))\Bigr).
 $$
 Indeed, for the double--integral part of $I[\cdot]$ we write using the monotonicity of logarithm
 \begin{multline*}
    \sum_{i<j} \ln\left(\frac{\lfloor N
 x_j\rfloor}{N} - \frac{\lfloor N
 x_i\rfloor}{N}\right)\le
   \sum_{i<j} \ln\left(x_j - x_i +\frac{1}{N}\right)
 \\ \le
  N^2 \sum_{i<j} \int_{x_{j}}^{x_{j+1}} \int_{x_{i-1}}^{x_i} \ln\left(t - s +\frac{1}{N}\right) \mu_\m(t) \mu_\m(s)\, dt
  \,ds +O(N\ln(N))
\\= N^2 \iint_{s<t} \ln(t-s) \mu_\m(t)\mu_\m(s) \,dt\, ds +O(N\ln(N)),
\end{multline*}
and similarly for the opposite inequality. For the single--integral part of $I[\cdot]$ we have
$$
 N \sum_{i=1}^N V\left(\frac{\lfloor N
 x_i\rfloor}{N}\right)=N^2 \sum_{i=2}^{N-2} \int_{x_i}^{x_{i+1}} \biggl( V(t)+\left(t-\frac{\lfloor N
 x_i\rfloor}{N}\right)
 V'(\kappa(t)) \biggr)\mu_\m(t) dt + O(N),
$$
where $\kappa(t)$ is a point inside $[x_1,x_N]$. Observe that $|V'(\kappa(t))|=O(\ln(N))$, and thus
\begin{multline*}
  N \sum_{i=1}^N V\left(\frac{\lfloor N
 x_i\rfloor}{N}\right)
= N^2 \sum_{i=2}^{N-2} \int_{x_i}^{x_{i+1}} \biggl( V(t) +\left(x_{i+1}-x_i+\frac{1}{N}\right)
O(\ln(N)) \biggr) \mu_\m(t)dt+O(N)\\=
 N^2 \int_0^\m V(t) \mu_m(t) dt + N^2 \frac{O(\ln(N))}{N}\sum_{i=2}^{N-2}
 \left(x_{i+1}-x_i+\frac{1}{N}\right)=  N^2 \int_0^\m V(t) \mu_m(t) dt +O(N\ln(N)).
\end{multline*}

 The next step is to replace ${\rm mes}[\ell_1,\dots,\ell_N]$ in \eqref{eq_x29} by
 its convolution with the uniform measure on $[0,N^{-p}]$, that we denote $\widetilde{\rm mes}[\ell_1,\dots,\ell_N]$. For that take two
 independent random variables $u$, $\tilde u$ uniformly distributed on $[0,N^{-p}]$, where $p>2$ as above.
 Then
\begin{multline}
 I\bigl[\widetilde{\rm mes}[\ell_1,\dots,\ell_N] \bigr]=\E_{u,\tilde u} \int_0^\m\int_0^\m
 \ln|x-y+u-\tilde
 u| {\rm mes}[\ell_1,\dots,\ell_N](dx) {\rm mes}[\ell_1,\dots,\ell_N](dy)\\-\E_{u} \int_0^\m V(x+u){\rm mes}[\ell_1,\dots,\ell_N](dx)
 \\=I\bigl[ {\rm mes}[\ell_1,\dots,\ell_N] \bigr]+ \frac{1}{N}\E_{u,\tilde u} \int_0^\m \ln|u-\tilde
 u| {\rm mes}[\ell_1,\dots,\ell_N](dx)\\+\E_{u,\tilde u}\int\int_{x\ne y} \ln\left|1+\frac{u-\tilde u}{x-y}\right|{\rm mes}[\ell_1,\dots,\ell_N](dx) {\rm mes}[\ell_1,\dots,\ell_N](dy)
 \\+\E_{u} \int_0^\m (V(x+u)-V(x)){\rm mes}[\ell_1,\dots,\ell_N](dx)= I\bigl[ {\rm mes}[\ell_1,\dots,\ell_N]
 \bigr] +O\left(\frac{\ln(N)}{N}\right).
\end{multline}
We conclude that there exists a constant $C$ such that
$$
\Pp_{N}(\ell_1,\dots,\ell_N)\le \exp(C N\ln(N)) \exp\biggl(N^2 \bigl(I \bigl[\widetilde{\rm
mes}[\ell_1,\dots,\ell_N]\bigr]-I[\mu_\m]\bigr)\biggr).
$$
Further, completing the square we get
\begin{multline} \label{eq_x30}
  I\bigl[\widetilde{\rm mes}[\ell_1,\dots,\ell_N]\bigr]-I[\mu_\m]\\=-\D(\mu_\m,\widetilde{\rm
 mes}[\ell_1,\dots,\ell_N])+\int_0^\m  F_\m(x) (\widetilde{\rm
 mes}[\ell_1,\dots,\ell_N]-\mu_\m)(dx)\\
 =-\D(\mu_\m,\widetilde{\rm
 mes}[\ell_1,\dots,\ell_N])+\int_0^\m ( F_\m(x)-f) (\widetilde{\rm
 mes}[\ell_1,\dots,\ell_N]-\mu_\m)(dx),
\end{multline}
with $F_\m(x)$ and $f$ defined in \eqref{eq_effective_pot} and directly below that
formula, respectively. Let us analyze the last term in \eqref{eq_x30}. On the set
$S=\{x\mid 0<\mu_\m(x)<1\}$ the function $F_\m(x)-f$ vanishes. On the complement of
the support of $\mu_\m(x)$ we have $F_\m(x)-f <0$ and the corresponding part of the
last integral in \eqref{eq_x30} is negative. Finally, on the set $S'=\{x\mid
\mu_\m(x)=1\}$, we have $F_\m(x)-f>0$. Since all the points $\ell_i/N$ are at least
$1/N$ apart, for any $a$ such that
 $[a,a+1/N]\subset S'$ we have
$$
 (\widetilde{\rm
 mes}[\ell_1,\dots,\ell_N]-\mu_\m)\bigl([a,a+1/N]\bigr)\le 0.
$$
Thus,
\begin{multline} \label{eq_x46}
\int_a^{a+1/N} ( F_\m(x)-f) (\widetilde{\rm
 mes}[\ell_1,\dots,\ell_N]-\mu_\m)(dx)\\
 \le
 \int_a^{a+1/N} ( F_\m(x)-F_\m(a)) (\widetilde{\rm
 mes}[\ell_1,\dots,\ell_N]-\mu_\m)(dx)
 \le \frac{2}{N} \sup_{\begin{smallmatrix} x,y\in S'\\ |x-y|\le 1/N \end{smallmatrix}}
 |F_\m(x)-F_\m(y)|.
\end{multline}
Observe that the last $\sup$ is at most $O(\ln(N)/N)$. Now partition $S'$ into segments of the form
$[a,a+1/N]$ and note that for boundary segments (which have to be shorter than $1/N$) the bound for
the integral of the form \eqref{eq_x46} is still valid as $F_\m(x)$ is equal to $f$ in one of the
end-points of such segment. Summing the bounds over all segments, we get a bound on the integral
over $S'$. It follows that as $N\to\infty$
$$
 \int_0^\m ( F_\m(x)-f) (\widetilde{\rm
 mes}[\ell_1,\dots,\ell_N]-\mu_\m)(dx)\le O\left(\frac{\ln(N)}{N}\right).
$$
Therefore, we finally obtain
$$
\Pp_{N}(\ell_1,\dots,\ell_N)\le \exp(C' N\ln(N)) \exp\biggl(-N^2 \D(\widetilde{\rm
mes}[\ell_1,\dots,\ell_N],\mu_\m)\biggr).
$$
Since the total number of $N$--tuples $0\le \ell_1<\dots\ell_N\le \lfloor \m N\rfloor$ is
${{\lfloor \m N\rfloor+1}\choose N}=\exp(O(N\ln(N)))$, the proof is complete.
\end{proof}

\begin{corollary} \label{Corrollary_tail_bound}
For a compactly supported Lipschitz function $g$   define
 $$
  \| g\|_{1/2} = \left(\int_{-\infty}^{\infty} |s| \left|\int_{-\infty}^{\infty} e^{\i s x} g(x)dx
  \right|^2 ds\right)^{1/2}, \quad \| g\|_{\rm Lip}=\sup_{x\ne y}
  \left|\frac{g(x)-g(y)}{x-y}\right|.
 $$
  Fix any $p>2$. Then there exists $C\in\mathbb R$ such that for all $\gamma>0$, all $N\ge 1$ and all $g$ we have
\begin{multline} \label{eq_x38}
 \Pp_{N}\left( \left|\int_0^\m g(x) \mu_N(dx)-\int_0^\m g(x) \mu_\m(dx)\right|
 \ge \gamma \| g \|_{1/2}+ \frac{\|g\|_{\rm Lip}}{N^p} \right)\\ \le \exp\left( C N\ln(N)-\frac{\gamma^2
 N^2}{2}\right).
\end{multline}
\end{corollary}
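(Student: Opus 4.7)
The strategy is to reduce the claim to the large-deviation estimate on $\Dr(\tilde\mu_N,\mu_\m)$ from Proposition \ref{Prop_pseudodistance_bound} via the decomposition $\mu_N-\mu_\m=(\mu_N-\tilde\mu_N)+(\tilde\mu_N-\mu_\m)$. The smoothing discrepancy $\mu_N-\tilde\mu_N$ will be controlled deterministically using the Lipschitz hypothesis, and the remainder $\int g\,d(\tilde\mu_N-\mu_\m)$ will be related to $\Dr$ via the Fourier formula \eqref{eq_quadratic_alternative}.

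For the smoothing term, since $\tilde\mu_N$ is the convolution of $\mu_N$ with the uniform law on $[0,N^{-p}]$, one has
$$
\int g\,d(\mu_N-\tilde\mu_N)=-N^p\int_0^{N^{-p}}\!\!\int\bigl(g(x+u)-g(x)\bigr)\mu_N(dx)\,du,
$$
whose absolute value is deterministically bounded by $\|g\|_{\rm Lip}/N^p$ via $|g(x+u)-g(x)|\le \|g\|_{\rm Lip}\,u$. For the main piece, set $\hat h(t)=\int e^{\i tx}h(x)\,dx$ and $\nu=\tilde\mu_N-\mu_\m$, and apply Plancherel to obtain
$$
\int g\,d\nu=\frac{1}{2\pi}\int_{-\infty}^{\infty}\hat g(t)\,\overline{\hat\nu(t)}\,dt.
$$
Splitting the integrand as $\sqrt{|t|}\,\hat g(t)\cdot |t|^{-1/2}\,\overline{\hat\nu(t)}$ and invoking Cauchy-Schwarz, together with the fact that $|\hat\nu(t)|^2$ is even (which converts the full-line integral to the half-line expression in \eqref{eq_quadratic_alternative} up to a factor of $2$), yields
$$
\left|\int g\,d\nu\right|\le \frac{\sqrt{2}}{2\pi}\,\|g\|_{1/2}\,\Dr(\tilde\mu_N,\mu_\m).
$$

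Combining these two bounds, the event appearing on the left-hand side of \eqref{eq_x38} is contained in $\{\Dr(\tilde\mu_N,\mu_\m)\ge \pi\sqrt{2}\,\gamma\}$, and Proposition \ref{Prop_pseudodistance_bound} applied with $\gamma$ replaced by $\pi\sqrt{2}\,\gamma$ produces an upper bound of $\exp(C'N\ln N-2\pi^2\gamma^2 N^2)$. Since $2\pi^2\gg 1/2$, this dominates the claimed bound after adjusting the constant $C$. There is no serious obstacle: all estimates are standard, the smoothing parameter $p>2$ plays no essential role beyond bounding one term, and one only needs to check that $\|g\|_{1/2}$ is meaningful (if $\|g\|_{1/2}=\infty$ the claim is vacuous, and otherwise Plancherel applies in the usual way for compactly supported Lipschitz $g$ by standard approximation).
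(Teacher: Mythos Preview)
Your proposal is correct and follows essentially the same route as the paper: split $\mu_N-\mu_\m$ into the smoothing error (controlled by $\|g\|_{\rm Lip}/N^p$) and $\tilde\mu_N-\mu_\m$, then use Plancherel and Cauchy--Schwarz to bound the latter by a constant times $\|g\|_{1/2}\,\Dr(\tilde\mu_N,\mu_\m)$ and invoke Proposition~\ref{Prop_pseudodistance_bound}. The only difference is cosmetic: you track the $1/(2\pi)$ from Plancherel explicitly, yielding a slightly sharper constant than the paper records, but the argument is the same.
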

\begin{proof} We have
\begin{multline*}
 \left|\int_0^\m g(x) \mu_N(dx)-\int_0^\m g(x) \mu_\m(dx)\right|\\ \le
 \left|\int_0^\m g(x) \mu_N(dx)-\int_0^\m g(x) \tilde \mu_N(dx)\right|
 +\left|\int_0^\m g(x) \tilde \mu_N(dx)-\int_0^\m g(x) \mu_\m(dx)\right|.
\end{multline*}
The first term is bounded by $ \frac{\|g\|_{\rm Lip}}{N^p}$ and corresponds to such term in
\eqref{eq_x38}. Therefore, it remains to work out the second term. Since scalar products are
preserved under Fourier transform, with the notation
$$
 \hat \phi (s)= \int_{-\infty}^{\infty} e^{\i s x} \phi(x)dx
$$
we write the Plancherel formula (note that $g(x)$ and $(\tilde \mu_N(x)-\mu_\m(x))$ are bounded and
belong to $L^1[0,\m]\cap L^2 [0,\m]$) and use the Cauchy--Schwartz inequality
\begin{multline*}
\left|\int_0^\m g(x)(\tilde \mu_N(x)-\mu_\m(x)) dx\right|=\left|\int_{-\infty}^{\infty}
\left(\sqrt{|t|} \hat g(t)\right) \frac{\hat {\tilde {\mu}}_N(t)-\hat \mu_\m(t)}{\sqrt{|t|}}
dt\right|
\\ \le \| g\|_{1/2}
\sqrt{\int_{-\infty}^{\infty} \frac{|\hat{\tilde \mu}_N(t)-\hat \mu_\m(t)|^2}{|t|}dt}= \sqrt{2} \|
g\|_{1/2} \Dr(\tilde \mu_N,\mu_\m).
\end{multline*}
It remains to use Proposition \ref{Prop_pseudodistance_bound}.
\end{proof}

\begin{proof}[Proof of Lemma \ref{Lemma_a_priory_1}]
Choose a small $\eta>0$ and take an infinitely differentiable function $h(x)$, whose
support is inside $[-\eta,\m+\eta]$ and such that $h(x)=1$ for $0\le x\le \m$.

 Since both $\mu_N$ and $\mu_\m$ are supported
on $[0,\m]$, we can replace $1/(z-x)$ in the definition of $G_N(z)$ and $G_\m(z)$ by
a nice smooth compactly supported function $h(x)/(z-x)$ without changing $G_N(z)$
and $G_\m(z)$. Now choose $\gamma= q\cdot N^{-1/2+\eps}$, $q>0$, in Corollary
\ref{Corrollary_tail_bound} and note that for this choice the right--hand side of
\eqref{eq_x38} still exponentially decays as $N\to\infty$. This readily implies the
bound of Lemma \ref{Lemma_a_priory_1}.
\end{proof}

\subsection{Self--improving estimates and the proof of Theorem \ref{Theorem_main_1}}

\label{Section_estimates}

The aim of this section is to finish the proof of Theorem \ref{Theorem_main_1}. The key part is
establishing the following statement.

\begin{proposition} \label{Prop_uniform_bound}
 For any $k\ge 1$ and any $u_1,\dots,u_k\in \mathbb C\setminus [0,\m]$, the joint moments
$$
 \E_{\P} \prod_{a=1}^{k} \left| \Delta G_N(u_a) \right|
$$
are uniformly (in $N$ and in $u_1,\dots,u_k$ in compact subsets of $\mathbb C\setminus [0,\m]$)
bounded.
\end{proposition}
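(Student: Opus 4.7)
The plan is to bootstrap the weak a priori bound of Lemma \ref{Lemma_a_priory_1} using the Nekrasov loop equation, gaining an extra factor of $N^{-1}$ in the relevant exponent at each iteration until the bound becomes $O(1)$. The key observation is that for $z$ in a compact set $A$ bounded away from $[0,\m]$, the ratio $\Delta G_N(z)/N = G_N(z)-G_\m(z)$ is deterministically bounded by $2/\mathrm{dist}(A,[0,\m])$, so the factor $\exp(|\Delta G_N(z)/N|)$ appearing in \eqref{eq_crucial_estimate_1} is uniformly $O(1)$; hence the only source of largeness in the remainder term of \eqref{eq_x19} is the variance of $\Delta G_N$, which is precisely what we are trying to bound.

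First I would reduce the question to bounding joint cumulants. By the classical moment-cumulant formula, together with Cauchy--Schwarz (using $\overline{\Delta G_N(u)} = \Delta G_N(\bar u)$ to express $\E|\Delta G_N(u)|^{2m}$ as a joint moment at complex-conjugate arguments), it suffices to bound each joint cumulant $\kappa_n(u_1,\dots,u_n)$ uniformly in $N$ and in the $u_i$ on a fixed compact set away from $[0,\m]$. Lemma \ref{Lemma_cumulant} identifies $\kappa_n(u,v_1,\dots,v_{n-1})$ with the mixed $(n-1)$-th $\t$-derivative at $\t=0$ of $\E_{\Pp_N^{\t,\v}}[\Delta G_N(u;\t,\v)]$. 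Hence it is enough to produce a bound on $\E_{\Pp_N^{\t,\v}}[\Delta G_N(u)]$ that is uniform in $(\t,\v)$ on a small polydisc around $\t=0$ (and in $u,\v$ on a fixed compact set); Cauchy's formula then converts such a bound into bounds on all cumulants.

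Next comes the bootstrap. Suppose inductively that $\E_{\Pp_N^{\t,\v}}|\Delta G_N(z)|^2 \le C N^{\alpha}$ uniformly on the chosen polydisc and in $z \in A$, with base case $\alpha = 1+2\eps$ provided by Lemma \ref{Lemma_a_priory_1}. Using \eqref{eq_exponent_bound} with $n=2$ and the uniform boundedness of $\exp(|\Delta G_N/N|)$, the error incurred in replacing $\E[\exp(\pm\Delta G_N(z)/N)-1]$ by $\pm\E\Delta G_N(z)/N$ in \eqref{eq_x19} is $O(N^{\alpha-2})$. Inserting this control into the contour-integration argument of \eqref{eq_x20}--\eqref{eq_x21}, and using that $\sqrt{(u-a_-)(u-a_+)}$ is bounded away from zero for $u$ outside $\gamma_{[0,\m]}$, one obtains $|\E_{\Pp_N^{\t,\v}}\Delta G_N(u)| \le C'(1+N^{\alpha-1})$. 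Applying Cauchy's formula once in $\t$ transfers this to a bound of the same order on the second cumulant, and recombining via the moment-cumulant formula improves the variance exponent from $\alpha$ to $\max(0,2(\alpha-1))$. Starting from $\alpha=1+2\eps$ with $\eps$ small, two iterations already collapse the exponent to $0$, giving $\E|\Delta G_N(z)|^2 = O(1)$; the same scheme iterated with more Cauchy derivatives in $\t$ yields uniform bounds on all joint cumulants, hence on all joint moments.

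The main obstacle is to carry out the whole bootstrap uniformly in $(\t,\v)$ on a small polydisc. This requires extending Lemma \ref{Lemma_a_priory_1} and the potential-theoretic analysis of Section \ref{Section_weak_estimate_1} to the deformed complex-valued measures $\Pp_N^{\t,\v}$, with constants that are uniform on the polydisc. In particular one has to verify that the partition function $Z(N,M;\t,\v)$ does not vanish and has modulus comparable (up to a subexponential factor) to $|Z(N,M;0,\v)|$, so that the expectations $\E_{\Pp_N^{\t,\v}}$ are genuinely controlled by absolute expectations against the undeformed weight times a uniformly bounded correction. Once this analytic stability of the deformed ensemble is established, the Cauchy integration in $\t$ and the iterative improvement described above proceed routinely.
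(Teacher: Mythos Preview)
Your proposal correctly identifies its own weak point, and that weak point is fatal as stated. Working uniformly on a polydisc in $\t$ requires controlling expectations under the genuinely complex signed measures $\Pp_N^{\t,\v}$, and the large-deviation machinery of Section~\ref{Section_weak_estimate_1} (which is what produces Lemma~\ref{Lemma_a_priory_1}) has no obvious extension to such measures. For $|t_a|$ of order a fixed constant the deformation weight $\prod_i\prod_a(1+t_a/(v_a-\ell_i/N))$ can have modulus anywhere in an interval of the form $[(1-c)^N,(1+c)^N]$, so there is no reason for $Z(N,M;\t,\v)$ to be comparable to $Z(N,M;0,\v)$ up to subexponential factors; your stated ``analytic stability'' hypothesis is therefore not something one can simply verify, and without it the Cauchy-in-$\t$ step does not go through.

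The paper avoids this obstacle entirely by reversing the order of operations: it first differentiates the full loop-equation identity (including the entire Taylor series of $\exp(\pm\Delta G_N/N)$, not just the linear term) with respect to $t_1,\dots,t_n$ at $\t=0$. By \eqref{eq_moment_via_derivative} this converts every deformed expectation into a joint cumulant under the \emph{undeformed} positive measure $\P$, yielding an identity of the form \eqref{eq_x28}/\eqref{eq_x35} that relates $M_c(\Delta G_N(v_0),\dots,\Delta G_N(v_n))$ to a finite sum of cumulants $M_c((\Delta G_N(v_0))^h,\Delta G_N(v_1),\dots,\Delta G_N(v_n))$ plus a controlled remainder. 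All quantities now live under $\P$, so Lemma~\ref{Lemma_a_priory_1} applies directly, and the self-improving iteration (each pass shaving $N^{1/2-\eps}$ off the exponent) runs without ever touching the deformed measure for nonzero $\t$. In short: differentiate first, estimate second; do not try to estimate uniformly in $\t$ and then differentiate.
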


The idea for getting such estimates is to start from Lemma \ref{Lemma_a_priory_1} and then
recursively feed the existing estimates into the argument of Section \ref{Section_heuristics_1} to
obtain the stronger ones. This is very similar to the argument of \cite[Section 4.3]{BoG1}.

\begin{proof}[Proof of Proposition \ref{Prop_uniform_bound}]

Fix $n=0,1,\dots$ and $\v=(v_1,\dots,v_n)$. Arguing as in Lemma \ref{Lemma_cumulant}, we prove
 that for any bounded random variable $\xi$ we have
\begin{equation}
\label{eq_moment_via_derivative}
 \frac{\partial^n}{\partial t_1\cdots \partial t_n}\left( \E_{\Pp_{N}^{\t;\v}} \xi\right) \biggr|_{t_1=\dots=t_n=0}=
 M_c\bigl(\xi, NG_N(v_1),\dots, N G_N(v_n) \bigr),
\end{equation}
where $M_c$ is the joint cumulant. Since the cumulants are unchanged under shifts, we can also
replace $N G_N(v_a)$ by $\Delta G_N(v_a)$ in \eqref{eq_moment_via_derivative}.

We aim to differentiate the formulas of Section \ref{Section_heuristics_1} with respect to $t_a$ at
$t_a=0$. For that we need to examine the remainders. The remainder $o(N^{-1})$ in \eqref{eq_x19}
comes from three sources: from the expansions \eqref{eq_first_term_1}, \eqref{eq_second_term_1};
 from the $N^{-1}$ expansion of $\phi^\pm_N(z)$; from the replacement of $\frac{\partial}{\partial z} G_N(z)$ by $\frac{\partial}{\partial z} G_\m(z)$.
 The remainder can then be written as a sum corresponding to these three sources:
\begin{equation}
\label{eq_remainder_1} \prod_{a=1}^n (v_a-z)(t_a+v_a-z)\left[ \frac{1}{N^2} \E_{\Pp_{N}^{\t;\v}}
\xi_N(z) + \frac{1}{N^2} {\mathfrak c}(z;\t,\v) \E_{\Pp_{N}^{\t;\v}}
 \xi_N'(z)+
 \frac{1}{N^2}  \E_{\Pp_{N}^{\t;\v}}  \left( \xi_N''(z)  \frac{\partial}{\partial z} \Delta G_N(z) \right)\right]
\end{equation}
 where $\xi_N(z)$, $\xi_N'(z)$, $\xi_N''(z)$ are random variables, which are bounded uniformly in $N$, in $z$
 belonging to compact subsets of $\mathbb C\setminus [0,\m]$, and which do not depend on $\t,\v$. The function ${\mathfrak c}(z;\t,\v)$
 arises from the large $N$ expansions of \eqref{eq_phin_pl_1}, \eqref{eq_phin_min_1}, and it is
 uniformly bounded in $z$ belonging to compact subsets of $\mathbb C\setminus\{v_1,\dots, v_n; t_1+v_1;\dots
 t_n+v_n\}$. The dependence on $z$ is holomorphic in all the terms.

 Further, when we pass from \eqref{eq_x19} to \eqref{eq_x20}, we need to expand $\E_{\Pp_{N}} \left( \exp\left( \frac{1}{N} \Delta G_N(z) \right) -1
 \right)$. Since $\frac{1}{N} \Delta G_N(z)$ is a bounded random variable, we can use the Taylor expansion to get
\begin{multline}
\label{eq_x23}
 \E_{\Pp_{N}^{\t;\v}} \left( \exp\left( \frac{1}{N} \Delta G_N(z) \right) -1 \right)=\E_{\Pp_{N}^{\t,\v}}\left(\frac{1}{N} \Delta
 G_N(z)\right)+\frac{1}{2}\,\E_{\Pp_{N}^{\t,\v}}\left(\frac{1}{N} \Delta G_N(z)\right)^2\\+\frac{1}{6}\,\E_{\Pp_{N}^{\t,\v}}\left(\frac{1}{N} \Delta
 G_N(z)\right)^3+\dots
\end{multline}
Thus, after reconstructing the remainders, \eqref{eq_x21} is replaced by

\begin{multline} \label{eq_x26}
 \frac{1}{4\pi\i} \oint_{ \gamma_{[0,\m]}}
 \frac{1}{u-z}
 \cdot\Biggl(  Q_\m(z) \E_{\Pp_{N}^{\t,\v}} \left(  \Delta G_N(z)\right)
 +\frac{R_\m(z)}{2N} \E_{\Pp_{N}^{\t,\v}} \left(  \Delta G_N(z)^2\right)+\frac{Q_\m(z)}{6 N^2} \E_{\Pp_{N}^{\t,\v}} \left(  \Delta
 G_N(z)^3
 \right) +\dots
\\ + \frac{1}{N} \left[\E_{\Pp_{N}^{\t;\v}} \xi_N(z) +  {\mathfrak c}(z;\t,\v) \E_{\Pp_{N}^{\t;\v}}
 \xi_N'(z)+
  \E_{\Pp_{N}^{\t;\v}}  \left( \xi_N''(z)  \frac{\partial}{\partial z} \Delta G_N(z) \right)) \right]
 \Biggr) dz \\ =- \frac{1}{2\pi\i} \oint_{\gamma_{[0,\m]}} \frac{dz}{2(u-z)}
  \cdot\Biggl(\psi_N^-(z) e^{-G_\m(z)} +\psi_N^+(z) e^{
G_\m(z)} + \frac{Q_\m(z)}{2}\frac{\partial}{\partial z}G_\m(z)  \biggr).
\end{multline}
We now differentiate \eqref{eq_x26} with respect to all $t_a$ at $t_a=0$. For all the terms
involving random variables we use \eqref{eq_moment_via_derivative} to rewrite the result as a
cumulant. Note that when we differentiate ${\mathfrak c}(z;\t,\v) \E_{\Pp_{N}^{\t;\v}}  \xi_N''(z)
$, we need to apply the Leibnitz rule and therefore to differentiate each of the two factors and
get a sum. We also compute the integral of $\frac{1}{u-z}
  Q_\m(z) \E_{\Pp_{N}^{\t,\v}} \left(  \Delta G_N(z)\right)$  as minus the residue at $z=u$ to get
\begin{multline} \label{eq_x28}
\frac{Q_\m(u)}{2} M_c\bigl( \Delta G_N(u), \Delta G_N(v_1),\dots, \Delta G_N(v_n)\bigr)+
 \\ \frac{1}{4\pi\i} \oint_{ \gamma_{[0,\m]}}
 \frac{1}{u-z} \Biggl(\frac{R_\m(z)}{4N} M_c\bigl( (\Delta
G_N(z))^2, \Delta G_N(v_1),\dots, \Delta G_N(v_n)\bigr)
 \\+\frac{Q_\m(z)}{12 N^2} M_c\bigl( (\Delta
G_N(z))^3, \Delta G_N(v_1),\dots, \Delta G_N(v_n)\bigr) +\dots \Biggr) dz
\\
+\frac{1}{N} \cdot \frac{1}{4\pi\i} \oint_{ \gamma_{[0,\m]}}
 \frac{1}{u-z}  M_c\Bigl(\xi_N(z)+\xi_N''(z)  \frac{\partial}{\partial z} \Delta G_N(z) , \Delta G_N(v_1),\dots, \Delta
 G_N(v_n)\Bigr)dz
\\
+ \frac{1}{N} \cdot \frac{1}{4\pi\i} \sum_{A\subset\{1,\dots,n\}} \oint_{ \gamma_{[0,\m]}}
 \frac{1}{u-z} {\mathfrak c}_A(z;\t,\v) M_c
 (\xi_N'(z), \Delta G_N(v_a), a\in A) dz
\\=
 \frac{\partial^n}{\partial t_1\cdots \partial t_n}\Biggl[ \frac{1}{4\pi\i} \oint_{ \gamma_{[0,\m]}} \frac{dz}{(u-z)}
  \cdot\Biggl(\psi_N^-(z) e^{- G_\m(z)} +\psi_N^+(z) e^{
G_\m(z)} +\frac{Q_\m(z)}{2}\cdot \frac{\partial}{\partial z}G_\m(z)\Biggr) \Biggr]_{t_a=0,\, 1\le
a\le n},
\end{multline}
where the fifth line is the result of the differentiation of ${\mathfrak c}(z;\t,\v)
\E_{\Pp_{N}^{\t;\v}}  \xi_N''(z) $, i.e.\ ${\mathfrak c}_A(z;\t,\v)$ is the mixed derivative of
${\mathfrak c}(z;\t,\v)$ with respect to $t_a$, $a\in \{1,\dots,n\}\setminus A$. Let us analyze the
resulting expression \eqref{eq_x28} as $N\to\infty$. At this moment all we need from the
right--hand side of \eqref{eq_x28} is that it is $O(1)$ as $N\to\infty$.

Consider the infinite sum over growing powers of $\Delta G_N(z)$ in \eqref{eq_x28}. All the terms
starting from the $H$th one can be combined into
\begin{equation}
\label{eq_x34}
 M_c\left( \sum_{h=H}^\infty (Q/R)_h \frac{(\Delta G_N(z))^h}{2 h! N^{h-1}}, \Delta G_N(v_1),\dots, \Delta G_N(v_n)
 \right),
\end{equation}
where $(Q/R)_h$ is either $Q_\m(z)$ or $R_\m(z)$ depending on the parity of $h$. Expanding the
cumulants in terms of centered moments and using Holder's inequality, we observe that if $\zeta$ is
a bounded random variable, then for any fixed $k\ge 1$ there is a constant $C_k$ such that
\begin{equation}\label{eq_cum_mom}
\left| M_c(\zeta, \Delta G_N(v_1),\ldots, \Delta G_N(v_k))\right|\le C_k \, \sqrt{\E|\zeta|^2
}\prod_{i=1}^k \bigl[ \E |\Delta G_N(v_i)- \E[\Delta G_N(v_i)]|^{2k}\bigr]^{1/2k}
\end{equation}
Observe that the random variables $G_N(v_a)$ are uniformly bounded,  same is true about $(Q/R)_h$
(for $z$ and $v_a$ in compact subsets of $\mathbb C\setminus [0,\m]$), and
$$\zeta= \sum_{h=H}^\infty (Q/R)_h \frac{(\Delta G_N(z))^h}{2 h! N^{h-1}}$$
satisfies (uniformly over $z$ in compact subsets of $\mathbb C\setminus [0,\m]$)  the point--wise
bound
$$|\zeta|\le {\rm const} \cdot N \cdot \left|\frac{\Delta G_N(z)}{N}\right|^H \exp\left |\frac{\Delta G_N(z)}{N}\right|.$$
Thus, we can bound \eqref{eq_x34} by
\begin{multline} \label{eq_x39}
 {\rm const} \cdot  N^{n+1} \sup_z \sqrt{\E_{\P} \Biggl( \left|\frac{\Delta G_N(z)}{N}\right|^{2H} \exp\left| \frac{2 \Delta
 G_N(z)}{N}\right|
 \Biggr)}
 \le
   {\rm const}  \cdot N^{n+1}   \sup_z \sqrt{\E_{\P} \Biggl( \left|\frac{\Delta G_N(z)}{N}\right|^{2H}
 \Biggr)},
\end{multline}
where in the last inequality we used the uniform boundness of $\frac{\Delta G_N(u)}{N}$, and
$\sup_z$ is to be taken over any set which includes both $\gamma_{[0,\m]}$ and all points $v_a$,
$a=1,\dots,n$.

 We can use
Lemma \ref{Lemma_a_priory_1} to bound the expectation in \eqref{eq_x39} and conclude
that if $H>3n$, then \eqref{eq_x34} is $o(1)$. Therefore, renaming $u$ into $v_0$,
\eqref{eq_x28} is rewritten as
\begin{multline} \label{eq_x35}
\frac{Q_\m(v_0)}{2} M_c\bigl( \Delta G_N(v_0), \Delta G_N(v_1),\dots, \Delta G_N(v_n)\bigr) +
 \\ + \frac{1}{4\pi\i} \oint_{ \gamma_{[0,\m]}}
 \frac{1}{v_0-z} \left( \sum_{h=2}^{3n}  \frac{1}{N^{h-1}} \cdot \frac{(Q/R)_h}{2 h! } M_c\bigl( (
\Delta G_N(z))^h, \Delta G_N(v_1),\dots, \Delta G_N(v_n)\bigr) \right)dz \\ +
\frac{1}{N} \cdot
\frac{1}{4\pi\i} \oint_{ \gamma_{[0,\m]}}
 \frac{1}{v_0-z}  M_c\Bigl(\xi_N(z)+\xi_N''(z)  \frac{\partial}{\partial z} \Delta G_N(z) , \Delta G_N(v_1),\dots, \Delta
 G_N(v_n)\Bigr)dz
\\
+ \frac{1}{N} \cdot \frac{1}{4\pi\i} \sum_{A\subset\{1,\dots,n\}} \oint_{ \gamma_{[0,\m]}}
 \frac{1}{v_0-z} {\mathfrak c}_A(z;\t,\v) M_c
 (\xi_N'(z), \Delta G_N(v_a), a\in A) dz
 = O(1),
\end{multline}
where $(Q/R)_h$ is either $Q_\m(z)$ or $R_\m(z)$ depending on the parity of $h$, and we replaced
$u$ by $v_0$.

At this moment we claim that \eqref{eq_x35} for all $n=0,1,2\dots$ together with the
bound of Lemma \ref{Lemma_a_priory_1} implies Proposition \ref{Prop_uniform_bound}.
Indeed, take any two \emph{disjoint} compact sets $\mathcal U,\mathcal V \subset
\mathbb C\setminus[0,\m]$, which are invariant under conjugation, and suppose that
$\gamma_{[0,\m]}\subset\mathcal U$ . Expanding cumulants in terms of centered
moments and using Lemma \ref{Lemma_a_priory_1}, we obtain
\begin{multline} \label{eq_bound_x1}
\sup_{v_0,\dots,v_n\in \mathcal V}
\frac{1}{N} \cdot \frac{1}{4\pi\i} \oint_{
\gamma_{[0,\m]}}
 \frac{1}{v_0-z}  M_c\Bigl(\xi_N(z)+\xi_N''(z)  \frac{\partial}{\partial z} \Delta G_N(z) , \Delta G_N(v_1),\dots, \Delta
 G_N(v_n)\Bigr)dz\\ =O\left(N^{(n+1)(1/2+\eps)-1}\right),
\end{multline}
\begin{multline} \label{eq_bound_x2}
\sup_{v_0,\dots,v_n\in \mathcal V}
 \frac{1}{N} \cdot \frac{1}{4\pi\i} \sum_{A\subset\{1,\dots,n\}} \oint_{
\gamma_{[0,\m]}}
 \frac{1}{v_0-z} {\mathfrak c}_A(z;\t,\v) M_c
 (\xi_N'(z), \Delta G_N(v_a), a\in A) dz
 \\=O\left(N^{n(1/2+\eps)-1}\right),
\end{multline}
\begin{multline} \label{eq_bound_x3}
 \sup_{v_0,\dots,v_n\in \mathcal V}
 \frac{1}{4\pi\i} \oint_{ \gamma_{[0,\m]}}
 \frac{1}{v_0-z} \left( \frac{1}{N^{h-1}} \cdot \frac{(Q/R)_h}{2 h! } M_c\bigl( (
\Delta G_N(z))^h, \Delta G_N(v_1),\dots, \Delta G_N(v_n)\bigr) \right)dz
\\=O\left(N^{(h+n)(1/2+\eps)+1-h}\right).
\end{multline}
One might be cautious about \eqref{eq_bound_x1}, as it involves the derivative of
$\Delta G_N(z)$ (instead of $\Delta G_N(z)$ itself as in Lemma
\ref{Lemma_a_priory_1}), yet for analytic functions a uniform bound for a function
implies a uniform bound for its derivative, therefore, the bound is valid.

 We plug
 \eqref{eq_bound_x1}, \eqref{eq_bound_x2}, \eqref{eq_bound_x3} into \eqref{eq_x35} to get
\begin{multline}
\label{eq_iterative_bound}
 \sup_{v_0,\dots,v_n\in \mathcal V} \frac{Q_\m(v_0)}{2} M_c\bigl( \Delta G_N(v_0), \Delta G_N(v_1),\dots, \Delta
G_N(v_n)\bigr) \\ + \left[\sum_{h=2}^{3n}  O\left(N^{(h+n)(1/2+\eps)+1-h}\right)
\right]+ O\left(N^{(n+1)(1/2+\eps)-1}\right)+O\left(N^{n(1/2+\eps)-1}\right)
 = O(1),
\end{multline}
which implies that for each $n=0,1,2\dots$
\begin{equation}
\label{eq_x36}
 \sup_{v_0,v_1,\dots,v_{n}\in \mathcal V}\left|M_c\left(\Delta G_N(v_0),\dots, \Delta G_N(v_{n-1})\right)\right| = O(
 N^{(n+1)(1/2+\eps)-(1/2-\eps)})+O(1),
\end{equation}
where the remainder depends on the choice of the compact set $\mathcal V\subset
\mathbb C \setminus [0,\m]$. As centered moments are linear combinations of products
of joint cumulants, we deduce from \eqref{eq_x36} that for all $k=1,2,\dots$,
\begin{equation}
\label{eq_x40} \sup_{v_1,\dots,v_k\in \mathcal V} \E\left[\prod_{a=1}^k (\Delta
G_N(v_a)-\E[\Delta G_N(v_a)])\right]=O( N^{k(1/2+\eps)-(1/2-\eps)})+O(1).
\end{equation}
Combining $n=0$ version of \eqref{eq_x36} with \eqref{eq_x40} we finally conclude
that
\begin{equation}
\label{eq_x41} \sup_{v_1,\dots,v_k\in \mathcal V} \E  \left[ \prod_{a=1}^k \Delta
G_N(v_a) \right]=O\left( N^{k(1/2+\eps)-(1/2-\eps)}\right)+O(1), \quad k=1,2,\dots.
\end{equation}
If $k$ is even, then we can choose $v_1=\dots=v_{k/2}=v$ and
$v_{k/2+1}=\dots=v_k=\bar v$ in \eqref{eq_x41} and get
\begin{equation}
\label{eq_x41_5} \sup_{v\in \mathcal V} \E  \left[ |\Delta G_N(v)|^{k}
\right]=O\left( N^{k(1/2+\eps)-(1/2-\eps)}\right)+O(1), \quad k=2,4,6,\dots.
\end{equation}
The estimate for odd $k$ is reduced to the even $k$ case with the use of Jensen's
inequality in the form
$$
\E( \xi^k)\le \left(\E\left(\xi^{k+1}\right)\right)^{\frac{k}{k+1}}, \quad k>0,
$$
which leads to the following bound for all $k$:
\begin{equation}
\label{eq_x41_6} \sup_{v\in \mathcal V} \E  \left[ |\Delta G_N(v)|^{k}
\right]=O\left( N^{k(1/2+\eps)-\frac{k}{k+1}(1/2-\eps)}\right)+O(1), \quad
k=1,2,3,\dots.
\end{equation}
We finally use the Holder inequality to get
\begin{multline}
\label{eq_x43} \sup_{v_1,\dots,v_k\in \mathcal V} \E\left[ \prod_{a=1}^k |\Delta
G_N(v_a)| \right]\le  \sup_{v\in \mathcal V} \E\left[ |\Delta G_N(v)|^k
\right]=O\left( N^{k(1/2+\eps)-\frac{k}{k+1}(1/2-\eps)}\right) +O(1) \\=O\left(
N^{k(1/2+\eps)-1/6}\right) +O(1) , \quad k=1,2,3,\dots,
\end{multline}
where we silently assumed that $\eps>0$ is small enough for the last equality to
hold.

At this moment we can iterate the argument. Expanding cumulants in terms of moments,
we deduce from \eqref{eq_x43} the following three bounds:
\begin{multline} \label{eq_bound_x1_2}
\sup_{v_0,\dots,v_n\in \mathcal V} \frac{1}{N} \cdot \frac{1}{4\pi\i} \oint_{
\gamma_{[0,\m]}}
 \frac{1}{v_0-z}  M_c\Bigl(\xi_N(z)+\xi_N''(z)  \frac{\partial}{\partial z} \Delta G_N(z) , \Delta G_N(v_1),\dots, \Delta
 G_N(v_n)\Bigr)dz\\ =O\left(N^{(n+1)(1/2+\eps)-1-1/6}\right),
\end{multline}
\begin{multline} \label{eq_bound_x2_2}
\sup_{v_0,\dots,v_n\in \mathcal V}
 \frac{1}{N} \cdot \frac{1}{4\pi\i} \sum_{A\subset\{1,\dots,n\}} \oint_{
\gamma_{[0,\m]}}
 \frac{1}{v_0-z} {\mathfrak c}_A(z;\t,\v) M_c
 (\xi_N'(z), \Delta G_N(v_a), a\in A) dz
 \\=O\left(N^{n(1/2+\eps)-1-1/6}\right),
\end{multline}
\begin{multline} \label{eq_bound_x3_2}
 \sup_{v_0,\dots,v_n\in \mathcal V}
 \frac{1}{4\pi\i} \oint_{ \gamma_{[0,\m]}}
 \frac{1}{v_0-z} \left( \frac{1}{N^{h-1}} \cdot \frac{(Q/R)_h}{2 h! } M_c\bigl( (
\Delta G_N(z))^h, \Delta G_N(v_1),\dots, \Delta G_N(v_n)\bigr) \right)dz
\\=O\left(N^{(h+n)(1/2+\eps)+1-h-1/6}\right).
\end{multline}
The only difference between \eqref{eq_bound_x1}, \eqref{eq_bound_x2},
\eqref{eq_bound_x3} and \eqref{eq_bound_x1_2}, \eqref{eq_bound_x2_2},
\eqref{eq_bound_x3_2} is that the degree of $N$ in the bound decreased by $1/6$
(which is because the bound of Lemma \ref{Lemma_a_priory_1} is replaced by
\eqref{eq_x43}).

 We then plug \eqref{eq_bound_x1_2}, \eqref{eq_bound_x2_2},
\eqref{eq_bound_x3_2} into \eqref{eq_x35} to get
\begin{multline}
\label{eq_iterative_bound_2}
 \sup_{v_0,\dots,v_n\in \mathcal V} \frac{Q_\m(v_0)}{2} M_c\bigl( \Delta G_N(v_0), \Delta G_N(v_1),\dots, \Delta
G_N(v_n)\bigr) \\ + \left[\sum_{h=2}^{3n}  O\left(N^{(h+n)(1/2+\eps)+1-h-1/6}\right)
\right]+ O\left(N^{(n+1)(1/2+\eps)-1-1/6}\right)+O\left(N^{n(1/2+\eps)-1-1/6}\right)
 = O(1).
\end{multline}
In the same way as \eqref{eq_iterative_bound} implied \eqref{eq_x43}, the bound
\eqref{eq_iterative_bound_2} implies
\begin{multline}
\label{eq_x43_cop} \sup_{v_1,\dots,v_k\in \mathcal V} \E\left[ \prod_{a=1}^k |\Delta
G_N(v_a)| \right]=O\left( N^{k(1/2+\eps)-\frac{k}{k+1}(1/2-\eps)-1/6}\right) +O(1)
\\=O\left( N^{k(1/2+\eps)-2\cdot 1/6}\right) +O(1) , \quad k=1,2,3,\dots.
\end{multline}
Repeating the same argument $m-2$ more times, we improve \eqref{eq_x43_cop} to
\begin{equation}
\label{eq_final_bound} \sup_{v_1,\dots,v_k\in \mathcal V} \E\left[ \prod_{a=1}^k
|\Delta G_N(v_a)| \right]=O\left( N^{k(1/2+\eps)-m/6}\right) +O(1) , \quad
k=1,2,3,\dots.
\end{equation}
Since $m$ is arbitrary, this implies
\begin{equation}
\sup_{v_1,\dots,v_k\in A} \E  \left[ \prod_{a=1}^k |\Delta G_N(v_a)| \right]=O(1), \quad
k=1,2,\dots,
\end{equation}
and finishes the proof of Proposition \ref{Prop_uniform_bound}. \end{proof}

\begin{proof}[Proof of Theorem \ref{Theorem_main_1}]

Take  \eqref{eq_x28} and observe that the bounds of Section \ref{Section_estimates} imply that all
the terms except for the second line and the last line are negligible as $N\to\infty$. The second
line of \eqref{eq_x28} is precisely the left--hand side of \eqref{eq_first_order_1}, while the last
line of \eqref{eq_x28} is the right--hand side of \eqref{eq_first_order_1}.
\end{proof}

\section{General setup}

\label{Section_setup}

\subsection{Definition of the system}

\label{Section_definition}

Our next goal is to generalize the arguments of the previous section to a much more general setting
of a multi--cut fixed filling fractions model with fixed parameter $\theta>0$ and general weight
$w(x)$. Informally, we want to consider measures of the form
$$
 \prod_{1\le i<j \le N}
 \frac{\Gamma(\ell_j-\ell_i+1)\Gamma(\ell_j-\ell_i+\theta)}{\Gamma(\ell_j-\ell_i)\Gamma(\ell_j-\ell_i+1-\theta)}
 \prod_{i=1}^N w(\ell_i;N)
$$
on ordered $N$--tuples $\ell_1<\ell_2<\dots<\ell_N$ referred to as positions of $N$
particles and satisfying two additional constraints. First, the particles are
separated into $k$ groups, and particles in each group must belong to its own
interval of the real line. Second, if $i$th and $(i+1)$st particles are in the same
group, then $\ell_{i+1}-\ell_i\in \{\theta,\theta+1,\theta+2,\dots\}$.

For instance, if we have a single group $(k=1)$, then after defining $\lambda_i$
through $\ell_i=\lambda_i+\theta i$, the constraint boils to down to saying that all
$\lambda_i$ are integers and they satisfy
$\lambda_1\ge\lambda_2\ge\dots\ge\lambda_N$.

\bigskip

Let us give precise definitions for general $k$. The model depends on an integer
parameter $N=1,2,\dots$ and amounts to fixing for each $N$ a probability
distribution on certain $N$--point subsets of $\mathbb R$.

 We fix an integer
$k=1,2,\dots$, whose meaning is the number of segments in the support of the measure. For each
$N=1,2,\dots$ we take $k$ integers $n_1(N),\dots,n_k(N)$, such that $\sum_{i=1}^k n_i(N)=N$ and $k$
disjoint intervals $(a_1(N),b_1(N))$, \dots, $(a_k(N),b_k(N))$ of the real line ordered from left
to right.\footnote{For a generalization to the case of infinite support see Sections
\ref{Section_convex}, \ref{Section_zw-measures}.}

 We assume that $b_i(N)+\theta\le a_{i+1}(N)$ for $i=1,\dots,k-1$. The numbers $a_i(N)$,
$b_i(N)$ must also satisfy the conditions
\begin{equation}
\label{eq_intervals_conditions}
 b_i(N)-\theta n_i(N)-a_i(N)\in\mathbb Z.
\end{equation}
Further, the number $n_i(N)$ counts the number of the particles in the $i$th interval; to make this
statement precise we define the sets of indices $I_j\subset\{1,\dots,N\}$, $j=1,\dots,k$, via
$$
 I_j=\left\{i\in\mathbb Z \, \Bigl| \ \sum_{m=1}^{j-1} n_m(N) < i \le \sum_{m=1}^j n_m(N)\right\}.
$$
We also set $I_j^+$ and $I_j^-$ to be the maximal and minimal elements of $I_j$,
respectively.

\begin{definition} \label{Def_state_space}The state space $\W$ consists of $N$--tuples $\ell_1<\ell_2<\dots<\ell_N$
such that for each $j=1,\dots,k$:
\begin{enumerate}
 \item If $i=I^-_j$, then $\ell_i-a_i(N)\in \mathbb Z_{>0}$.
 \item If $i=I^+_j$, then $b_i(N)-\ell_i\in\mathbb Z_{>0}$.
 \item If $i\in I_j$, but $i\ne I^+_j$, then $\ell_{i+1}-\ell_{i}\in\{\theta,\theta+1,\theta+2,\dots\}$.
\end{enumerate}
\end{definition}
Note that the conditions of Definition \ref{Def_state_space} imply that for every $i$ from $I_j$,
we have $\ell_i\in [a_i(N)+1,b_i(N)-1]$. An example of a configuration from $\W$ is shown in Figure
\ref{Fig_conf_space}.

\begin{figure}[t]
\center \scalebox{1.0}{\includegraphics{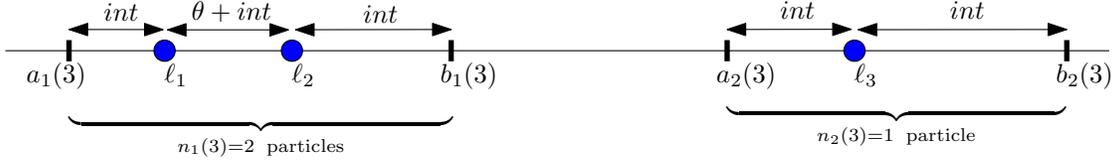}} \caption{The state space
for $N=3$, $k=2$. Numbers $int$ indicate various (possibly different) nonnegative
integers.\label{Fig_conf_space}}
\end{figure}

We also take a \emph{positive} weight function $w(x;N)$ for $x\in \cup_{i=1}^k [a_i(N)+1,b_i(N)-1]$
and define a probability measure $\P$ on $\W$ given by
\begin{equation}\label{eq_distribution_form}
 \P(\ell_1,\dots,\ell_N)= \frac{1}{Z_N} \prod_{1\le i<j \le N}
 \frac{\Gamma(\ell_j-\ell_i+1)\Gamma(\ell_j-\ell_i+\theta)}{\Gamma(\ell_j-\ell_i)\Gamma(\ell_j-\ell_i+1-\theta)}
 \prod_{i=1}^N w(\ell_i;N),
\end{equation}
where $Z_N$ is a normalizing constant which we will refer to as the \emph{partition function}.

\bigskip

\subsection{List of regularity assumptions.}

\label{Section_list_of_assumptions}

Our ultimate goal is to study the asymptotics of the measures $\P$ as $N\to\infty$. For that we
need to assume that the weights $w(x;N)$, as well as all other data specified in Section
\ref{Section_definition}, depend on $N$ in a regular way. Let us present all the technical
assumptions that we impose on the data.

\begin{assumption} \label{Assumptions_basic}
We require that for each $i=1,\dots,k$, as $N\to\infty$
 $$
 a_i(N)=N\hat a_i+O(\ln(N)),\quad  b_i(N)=
   N \hat b_i +O(\ln(N)),
$$
 $$\hat a_1<\hat b_1<\hat a_2<\dots< \hat a_k<\hat b_k.$$
We require that $w(x;N)$ in the intervals $[a_i(N)+1,b_i(N)-1]$, $i=1,\dots,k$, has
the
  form
  $$
   w(x;N)=\exp\left(-N V_N\left(\frac{x}{N}\right)\right)
  $$
  for a function $V_N$ that is continuous in the intervals
  $[a_i(N)+1,b_i(N)-1]$ and such that
  \begin{equation}
  \label{eq_potential_expansion_1}
   V_N(u)=V(u)+O\left(\frac{\ln(N)}{N}\right)
  \end{equation}
  uniformly over $x=u\cdot N$ in the  intervals $[a_i(N)+1,b_i(N)-1]$. The function $V(u)$ is
  differentiable and the following bound holds for a constant $C>0$
  \begin{equation} \label{eq_derivative_bound}
   |V'(u)|\le C\left[1+\sum_{i=1}^k \bigl(|\ln(u-\hat a_i)|+|\ln(u-\hat b_i)|\bigr)\right].
  \end{equation}
\end{assumption}
\begin{remark}  We believe that the assumption on the remainders can be
weakened with minor changes in all the further statements and proofs. However, we do not pursue
this direction due to lack of natural examples.
\end{remark}

For the filling fractions $n_i(N)$ we make a weaker assumption, as it might be important for future
applications, cf.\ \cite{BoG2}.

\begin{assumption} \label{Assumption_filling}
 There exists a constant $C>0$ such that for all $1\le i \le k$ and all large enough
 $N$ we have
 $$
  C<\frac{n_i(N)}{N}<\theta^{-1}(\hat b_i-\hat a_i)-C.
 $$
\end{assumption}
Note that our definition of the state space $\W$ implies that $\frac{n_i(N)}{N}<\theta^{-1}(\hat
b_i-\hat a_i)+o(1)$.

Introduce the notation
$$
 \frac{n_i}{N}=\hat n_i, \quad i=1,\dots,k.
$$
Note that the numbers $\hat n_i$ \emph{ still depend on} $N$. However, we will hide this dependence
from our notations. It is important here that all the limiting values as well as all the remainders
in what follows will be uniform over $\hat n_i$ satisfying Assumption \ref{Assumption_filling}.

\bigskip

The next two assumptions deal with analytic properties of the weight $w(x;N)$ and the equilibrium
measure $\mu$. We fix an open set $\mathcal M_N\subset \mathbb C$, such that $\cup_{i=1}^k
[a_i(N),b_i(N)]\ \subset \mathcal M_N$.

\begin{assumption} \label{Assumptions_ratio} There exist a pair of analytic in $x\in \mathcal M_N$ functions $\phi^+_N(x)$, $\phi^-_N(x)$ such that
$$
 \frac{w(x;N)}{w(x-1;N)} = \frac{\phi^+_N(x)}{\phi^-_N(x)}.
$$
Moreover,
$$
 \phi^\pm_N(x)=\phi^\pm\left(\frac{x}{N}\right)+\frac{1}{N}
 \varphi^\pm_N\left(\frac{x}{N}\right)+O\left(\frac{1}{N^2}\right)
$$
uniformly over $x/N$ in compact subsets of an open set $\mathcal M_N$, which
contains the union of the intervals $[\hat a_i,\hat b_i]$, $i=1,\dots,k$. All the
aforementioned functions are holomorphic in $\mathcal M_N$ and functions
$\varphi^\pm_N$ are uniformly bounded as $N\to\infty$.
\end{assumption}

\begin{remark}
 In the case when $V_N(x)$ in Assumption \ref{Assumptions_basic} is smooth and \emph{uniformly} converges
 to $V$ together with its derivative in a neighborhood of a point $x$,
 we have
 \begin{equation}
 \label{eq_V_to_phi}
  \exp\left(-\frac{\partial}{\partial x} V(x) \right)=
  \frac{\phi^+(x)}{\phi^-(x)}.
 \end{equation}
 Indeed, in this case,
\begin{multline*}
 \frac{\phi^+(x)}{\phi^-(x)}=\lim_{N\to\infty}
 \frac{w(Nx;N)}{w(Nx-1;N)}=\lim_{N\to\infty} \exp\bigl(-N (V_N(x)-V_N(x-1/N) \bigr)=
\\=\lim_{N\to\infty} \exp\left(-N \left(\int_{x-1/N}^x V_N'(y)dy\right)  \right)=
 \exp\left(-\frac{\partial}{\partial x} V(x) \right).
\end{multline*}
\end{remark}

\bigskip

Recall that the equilibrium measure $\mu$ with the density $\mu(x)$ encodes the Law
of Large Numbers for $\P$ stated in Theorem \ref{theorem_LLN_intro} and discussed in
more details in Section \ref{Section_LLN}. A convenient way of working with the
equilibrium measure is through its Stieltjes transform $G_\mu(z)$ defined through
\begin{equation} \label{eq_St_transform_1}
 G_\mu(z)=\int_{-\infty}^{\infty} \frac{\mu(x) dx}{z-x}.
\end{equation}

The following two functions $R_\mu(z)$, $Q_\mu(z)$ are important for our asymptotic study, cf.\
Section \ref{Section_LLN_1}:
\begin{align}
  R_\mu(z):=\phi^-(z)\exp(-\theta G_\mu(z))+\phi^+(z)\exp(\theta G_\mu(z)),\label{eq_def_R} \\
  Q_\mu(z):=\phi^-(z)\exp(-\theta G_\mu(z))-\phi^+(z)\exp(\theta G_\mu(z)). \label{eq_def_Q}
\end{align}

We explain in Section \ref{Section_LLN} that $R_\mu(z)$ is analytic, while
$Q_\mu(z)$ is a branch of a two--valued analytic function which is
 the square--root of a function holomorphic in $\mathcal M_N$. An important technical ingredient of
 our method is a restriction on its zeros, as is summarized in the following assumption.

\begin{assumption} \label{Assumption_simple} We require that for each large enough $N$ and corresponding
 $Q_\mu$ (which depends on $N$ through the filling fractions $\hat n_i$ in the
 definition of $\mu$),
there exists a function $H(z)$ holomorphic in $\mathcal M_N$ and numbers
$\{\alpha_i,\beta_i\}_{i=1}^k$ such that
 \begin{itemize}
 \item $\hat a_i \le \alpha_i < \beta_i \le \hat b_i$, $i=1,\dots,k$;
 \item
 $
  Q_\mu(z)=H(z)\, \prod\limits_{i=1}^k \sqrt{(z-\alpha_i)(z-\beta_i)},
 $\\
 where the branch of the square root is such that $\sqrt{(z-\alpha_i)(z-\beta_i)}\sim z$ when
 $z\to\infty$;
 \item
 $H(z)\ne 0$ for all $z\in\bigcup_{i=1}^k [\hat a_i,\hat b_i]$.
\end{itemize}
\end{assumption}

 We remark that Assumption \ref{Assumption_simple} does not describe a generic case.
 In particular, it implies that there is precisely one interval of support of $\mu(x)$ in each
 interval $[\hat a_i,\hat b_i]$.
 The authors are not aware of simple ways to check
 such property by examining the potential $V(x)$. Nevertheless, many natural models
 arising in the applications satisfy Assumption \ref{Assumption_simple}. We demonstrate this general principle
 by considering several examples in Section \ref{Section_examples}.

\bigskip

Finally, we need a simple vanishing assumption. It is convenient to work with it, yet we later show
in Section \ref{Section_non_van} how it can be relaxed.

\begin{assumption} \label{Assumption_vanishing}
For all $i=1,\dots,k$, we have $\phi^-_N((a_i(N)+1)=\phi^+_N(b_i(N))=0$.
\end{assumption}

\section{Nekrasov's equation}

The main tool for our study of the probability distributions $\P$ from the last section is a
 statement, which is essentially due to Nekrasov \cite{Nekrasov}, \cite{Nek_Pes}, \cite{Nek_PS}. Its
 affine and $q$--versions are given in the following two theorems.

\begin{theorem} \label{Theorem_discrete_loop}
 Let $\P$ be a distribution on $N$--tuples $(\ell_1,\dots,\ell_N)\in\W$ as in the previous
section. Suppose that
\begin{equation}
\label{eq_ratio_def}
 \frac{w(x;N)}{w(x-1;N)} = \frac{\phi^+_N(x)}{\phi^-_N(x)},
\end{equation}
and for all $i=1,\dots,k$, we have $\phi^-_N(a_i(N)+1)=\phi^+_N(b_i(N))=0$.
 Define
\begin{equation} \label{eq_observable}
R_N(\xi)=\phi^-_N(\xi)\cdot
\E_{\P}\left[\prod_{i=1}^N\left(1-\frac{\theta}{\xi-\ell_i}\right)
\right]+\phi^+_N(\xi)\cdot
\E_{\P}\left[\prod_{i=1}^N\left(1+\frac{\theta}{\xi-\ell_i-1}\right)\right].
\end{equation}
If $\phi^\pm_N(\xi)$ are holomorphic in a domain $\mathcal M_N \subset\mathbb C$, then so is
$R_N(\xi)$. Moreover, if $\phi^\pm_N(\xi)$ are polynomials of degree at most $d$, then so is
$R_N(\xi)$.
\end{theorem}
\begin{proof} The possible singularities of $R_N(\xi)$ are simple poles arising from
the denominator of the expression under expectation $\E_\P$ in
\eqref{eq_observable}. Let us compute a residue at such a pole $m$.

 The expectation
$\E_\P$ in \eqref{eq_observable} is a sum over all $(\ell_1,\dots,\ell_N)\in\W$.
Such a configuration contributes to the residue if $\ell_i=m$ or $\ell_i=m+1$ for
some $i=1,\dots,N$.

We separately analyze the contributions appearing from each  $i=1,\dots,N$, which we
now fix. According to definitions, the possible values for $\ell_i$ are $\{A, A+1,
A+2,\dots,B\}$ for certain $A$ and $B$. Given a particle configuration
$\ell=(\ell_1,\dots,\ell_N)$, let $\ell^+$ denote the configuration with $i$th
coordinate increased by $1$, and let $\ell^-$ denote the configuration with $i$th
coordinate decreased by $1$. Note that in principle, $\ell^+$ (similarly $\ell^-$)
may fail to be in $\W$. However, in this case the formula for $\P(\ell^+)$ still
applies, but gives zero.

 Let us explain how the weight \eqref{eq_distribution_form}
 of a configuration $(\ell_1,\dots,\ell_N)$ changes when one
 coordinate is changed from $\ell_i=x$ to $\ell_i=x-1$, i.e.\ we compute the ratio of the weights
 at $\ell_i=x-1$ and at $\ell_i=x$ (all other coordinates are unchanged).
  The double product over $i<j$ in \eqref{eq_distribution_form} produces factors (we denote
  $\ell_j=r$ here)
\begin{equation}
\label{eq_x6}
\frac{\Gamma(r-x+1)\Gamma(r-x+\theta)}{\Gamma(r-x)\Gamma(r-x+1-\theta)} \cdot
\frac{\Gamma(r-x+1)\Gamma(r-x+2-\theta)}{\Gamma(r-x+2)\Gamma(r-x+1+\theta)} =
\frac{(r-x)(r-x+1-\theta)}{(r-x+1)(r-x+\theta)},
\end{equation}
if $i<j$, and the factor
\begin{equation}
\label{eq_x7}
\frac{\Gamma(x-r+1)\Gamma(x-r+\theta)}{\Gamma(x-r)\Gamma(x-r+1-\theta)} \cdot
\frac{\Gamma(x-r-1)\Gamma(x-r-\theta)}{\Gamma(x-r)\Gamma(x-r+\theta-1)} =
\frac{(x-r)(x-r+\theta-1)}{(x-r-1)(x-r-\theta)},
\end{equation}
if $i> j$. Note that \eqref{eq_x6} and \eqref{eq_x7} are two forms of the same rational expression.

Now take $m\in\{A,A+1,\dots,B\}$. The contribution to the residue of $R_N$ at $z=m$,
arising from the $i$th coordinate of configurations $(\ell_1,\dots,\ell_N)$ is
\begin{multline}
\label{eq_x8}
 -\theta \sum_{\ell\in\W\mid \ell_i=m} \phi^-_N(m) \P(\ell_1,\dots,\ell_N) \prod_{j\ne i}
 \left(1-\frac{\theta}{m-\ell_j}\right)\\ + \theta \sum_{\ell\in\W\mid \ell_i=m-1} \phi^+_N(m)
 \P(\ell_1,\dots,\ell_N) \prod_{j\ne i} \left(1+\frac{\theta}{m-\ell_j-1}\right)
\end{multline}

Note the difference between two summation sets in \eqref{eq_x8}. If $\ell_i$ is \emph{not} the
smallest particle in an interval $[a_h(N)+1,b_h(N)-1]$, then the first sum contains the terms with
$\ell_{i-1}=m-\theta$, while the second one does not. However, each such term in the first sum is
actually zero. Also if $\ell_i$ is the smallest particle and $m=A=a_h(N)+1$, then the second sum is
empty as $\ell_i$ is never $m$. But we also know that
 $\phi^-_h(m)=\phi^-_h(a_h(N)+1)=0$ and the first sum vanishes as well.
Similar considerations apply to two cases whether $\ell_i$ is the largest particle or not. We
conclude, that it suffices to study the case when there is one-to-one correspondence between terms
of two sums in \eqref{eq_x8}.

Using \eqref{eq_x6}, \eqref{eq_x7}, and \eqref{eq_ratio_def} we see that
\begin{multline*}
\phi^-_N(m) \P(\ell_1,\dots\ell_{i-1},m,\ell_{i+1},\dots,\ell_N) \prod_{j\ne i}
 \left(1-\frac{\theta}{m-\ell_j}\right)
\\ =\phi^+_N(m)
 \P(\ell_1,\dots\ell_{i-1},m-1,\ell_{i+1},\dots,\ell_N) \prod_{j\ne i}
 \left(1+\frac{\theta}{m-\ell_j-1}\right).
\end{multline*}
We conclude that for each $\mu$, the terms with $\ell=\mu$ and $\ell=\mu^+$ (or
$\ell=\mu^-$ and $\ell=\mu$) in the first and second sum in \eqref{eq_x8} cancel out
and the total residue is zero.

For the polynomiality statement it suffices to notice that if $\phi^\pm_N(\xi)$ are polynomials of
degree at most $d$, then $R_N(\xi)$ is an entire function which grows as $O(\xi^d)$ as
$\xi\to\infty$. Hence, by Liouville's theorem $R_N(\xi)$ is a polynomial.
\end{proof}

The proof of Theorem \ref{Theorem_discrete_loop} reveals that it admits a natural
$q$--deformation. Recall the definition of $q$--Gamma function $\Gamma_q$:
$$
 \Gamma_q(x)=(1-q)^{1-x} \frac{(q;q)_\infty}{(q^x;q)_\infty},
$$
where
$$
 (a;q)_\infty=\prod_{n=0}^{\infty} (1-a q^n).
$$
In the same framework of Section \ref{Section_setup}, define a $q$--deformation of
$\P$ through
 \begin{equation}\label{eq_q_distribution_form}
 \Pq(\ell_1,\dots,\ell_N)= \frac{1}{Z_N^q} \prod_{1\le i<j \le N}
 q^{-\theta(\ell_j-\ell_i)}
 \frac{\Gamma_q(\ell_j-\ell_i+1)\Gamma_q(\ell_j-\ell_i+\theta)}{\Gamma_q(\ell_j-\ell_i)\Gamma_q(\ell_j-\ell_i+1-\theta)}
 \cdot \prod_{i=1}^N w(\ell_i;N).
\end{equation}

\begin{theorem} \label{Theorem_q_discrete_loop}
 Let $\Pq$ be a distribution on $N$--tuples $(\ell_1,\dots,\ell_N)\in\W$ as above. Suppose that
$$
 \frac{w(x;N)}{w(x-1;N)} = \frac{\phi^+_N(x)}{\phi^-_N(x)},
$$
and for all $i=1,\dots,k$, we have $\phi^-_N(a_i(N)+1)=\phi^+_N(b_i(N))=0$.
 Define
\begin{equation}
R_N^q(\xi)=\phi^-_N(\xi)\cdot \E_{\Pq}\left[\prod_{i=1}^N\left(q^{\frac\theta
2}\frac{1-q^{\xi-\ell_i-\theta}}{1-q^{\xi-\ell_i}}\right) \right]+\phi^+_N(\xi)\cdot
\E_{\Pq}\left[\prod_{i=1}^N \left(q^{-\frac\theta
2}\frac{1-q^{\xi-\ell_i-1+\theta}}{1-q^{\xi-\ell_i-1}}\right)\right].
\end{equation}
If $\phi^\pm_N(\xi)$ are holomorphic in a domain $\mathcal M_N \subset\mathbb C$, then so is
$R_N^q(\xi)$.  Moreover, if $\phi^\pm_N(\xi)$ are polynomials of degree at most $d$, then so is
$R_N(\xi)$.
\end{theorem}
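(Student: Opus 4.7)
The plan is to mimic the proof of Theorem \ref{Theorem_discrete_loop} line by line, replacing each classical Gamma identity with its $q$-analogue coming from the functional equation $\Gamma_q(y+1)=\tfrac{1-q^y}{1-q}\Gamma_q(y)$. As in that theorem, the only candidate singularities of $R_N^q(\xi)$ inside $\mathcal M_N$ are simple poles at $\xi=\ell_i$ (from the factor $1/(1-q^{\xi-\ell_i})$ in the first expectation) and at $\xi=\ell_i+1$ (from $1/(1-q^{\xi-\ell_i-1})$ in the second), for $\ell=(\ell_1,\dots,\ell_N)\in\W$ and $1\le i\le N$. The content of the proof is showing that the residue at each such point vanishes.

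I fix an index $i$ and a candidate pole location $\xi=m$. Using $1-q^h=-h\log q+O(h^2)$ near $h=0$, the residue of the $i$-th factor in the first expectation (for configurations with $\ell_i=m$) is $-(q^{\theta/2}-q^{-\theta/2})/\log q$, while the residue of the $i$-th factor in the second expectation (for configurations with $\ell_i=m-1$) is $+(q^{\theta/2}-q^{-\theta/2})/\log q$; the two are exact negatives. Pairing each $\ell$ with $\ell_i=m$ in the first sum with the configuration $\ell^-$ obtained from $\ell$ by decreasing $\ell_i$ by one (other coordinates fixed), cancellation of the total residue at $\xi=m$ reduces to the term-wise identity
\[
\phi^-_N(m)\,\Pq(\ell)\prod_{j\ne i}q^{\theta/2}\,\tfrac{1-q^{m-\ell_j-\theta}}{1-q^{m-\ell_j}}=\phi^+_N(m)\,\Pq(\ell^-)\prod_{j\ne i}q^{-\theta/2}\,\tfrac{1-q^{m-\ell_j-1+\theta}}{1-q^{m-\ell_j-1}}.
\]
This is the $q$-analogue of the pairing used in Theorem \ref{Theorem_discrete_loop}, and I verify it by direct computation: the $q$-Gamma functional equation shows that the transition $\ell_i=m\mapsto m-1$ multiplies each $(i,j)$-factor in \eqref{eq_q_distribution_form} by an explicit $q$-rational expression, and — exactly as in the classical case (cf.\ the remark that \eqref{eq_x6} and \eqref{eq_x7} are two forms of the same rational function) — the formulas obtained for $j>i$ and $j<i$ are the same rational function of $q^{\ell_j-m}$ once one applies $1-q^{-s}=-q^{-s}(1-q^s)$ to pass between them. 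Combining these ratios with the weight ratio $w(m;N)/w(m-1;N)=\phi^+_N(m)/\phi^-_N(m)$ from \eqref{eq_ratio_def} yields exactly the displayed identity. Configurations where $\ell$ or $\ell^-$ leaves $\W$ (through coincidence of coordinates, boundary crossing, or violation of the $\theta$-spacing) contribute zero either because the corresponding $\Pq$ vanishes via $w(a_i(N);N)=w(b_i(N);N)=0$ or because a $(i,j)$-factor in \eqref{eq_q_distribution_form} itself vanishes.

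The polynomial-degree assertion (the ``$R_N(\xi)$'' at the end of the statement should be read ``$R_N^q(\xi)$'') follows from the same cancellation combined with a growth bound: once $R_N^q$ is seen to be entire in $\xi$, the two expectations in its definition tend to the constants $q^{\pm\theta N/2}$ as $\Re\xi\to\pm\infty$ inside $\mathcal M_N$ and are bounded on vertical strips, so $R_N^q$ has polynomial growth of degree $\le d$ and hence is a polynomial of degree at most $d$ by Liouville. The main obstacle, and really the only genuinely delicate step, is purely bookkeeping: one must track the overall $q^{-\theta(\ell_j-\ell_i)}$ prefactor in \eqref{eq_q_distribution_form} together with the $q^{\pm\theta/2}$ normalisations in the definition of $R_N^q$, and verify that the total power of $q$ in the displayed identity is exactly right. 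Conceptually there is nothing new beyond the substitution $y\mapsto \tfrac{1-q^y}{1-q}$ in the proof of Theorem \ref{Theorem_discrete_loop}.
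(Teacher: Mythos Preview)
Your residue--cancellation argument for the holomorphicity claim is exactly the paper's approach: the paper's proof consists of writing down the $q$--analogues \eqref{eq_x6_2}, \eqref{eq_x7_2} of the ratios \eqref{eq_x6}, \eqref{eq_x7}, observing that they agree, and concluding the pairing identity you display. So on the main point you match the paper.

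There is, however, a genuine gap in your treatment of the polynomial clause. You assert that the only candidate singularities are at $\xi=\ell_i$ and $\xi=\ell_i+1$, but in the $q$--setting the factor $1/(1-q^{\xi-\ell_i})$ has simple poles at every $\xi=\ell_i+2\pi\mathbf i k/\log q$, $k\in\mathbb Z$. Your cancellation identity uses $\Pq(\ell)/\Pq(\ell^-)$, which brings in $\phi_N^+(r)/\phi_N^-(r)$ at the \emph{real} point $r=\ell_i$, whereas the residue of $R_N^q$ at a shifted pole $\xi_0=r+2\pi\mathbf i k/\log q$ carries $\phi_N^\pm(\xi_0)$. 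For $k\ne 0$ these need not agree (take e.g.\ $\phi_N^-(\xi)=\xi$, $\phi_N^+(\xi)=1$), so the residues at non-real poles do \emph{not} cancel in general. Consequently $R_N^q$ is not entire when $\phi_N^\pm$ are polynomials in $\xi$, and your Liouville step cannot be applied. (The expectations are indeed bounded, but $R_N^q$ is not globally holomorphic, so the growth bound gives nothing.) The paper itself does not spell out an argument for this clause in the $q$--case, and the ``$R_N(\xi)$'' in the statement is visibly a slip; if one reads the clause as being about $\phi_N^\pm$ and $R_N^q$ that are polynomials in $q^\xi$ rather than in $\xi$, then $\phi_N^\pm(\xi_0)=\phi_N^\pm(r)$ and your argument goes through verbatim. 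As written, though, your proof of the polynomial assertion does not stand.
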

\begin{proof} The proof is the same as for Theorem \ref{Theorem_discrete_loop}. The
only new ingredient is a $q$--deformation of \eqref{eq_x6}, \eqref{eq_x7}, which now reads
\begin{multline}
\label{eq_x6_2}
q^{-\theta(r-x)}\frac{\Gamma_q(r-x+1)\Gamma_q(r-x+\theta)}{\Gamma_q(r-x)\Gamma_q(r-x+1-\theta)}
\cdot q^{\theta(r-x+1)}
\frac{\Gamma_q(r-x+1)\Gamma_q(r-x+2-\theta)}{\Gamma_q(r-x+2)\Gamma_q(r-x+1+\theta)}
\\= q^{\theta} \frac{(1-q^{r-x})(1-q^{r-x+1-\theta})}{(1-q^{r-x+1})(1-q^{r-x+\theta})}
\end{multline}
and
\begin{multline}
\label{eq_x7_2}
q^{-\theta(x-r)}\frac{\Gamma_q(x-r+1)\Gamma_q(x-r+\theta)}{\Gamma_q(x-r)\Gamma_q(x-r+1-\theta)}
\cdot
q^{\theta(x-r-1)}\frac{\Gamma_q(x-r-1)\Gamma_q(x-r-\theta)}{\Gamma_q(x-r)\Gamma_q(x-r+\theta-1)}
\\=q^{-\theta} \frac{(1-q^{x-r})(1-q^{x-r+\theta-1})}{(1-q^{x-r-1})(1-q^{x-r-\theta})}.
\end{multline}
One readily recognizes the same expression in \eqref{eq_x6_2} and \eqref{eq_x7_2}.
We further conclude that
\begin{multline*}
 \phi^-_N(m) \Pq(\ell_1,\dots\ell_{i-1},m,\ell_{i+1},\dots,\ell_N) \prod_{j\ne
i}
 \left(q^{\frac\theta 2}\frac{1-q^{m-\ell_j-\theta}}{1-q^{m-\ell_j}}\right)
\\ = \phi^+_N(m)
 \Pq(\ell_1,\dots\ell_{i-1},m-1,\ell_{i+1},\dots,\ell_N) \prod_{j\ne i}
 \left(q^{-\frac\theta 2}\frac{1-q^{m-\ell_j-1+\theta}}{1-q^{m-\ell_j-1}}\right),
\end{multline*}
and therefore there is a cancelation of the poles.
\end{proof}

\section{Law of Large Numbers}

\label{Section_LLN}

\subsection{Limit shape}

Assumptions \ref{Assumptions_basic} and \ref{Assumption_filling} guarantee that a certain law of
large numbers for the measures $\P$ holds as $N\to\infty$. To state it we introduce a random
probability measure $\mu_N$ on $\mathbb R$ via
\begin{equation}
 \mu_N=\frac{1}{N}\sum_{i=1}^N \delta\left(\frac{\ell_i}{N}\right),\quad
 (\ell_1,\dots,\ell_N) \text{ is } \P\text{-distributed.} \label{eq_empirical_mes}
\end{equation}
The measure $\mu_N$ is often referred to as the \emph{empirical measure} of point configuration
$\ell_1,\dots,\ell_N$, cf.\ \eqref{eq_empirical_mes_1}. Note that our definitions imply the
condition $\ell_{i+1}-\ell_i\ge \theta$, which shows that for any interval $[p,q]$, its
$\mu_N$--measure is bounded from above by $\theta^{-1}(q-p+N^{-1})$.

The first order asymptotic behavior of measures $\mu_N$ can be understood through a
variational problem. For a probability measure $\rho$ supported on $\cup_i [\hat
a_i,\hat b_i]$, define
 \begin{equation}
 \label{eq_functional_general}
   I_V[\rho]=\theta \iint_{x\ne y} \ln|x-y|
  \rho(dx)\rho(dy) - \int_{-\infty}^{\infty} V(x) \rho(dx).
 \end{equation}

\begin{lemma} \label{Lemma_maximizer} Let
$\Theta$ be the set of absolutely continuous probability measures $\rho(x)dx$
supported on $\cup_i [\hat a_i,\hat b_i]$, whose density is between $0$ and
$\theta^{-1}$ and such that
 $$
 \int_{\hat a_i}^{\hat b_i} \rho(x)dx=\hat
 n_i,\quad 1\le i \le k,
 $$
 where $0<\hat n_i<\theta^{-1}(\hat b_i-\hat a_i)$, $i=1,\dots,k$, are such that
 $\sum_{i=1}^k \hat n_i=1$.
 Then the functional $I_V$ has a unique maximum $\mu(x)dx$ on $\Theta$.
\end{lemma}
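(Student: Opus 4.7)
The plan is to establish existence by a compactness plus upper semicontinuity argument, and uniqueness by strict concavity of $I_V$ on the convex set $\Theta$.

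First I would check that $\Theta$ is a nonempty, convex, and weakly compact subset of probability measures on the compact set $K=\bigcup_i[\hat a_i,\hat b_i]$. Nonemptiness follows by taking the piecewise uniform measure with density $\hat n_i/(\hat b_i-\hat a_i)\le \theta^{-1}$ on each interval (where the bound uses the hypothesis on $\hat n_i$). Convexity is immediate from the linearity of all constraints. For weak compactness I would verify that $\Theta$ is weakly closed in the (tight) set of probability measures on $K$: the density bound $\rho(x)\le\theta^{-1}$ is equivalent to the requirement $\rho([p,q])\le\theta^{-1}(q-p)$ for every subinterval, which is preserved under weak limits by the Portmanteau lemma, and in the limit yields an absolutely continuous measure with the same density bound. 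The filling-fraction constraints $\rho([\hat a_i,\hat b_i])=\hat n_i$ pass to the limit because any limiting $\rho$ has no atoms at the (isolated) endpoints.

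Next I would check that $I_V$ is well-defined and finite on $\Theta$, and upper semicontinuous for the weak topology. Finiteness of $\iint\ln|x-y|\rho(dx)\rho(dy)$ follows because $\ln|x-y|$ is bounded above on $K\times K$ by $\ln(\mathrm{diam}\,K)$, while the lower bound uses that $\rho$ has density at most $\theta^{-1}$, so the logarithmic singularity is integrable. Finiteness of $\int V\,d\rho$ follows from the bound \eqref{eq_derivative_bound}, which shows $V$ is continuous and bounded on $K$ after integrating its logarithmic singularities near the endpoints. Upper semicontinuity of the double integral in the weak topology is standard: $\ln|x-y|$ is upper semicontinuous and bounded above on $K\times K$, so it is the decreasing limit of continuous bounded functions, and Fatou/monotone convergence applies to $\rho\otimes\rho$. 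The linear term $-\int V\,d\rho$ is weakly continuous since $V$ is continuous on $K$. Hence $I_V$ attains its supremum on the compact set $\Theta$.

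Finally, I would prove strict concavity. For $\rho_0,\rho_1\in\Theta$ and $\rho_t=(1-t)\rho_0+t\rho_1$, direct expansion gives
\begin{equation*}
I_V[\rho_t]=(1-t)I_V[\rho_0]+tI_V[\rho_1]+\theta\, t(1-t)\iint\ln|x-y|(\rho_0-\rho_1)(dx)(\rho_0-\rho_1)(dy).
\end{equation*}
The signed measure $\nu=\rho_0-\rho_1$ has total mass zero, compact support, and bounded density, so the Fourier representation recalled in \eqref{eq_quadratic_alternative} applies and yields
\begin{equation*}
-\iint\ln|x-y|\,\nu(dx)\nu(dy)=\int_0^\infty\frac{|\hat\nu(t)|^2}{t}\,dt\ge 0,
\end{equation*}
with equality only when $\hat\nu\equiv 0$, i.e.\ $\rho_0=\rho_1$. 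Consequently $I_V[\rho_t]>(1-t)I_V[\rho_0]+tI_V[\rho_1]$ for every $0<t<1$ whenever $\rho_0\ne\rho_1$, so the maximizer is unique.

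I do not expect any serious obstacle; the main technical points are verifying that $\Theta$ is weakly closed under the density-bound and filling-fraction constraints, and justifying that the logarithmic energy quadratic form is positive definite on zero-mass signed measures with bounded $L^\infty$ density, both of which are classical.
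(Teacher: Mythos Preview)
Your proposal is correct and follows essentially the same approach as the paper: compactness of $\Theta$ in the weak topology plus (upper semi)continuity of $I_V$ for existence, and strict concavity via the Fourier identity \eqref{eq_quadratic_alternative} for uniqueness. The only cosmetic difference is that the paper asserts full continuity of $I_V$ on $\Theta$ (using the uniform density bound to control the logarithmic singularity from both sides), whereas you argue upper semicontinuity, which already suffices for a maximum; you also spell out nonemptiness and weak closedness of $\Theta$, which the paper leaves implicit.
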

\begin{remark}
 The maximizer of $I_V$ is called the \emph{equilibrium measure}.
\end{remark}
\begin{proof}[Proof of Lemma \ref{Lemma_maximizer}]
 Let us equip $\Theta$ with weak topology, i.e.\ the topology of pointwise
 convergence for the distribution functions. Then $\Theta$ is compact.

 Observe that the functional $I_V$ is continuous in the weak topology on
 $\Theta$; here it is crucial that the measures in $\Theta$ have density
 between $0$ and $\theta^{-1}$. Therefore, $I_V$ attains its maximum on
 $\Theta$. It remains to show that such a maximum is unique.

 For that we note that $I_V$ is strictly concave, i.e.\ for any $\rho,\rho'\in
\Theta$ and any $0<t<1$
 $$
 I_V[t\rho+ (1-t) \rho']> t I_V[\rho]+ (1-t) I_V[\rho'].
 $$
 Indeed, the linear part of $I_V$ is concave by the definition. The quadratic part
 is negatively--definite due to the formula \eqref{eq_quadratic_alternative} for it,
 therefore, it is strictly concave.

 Since a strictly concave functional cannot have more than one maximum on a convex set, we are done.
\end{proof}

\begin{theorem} \label{Theorem_LLN}
 Under Assumptions \ref{Assumptions_basic} and \ref{Assumption_filling}
 the random measures $\mu_N$ concentrate (in
 probability) near  $\mu(x)dx$ of Lemma \ref{Lemma_maximizer}. More
 precisely, for each Lipshitz function $f(x)$ defined in a real
 neighborhood of $\cup_{i=1}^k [\hat a_i, \hat b_i]$,  and each $\eps>0$ the random variables
 $$
  N^{1/2-\eps}\left|\int_{-\infty}^{\infty} f(x) \mu_N(dx)- \int_{-\infty}^{\infty} f(x)
  \mu(x)dx\right|
 $$
 converge to $0$ in probability and in the sense of moments.
\end{theorem}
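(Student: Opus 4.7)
The plan is to generalize the large-deviations argument of Proposition~\ref{Prop_pseudodistance_bound} from the toy $\theta=1$, binomial, one-cut case to the present multi-cut setting with arbitrary $\theta>0$. The guiding principle is unchanged: up to the partition function and a subexponential prefactor, the density of $\P$ equals $\exp(N^2 I_{V_N}[{\rm mes}[\ell_1,\dots,\ell_N]])$, so concentration follows from the strict concavity of $I_V$ and the uniqueness of its maximizer $\mu$ (Lemma~\ref{Lemma_maximizer}). The implementation has three main ingredients: a density estimate together with a matching lower bound on $Z_N$, a large-deviation estimate for a logarithmic pseudo-distance, and a Plancherel step converting the latter into a bound on integrals of Lipschitz test functions.

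For the density, Stirling's formula together with the expansion
\begin{equation*}
\ln\frac{\Gamma(m+\theta)\Gamma(m+1)}{\Gamma(m)\Gamma(m+1-\theta)}=2\theta\ln m + O(m^{-1}), \qquad m\to\infty,
\end{equation*}
yields, for every $(\ell_1,\dots,\ell_N)\in\W$,
\begin{equation*}
\P(\ell_1,\dots,\ell_N) = \frac{1}{Z_N}\exp\Bigl(2\theta N(N-1)\ln N + N^2 I_{V_N}\bigl[{\rm mes}[\ell_1,\dots,\ell_N]\bigr] + O(N\ln N)\Bigr),
\end{equation*}
where the error absorbs the $1/m$ tails of the Gamma expansion (bounded thanks to the hard-core spacing $\ell_{i+1}-\ell_i\ge\theta$) and the $V_N-V = O(\ln N/N)$ correction from Assumption~\ref{Assumptions_basic}. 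A matching lower bound on $Z_N$ comes from evaluating at the deterministic quantile configuration: in each band $[\hat a_i,\hat b_i]$ place particles at lattice points closest to the $\hat n_i N$ quantiles of $\mu$. Because $0\le\mu\le\theta^{-1}$ these points automatically satisfy the $\theta$-spacing and hence lie in $\W$, and \eqref{eq_derivative_bound} lets one pass from the resulting Riemann sum for $V$ to $\int V\,d\mu$, giving $Z_N \ge \exp(2\theta N(N-1)\ln N + N^2 I_V[\mu] + O(N\ln N))$.

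I would then smooth the rescaled empirical measure by convolving with the uniform measure on $[0,N^{-p}]$ ($p>2$) to obtain $\tilde\mu_N$ (the regularization cost is $O(\ln N/N)$, again by the $\theta$-spacing), and complete the square:
\begin{equation*}
I_V[\tilde\mu_N] - I_V[\mu] = -\theta\,\D(\mu,\tilde\mu_N) + \sum_{i=1}^k \int_{\hat a_i}^{\hat b_i}\bigl(F_\mu(x)-f_i\bigr)(\tilde\mu_N-\mu)(dx),
\end{equation*}
where $F_\mu(x)=2\theta\int\ln|x-y|\,\mu(dy)-V(x)$ and $f_1,\dots,f_k$ are the $k$ Lagrange multipliers associated with the fixed-filling-fractions variational problem (one per band; this is the feature new relative to the toy case). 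The effective potential $F_\mu-f_i$ vanishes on each band, is nonpositive outside the support of $\mu$, and nonnegative on the saturated region $\{\mu=\theta^{-1}\}$; on the saturated region the $\theta$-spacing of the $\ell_i$ forces $\tilde\mu_N([a,a+\theta/N])\le 1/N=\mu([a,a+\theta/N])$, so the segment-by-segment estimate of \eqref{eq_x46} still produces an $O(\ln N/N)$ bound. Combining with $|\W|=\exp(O(N\ln N))$ yields
\begin{equation*}
\P\bigl(\Dr(\tilde\mu_N,\mu)\ge\gamma\bigr)\le \exp\bigl(C N\ln N - \theta\gamma^2 N^2\bigr).
\end{equation*}

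Finally, Plancherel exactly as in Corollary~\ref{Corrollary_tail_bound} gives $\bigl|\int f\,d(\mu_N-\mu)\bigr|\le \|f\|_{\rm Lip}/N^p + \sqrt{2}\,\|f\|_{1/2}\,\Dr(\tilde\mu_N,\mu)$, and choosing $\gamma=qN^{-1/2+\epsilon}$ for arbitrary $q>0$ produces tails of order $\exp(-cq^2 N^{2\epsilon})$ for the random variable $N^{1/2-\epsilon}\bigl|\int f\,d(\mu_N-\mu)\bigr|$; convergence in probability and convergence of all moments both follow. The main obstacle, and the feature that genuinely distinguishes the discrete setting, is the handling of the saturated region $\{\mu=\theta^{-1}\}$: it has no continuous log-gas analogue, and the argument hinges on the hard-core spacing $\ell_{i+1}-\ell_i\ge\theta$ translating exactly into the lattice-scale density bound $\tilde\mu_N\le\theta^{-1}$, matching the bound that $\mu$ saturates. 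A subsidiary point is that the $k$ Lagrange multipliers $f_i$ need not coincide when $k\ge 2$, but since the effective-potential integral splits band by band this causes no extra difficulty.
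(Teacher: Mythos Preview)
Your proposal is correct and follows essentially the same approach as the paper: the paper proves Proposition~\ref{Prop_pseudodistance_bound_general} (the general-$\theta$, multi-interval analogue of Proposition~\ref{Prop_pseudodistance_bound}) and its Corollary~\ref{Corrollary_tail_bound_general}, then deduces Theorem~\ref{Theorem_LLN} by choosing $\gamma=qN^{-1/2+\eps}$, exactly as you outline. One small point you gloss over: because $a_i(N)=N\hat a_i+O(\ln N)$ and the state space $\W$ imposes lattice conditions at the interval endpoints, the quantile configuration cannot be placed verbatim into $\W$; the paper absorbs this by allowing $O(\ln N)$ exceptional particles, which upgrades the error in the $Z_N$ lower bound from $O(N\ln N)$ to $O(N\ln^2 N)$, but this does not affect the conclusion.
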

\begin{remark}
 Since $\mu(x)$ depends on $\hat n_i$, and the latter depend on $N$, so is the former.
 When $\hat n_i$ converge as $N\to\infty$, this dependence can be removed by using
 the continuity of $\mu(x)$ in $\hat n_i$, established below in Proposition \ref{Prop_dependence_on_filling}.
\end{remark}
 Analogues of Theorem \ref{Theorem_LLN} are known in the
 literature, cf.\ \cite{MMS}, \cite{AGZ}, \cite{Feral}. Our proof of Theorem \ref{Theorem_LLN} relies on a different characterization for $\mu$ maximizing
the functional $I_V[\cdot]$.

The restriction $0\le \mu(x)\le \theta^{-1}$ leads to the subdivision of $\mathbb R$
into three types of regions (we borrow the terminology from \cite{BKMM}):
\begin{itemize}
 \item Maximal (with respect to inclusion) closed connected intervals where $\mu(x)=0$ are called \emph{voids}.
 \item Maximal open connected intervals where $0<\mu(x)<\theta^{-1}$ are called \emph{bands}.
 \item Maximal closed connected intervals where $\mu(x)=\theta^{-1}$ are called \emph{saturated
 regions}.
\end{itemize}
In a related context of random tilings and periodically-weighted dimers the voids
and saturated regions are usually called \emph{frozen}, while bands are \emph{liquid
regions}.

\bigskip

We further define the \emph{effective potential} $F_V(x)$ through (cf.\ \eqref{eq_effective_pot})
$$
 F_V(x)=2\theta \int_{-\infty}^\infty \ln|x-y|\mu(y)dy   - V(x).
$$

\begin{lemma}
 There exist $k$ constants $f_1,\dots,f_k$, such that
\begin{align}
 F_V(x)-f_i\le 0,& \text{ for all } x \text{ in voids in } [\hat a_i,\hat b_i];\label{eq_void_inequality}\\
 F_V(x)-f_i\ge 0,& \text{ for all } x \text{ in saturated regions in }
 [\hat a_i,\hat b_i];\label{eq_saturated_inequality}\\
 F_V(x)-f_i=0,& \text{ for all } x \text{ in bands in } [\hat a_i,\hat b_i].
 \label{eq_band_equality}
\end{align}
\end{lemma}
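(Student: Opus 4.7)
The plan is to prove the lemma by the standard Kuhn--Tucker / Lagrange multiplier argument applied to the constrained maximization of $I_V[\cdot]$ on $\Theta$. The constants $f_i$ will be the Lagrange multipliers enforcing the $k$ equality constraints $\int_{\hat a_i}^{\hat b_i}\rho(x)\,dx=\hat n_i$, and the three cases in \eqref{eq_void_inequality}--\eqref{eq_band_equality} will correspond to the three ways in which the density constraint $0\le\rho\le \theta^{-1}$ can be active.

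First, I would compute the first variation. For a signed measure $\nu(x)\,dx$ with bounded density, compact support inside $\bigcup_i(\hat a_i,\hat b_i)$, and total mass zero on each interval $(\hat a_i,\hat b_i)$, the measure $\mu+t\nu$ lies in $\Theta$ for all sufficiently small $t>0$ provided $\nu\ge 0$ where $\mu=0$ and $\nu\le 0$ where $\mu=\theta^{-1}$. A direct calculation gives
\[
\frac{d}{dt}\bigg|_{t=0}I_V[\mu+t\nu]=2\theta\iint \ln|x-y|\,\mu(y)\,dy\,\nu(x)\,dx-\int V(x)\nu(x)\,dx=\int F_V(x)\nu(x)\,dx.
\]
Since $\mu$ maximizes $I_V$ on $\Theta$ (by Lemma \ref{Lemma_maximizer}), the left-hand side is $\le 0$ for every such admissible $\nu$. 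The growth bound \eqref{eq_derivative_bound} and basic logarithmic potential theory guarantee that $F_V$ is continuous on each $[\hat a_i,\hat b_i]$, so integrals against $\nu$ are well defined.

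Next I would define the constants and verify \eqref{eq_band_equality}. Fix $i$ and pick two points $x_1,x_2$ in bands inside $[\hat a_i,\hat b_i]$ (the generic case; the degenerate case will be handled last). Near such points $0<\mu<\theta^{-1}$, so bump perturbations $\nu$ supported near $x_1$ and $x_2$ with $\int\nu=0$ on that interval and zero elsewhere are admissible in \emph{both} signs. Applying the first-variation inequality to $\pm\nu$ and letting the bumps shrink to Dirac masses yields $F_V(x_1)=F_V(x_2)$. Thus $F_V$ is constant on all bands inside $[\hat a_i,\hat b_i]$; define $f_i$ to be this common value. Now to obtain \eqref{eq_void_inequality}, take $x_1$ in a void of $[\hat a_i,\hat b_i]$ and $x_2$ in a band of the same interval, and consider $\nu$ with $\nu\ge 0$ near $x_1$, $\nu\le 0$ near $x_2$, and $\int\nu=0$. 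Admissibility holds, so $0\ge \int F_V\nu\,dx\to F_V(x_1)-F_V(x_2)=F_V(x_1)-f_i$. The saturated-region inequality \eqref{eq_saturated_inequality} follows from the symmetric argument with $\nu\le 0$ near $x_1\in\{\mu=\theta^{-1}\}$ and $\nu\ge 0$ near $x_2$ in a band.

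The main point that requires a little care is the degenerate situation where $[\hat a_i,\hat b_i]$ contains no band, so the density takes only the values $0$ and $\theta^{-1}$ there. Assumption \ref{Assumption_filling} still forces both a void and a saturated portion to be present (otherwise the filling $\hat n_i$ would sit at the boundary of $[0,\theta^{-1}(\hat b_i-\hat a_i)]$). Comparing a void point to a saturated point by a one-sign-each perturbation gives $F_V(x_1)\le F_V(x_2)$ for every $x_1$ in a void and $x_2$ in a saturated piece of that interval, so one may set $f_i$ to be any value in the interval $[\sup_{\text{voids}}F_V,\inf_{\text{sat.}}F_V]$ and the three conditions hold (vacuously on bands). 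This is the only slightly subtle step; the rest is a routine variational calculation, and I expect it to be the main conceptual check required.
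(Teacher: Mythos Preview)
Your proposal is correct and follows exactly the standard variational (Kuhn--Tucker) argument that the paper has in mind: the paper's own proof is a one-line citation to \cite{ST}, \cite{DS} (``This characterization is readily obtained from the variation of the functional $I_V[\cdot]$''), and your write-up is essentially an expansion of what those references contain. Your handling of the degenerate no-band case via Assumption~\ref{Assumption_filling} is a nice touch that the paper leaves implicit.
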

\begin{proof}
 This characterization is readily obtained from the variation of the functional $I_V[\cdot]$, cf.\ \cite{ST}, \cite{DS}.
\end{proof}

Further fix a parameter $p>2$ and let $\tilde \mu_N$ denote the convolution of the empirical
measure $\mu_N$ with uniform measure on the interval $[0,N^{-p}]$. The following statement is a
generalization of Proposition \ref{Prop_pseudodistance_bound}. Let us emphasize that the measure
$\mu$ here and below depends on the filling fractions $\hat n_i= n_i(N)/N$, and thus changes with
$N$.

\begin{proposition} \label{Prop_pseudodistance_bound_general}
 Let $\mu$ be the maximizer of Lemma \ref{Lemma_maximizer}.
 Then there exists $C\in\mathbb R$ such that for all $\gamma>0$ and all $N$ we have
 $$
  \Pp_{N}
  \Bigl( \Dr(\tilde\mu_N,\mu)\ge \gamma\Bigr)\le\exp\bigl( CN\ln^2(N)
  -\gamma^2 N^2\bigr),
 $$
 where $\Dr$ was defined in \eqref{eq_quadratic_main},
 \eqref{eq_quadratic_alternative}.
\end{proposition}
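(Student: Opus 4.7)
The plan is to adapt the proof of Proposition \ref{Prop_pseudodistance_bound} to the multi-cut setting with general $\theta > 0$. The first step is to produce an upper bound of the form
$$
\log \P(\ell_1,\dots,\ell_N) \le C_N + N^2 I_V\bigl[\mathrm{mes}[\ell]\bigr] - \log Z_N + O(N \ln^2 N),
$$
uniformly in $\ell \in \W$, where $\mathrm{mes}[\ell] = N^{-1}\sum\delta_{\ell_i/N}$ and $C_N$ collects the explicit $N^2 \ln N$ term arising from the rescaling $\ell_i = N x_i$. This uses the Gamma-ratio asymptotics
$$
\log \frac{\Gamma(s+1)\Gamma(s+\theta)}{\Gamma(s)\Gamma(s+1-\theta)} = 2\theta \log s + O(1/s), \qquad s \ge \theta,
$$
applied to the pairs $s = \ell_j - \ell_i$, together with the expansion $V_N = V + O(\ln N / N)$ from Assumption \ref{Assumptions_basic}. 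The $O(N \ln^2 N)$ error, stronger than the $O(N \ln N)$ appearing in the toy case, reflects the interaction of the $O(\ln N / N)$ control on $V_N$, the endpoint uncertainty $a_i(N) = N \hat a_i + O(\ln N)$, and the logarithmic singularity of $V'$ from \eqref{eq_derivative_bound} near the boundaries $\hat a_i, \hat b_i$.

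Next, one establishes a matching lower bound on $Z_N$. For each interval $[\hat a_i,\hat b_i]$ I construct a quantile configuration of $n_i(N)$ particles approximating the restriction of $\mu$ there; the fact that $\mu(x) \le \theta^{-1}$ makes these particles $\theta/N$-separated after rounding, and the strict inequality $\hat n_i < \theta^{-1}(\hat b_i - \hat a_i) - C$ from Assumption \ref{Assumption_filling} guarantees that the rounded configuration remains strictly inside $[a_i(N), b_i(N)]$. Evaluating the density of $\P$ at this configuration gives
$$
Z_N \ge \exp\bigl(C_N + N^2 I_V[\mu] - C' N \ln^2 N\bigr).
$$
I then pass to the mollified measure $\tilde \mu_N = \mathrm{mes}[\ell] * \mathrm{Unif}[0, N^{-p}]$, verifying exactly as in the toy case that $I_V[\tilde \mu_N] - I_V[\mathrm{mes}[\ell]] = O(\ln N / N)$: the logarithmic double integral is barely affected, and the linear part is controlled by \eqref{eq_derivative_bound}. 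Combining these estimates,
$$
\P(\ell_1,\dots,\ell_N) \le \exp\bigl( C N \ln^2 N + N^2 (I_V[\tilde \mu_N] - I_V[\mu])\bigr).
$$

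The heart of the argument is the completion-of-the-square identity
$$
I_V[\tilde \mu_N] - I_V[\mu] = -\theta \D(\mu, \tilde \mu_N) + \sum_{i=1}^k \int_{\hat a_i}^{\hat b_i} (F_V(x) - f_i)(\tilde \mu_N - \mu)(dx),
$$
where the constants $f_i$ are those given by \eqref{eq_void_inequality}--\eqref{eq_band_equality}. Because the filling fractions are fixed and mollification preserves mass up to $O(N^{-p+1})$ boundary effects, the integral of $(\tilde\mu_N - \mu)$ over each $[\hat a_i, \hat b_i]$ vanishes to leading order, which is what justifies the subtraction of the constants $f_i$. Each cross integral is then upper-bounded via the sign-based argument of Proposition \ref{Prop_pseudodistance_bound}: the integrand vanishes on bands by \eqref{eq_band_equality}, is non-positive on voids by \eqref{eq_void_inequality} (where $\mu \equiv 0$ and hence $\tilde\mu_N - \mu \ge 0$), and on saturated regions one partitions into intervals of length $1/N$ and exploits \eqref{eq_saturated_inequality} together with the density bound $\tilde\mu_N \le \theta^{-1} + O(N^{-p+1})$, following the estimate \eqref{eq_x46} essentially verbatim. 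The total contribution is $O(\ln N / N)$, which after multiplication by $N^2$ is absorbed into the $C N \ln^2 N$ prefactor. A union bound over the $\exp(O(N \ln N))$ configurations in $\W$, combined with the identity $\D = c \Dr^2$ for some $c > 0$ (from comparing \eqref{eq_quadratic_main} and \eqref{eq_quadratic_alternative} via Plancherel), yields the claimed tail estimate.

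The main obstacle will be uniformity and boundary control. First, the $O(N \ln^2 N)$ remainder must hold uniformly in the filling fractions $\hat n_i$ allowed by Assumption \ref{Assumption_filling}, which requires that the quantile construction and the Gamma-ratio asymptotics both behave uniformly across all admissible fillings. Second, the logarithmic singularities of $V'$ near $\hat a_i, \hat b_i$, together with the square-root singularities of the Stieltjes transform of $\mu$ that typically appear at band edges, demand careful handling of particles within $O(\ln N / N)$ of the interval boundaries, both in the upper bound on $\log \P$ and in the partition of the saturated region used in the completion-of-the-square step.
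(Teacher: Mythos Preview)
Your proposal is correct and follows essentially the same route as the paper: Gamma-ratio asymptotics to rewrite $\P$ via $I_V$, a quantile configuration to lower-bound $Z_N$, mollification, completion of the square against the effective potential $F_V$ with the sign argument on voids/bands/saturated regions, and a union bound over $\exp(O(N\ln N))$ configurations. The one sharpening worth noting is that in the paper the upper bound on $\log\P$ carries only an $O(N\ln N)$ remainder; the extra $\ln N$ factor enters solely through the $Z_N$ lower bound, because the $O(\ln N)$ uncertainty in $a_i(N),b_i(N)$ forces $O(\ln N)$ of the quantile particles to be displaced by more than a bounded amount, and each such particle contributes $O(N\ln N)$ to the error in the double product.
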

\begin{proof}
 We start by analyzing the asymptotic of the formula for  $\Pp_N$. We observe
 that the definition of $I_V[\cdot]$ by \eqref{eq_functional_general}
 makes sense for discrete measures. It is important here that we integrate only over $x\ne y$ in \eqref{eq_functional_general}
  as otherwise
 the integral would be infinite. Using for the double product
 in \eqref{eq_distribution_form} the following corollary of the Stirling's formula
 \begin{equation}
 \label{eq_Gamma_expansion}
  \frac{\Gamma(h+\theta)}{\Gamma(h)}= h^{\theta} \bigl(1+O(h^{-1}) \bigr), \quad
  h\to+\infty,
 \end{equation}
 we can write for every
$(\ell_1,\ell_2,\dots,\ell_N)\in \W$
\begin{equation} \label{eq_x37}
\Pp_{N}(\ell_1,\dots,\ell_N) =\dfrac{\exp\Bigl(2\theta N(N-1)\ln(N)+ N^2 I_V\bigl[ {\rm mes}[\ell_1,\dots,\ell_N] \bigr]
+O(N\ln(N))\Bigr)} {Z_N},
\end{equation}
where
$$
{\rm mes}[\ell_1,\dots,\ell_N]=\frac{1}{N}\sum_{i=1}^N
\delta\left(\frac{\ell_i}{N}\right).
$$
Let us obtain a lower bound for the partition function $Z_N$ in \eqref{eq_x37}. For that let $x_i$,
 $i=1,\dots,N$ be quantiles of $\mu$ defined through
 $$
  \int_0^{x_i} \mu(x) dx = \frac{i-1/2}{N},\quad 1\le i\le N.
 $$
 Since $\mu(x)\le \theta^{-1}$, $\theta(x_{i+1}-x_i)\ge 1/N$. Therefore, using the asymptotics of
 $a_i(N)$ and $b_i(N)$ of
 Assumption \ref{Assumptions_basic}
 we conclude that there exists an absolute constant $U$ (independent of $N$)
 and a configuration
 $(\hat \ell_1,\dots,\hat \ell_N)\in\W$ such that
 \begin{equation} \label{eq_x55}
 |Nx_i-\hat \ell_i|\le
 U
 \end{equation}
  for all $1\le i\le N$ except for $O(\ln(N))$ ones.  Arguing as in Proposition
 \ref{Prop_pseudodistance_bound},
 we  write
$$
  \frac{Z_N}{N^{2\theta N(N-1)}}\ge
 \exp\Bigl( N^2 I_V\bigl[ {\rm mes}[\hat \ell_1,\dots,\hat \ell_N] \bigr]
  +O(N\ln N)\Bigr)=\exp\Bigl( N^2 I_V\bigl[ \mu  \bigr] +O(N \ln^2(N))\Bigr).
 $$
 Note that the remainder in the last formula is $O(N \ln^2(N))$ instead of $O(N\ln(N))$ in the
 proof of Proposition \ref{Prop_pseudodistance_bound}. This is due to an additional error produced by
 the $\ell_i$ for which \eqref{eq_x55} does not hold: there are $O(\ln(N))$ of such $\ell_i$ and
 each of them produces (at most) $O(N \ln(N))$ of error.

 From here on the proof literally repeats that of Proposition
 \ref{Prop_pseudodistance_bound}.
\end{proof}

\begin{corollary} \label{Corrollary_tail_bound_general} Recall the notations $\| g\|_{1/2}$ and
$\| g \|_{\rm Lip}$ of Corollary \ref{Corrollary_tail_bound}.
  Fix any $p>2$ and let $\mu(x)dx$ be the maximizer of Lemma \ref{Lemma_maximizer}. Then there exists $C\in\mathbb R$
  such that for all $\gamma>0$, all $N$ and all $f$ we have
\begin{multline} \label{eq_x31}
 \Pp_{N}\left( \left|\int_{\mathbb R} g(x) \mu_N(dx)-\int_{\mathbb R} g(x) \mu(x)dx\right|
 \ge \gamma \| g \|_{1/2}+ \frac{\|g\|_{\rm Lip}}{N^p} \right)\\ \le \exp\left( C N\ln^2(N)-\frac{\gamma^2
 N^2}{2}\right).
\end{multline}
\end{corollary}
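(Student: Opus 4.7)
The plan is to mimic verbatim the argument used in Corollary \ref{Corrollary_tail_bound}, with the only change being that we invoke Proposition \ref{Prop_pseudodistance_bound_general} in place of Proposition \ref{Prop_pseudodistance_bound}. First, I would split
\begin{equation*}
\left|\int_{\mathbb R} g\, d\mu_N - \int_{\mathbb R} g\, \mu(x)dx\right|
\le
\left|\int_{\mathbb R} g\, d\mu_N - \int_{\mathbb R} g\, d\tilde\mu_N\right|
+\left|\int_{\mathbb R} g\, d\tilde\mu_N - \int_{\mathbb R} g\, \mu(x)dx\right|,
\end{equation*}
and bound the first term deterministically by $\|g\|_{\rm Lip}/N^p$, using that $\tilde \mu_N$ is the convolution of $\mu_N$ with uniform measure on $[0,N^{-p}]$; this accounts for the $\|g\|_{\rm Lip}/N^p$ summand in the probability event.

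Next, for the second term, I would apply the Plancherel identity exactly as in the toy case: writing $\hat\phi(s)=\int e^{\i sx}\phi(x)dx$ and splitting $1 = \sqrt{|s|}\cdot\tfrac1{\sqrt{|s|}}$ before applying Cauchy--Schwarz, one gets
\begin{equation*}
\left|\int_{\mathbb R} g(x)(\tilde\mu_N(x)-\mu(x))dx\right|
\le \|g\|_{1/2}\sqrt{\int_{\mathbb R}\frac{|\hat{\tilde\mu}_N(t)-\hat\mu(t)|^2}{|t|}\,dt}
= \sqrt{2}\,\|g\|_{1/2}\,\Dr(\tilde\mu_N,\mu).
\end{equation*}
Here $g$ and $\tilde\mu_N-\mu$ are compactly supported, bounded, and lie in $L^1\cap L^2$, so Plancherel applies.

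Combining the two bounds, the event inside the probability in \eqref{eq_x31} is contained in $\{\sqrt{2}\,\|g\|_{1/2}\,\Dr(\tilde\mu_N,\mu) \ge \gamma\|g\|_{1/2}\}$, i.e. $\{\Dr(\tilde\mu_N,\mu)\ge \gamma/\sqrt 2\}$. Applying Proposition \ref{Prop_pseudodistance_bound_general} with $\gamma$ replaced by $\gamma/\sqrt 2$ yields the bound $\exp(CN\ln^2(N) - \gamma^2 N^2/2)$, which is exactly \eqref{eq_x31}. Since every step is a direct transcription of the toy--case proof, there is no real obstacle; the only point worth noting is that the $\ln^2(N)$ (rather than $\ln(N)$) loss from Proposition \ref{Prop_pseudodistance_bound_general} is absorbed into the constant $C$ in the statement.
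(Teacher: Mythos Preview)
Your proposal is correct and follows exactly the paper's approach: the paper simply states ``The proof is the same as for Corollary \ref{Corrollary_tail_bound},'' and you have faithfully transcribed that argument, substituting Proposition \ref{Prop_pseudodistance_bound_general} for Proposition \ref{Prop_pseudodistance_bound} and correctly noting the $\ln^2(N)$ in place of $\ln(N)$.
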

\begin{proof} The proof is the same as for Corollary \ref{Corrollary_tail_bound}.
\end{proof}

\begin{proof}[Proof of Theorem \ref{Theorem_LLN}] The desired statement follows from Corollary
\ref{Corrollary_tail_bound_general} with $\gamma=q\cdot N^{\eps-1/2}$, $q>0$.
\end{proof}

Recall that the set of measures $\Theta$ and the equilibrium measure $\mu$ of Lemma
\ref{Lemma_maximizer} implicitly depend on many parameters. For the next statement
we reconstruct a part of this dependence in the notations. For $\hat n=(\hat
n_1,\dots,\hat n_k)$ let $\mu^{\hat n}$ be the maximizer of $I_V$ over $\Theta^{\hat
n}$, the set of probability measures $\mu$ absolutely continuous with respect to
Lebesgue measure, with density bounded above by $1/\theta$  and with mass $\mu([\hat
a_i,\hat b_i])=\hat n_i$, $\sum_{i=1}^k\hat n_i=1$.
\begin{proposition} \label{Prop_dependence_on_filling}
Assume that $0<\hat n_i<\theta^{-1}(\hat b_i-\hat a_i)$ for all $i=1,\dots,k$ and
 that the measure $\mu^{\hat n_i}$ has at least one band in each of the
intervals $[\hat a_i, \hat b_i]$, $i=1,\dots,k$. Then there exists a finite constant $C_{\hat n}$
such that for $\hat n'$ close enough to $\hat n$
\begin{equation} \label{eq_x58} \Dr(\mu^{\hat  n},\mu^{\hat n'})\le
C_{\hat n} \|\hat n-\hat n'\|_\infty.
\end{equation}
\end{proposition}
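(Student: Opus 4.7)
The plan is to compare $\mu^{\hat n'}$ against an admissible test measure $\tilde\mu\in\Theta^{\hat n'}$ built by perturbing $\mu^{\hat n}$ inside its bands, and then to combine the quadratic expansion of $I_V$ around $\mu^{\hat n}$ with the variational inequalities \eqref{eq_void_inequality}--\eqref{eq_band_equality} and the maximality of $\mu^{\hat n'}$.

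For the construction, I would fix for each $i=1,\dots,k$ an open subinterval $B_i^0\subset(\hat a_i,\hat b_i)$ lying strictly inside a band of $\mu^{\hat n}$ (available by hypothesis) and a constant $\delta>0$ with $\delta\le\mu^{\hat n}(x)\le\theta^{-1}-\delta$ on $B_i^0$, together with a smooth nonnegative bump $\chi_i$ of integral $1$ supported in $B_i^0$. Setting $\epsilon_i=\hat n'_i-\hat n_i$ (so $\sum_i\epsilon_i=0$), $\nu=\sum_{i=1}^k\epsilon_i\chi_i(x)\,dx$ and $\tilde\mu=\mu^{\hat n}+\nu$, once $\|\hat n'-\hat n\|_\infty$ is small enough that $\|\nu\|_\infty<\delta$, the density of $\tilde\mu$ lies in $[0,\theta^{-1}]$ and has filling fractions $\hat n'_i$, hence $\tilde\mu\in\Theta^{\hat n'}$.

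Next I would invoke the ``completing the square'' identity
$$I_V[\mu^{\hat n}+\eta]-I_V[\mu^{\hat n}]=\int F_V\,d\eta-\theta\,\D(\mu^{\hat n}+\eta,\mu^{\hat n}),$$
valid for any signed measure $\eta$ of total mass zero, with $F_V$ the effective potential attached to $\mu^{\hat n}$. Using this with $\eta=\mu^{\hat n'}-\mu^{\hat n}$ and with $\eta=\nu$, and invoking $I_V[\mu^{\hat n'}]\ge I_V[\tilde\mu]$ by the maximality of $\mu^{\hat n'}$ on $\Theta^{\hat n'}$, one arrives at
$$\theta\,\D(\mu^{\hat n'},\mu^{\hat n})\ \le\ \int F_V\,d(\mu^{\hat n'}-\tilde\mu)\ +\ \theta\,\D(\tilde\mu,\mu^{\hat n}).$$

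The remaining task is to estimate the right-hand side. The first integral is non-positive: since $\mu^{\hat n'}$ and $\tilde\mu$ share the filling fraction $\hat n'_i$ on $[\hat a_i,\hat b_i]$, I may replace $F_V$ there by $F_V-f_i$; this vanishes on bands of $\mu^{\hat n}$ by \eqref{eq_band_equality}; on voids, $\tilde\mu=\mu^{\hat n}=0$ (because $\nu$ sits strictly inside the bands), so $d(\mu^{\hat n'}-\tilde\mu)\ge 0$ while $F_V-f_i\le 0$ by \eqref{eq_void_inequality}; on saturated regions, $\tilde\mu=\mu^{\hat n}$ has density $\theta^{-1}$, so $d(\mu^{\hat n'}-\tilde\mu)\le 0$ while $F_V-f_i\ge 0$ by \eqref{eq_saturated_inequality}. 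The second term equals $\theta\,\D(\nu,0)=-\theta\iint\ln|x-y|\,\nu(dx)\nu(dy)$, which expands as $\theta\sum_{i,j}\epsilon_i\epsilon_j K_{ij}$ for fixed constants $K_{ij}$ and is therefore $O(\|\hat n-\hat n'\|_\infty^2)$. Combining the two bounds yields $\D(\mu^{\hat n'},\mu^{\hat n})\le C_{\hat n}\|\hat n-\hat n'\|_\infty^2$, which is (a fortiori) the claim \eqref{eq_x58}. The only delicate point is that $\tilde\mu$ must coincide with $\mu^{\hat n}$ on all voids and saturated regions so that the sign analysis closes, and this is precisely what the hypothesis ``a band in each $[\hat a_i,\hat b_i]$'' is designed to guarantee.
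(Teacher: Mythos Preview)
Your proof is correct and follows essentially the same route as the paper: complete the square in $I_V$ around $\mu^{\hat n}$, build a competitor $\tilde\mu\in\Theta^{\hat n'}$ by adding mass inside the bands, use maximality of $\mu^{\hat n'}$ together with the sign of $F_V-f_i$ on voids/saturated regions/bands, and estimate $\D(\tilde\mu,\mu^{\hat n})$ as quadratic in $\hat n'-\hat n$. The only cosmetic differences are that the paper takes $\chi_i=|B_i|^{-1}\,1_{B_i}$ rather than a smooth bump, and states the final bound as $O(\|\hat n-\hat n'\|_\infty)$ without noting (as you do) that the argument actually gives $O(\|\hat n-\hat n'\|_\infty^2)$.
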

\begin{remark}
 Arguing as in the proof of Corollary \ref{Corrollary_tail_bound}, one deduces from \eqref{eq_x58} that for any
 smooth function $g(x)$, the averages $\int g(x) \mu^{\hat n}(x) dx$ and $\int g(x) \mu^{\hat n'}(x) dx$
 are close. Their difference goes to zero as $\hat n'\to \hat n$.
\end{remark}
\begin{remark} In all the applications that we present, the assumption that the measure $\mu^{\hat n_i}$ has at least one band in each of the
intervals $[\hat a_i, \hat b_i]$, $i=1,\dots,k$, is satisfied automatically, see the
end of Section \ref{Section_multi_Krawtchouk} for a general argument in this
direction.
\end{remark}
\begin{proof}[Proof of Proposition \ref{Prop_dependence_on_filling}]
 By definition, for any probability
measure $\nu$ with density $\nu(x)$
\begin{eqnarray*}
I_V(\nu)&=&I_V(\mu^{\hat n}) -\theta \D(\nu,\mu^{\hat n}) - \int\left(V(x)-2\theta \int\log|x-y|\mu^{\hat n}(y)dy\right)(\nu(x)-\mu^{\hat n}(x))dx\\
&=&I_V(\mu^{\hat n}) -\theta \D(\nu,\mu^{\hat n}) + \int \tilde F_V(x) (\nu(x)-\mu^{\hat n}(x))dx+\sum_{i=1}^k f_i (\nu-\mu^{\hat n})([\hat a_i,\hat b_i]) \\
\end{eqnarray*}
where $\tilde F_V(x)=F_V(x)-f_i$ on $[\hat a_i,\hat b_i]$ is nonpositive on voids,
nonnegative on saturated regions and vanishes on bands for $\mu^{\hat n}$, cf.\
\eqref{eq_void_inequality}--\eqref{eq_band_equality}. We next choose $\hat n'\neq
\hat  n$ and let $\nu$ be a probability measure with filling fractions $\hat n'$ so
that
$$I_V(\nu)\le I_V(\mu^{\hat n'}).$$
The above decomposition and $(\nu-\mu^{\hat n'})([\hat a_i,\hat b_i])=0$ implies
$$\theta \D(\mu^{\hat n'},\mu^{\hat n})-\int \tilde F_V(x) (\mu^{\hat n'}(x)-\mu^{\hat n}(x)
)dx \le \theta \D(\nu,\mu^{\hat n})-\int \tilde F_V(x) (\nu(x)-\mu^{\hat n}(x) )dx\,.
$$
Observe that $\int \tilde F_V(x) (\nu(x)-\mu^{\hat n}(x) )dx$ is nonpositive for any probability
measure $\nu$ whose density is at most $\theta^{-1}$, and vanishes if $\nu-\mu^{\hat n}$ is
supported on the bands of $\mu^{\hat n}$. We assume that this is the case to deduce that
\begin{equation}\label{eq_x56}
\D(\mu^{\hat n'},\mu^{\hat n})\le \D(\nu,\mu^{\hat n})\,.
\end{equation}
 Finally we choose
$\nu=\mu^{\hat n} +\sum_i (\hat  n_i'-\hat n_i) \frac{1_{B_i} }{|B_i|}dx$ where
$B_i$ denotes a subset in the interior of  bands in $[\hat a_i,\hat b_i]$  so that
on $B_i$ we have $\delta\le d\mu^{\hat n}/dx\le 1/\theta-\delta$ for some small
$\delta>0$. Note that our assumption on the existence of bands in all the intervals
$[\hat a_i, \hat b_i]$ implies that $B_i$ is not empty.
 For $ n_i'- n_i$ small enough (smaller than $\delta |B_i|$), $\nu\in
\Theta^{\hat n'}$. Hence, we have \begin{equation} \label{eq_x57}
 \D(\nu,\mu^{\hat n})=-
 \sum_{i,j=1}^k \frac{(\hat  n_i'-\hat n_i) (\hat  n_j'-\hat n_j)}{|B_i||B_j|}
 \int_{t\in B_i} \int_{s\in B_j} \ln|t-s|\, dt\, ds \le C_{\hat n}\|\hat n-\hat n'\|^2_\infty
\end{equation}
 for a constant $C_{\hat n}>0$. Combining \eqref{eq_x56} and \eqref{eq_x57} we
 obtain
 $$\D(\mu^{\hat n'},\mu^{\hat n})\le C_{\hat n}\|\hat n-\hat n'\|_\infty^2\,.\qedhere $$
\end{proof}

\subsection{Functions $R$ and $Q$}

\label{Section_R_and_Q}

We work with the equilibrium measure $\mu$ of density $\mu(x)$ in terms of its
Stieltjes transform $G(z)$ defined through
\begin{equation} \label{eq_St_transform}
 G_\mu(z)=\int_{-\infty}^{\infty} \frac{\mu(x) dx}{z-x}.
\end{equation}
Observe that \eqref{eq_St_transform} makes sense for all $z$ outside the support of
$\mu(x)$ and $G_\mu(z)$ is holomorphic there.

We are going to extensively use the following two notations for any function $F(z)$
of a complex variable $z$:
$$
 F(z+\i 0):=\lim_{\delta\to 0} F(z+\i\delta),\quad \quad F(z-\i 0):=\lim_{\delta\to 0}
 F(z-\i\delta).
$$
At every point $x$ where $\mu(x)$ is continuous,
it can be reconstructed by its Stieltjes transform via
\begin{equation}
\label{eq_Stil_inversion}
 \mu(x)=\frac{1}{2\pi\i}\left(G_\mu(x+\i 0)-G_\mu(x-\i 0)\right),\quad x\in\mathbb R.
\end{equation}
On the other hand, the equilibrium measure characterization \eqref{eq_band_equality}
implies (cf.\ \cite{DS}, \cite{ST}) that for $x$ in a band of the equilibrium
measure
\begin{equation}
\label{eq_variational_equation}
 \theta \bigl(G_\mu(x+\i 0)+ G_\mu(x-\i 0)\bigr)=\frac{\partial}{\partial x}
 V(x).
\end{equation}

We also define the Stieltjes transform $G_N(z)$ of the prelimit empirical measure
\eqref{eq_empirical_mes} through
\begin{equation} 
 G_N(z)=\int_{-\infty}^{\infty} \frac{1}{z-x} \mu_N(dx)=\frac{1}{N}
 \sum_{i=1}^N
 \frac{1}{z-\ell_i/N},\quad (\ell_1,\dots,\ell_N)\in\W \text{ is }
 \P\text{-distributed}.
\end{equation}

The functions $R_\mu(z)$, $Q_\mu(z)$ defined in \eqref{eq_def_R}, \eqref{eq_def_Q}.
are important for our asymptotic study.

\begin{proposition} \label{Proposition_zero_order_equation}
 Under Assumptions \ref{Assumptions_basic}, \ref{Assumption_filling},
 \ref{Assumptions_ratio}, \ref{Assumption_vanishing},
  $R_\mu(z)$ is holomorphic in $\mathcal M_N$.
\end{proposition}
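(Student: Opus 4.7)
My plan is to obtain $R_\mu(z)$ as the $N\to\infty$ limit of the functions $R_N(Nz)$ coming from Nekrasov's equation (Theorem \ref{Theorem_discrete_loop}), and to transfer holomorphicity from the prelimit objects to the limit. By Theorem \ref{Theorem_discrete_loop} and Assumption \ref{Assumptions_ratio}, each $R_N(\xi)$ is holomorphic in $\mathcal M_N$, so after the rescaling $\xi=Nz$ the functions $z\mapsto R_N(Nz)$ are holomorphic on compact subsets of $\mathcal M$ for all sufficiently large $N$.

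The second step is to identify the pointwise limit for $z\in\mathcal M\setminus\mathrm{supp}(\mu)$. Exactly as in the heuristic calculation of Section \ref{Section_heuristics_1}, expanding the logarithm gives
$$\prod_{i=1}^N\!\left(1-\frac{\theta}{Nz-\ell_i}\right) = \exp\!\left(-\theta G_N(z) + O(N^{-1})\right),$$
and analogously for the $(Nz-\ell_i-1)^{-1}$ factor, uniformly in $z$ on compacts off the support. Because $G_N(z)$ is uniformly bounded there, Theorem \ref{Theorem_LLN} applied to the bounded Lipschitz function $x\mapsto 1/(z-x)$, combined with the dominated convergence theorem, yields $\E_{\P}\exp(\mp\theta G_N(z))\to \exp(\mp\theta G_\mu(z))$. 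Together with $\phi^\pm_N(Nz)\to \phi^\pm(z)$ from Assumption \ref{Assumptions_ratio}, this shows $R_N(Nz)\to R_\mu(z)$ uniformly on compact subsets of $\mathcal M\setminus\mathrm{supp}(\mu)$.

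The last step extends the convergence, and hence holomorphicity, across the support. I would choose an open $U$ with $\bigcup_i[\hat a_i,\hat b_i]\subset U$, $\bar U\subset\mathcal M$, and $\partial U\subset\mathcal M\setminus\mathrm{supp}(\mu)$ (a thin complex thickening of the intervals). Since each $R_N(N\cdot)$ is holomorphic on a neighborhood of $\bar U$, the maximum modulus principle gives $\sup_{U}|R_N(Nz)|\le \sup_{\partial U}|R_N(Nz)|$, and the right-hand side is uniformly bounded in $N$ by the uniform convergence on $\partial U$. Hence $\{R_N(N\cdot)\}$ is a normal family on $U$, and Vitali's theorem (pointwise convergence on the open subset $U\setminus\mathrm{supp}(\mu)$, which has accumulation points in $U$, forces convergence on all of $U$ to a holomorphic limit) produces a holomorphic function on $U$ coinciding with $R_\mu$ off the support. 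This is the desired holomorphic extension of $R_\mu$ to $\mathcal M$.

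The main technical point is controlling the convergence $R_N(Nz)\to R_\mu(z)$ uniformly on compacts off the support and simultaneously controlling the deterministic $O(N^{-1})$ remainder in the expansion of the products; this is precisely where Theorem \ref{Theorem_LLN} is essential. An alternative, more direct route bypasses the prelimit entirely: verify $R_\mu(x+\i 0)=R_\mu(x-\i 0)$ on the support by substituting $G_\mu(x\pm\i 0)=\tfrac{1}{2\theta}V'(x)\mp\i\pi\mu(x)$ (from \eqref{eq_variational_equation} and Stieltjes inversion), which produces $R_\mu(x+\i 0)-R_\mu(x-\i 0)=2\i\sin(\theta\pi\mu(x))\bigl[\phi^-(x)e^{-V'/2}-\phi^+(x)e^{V'/2}\bigr]$; the bracket vanishes by the limit form $\phi^+/\phi^-=\exp(-V')$ of \eqref{eq_V_to_phi} on bands, and the $\sin$ factor vanishes on voids and saturated regions, after which a standard removable singularity argument on the real-analytic curve $\bigcup_i[\hat a_i,\hat b_i]$ delivers the extension.
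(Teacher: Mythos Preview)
Your approach is essentially the paper's: obtain $R_\mu$ as the locally uniform limit of the holomorphic functions $R_N(N\cdot)$ on a contour surrounding the support, then transfer holomorphicity inside (the paper does this in one line via the Cauchy integral formula rather than your Vitali/normal-families route, but the content is the same). One technical point you omit and the paper handles explicitly: the equilibrium measure $\mu$ depends on the filling fractions $\hat n_i$, so to prove the statement for a \emph{fixed} $\mu$ one must first choose a sequence $n_i(N)$ with $|n_i(N)-N\hat n_i|\le 1$; Theorem~\ref{Theorem_LLN} then yields convergence of $G_N$ to $G_\nu$ with $\nu=\mu^{n_i(N)/N}$, and the paper invokes Proposition~\ref{Prop_dependence_on_filling} (continuity of the equilibrium measure in the filling fractions) to pass from $G_\nu$ to $G_\mu$. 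Your alternative boundary-value argument is exactly what the paper records in the Remark immediately following its proof.
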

\begin{proof}[Proof of Proposition \ref{Proposition_zero_order_equation}]
 Since $G_\mu(z)$ is holomorphic everywhere outside the (real) support of the
 equilibrium measure $\mu(x)$, so is $R_\mu(z)$.

 Further take any point $x$ in the support of $\mu(x)$. Choose a
 simple contour $\gamma$ in $\mathcal M_N\setminus [\hat a_i ,\hat b_i]$ enclosing
 $x$. Observe that as $N\to\infty$ under the change of variables $\xi=Nz$, we have
\begin{equation}\label{eq_x13}
 \begin{split}
 &\left(1-\frac{\theta}{\xi-\ell_i}\right)= \left(1-\frac{1}{N}\cdot
 \frac{\theta}{z-\ell_i/N}\right)=\exp\left(-\frac{1}{N} \cdot \frac{\theta}{z-\ell_i/N} +
 O\left(\frac{1}{N}\right)\right),\\
 &\left(1+\frac{\theta}{\xi-\ell_i-1}\right)= \left(1+\frac{1}{N}\cdot
 \frac{\theta}{z-\ell_i/N-1/N}\right)=\exp\left(\frac{1}{N} \cdot \frac{\theta}{z-\ell_i/N} +
 O\left(\frac{1}{N}\right)\right).
 \end{split}
 \end{equation}
 Now fix $\hat n_i$, $i=1,\dots,k$ and choose the filling fractions $n_i(N)$ in such a way that ${|n_i(N)-N
 \hat n_i|\le 1}$ for all $N$ and $i$. We claim that
 $R_N(Nz)$ of Theorem \ref{Theorem_discrete_loop} converges
 to $R_\mu(z)$ uniformly on $\gamma$. Indeed, the functions $\phi^\pm_N$ converge to
 $\phi^\pm$ by Assumption \ref{Assumptions_ratio}. By \eqref{eq_x13} and Theorem \ref{Theorem_LLN}
 the two expectations in the definition of $R_N(Nz)$ approximate
 $\exp(-\theta G_{\nu}(z))$ and $\exp(-\theta G_{\nu}(z))$, respectively, where $\nu$
 is the maximizer of Lemma \ref{Lemma_maximizer} with filling fractions $\frac{n_1(N)}{N}$, \dots
 $\frac{n_k(N)}{N}$. Since $\frac{n_i(N)}{N}\to \hat n_i$ as $N\to\infty$,
 Proposition \ref{Prop_dependence_on_filling} implies that $G_\nu(z)$
 converges to $G_\mu(z)$.

 Since uniform convergence on $\gamma$ of holomorphic (everywhere inside the contour)
 functions $R_N(Nz)$
 implies the same convergence inside the contour and the holomorphicity of the
 limit, we are done.
\end{proof}
\begin{remark} Alternatively, one can prove Proposition
\ref{Proposition_zero_order_equation} by showing that $R_\mu(z)$ is continuous near real $x$ via
combining \eqref{eq_variational_equation} with \eqref{eq_V_to_phi}.
\end{remark}

Despite their similar form, the analytic properties of the function $Q_\mu(z)$ are very different.
Observe that
\begin{equation} \label{eq_quadratic_on_Q}
  \bigl( Q_\mu(z) \bigr)^2= \bigl( R_\mu(z) \bigr)^2 - 4\phi^+(z)\phi^-(z).
\end{equation}
 Thus, $Q_\mu(z)$ is a branch of a two--valued analytic function which is
 the square--root of a function holomorphic in $\mathcal M_N$. Note that $Q_\mu(z)$ is
 analytic outside $\cup_{i=1}^k [\hat a_i, \hat b_i]$. On the other hand, combining
 \eqref{eq_variational_equation} with \eqref{eq_V_to_phi} one observes that $Q_\mu(z)$
 must have discontinuities in the bands of $\mu(x)$ and the endpoints of the bands should be its branching points.
 Therefore, $Q_\mu(z)$ has at least two branching points inside each interval $[\hat a_i, \hat b_i]$, which also have to be zeros of $Q_\mu(z)$.
 For our method it is important to assume that there are precisely two zeros in each
 interval, as is summarized in Assumption \ref{Assumption_simple}.

\section{Second order expansion.}

\label{Section_second_order}

The goal of this section is to prove a generalization of Theorem \ref{Theorem_main_1} in the setup
of Section \ref{Section_setup}.

Theorem \ref{Theorem_LLN} implies that $G_N(z)-G_\mu(z)$ vanishes as $N\to\infty$ uniformly over
$z$ in compact subsets of $\mathbb C\setminus \bigcup_{i=1}^{k} [\hat a_i,\hat b_i]$. Moreover,
since all the involved functions are analytic in $z$, we can infer also the uniform convergence of
the derivatives.

Similarly to Section \ref{Section_Second_order_1}, we introduce a deformed version of $G_N(z)$.
Take $2m$ parameters $\t=(t_1,\dots,t_m)$, $\v=(v_1,\dots,v_m)$ such that $v_a+t_a-y\ne 0$ for all
$a=1,\dots,m$ and all $y\in \cup_{i=1}^{k} [\hat a_i,\hat b_i]$, and let the deformed distribution
$\Pp_{N}^{\t,\v}$ be defined through
\begin{equation}\label{eq_distr_deformed}
 \Pp_{N}^{\t,\v}(\ell_1,\dots,\ell_N)=\frac{Z_N}{Z_N(\t,\v)}\cdot \Pp_N(\ell_1,\dots,\ell_N) \cdot  \prod_{i=1}^N
\prod_{a=1}^{m}\left(1+\frac{t_a}{v_a-\ell_i/N}\right),
\end{equation}
where $\Pp_N$ was defined in \eqref{eq_distribution_form}.
 In general, $\Pp_{N}^{\t,\v}$ may be a \emph{complex--valued} measure, but
the normalizing constant $Z_N(\t,\v)$ in \eqref{eq_distr_deformed} is chosen so that
the total mass of $\Pp_{N}^{\t,\v}$ is $1$, i.e.\ $\sum_{\ell\in \W}
\Pp_{N}^{\t,\v}(\ell)=1$. The numbers $t_a$, $a=1,\dots, k$ are always assumed to be
in a small neighborhood of $0$, which guarantees, in particular, that that the
measure is normalizable, i.e.\ $Z_N(\t,\v)\ne 0$.

Observe that $\Pp_{N}^{\t,\v}$ satisfies the assumptions of Theorem \ref{Theorem_discrete_loop}
with (in the notations of Assumption \ref{Assumptions_ratio})
$$
\phi^+_N(x)=\left(\phi^+\left(\frac{x}{N}\right)+\frac{1}{N}\varphi^+_N\left(\frac{x}{N}\right)+O\left(\frac{1}{N^2}\right)
 \right)\cdot \prod_{a=1}^m
 \biggl(v_a-\frac{x}{N}+\frac{1}{N}\biggr)\biggl(t_a+v_a-\frac{x}{N}\biggr),\\
$$
$$
  \phi^-_N(x)=\left(\phi^-\left(\frac{x}{N}\right)+\frac{1}{N}\varphi^-_N\left(\frac{x}{N}\right)+O\left(\frac{1}{N^2}\right)
  \right)\cdot\prod_{a=1}^m
 \biggl(v_a-\frac{x}{N}\biggr)\biggl(t_a+v_a-\frac{x}{N}+\frac{1}{N}\biggr).
$$
We also define
$$
 \psi_N^-(z)= \varphi_N^-(z)+\sum_{a=1}^m \frac{\phi^-(z)}{t_a+v_a-z},
 \quad \psi_N^+(z)=\varphi_N^+(z)+ \sum_{a=1}^m \frac{\phi^+(z)}{v_a-z}.
$$
Clearly,
$$
 \phi^\pm_N(Nz)=
 \left(\phi^\pm(z)+\frac{\psi_N^\pm(z)}{N}+O\left(\frac{1}{N^2}\right)\right) \cdot
 \prod_{a=1}^m\bigl(v_a-z\bigr)\bigl(t_a+v_a-z\bigr).
$$

We further define $\mu_N^{\t,\v}$ as the empirical distribution of $\Pp_{N}^{\t,\v}$ and set, cf.\
\eqref{eq_St_transform_deformed}
\begin{multline} \label{eq_St_transform_deformed_general}
 G_N(z)=
 \int_{-\infty}^{\infty} \frac{1}{z-x}\, \mu_N^{\t,\v}(dx)
 =\frac{1}{N}
 \sum_{i=1}^N
 \frac{1}{z-\ell_i/N},\quad (\ell_1,\dots,\ell_N) \text{ is }
 \Pp_{N}^{\t,\v}\text{-distributed}.
\end{multline}
Define
\begin{equation}
 \Delta G_N(z)=N\bigl(G_N(z)-G_\mu(z)\bigr).
\end{equation}
We will now formulate a generalization of Theorem \ref{Theorem_main_1}. For that we need to
introduce certain notations from the theory of hyperelliptic integrals.

Fix $k$ simple positively--oriented complex contours $\gamma_1$, $\gamma_2$,\dots,$\gamma_k$, such
that each $\gamma_i$ $(1\le i \le k)$ encloses the segment $[\hat a_i,\hat b_i]$ (and thus, also
$[\alpha_i,\beta_i]$), but not the other segments.

Let $P(z)=p_0 + p_1 z+\dots+ p_{k-2} z^{k-2}$ be a polynomial of degree $k-2$, and consider the map
$$
\Omega: P(z)\mapsto \left(\frac{1}{2\pi\i} \oint_{\gamma_1}
 \frac{P(z)dz}{\prod_{i=1}^k \sqrt{(z-\alpha_i)(z-\beta_i)}},\dots, \frac{1}{2\pi\i} \oint_{\gamma_k}
 \frac{P(z)dz}{\prod_{i=1}^k \sqrt{(z-\alpha_i)(z-\beta_i)}} \right)
$$
Note that the sum of the integrals in the definition of $\Omega$ equals $(-2\pi\i)$
times the residue of $\frac{P(z)}{\prod_{i=1}^k\sqrt{(z-\alpha_i)(z-\beta_i)}}$ at
 infinity, which is zero. Therefore, $\Omega$ is a linear map between
$(k-1)$--dimensional vector spaces. This map is very non-trivial, but it is known to
be an isomorphism for any $k\ge 2$ (cf.\ \cite[Section 2.1]{Dubrovin}).

Using $\Omega$ we can now define a more complicated map $\Upsilon$. Given a (continuous) function
$f(z)$ defined on the contours $\gamma_i$ and such that the sum of its integrals over these
contours is zero, we define a function $\Upsilon_z[f]$ through
$$
 \Upsilon_z[f]=f(z)+ \frac{P(z)}{\prod_{i=1}^k \sqrt{(z-\alpha_i)(z-\beta_i)}},
$$
where $P(z)$ is a unique polynomial of degree at most $k-2$, such that for each
$i=1,\dots,k$
$$
 \frac{1}{2\pi\i} \oint_{\gamma_i} \Upsilon_z[f] dz=0.
$$
The polynomial $P(z)$ can be evaluated in terms of the map $\Omega$ via
$$
 P=\Omega^{-1}\left(-\frac{1}{2\pi\i} \oint_{\gamma_1}f(z)dz,\; -\frac{1}{2\pi\i} \oint_{\gamma_2}f(z)dz,\; \dots,\;-\frac{1}{2\pi\i}
 \oint_{\gamma_k}f(z)dz\right).
$$
Note that the map $f\mapsto \Upsilon_z[f]$ is linear and does not depend on $\t$,
$\v$.

\begin{theorem} \label{Theorem_main_general} Fix $m=0,1,\dots$ and choose  complex numbers
$\v=(v_1,\dots,v_m)\subset (\mathbb C\setminus \cup_{i=1}^k [\hat a_i,\hat b_i])^m$. Under
Assumptions \ref{Assumptions_basic}--\ref{Assumption_vanishing} we have as $N\to\infty$
\begin{multline} \label{eq_first_order}
 \frac{\partial^m}{\partial t_1\cdots \partial t_m} \E_{\Pp_{N}^{\t,\v}} \bigl(  \Delta
 G_N(u)\bigr) \Bigr|_{t_a=0,\, 1\le a\le m}
 = o(1)+ \frac{\partial^m}{\partial t_1\cdots \partial t_m}\Biggl(\\
 \Upsilon_u \Biggl[
 \frac{\theta^{-1}}{2\pi\i\,
{\prod_{i=1}^k\sqrt{(u-\alpha_i)(u-\beta_i)}} } \oint_{\bigcup_{i=1}^k \gamma_i}
\frac{dz}{(u-z)H(z)}
  \cdot\Biggl(\psi^-_N(z) e^{-\theta G_\mu(z)} +\psi^+_N(z) e^{\theta
G_\mu(z)}
\\+
\phi^-(z) e^{-\theta G_\mu(z)} \frac{\theta^2}{2} \frac{\partial}{\partial z}G_\mu(z) +
 \phi^+(z) e^{\theta G_\mu(z)}
\left(\frac{\theta^2}{2}-\theta\right) \frac{\partial}{\partial z}G_\mu(z)\Biggr)
 \Biggr] \Biggr)_{t_a=0,\, 1\le a\le m},
\end{multline}
where $\gamma_i$ are  simple positively--oriented contours enclosing the segment
$[\hat a_i, \hat b_i]$ (the points $u$ and $v_1,\dots,v_m$ are outside the
contours). The remainder $o(1)$ is uniform over $u, v_1,\dots,v_m$ in compact
subsets of the unbounded component of $\mathbb C\setminus \cup_{i=1}^k\gamma_i$.
\end{theorem}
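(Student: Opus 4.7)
The plan is to imitate the toy--case strategy of Section \ref{Section_toy} (heuristics in Section \ref{Section_heuristics_1}, bounds in Sections \ref{Section_weak_estimate_1}--\ref{Section_estimates}), but with the multi--cut structure and general $\theta$ built in throughout. First I would apply Theorem \ref{Theorem_discrete_loop} to the deformed measure $\Pp_N^{\t,\v}$, whose $\phi_N^\pm$ are the original ones multiplied by $\prod_a(v_a-x/N+\cdots)(t_a+v_a-x/N+\cdots)$. Setting $\xi=Nz$ and using
\begin{equation*}
\prod_{i=1}^N\Bigl(1\mp\tfrac{\theta}{N(z-\ell_i/N)}\Bigr)=\exp\Bigl(\mp\theta G_N(z)+\tfrac{\theta^2}{2N}\partial_zG_N(z)+O(N^{-2})\Bigr),
\end{equation*}
and similarly for the shifted product (which contributes an extra $-\theta/N\cdot\partial_zG_N$ from the $-1$ in $\xi-\ell_i-1$), I would expand $R_N(Nz)$ as
\begin{equation*}
R_N(Nz)=\prod_a(v_a-z)(t_a+v_a-z)\Bigl[R_\mu(z)+\tfrac{\theta Q_\mu(z)}{N}\E_{\Pp_N^{\t,\v}}\Delta G_N(z)+\tfrac{1}{N}K_N(z)+\tfrac{R_\mu(z)}{2N^2}\theta^2\E(\Delta G_N(z))^2+\cdots\Bigr]+o(N^{-1}),
\end{equation*}
where $K_N(z)$ collects the deterministic $1/N$ corrections, namely $\psi_N^\pm(z)e^{\mp\theta G_\mu(z)}$ together with the $\partial_z G_\mu$ terms coming from the Taylor expansion of $\ln(1-x)$ and from the shift by $-1$.

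The key step is then the contour extraction. Divide both sides by $2\pi\i\,\theta H(z)(u-z)\prod_a(v_a-z)(t_a+v_a-z)$ and integrate over $\bigcup_i\gamma_i$. Since $R_N(Nz)$ is holomorphic inside $\bigcup_i\gamma_i$ (Theorem \ref{Theorem_discrete_loop}) and so is $R_\mu(z)$ (Proposition \ref{Proposition_zero_order_equation}), both contribute zero; using the factorization $Q_\mu(z)=H(z)\prod_i\sqrt{(z-\alpha_i)(z-\beta_i)}$ from Assumption \ref{Assumption_simple}, the main random term becomes $\E_{\Pp_N^{\t,\v}}\Delta G_N(z)$ integrated against a meromorphic kernel with pole at $z=u$. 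Here the \emph{multi--cut} subtlety appears: because the filling fractions $n_i(N)$ are fixed deterministically, one has $\oint_{\gamma_i}\E\Delta G_N(z)\,dz=0$ for each $i$, so the ambiguity in recovering $\E\Delta G_N(u)$ is exactly a polynomial of degree $\le k-2$ times $\prod_i\sqrt{(z-\alpha_i)(z-\beta_i)}^{-1}$; this is precisely the freedom removed by the $\Upsilon_u$ operator, whose well--definedness requires the isomorphism property of $\Omega$. Evaluating the $z=u$ residue against the deterministic $K_N$ contribution produces the right--hand side of \eqref{eq_first_order}.

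To justify replacing the higher--power terms $\E(\Delta G_N)^j$, $j\ge 2$, by $o(1)$ contributions and to control the remainders under differentiation in $t_a$, I would follow the two--stage bootstrap of Section \ref{Section_estimates}. The starting point is a weak a priori estimate of the form $\E_\Pp|\prod_{a=1}^n\Delta G_N(z_a)|\le C N^{n(1/2+\eps)}$, which I would derive from a multi--cut version of Corollary \ref{Corrollary_tail_bound_general} (already available as Proposition \ref{Prop_pseudodistance_bound_general}). Plugging this bound, together with the identity \eqref{eq_moment_via_derivative} interpreting $t_a$--derivatives of expectations as joint cumulants, into the integrated loop equation and noticing that the series in $\Delta G_N^h/N^{h-1}$ can be truncated at $H=3n$ (terms beyond contribute via Muirhead/Jensen only $o(1)$), yields an improved bound $N^{n(1/2+\eps)-(1/2-\eps)}+O(1)$ on joint cumulants; iterating $\lceil 1/(1-2\eps)\rceil$ times forces all moments $\E\prod|\Delta G_N(z_a)|$ to be $O(1)$. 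With this uniform control, the $h\ge 2$ and remainder terms are genuinely $o(1)$ under any finite number of $\partial_{t_a}$, and \eqref{eq_first_order} follows.

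The main obstacle, which has no counterpart in the one--cut toy example, is the interplay between the non--uniqueness in inverting the hyperelliptic integral (handled abstractly by $\Omega^{-1}$, hence by $\Upsilon_u$) and the fixed--filling--fraction constraint that makes $\oint_{\gamma_i}\Delta G_N(z)\,dz=0$ hold \emph{exactly} rather than approximately; without this exact vanishing the extraction of $\E\Delta G_N(u)$ would leave an unidentifiable polynomial ambiguity of order $1/N$. A secondary technical point is ensuring that Proposition \ref{Prop_dependence_on_filling} and the $N$--dependence of $\mu$ (through $\hat n_i$) are compatible with the uniformity claims on the $o(1)$ remainders; this is where Assumption \ref{Assumption_simple}, holding uniformly in $N$, is crucial.
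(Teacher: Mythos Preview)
Your proposal follows essentially the same route as the paper: apply Nekrasov's equation to $\Pp_N^{\t,\v}$, Taylor expand the products around $G_\mu$, divide by $H(z)(u-z)\prod_a(v_a-z)(t_a+v_a-z)$ and integrate over $\bigcup_i\gamma_i$, then use the exact vanishing $\oint_{\gamma_i}\Delta G_N=0$ (forced by the fixed filling fractions) to invoke $\Upsilon_u$; the remainder control is deferred to the self--improving bootstrap of Section~\ref{Section_estimates}, exactly as the paper does. You also correctly single out the genuinely new multi--cut ingredient, namely that the residue--at--infinity contribution leaves a degree $\le k-2$ polynomial ambiguity which is killed precisely by $\Upsilon_u$.

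Two small computational slips to fix: the linearized term carries the sign $-\tfrac{\theta Q_\mu(z)}{N}\E\Delta G_N(z)$ (not $+$), since $Q_\mu=\phi^-e^{-\theta G_\mu}-\phi^+e^{\theta G_\mu}$ and the first--order pieces from the two expansions are $-\tfrac{\theta}{N}\Delta G_N\cdot\phi^-e^{-\theta G_\mu}$ and $+\tfrac{\theta}{N}\Delta G_N\cdot\phi^+e^{\theta G_\mu}$; and the pairing in $K_N$ should be $\psi_N^-e^{-\theta G_\mu}+\psi_N^+e^{+\theta G_\mu}$, not $\psi_N^\pm e^{\mp\theta G_\mu}$. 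Neither affects the structure of the argument.
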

\begin{remark}
 Theorem \ref{Theorem_main_general} generalizes Theorem  \ref{Theorem_main_1} in
 several directions. First, before we had $k=1$ and now we allow any $k=1,2,\dots$.
 Second, $\theta$ was equal to $1$ and now $\theta>0$ is arbitrary. Finally, in Theorem
 \ref{Theorem_main_1} the functions $H(z)$, $\phi^\pm(z)$, $G_\mu(z)$ had a
 specific form.
\end{remark}
\begin{proof} Similarly to Theorem \ref{Theorem_main_1}, the proof has two parts:
algebraic manipulations and self-improving estimates for the remainders. The latter
literally repeats Section \ref{Section_estimates} and we are not going to present
it.

For the former we start from the statement of Theorem \ref{Theorem_discrete_loop}.
Making the change of variables $\xi=Nz$, we can write for $z$ outside
$\bigcup_{i=1}^k [\hat a_i,\hat b_i]$
\begin{equation}
\begin{split}\label{eq_first_term}
 \prod_{i=1}^{N}\left(1-\frac{\theta}{\xi-\ell_i} \right)&=
 \exp\left(\sum_{i=1}^{N} \ln\left(1-\frac{1}{N}\cdot
 \frac{\theta}{z-\ell_i/N}\right)\right)
 \\
 &=
  \exp\left(-\theta G_N(z)+\frac{\theta^2}{2N}\cdot \frac{\partial}{\partial z}G_N(z)+O\left(\frac1{N^2}\right) \right)
  \\
 &=
  \exp\left(-\theta G_\mu(z) -\frac{\theta}{N} \Delta G_N(z)+\frac{\theta^2}{2N}\cdot \frac{\partial}{\partial z}G_N(z)+O\left(\frac1{N^2}\right) \right)
  .
\end{split}
\end{equation}
 Similarly, also
\begin{multline} \label{eq_second_term}
 \prod_{i=1}^{N}\left(1+\frac{\theta}{\xi-\ell_i-1} \right)=
 \exp\left(\sum_{i=1}^N\ln\left(1+\frac{1}{N}\cdot
   \frac{\theta}{z-\ell_i/N-1/N}\right)\right)
 \\
 =
  \exp\left(\theta G_\mu(z)+\frac{\theta}{N} \Delta G_N(z)+\frac{\theta^2/2-\theta}{N}\cdot \frac{\partial}{\partial z}G_N(z)+O\left(\frac1{N^2}\right)
  \right).
\end{multline}

Recalling the definition of $R_\mu(z)$, we conclude that the function $R_N(Nz)$ from Theorem
\ref{Theorem_discrete_loop} for $\Pp_{N}^{\t,\v}$ can be written in the following form
\begin{multline} \label{eq_x14}
 R_N(Nz)=\prod_{a=1}^m (v_a-z)(t_a+v_a-z)\cdot \Bigl[ R_\mu(z)\\ +
 \phi^{-}(z)e^{\theta G_\mu (z)} \E_{\Pp_{N}^{\t,\v}} \left( \exp\left( \frac{\theta}{N} \Delta G_N(z) \right) -1 \right)
 +\phi^{+}(z)e^{-\theta G_\mu (z)} \E_{\Pp_{N}^{\t,\v}} \left( \exp\left(-\frac{\theta}{N} \Delta G_N(z) \right) -1 \right)
\\ + \frac{\psi^-_N(z)}{N} e^{-\theta G_\mu(z)} +\frac{\psi^+_N(z)}{N} e^{\theta
G_\mu(z)}
\\+
\dfrac{\phi^-(z) e^{-\theta G_\mu(z)}}{N}  \frac{\theta^2}{2} \frac{\partial}{\partial z}G_\mu(z) +
 \dfrac{\phi^+(z) e^{\theta G_\mu(z)}}{N}
\left(\frac{\theta^2}{2}-\theta\right) \frac{\partial}{\partial z}G_\mu(z)\Biggr]
+o\left(\frac{1}{N}\right).
\end{multline}
We further want to simplify the expression in the second line of \eqref{eq_x14}, by replacing
$e^{h}-1$ by $h$ under expectations. As in Theorem \ref{Theorem_main_1}, for that we use the
inequality
$$
 |e^h-h-1|\le |h|^2 e^{|h|},\quad h\in\mathbb C,
$$
and the fact that
$$
 \E_{\Pp_{N}^{\t,\v}}\left( \left|\frac{\theta}{N} \Delta G_N(z) \right|^2 \exp\left( \left|\frac{\theta}{N} \Delta
 G_N(z)\right|\right) \right)=o\left(\frac1N\right), \quad N\to\infty,
$$
which is established via self--improving estimates.

We therefore can rewrite \eqref{eq_x14} as
\begin{multline} \label{eq_x15}
 R_N(Nz)=\prod_{a=1}^m (v_a-z)(t_a+v_a-z)\cdot \Bigl[ R_\mu(z)
 -\frac{\theta Q_\mu(z)}{N} \E_{\Pp_{N}^{\t,\v}} \left(  \Delta G_N(z) \right)
 + \frac{\psi^-_N(z)}{N} e^{-\theta G_\mu(z)} +\frac{\psi^+_N(z)}{N} e^{\theta
G_\mu(z)}
\\+
\dfrac{\phi^-(z) e^{-\theta G_\mu(z)}}{N}  \frac{\theta^2}{2} \frac{\partial}{\partial z}G_\mu(z) +
 \dfrac{\phi^+(z) e^{\theta G_\mu(z)}}{N}
\left(\frac{\theta^2}{2}-\theta\right) \frac{\partial}{\partial z}G_\mu(z)\Biggr]
+o\left(\frac{1}{N}\right),
\end{multline}
where the remainder is uniform over $z$ in compact subsets of $\mathbb
C\setminus\bigcup_{i=1}^k [\hat a_i,\hat b_i]$.

Let us now fix $u$ outside the contours $\gamma_1,\dots,\gamma_k$, divide \eqref{eq_x15} by $2\pi
\i  \prod_{a=1}^m (v_a-z)(t_a+v_a-z) \cdot H(z) \cdot (u-z)$ (the definition of $H(z)$ is given in
Assumption \ref{Assumption_simple}) and integrate over the union of contours $\gamma_i$. We get
\begin{multline} \label{eq_x16}
 \frac{\theta}{2\pi\i} \oint_{\bigcup_{i=1}^k \gamma_i}
 \frac{\prod_{i=1}^k\sqrt{(z-\alpha_i)(z-\beta_i)}}{u-z}
 \cdot \E_{\Pp_{N}^{\t,\v}} \left(  \Delta G_N(z)
 \right) dz \\ = \frac{1}{2\pi\i} \oint_{\bigcup_{i=1}^k \gamma_i} \frac{dz}{(u-z)H(z)}
  \cdot\Biggl(\psi^-_N(z) e^{-\theta G_\mu(z)} +\psi^+_N(z) e^{\theta
G_\mu(z)}
\\+
\phi^-(z) e^{-\theta G_\mu(z)}  \frac{\theta^2}{2} \frac{\partial}{\partial z}G_\mu(z) +
 \phi^+(z) e^{\theta G_\mu(z)}
\left(\frac{\theta^2}{2}-\theta\right) \frac{\partial}{\partial z}G_\mu(z) \biggr) \\
+\frac{1}{2\pi\i}\oint_{\bigcup_{i=1}^k \gamma_i}\left( \frac{- N
R_N(Nz)}{(u-z)H(z)}+ \frac{N R_\mu(z)}{(u-z)H(z)} +o\left(1\right) \right)dz.
\end{multline}
Since $H(z)$ is non-zero on all the segments $[\hat a_i,\hat b_i]$, by a proper choice of small
contours $\gamma_i$ we can guarantee that $H(z)$ is non-zero inside the integration contours. Then
the terms in the last line of \eqref{eq_x16} vanish as they have no singularities inside the
integration contours.

Turning to the first line of \eqref{eq_x16}, note that $\E_{\Pp_{N}^{\t,\v}} \left(
\Delta G_N(z) \right)$ is analytic outside the contours of integration and decays as
$1/z^2$ when $z\to\infty$. Therefore, we can compute the integral as (minus) the sum
of the residues at $z=u$ and at $z=\infty$. The former is
$$
  \theta
 \prod_{i=1}^k\sqrt{(u-\alpha_i)(u-\beta_i)}
 \cdot \E_{\Pp_{N}^{\t,\v}} \left(  \Delta G_N(u) \right),
$$
while the latter is a polynomial $P_N(u)$ of degree at most $k-2$ (to see that one uses
$(z-u)^{-1}=z^{-1}\sum_{\m\ge 0} (uz)^m$ ). We conclude that
\begin{multline} \label{eq_x17}
 \theta\cdot \E_{\Pp_{N}^{\t,\v}} \left(  \Delta G_N(u)\right) +
 \frac{P_N(u)}{\prod_{i=1}^k\sqrt{(u-\alpha_i)(u-\beta_i)}}
\\ = \frac{1}{2\pi\i \cdot {\prod_{i=1}^k\sqrt{(u-\alpha_i)(u-\beta_i)}} } \oint_{\bigcup_{i=1}^k \gamma_i} \frac{dz}{(u-z)H(z)}
  \cdot\Biggl(\psi^-_N(z) e^{-\theta G_\mu(z)} +\psi^+_N(z) e^{\theta
G_\mu(z)}
\\+
\phi^-(z) e^{-\theta G_\mu(z)} \frac{\theta^2}{2} \frac{\partial}{\partial z}G_\mu(z) +
 \phi^+(z) e^{\theta G_\mu(z)}
\left(\frac{\theta^2}{2}-\theta\right) \frac{\partial}{\partial z}G_\mu(z)\Biggr) +o(1).
\end{multline}
Now we are in a position to apply the map $\Upsilon_u$. Indeed, the integral of $G_N(z)$ around
$\gamma_i$ is deterministic and equals $n_i(N)/N$. On the other hand, the integral of $G_\mu(z)$
around $\gamma_i$ equals the total mass of $\mu(x)$ inside $\gamma_i$, which is $\hat n_i$. Since,
$\hat n_i=n_i(N)/N$, the integral of $\E_{\Pp_{N}^{\t,\v}} \left(  \Delta G_N(u)\right)$ around
each loop $\gamma_i$ vanishes.

Therefore,
\begin{multline} \label{eq_x18}
 \E_{\Pp_{N}^{\t,\v}}  \left(  \Delta G_N(u)\right)
 \\=
 \Upsilon_u\Biggl[ \frac{\theta^{-1}}{2\pi\i \cdot
{\prod_{i=1}^k\sqrt{(u-\alpha_i)(u-\beta_i)}} } \oint_{\bigcup_{i=1}^k \gamma_i}
\frac{dz}{(u-z)H(z)}
  \cdot\Biggl(\psi^-_N(z) e^{-\theta G_\mu(z)} +\psi^+_N(z) e^{\theta
G_\mu(z)}
\\+
\phi^-(z) e^{-\theta G_\mu(z)} \left( \frac{\theta^2}{2} \frac{\partial}{\partial
z}G_N(z)(z)\right) +
 \phi^+(z) e^{\theta G_\mu(z)} \left(
(\theta^2/2-\theta) \frac{\partial}{\partial z}G_\mu(z)(z)\right)\Biggr)\Biggr]
+o(1) .
\end{multline}
It remains to show that \eqref{eq_x18} can be differentiated with respect to
$t_1,\dots,t_m$ and $t_1=\dots=t_m=0$, which is done in the same way as in Theorem
\ref{Theorem_main_1}, see Section \ref{Section_estimates} for the details. It is
important to note here that due to its explicit definition, the map $\Upsilon_u$ is
continuous and even Lipshitz (in uniform norm on the contours $\gamma_i$,
$i=1,\dots,k$). Moreover, it is linear and does not depend on $\t$. Therefore, its
appearance in the formulas does not affect the argument.
\end{proof}

\section{Central Limit Theorem}

The following two theorems are  corollaries of Theorem \ref{Theorem_main_general},
cf.\ Section \ref{Section_Second_order_1}.

\begin{theorem} \label{Theorem_CLT} Under Assumptions
\ref{Assumptions_basic}--\ref{Assumption_vanishing}
the joint moments of the random variables $N\bigl(G_N(z)-\E_{\Pp_{N}} G_N(z)\bigr)$ approximate
(uniformly in $z$ in compact subsets of $\mathbb C\setminus \bigcup_{i=1}^k [\hat a_i, \hat b_i]$)
those of centered Gaussian random variables with covariance
$$
 N^2 \biggl(  \E_{\Pp_{N}}  \bigl( G_N(z) G_N(w) \bigr)- \E_{\Pp_{N}}  G_N(z)\,
 \E_{\Pp_{N}} G_N(w)
 \biggr)
=\frac{1}{2\theta} \, {\mathfrak C} (z,w) + o(1), \quad N\to \infty,
$$
where
\begin{multline} \label{eq_limit_cov_general} {\mathfrak C} (z,w)=
 \Upsilon_w\Biggl[- \frac{1}{(w-z)^2}
+\frac{{\sqrt{\prod_i(z-\alpha_i)(z-\beta_i)}}}{\sqrt{\prod_i(w-\alpha_i)(w-\beta_i)}}\Biggl(\frac{1}{(z-w)^2}\\-\frac{1}{2(z-w)}\sum_{i=1}^k
\left(\frac{1}{z-\alpha_i}+\frac{1}{z-\beta_i}\right)\Biggr)\Biggr].
\end{multline}
\end{theorem}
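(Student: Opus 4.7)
The plan is to deduce Theorem \ref{Theorem_CLT} from Theorem \ref{Theorem_main_general} by combining it with the cumulant interpretation of $t$--derivatives. First I would generalize Lemma \ref{Lemma_cumulant} to the present setting, showing that
\begin{equation*}
\frac{\partial^m}{\partial t_1\cdots\partial t_m}\,\E_{\Pp_N^{\t,\v}}\bigl[\Delta G_N(u;\t,\v)\bigr]\Bigr|_{\t=0}\;=\;M_c\bigl(\Delta G_N(u),\,\Delta G_N(v_1),\ldots,\Delta G_N(v_m)\bigr).
\end{equation*}
The argument is identical to the one in Section \ref{Section_toy}: differentiation of the cumulant-generating function, together with the expansion $\exp(tNG_N(v))=\prod_i(1+t/(v-\ell_i/N))+O(t^2)$ at $t=0$, and translation-invariance of cumulants that allows us to replace $NG_N$ by $\Delta G_N$.

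With this identification, asymptotic Gaussianity amounts to showing that joint cumulants of order $\ge 3$ tend to zero. The crucial observation is that, under Assumption \ref{Assumptions_ratio}, the function $\psi_N^-(z)=\varphi_N^-(z)+\phi^-(z)\sum_{a}(t_a+v_a-z)^{-1}$ is affine in each $t_a$ separately, while $\psi_N^+(z)=\varphi_N^+(z)+\phi^+(z)\sum_{a}(v_a-z)^{-1}$ does not depend on $\t$ at all; the remaining finite-$N$ terms appearing in the right-hand side of \eqref{eq_first_order} (those proportional to $G_\mu'$) are also $\t$-independent. Consequently, for every $m\ge 2$ the operator $\partial_{t_1}\cdots\partial_{t_m}$ annihilates the explicit part of that expression, and only the $o(1)$ remainder survives. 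That this remainder stays $o(1)$ after differentiation in the $t_a$'s follows from the self-improving a priori estimates whose derivation in the multi-cut setting parallels Section \ref{Section_estimates}.

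For the second cumulant ($m=1$), one computes $\partial_{t_1}\psi_N^-(z)|_{t_1=0}=-\phi^-(z)/(v_1-z)^2$ and $\partial_{t_1}\psi_N^+|_{t_1=0}=0$, so Theorem \ref{Theorem_main_general} reduces to
\begin{equation*}
M_c(\Delta G_N(u),\Delta G_N(v_1))=-\Upsilon_u\!\left[\frac{\theta^{-1}}{2\pi\i\,g(u)}\oint_{\bigcup\gamma_i}\frac{\phi^-(z)e^{-\theta G_\mu(z)}\,dz}{(u-z)(v_1-z)^2 H(z)}\right]+o(1),
\end{equation*}
where $g(z)=\prod_{i=1}^k\sqrt{(z-\alpha_i)(z-\beta_i)}$. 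I would then split $\phi^-(z)e^{-\theta G_\mu(z)}=\tfrac12(R_\mu(z)+Q_\mu(z))$: by Proposition \ref{Proposition_zero_order_equation} and Assumption \ref{Assumption_simple}, the $R_\mu/H$ piece is holomorphic inside the contour (with $H$ nonvanishing there for tight enough $\gamma_i$) and integrates to zero, while $Q_\mu/H=g$. The surviving integral of $g(z)/((u-z)(v_1-z)^2)$ is evaluated by deforming the contour outward to pick up the simple pole residue at $z=u$, the double pole residue at $z=v_1$, and the Laurent contribution from infinity (a polynomial of degree $\le k-2$ in $u$). The latter contribution is exactly annihilated by $\Upsilon_u$, which is the defining property of that operator. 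Rewriting $g'(z)/g(z)=\tfrac12\sum_i\bigl((z-\alpha_i)^{-1}+(z-\beta_i)^{-1}\bigr)$ and using the (a posteriori) symmetry of the covariance in its two arguments to switch $\Upsilon_u$ into $\Upsilon_w$, the outcome matches \eqref{eq_limit_cov_general} up to the overall factor $\theta^{-1}$.

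Passage from joint cumulants to joint moments is a standard combinatorial identity: centered joint moments are polynomials in cumulants, so vanishing of all cumulants of order $\ge 3$ together with convergence of the covariance implies the Wick/Gaussian pattern for all joint moments, uniformly in $z,w$ on compact subsets of $\mathbb C\setminus\bigcup_i[\hat a_i,\hat b_i]$ (the uniformity is inherited from the $o(1)$ estimates, which are already uniform in Theorem \ref{Theorem_main_general}). The main technical obstacle I anticipate is the residue bookkeeping in the $m=1$ computation: carefully tracking the signs and powers of two, verifying that the infinite-contour contribution really lies in the kernel of $\Upsilon_u$ (so that it is indeed removed by the polynomial correction), and rewriting the expression in the specific asymmetric-looking form of \eqref{eq_limit_cov_general}. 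Once this algebra is verified, the theorem follows without further analytic input.
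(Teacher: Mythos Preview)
Your proposal is correct and follows essentially the same approach as the paper: Theorem \ref{Theorem_CLT} is deduced from Theorem \ref{Theorem_main_general} via the cumulant interpretation of the $t$--derivatives (the general-$\theta$, multi-cut analogue of Lemma \ref{Lemma_cumulant}), observing that the explicit right-hand side of \eqref{eq_first_order} is affine in each $t_a$ so that all cumulants of order $\ge 3$ vanish, and computing the $m=1$ case by splitting $\phi^- e^{-\theta G_\mu}=\tfrac12(R_\mu+Q_\mu)$ and evaluating residues, exactly as in the proof of Theorem \ref{Theorem_Stil_CLT}. Your caveat about the residue bookkeeping and the role of $\Upsilon$ is apt; note in particular that it is not only the residue at infinity but the combination of the $z=v_1$ and $z=\infty$ contributions whose $\gamma_i$--periods in $u$ are absorbed by the polynomial correction in $\Upsilon_u$, after which the symmetry of the covariance justifies rewriting the answer with $\Upsilon_w$ as in \eqref{eq_limit_cov_general}.
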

\begin{remark} The map $\Upsilon$ was defined in Section \ref{Section_second_order}.
In cases $k=1$ and $k=2$ the resulting covariance function ${\mathfrak C} (z,w)$ can
be brought to a more explicit form. When $k=1$,
$$
{\mathfrak C} (z,w)= -\frac{1}{(z-w)^2} \left(1-\frac{zw
-\frac{1}{2}(\alpha_1+\beta_1)(z+w)+\alpha_1\beta_1}
 {\sqrt{(z-\alpha_1)(z-\beta_1)}\sqrt{(w-\alpha_1)(w-\beta_1)}}\right).
$$
The expression for $k=2$ involves the values of complete elliptic integrals and we
do not pursue it here, cf.\ \cite[Section 3]{BDE}.
\end{remark}
\begin{remark}
 Note that the numbers $\alpha_i, \beta_i$ may depend on $\mu(x)$, which, in turn,
 may depend on $N$ through filling fractions $\hat n_i=n_i(N)/N$. However, this
 dependence is continuous (see Proposition \ref{Prop_dependence_on_filling}) and
 thus, if $\hat n_i$ converges as $N\to\infty$, then so is ${\mathfrak C} (z,w)$, and
 Theorem \ref{Theorem_CLT} turns into a conventional central limit theorem for
 $NG_N(z)$ as $N\to\infty$. In particular, if $k=1$, then this is always the case.
\end{remark}
\begin{remark} In the general beta random matrix models the covariance has precisely
the same form, cf.\ \cite{Johansson_shape}, \cite{Shch}, \cite{BoG2}.
\end{remark}

\begin{corollary} \label{Corollary_CLT_linear_general}  Take $m\ge 1$ real functions
$f_1(z),\dots,f_m(z)$ on $\cup_{i=1}^k[\hat a_i,\hat b_i]$ that can be extended to holomorphic
functions in a complex neighborhood $\B$ of $\cup_{i=1}^k[\hat a_i,\hat b_i]$. Under Assumptions
\ref{Assumptions_basic}--\ref{Assumption_vanishing}, as $N\to\infty$ the joint moments of the $m$
random variables
$$
{\mathcal L}_{f_j}= \sum_{i=1}^{N} \bigl( f_j(\ell_i)-\E_{\Pp_{N}}f_j(\ell_i)
\bigr),\quad (\ell_1,\dots,\ell_N)
 \text{ is } \Pp_{N}\text{--distributed,}
$$
approximate those of centered Gaussian random variables with asymptotic covariance
\begin{equation}
\label{eq_linear_covariance}
\E_{\Pp_{N}} {\mathcal L}_{f_i} {\mathcal L}_{f_j}=
\frac{1}{(2\pi\i)^2}
 \oint_{\cup_i \gamma_i} \oint_{\cup_i \gamma_i} f_i(u) f_j(v) {\mathfrak C}(u,v)\, du\, dv +o(1),
\end{equation}
where $\gamma_i$, $i=1,\dots,k$, are positively oriented contours in $\B$ that enclose $[\hat
a_i,\hat b_i]$, respectively, and ${\mathfrak C}(u,v)$ is given by \eqref{eq_limit_cov_general}.
\end{corollary}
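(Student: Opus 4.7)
The plan is to reduce Corollary \ref{Corollary_CLT_linear_general} directly to Theorem \ref{Theorem_CLT} via the Cauchy integral formula, following verbatim the argument given for Corollary \ref{Corollary_CLT_linear} in the toy setting. The only substantive difference is that a single contour is replaced by the disjoint union $\cup_i \gamma_i$, and this changes nothing conceptually.

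The first step is to choose each $\gamma_i$ as a simple positively oriented loop inside $\mathcal{B}$, tight enough that the bounded region it encloses contains $[a_i(N)/N, b_i(N)/N]$ for all sufficiently large $N$; this is possible by Assumption \ref{Assumptions_basic}. Since the definition of the state space $\W$ forces every particle $\ell_i/N$ to lie in $\cup_i [a_i(N)/N, b_i(N)/N]$, the analyticity of $f_j$ in $\mathcal{B}$ together with the residue theorem yields the exact identity
\begin{equation*}
 \mathcal{L}_{f_j} = \frac{N}{2\pi\i}\oint_{\cup_i \gamma_i} f_j(z)\bigl(G_N(z)-\E_{\Pp_N}G_N(z)\bigr)\,dz,
\end{equation*}
valid almost surely under $\Pp_N$ (in fact deterministically on $\W$). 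Multiplying $m$ such representations and taking expectation, the joint moment $\E_{\Pp_N}\prod_{j=1}^m \mathcal{L}_{f_j}$ becomes an $m$-fold contour integral of the joint moment of $N(G_N(z)-\E_{\Pp_N}G_N(z))$ at $m$ points on $\cup_i \gamma_i$.

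The second step invokes Theorem \ref{Theorem_CLT}: on each compact subset of $\mathbb{C}\setminus\cup_i[\hat a_i,\hat b_i]$, the joint moments of $N(G_N-\E G_N)$ converge uniformly to the joint moments of a centered Gaussian field with covariance $\theta^{-1}\mathfrak{C}(z,w)$. Uniform convergence on the compact contour $\cup_i\gamma_i$ permits exchange of limit and integral, producing exactly \eqref{eq_linear_covariance} for the limiting second moments, and via Wick's theorem for the limiting higher even moments. The same exchange applied to cumulants of order $\ge 3$ shows that those vanish in the limit (since they vanish for $N G_N$ by Theorem \ref{Theorem_CLT}), so the method of moments upgrades the convergence to convergence in joint distribution to a jointly Gaussian vector.

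There is no genuine obstacle here; the corollary is essentially a ``packaging'' result. The only items to verify carefully are (i) that the contours can be chosen in $\mathcal{B}$ while still enclosing the rescaled particle ranges $[a_i(N)/N,b_i(N)/N]$ for all large $N$, handled by Assumption \ref{Assumptions_basic} and the hard constraints defining $\W$, and (ii) that the convergence of joint moments asserted in Theorem \ref{Theorem_CLT} is uniform in the spectral parameters on compact subsets off the support — a property that is explicit in the statement of that theorem and inherited from the uniform remainders in Theorem \ref{Theorem_main_general}. With these two points in hand, the proof is literally the same as that of Corollary \ref{Corollary_CLT_linear}.
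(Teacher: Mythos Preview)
Your proposal is correct and follows essentially the same approach as the paper: the paper does not spell out a separate proof for Corollary \ref{Corollary_CLT_linear_general}, but the intended argument is precisely the one given for Corollary \ref{Corollary_CLT_linear}, namely the Cauchy integral representation of $\mathcal L_{f_j}$ combined with the uniform convergence of centered moments of $N G_N$ from Theorem \ref{Theorem_CLT}. Your two verification points (choice of contours via Assumption \ref{Assumptions_basic} and uniformity of the remainders from Theorem \ref{Theorem_main_general}) are exactly the details that make the toy-case proof transfer without change.
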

\begin{remark}
 Similarly to Corollary \ref{Corollary_CLT_linear_general},
 Theorem \ref{Theorem_main_general} can be used to obtain the first two terms in the asymptotic
 expansion of $\sum_{i=1}^{N} \E_{\Pp_{N}}f(\ell_i)$ for functions $f$
 holomorphic in a neighborhood of $\cup_{i=1}^k[\hat a_i,\hat b_i]$.
\end{remark}

\section{Non--vanishing weights}

\label{Section_non_van}

Assumption \ref{Assumption_vanishing} of the general setup of Section \ref{Section_setup} was
vanishing of the weight at the end--points of the supporting intervals. For future applications it
is convenient to relax this condition and replace it by the following exponential bound on
probabilities of having particles at $a_i(N)+1$ or $b_i(N)-1$.

\begin{assumption} \label{Assumption_endpoints}
 We require the existence of constants $C_1$, $C_2$, $C_3>0$ such that for all $N=1,2,\dots,$
 the $\P$--probability
 of the event
 $$
  \ell_j=a_i(N)+1\text{ or } \ell_j=b_i(N)-1\text { for at least one pair } 1\le j \le
  N,\; 1\le i\le k
 $$
 is bounded from above by $C_1\exp(-C_2 N^{C_3})$.
\end{assumption}

\begin{theorem} \label{Theorem_applies_to_nonvan}
 If we replace Assumption \ref{Assumption_vanishing} by Assumption
 \ref{Assumption_endpoints}, then the results of Theorem \ref{Theorem_main_general}, Theorem \ref{Theorem_CLT},
 Corollary \ref{Corollary_CLT_linear_general} are still valid
 for measures $\P$.
\end{theorem}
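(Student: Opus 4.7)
\medskip

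\emph{Proof proposal.} The plan is to show that replacing the vanishing condition $w(a_i(N);N)=w(b_i(N);N)=0$ by Assumption \ref{Assumption_endpoints} only introduces exponentially small corrections at every step of the proofs of Theorems \ref{Theorem_main_general} and \ref{Theorem_CLT}, and that these corrections are absorbed into the already present $o(1)$ remainders.

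First I would revisit the proof of Nekrasov's equation (Theorem \ref{Theorem_discrete_loop}), in which the vanishing condition was used only to guarantee the exact cancellation of the two sums in \eqref{eq_x8} at the boundary points $m=a_i(N)$ and $m=b_i(N)+1$. Without it, the residue of $R_N(\xi)$ at $\xi=a_i(N)$ comes solely from the first sum and equals
\begin{equation*}
-\theta\,\phi^-_N(a_i(N))\sum_{\ell\in\W:\,\exists p,\,\ell_p=a_i(N)}\P(\ell)\prod_{j\ne p}\!\left(1-\frac{\theta}{a_i(N)-\ell_j}\right),
\end{equation*}
and analogously at $\xi=b_i(N)+1$. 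The product is uniformly bounded because of the $\theta$--separation of particles, and $\phi^\pm_N$ are bounded on the relevant compact sets; hence by Assumption \ref{Assumption_endpoints} the residue at each of the $2k$ boundary points is $O(e^{-C_2 N^{C_3}})$. Note that Theorem \ref{Theorem_LLN}, Proposition \ref{Prop_pseudodistance_bound_general}, Corollary \ref{Corrollary_tail_bound_general}, and the weak a priori bound of Lemma \ref{Lemma_a_priory_1} depend only on the distributional formula \eqref{eq_distribution_form} together with Stirling's asymptotics and the variational characterization of $\mu$, none of which require the vanishing condition, so they carry over verbatim.

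Next I would rerun the proof of Theorem \ref{Theorem_main_general}. The only place where holomorphicity of $R_N$ is invoked is the claim that the contour integral $\oint_{\cup_i\gamma_i} R_N(Nz)/\bigl((u-z)H(z)\prod_a(v_a-z)(t_a+v_a-z)\bigr)\,dz$ in \eqref{eq_x16} vanishes. Choosing the contours $\gamma_i$ so that the exceptional points $a_i(N)/N$ and $(b_i(N)+1)/N$ lie strictly inside (which is possible since they cluster at $\hat a_i,\hat b_i$), this integral equals $2\pi\i$ times the sum of residues at these points; each residue is bounded by a constant multiple of $e^{-C_2 N^{C_3}}$ divided by distances to $u, v_1,\dots,v_m$ and by $|H(a_i(N)/N)|$, all of which are bounded away from zero. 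Hence the extra contribution is exponentially small and is absorbed into the $o(1)$ in \eqref{eq_first_order}. The self--improving estimates of Section \ref{Section_estimates} use only the modified Nekrasov equation and Lemma \ref{Lemma_a_priory_1}, so they proceed unchanged, yielding the analogue of Proposition \ref{Prop_uniform_bound}; Theorems \ref{Theorem_CLT} and Corollary \ref{Corollary_CLT_linear_general} then follow from Theorem \ref{Theorem_main_general} by the same contour arguments as in Section \ref{Section_Second_order_1}.

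The main obstacle I foresee is controlling the boundary residues after differentiating $m$ times in the deformation parameters $t_a$, as required for the multi--point statement of Theorem \ref{Theorem_main_general}. The boundary residue at $\xi=a_i(N)$ for the deformed measure $\Pp_N^{\t,\v}$ is essentially $\phi^{-,\t,\v}_N(a_i(N))\cdot\E_{\Pp_N^{\t,\v}}[\chi_{i,N}(\ell)]$, where $\chi_{i,N}(\ell)$ is a uniformly bounded quantity supported on the boundary event; by the cumulant identity underlying Lemma \ref{Lemma_cumulant}, the mixed $t$--derivative at $\t=0$ is a joint cumulant of $\chi_{i,N}$ with $NG_N(v_1),\dots,NG_N(v_k)$. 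Using the cumulant--moment inequality \eqref{eq_cum_mom}, one bounds it by
\begin{equation*}
C_k\sqrt{\E_{\P}|\chi_{i,N}|^2}\prod_{j=1}^{k}\bigl(\E_{\P}|\Delta G_N(v_j)-\E\Delta G_N(v_j)|^{2k}\bigr)^{1/2k}\le e^{-(C_2/2)N^{C_3}}\,O(N^{k(1/2+\varepsilon)}),
\end{equation*}
where the first factor comes from Assumption \ref{Assumption_endpoints} and the second from Lemma \ref{Lemma_a_priory_1}. Exponential decay beats the polynomial growth, so the derivatives of the boundary corrections remain $o(1)$, and the proofs of Theorems \ref{Theorem_main_general}, \ref{Theorem_CLT}, and Corollary \ref{Corollary_CLT_linear_general} are completed.
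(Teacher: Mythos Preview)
Your proposal is correct and follows essentially the same approach as the paper: identify that without the vanishing condition, Nekrasov's $R_N(\xi)$ acquires at most $2k$ simple poles at the endpoints with exponentially small residues (this is the content of Proposition \ref{Prop_discrete_loop_exponent}), and then observe that these contribute only exponentially small errors to every contour integral in the proof of Theorem \ref{Theorem_main_general}, which are swallowed by the existing $o(1)$.

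Two minor remarks. First, your claim that the product $\prod_{j\ne p}(1-\theta/(a_i(N)-\ell_j))$ is ``uniformly bounded'' is slightly imprecise: the sharp bound coming from $\theta$--separation is $\prod_{i=1}^{N-1}(1+1/i)=N$, not a constant. This is harmless since $N\cdot e^{-C_2 N^{C_3}}$ is still exponentially small, but it is worth stating correctly. Second, your treatment of the $t$--derivatives via the cumulant--moment inequality \eqref{eq_cum_mom} is a clean and explicit alternative to the paper's argument, which instead observes that for small $t_a$ the deformed measure $\Pp_N^{\t,\v}$ itself still satisfies Assumption \ref{Assumption_endpoints} (the deformation factor being close to $1$), so that the residues are exponentially small uniformly in small $\t$ and hence so are their $t$--derivatives by Cauchy's formula. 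Both routes yield the same conclusion.
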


The proof of Theorem \ref{Theorem_applies_to_nonvan} is based on the following observation.

\begin{proposition} \label{Prop_discrete_loop_exponent}
 In the notations of Theorem \ref{Theorem_discrete_loop},
 suppose that the condition  $\phi^-_N((a_i(N)+1)=\phi^+_N(b_i(N))=0$, $i=1,\dots,k$, does not hold.
 If $\phi^\pm_N(\xi)$ are holomorphic in a domain $\mathcal M_N \subset\mathbb C$, then so is
$R_N(\xi)$ except for at most $2k$ simple poles. These poles are at points $\{a_i(N)+1,
b_i(N)-1\}_{i=1}^k$. Under Assumption \ref{Assumption_endpoints} the corresponding residues decay
exponentially in $N$ as $N\to\infty$ in the same sense as in the bound of Assumption
\ref{Assumption_endpoints} (perhaps, with different constants $C_1$, $C_2$, $C_3$).
\end{proposition}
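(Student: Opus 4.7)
The plan is to revisit the cancellation argument in the proof of Theorem \ref{Theorem_discrete_loop} and pinpoint exactly where the boundary vanishing $w(a_i(N);N)=w(b_i(N);N)=0$ was used. As before, the only candidate singularities of $R_N(\xi)$ are simple poles at points $\xi=\ell_i$ (from the first term) or $\xi=\ell_i+1$ (from the second term), with $\ell\in\W$. For any point $m$ not of the form $a_i(N)$ or $b_i(N)+1$, the involution $\ell\mapsto \ell^-$ between configurations with $\ell_j=m$ and those with $\ell_j=m-1$ stays inside $\W$, and the identity used in the proof of Theorem \ref{Theorem_discrete_loop} gives an exact cancellation of the corresponding residue contributions. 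So the same argument shows that $R_N$ is holomorphic on $\mathcal M_N$ away from the $2k$ candidate points $\{a_i(N),b_i(N)+1\}_{i=1}^k$.

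Next I would write down the residues at the surviving points. By the monotonicity of particle positions and the spacing constraints, a configuration with $\ell_j=a_i(N)$ forces $j=I_i^-$, while the missing partner $\ell^-$ with $\ell_{I_i^-}=a_i(N)-1$ lies outside $\W$ (and, for $N$ large, outside all intervals). This gives
\[
\mathrm{Res}_{\xi=a_i(N)} R_N(\xi)= -\theta\,\phi^-_N(a_i(N))\sum_{\ell\in\W:\,\ell_{I_i^-}=a_i(N)}\P(\ell)\prod_{j\ne I_i^-}\left(1-\frac{\theta}{a_i(N)-\ell_j}\right),
\]
and an analogous expression holds at $b_i(N)+1$, with $\phi^+_N(b_i(N)+1)$ and the rightmost particle $\ell_{I_i^+}=b_i(N)$.

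It remains to bound these residues. Under Assumptions \ref{Assumptions_basic} and \ref{Assumptions_ratio}, $|\phi^\pm_N|$ is uniformly bounded on any compact subset of $\mathcal M$, hence at the endpoints. For any $j\ne I_i^-$, either $\ell_j$ lies in the same interval and is separated from $a_i(N)$ by at least $\theta (j-I_i^-)$ (by the $\theta$-spacing rule, condition (3) in the definition of $\W$), or $\ell_j$ lies in another interval and is separated by a macroscopic gap. In either case each factor $|1-\theta/(a_i(N)-\ell_j)|$ is bounded by a constant depending only on $\theta$ and the fixed geometry, and the whole product is uniformly $O(1)$ in $N$ and in the configuration. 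Therefore
\[
\bigl|\mathrm{Res}_{\xi=a_i(N)} R_N(\xi)\bigr|\le C\cdot \P\bigl(\ell_{I_i^-}=a_i(N)\bigr),
\]
and Assumption \ref{Assumption_endpoints} bounds the right-hand side by $C\cdot C_1\exp(-C_2 N^{C_3})$; the symmetric bound at $b_i(N)+1$ is handled identically.

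The main technical obstacle is the uniform bound on $\prod_{j\ne I_i^-}(1-\theta/(a_i(N)-\ell_j))$: one must check that no $\ell_j$ accumulates arbitrarily close to $a_i(N)$, which follows from the built-in $\theta$-spacing within a group and the order-$N$ separation between distinct intervals guaranteed by Assumptions \ref{Assumptions_basic} and the condition $b_{i-1}(N)+\theta\le a_i(N)$. Everything else is a routine accounting of residues; what gives the polynomial/holomorphic structure of $R_N$ away from endpoints is the same sign-cancellation identity \eqref{eq_x6}--\eqref{eq_x7} already exploited in the proof of Theorem \ref{Theorem_discrete_loop}, so no new algebraic input is required.
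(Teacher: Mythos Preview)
Your approach is the same as the paper's: repeat the residue cancellation from Theorem \ref{Theorem_discrete_loop}, observe that only the endpoint contributions survive, and bound those by the exponentially small probability from Assumption \ref{Assumption_endpoints}. Your identification of the surviving poles at $a_i(N)$ and $b_i(N)+1$ is correct (and in fact more precise than the statement of the proposition, which writes $b_i(N)$).

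There is, however, one incorrect step: the claim that the product $\prod_{j\ne I_i^-}\bigl(1-\theta/(a_i(N)-\ell_j)\bigr)$ is uniformly $O(1)$. Within the same interval the $\theta$-spacing only gives $|a_i(N)-\ell_j|\ge \theta(j-I_i^-)$, so the $j$th factor can be as large as $1+1/(j-I_i^-)$; multiplying these over $j$ gives a telescoping product $\prod_{m=1}^{n_i(N)-1}(1+1/m)=n_i(N)$, which is of order $N$, not $O(1)$. The paper records exactly this bound:
\[
\max_{\ell}\left|\prod_{j:\,\ell_j\ne m}\left(1-\frac{\theta}{m-\ell_j}\right)\right|\le \prod_{m=1}^{N-1}\left(1+\frac{1}{m}\right)=N.
\]
This does not affect the conclusion, since $N\cdot C_1\exp(-C_2 N^{C_3})$ is still exponentially small (with adjusted constants $C_1,C_2,C_3$, exactly as the proposition allows). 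You just need to replace the $O(1)$ claim by the polynomial bound $O(N)$; everything else in your argument is fine.
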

\begin{proof} We repeat the proof of Theorem \ref{Theorem_discrete_loop} and observe
the same cancelation of the poles. The only poles for which the cancelations do not occur, are
endpoints of the interval: in Theorem \ref{Theorem_discrete_loop} the functions $\phi^\pm$ were
vanishing at these endpoints, but this is no longer the case. The residue at an end-point can be
bounded for the first term in \eqref{eq_observable} by the probability to have a particle at such
an end-point (denote it by $m$ ) multiplied by
$$
 \max_{\ell_i} \left|\prod_{i:\, \ell_i\ne m}\left(1-\frac{\theta}{m-\ell_i}\right)\right|\le
 \left|\prod_{i=1}^{N-1}\left(1+\frac{\theta}{\theta i}\right)\right|=N.
$$
Thus, the exponential decay of the probability in Assumption \ref{Assumption_endpoints} implies the
exponential decay of the residue. For the second term in \eqref{eq_observable} the argument is the
same.
\end{proof}

\begin{proof}[Proof of Theorem \ref{Theorem_applies_to_nonvan}]
Note that an analogue of Proposition \ref{Prop_discrete_loop_exponent} is readily established also
for the deformed measures ${\Pp_{N}^{\t,\v}}$ as in Section \ref{Section_second_order}. Indeed, we
need such measures only for small (i.e.\ tending to $0$) values of $t_i$, but then the exponential
bounds on probability remain valid. Therefore, all the arguments of Section
\ref{Section_second_order} go through for the measures $\P$. Indeed, the only difference between
Theorem \ref{Theorem_discrete_loop} and Proposition \ref{Prop_discrete_loop_exponent} is in the
appearance of finitely many simple poles. However, since the residues are exponentially small,
these poles will only add exponentially small terms to all the remainders and thus will not
contribute to the expansions in powers of $1/N$ that we study.
\end{proof}

\section{Examples}

\label{Section_examples}

The aim of this section is to demonstrate how the Assumptions
\ref{Assumptions_basic}--\ref{Assumption_simple} are checked in applications, which yields the
validity of Theorem \ref{Theorem_main_general}, Theorem \ref{Theorem_CLT}, and Corollary
\ref{Corollary_CLT_linear_general} for certain stochastic systems.

\subsection{Multi-cut general $\theta$ extension of the Krawtchouk ensemble}
\label{Section_multi_Krawtchouk} The first example is an extension of that of
Section \ref{Section_toy} to general values of $\theta$ and $k$.

We fix $k=1,2,\dots$ and take $3k$ numbers $\hat a_i$, $\hat b_i$, $\hat n_i$, such
that
$$
 \hat a_1<\hat b_1<\hat a_2<\hat b_2<\dots<\hat a_k <\hat b_k,
$$
 $0<\hat n_i < \theta^{-1} (\hat b_i-\hat a_i)$ for all $i=1,\dots,k$, and
 $\sum_{i=1}^k \hat n_i=1$. Then we further choose numbers $a_i(N)$, $b_i(N)$,
 $n_i(N)$ such that the model fits into the setup of Section \ref{Section_setup}.
 The weight $w(x;N)$ is then defined through the identity
 \begin{equation}
 \label{eq_x48}
  \frac{w(x;N)}{w(x-1;N)}=-\prod_{i=1}^k \frac{x-b_i(N)}{x+1-a_i(N)}.
 \end{equation}
 Note that \eqref{eq_x48} agrees with the conditions $w(x;N)>0$, $x\in\cup_{i=1}^k
 [a_i(N)+1,b_i(N)-1]$ and with Assumption \ref{Assumption_vanishing}. Since the weight $w(x;N)$ is supported on $\cup_{i=1}^k
 (a_i(N),b_i(N))$, we need to supplement \eqref{eq_x48} by the choice of $k$
 constants
 \begin{equation} \label{eq_x49}
 \mathfrak c_i(N)=w(a_i(N)+1;N),\quad i=1,\dots,k.
 \end{equation}
 Observe that the multiplication of all $\mathfrak c_i$ by a same constant leaves
 the probability distribution unchanged. Therefore, for each $N$ the system depends
 on the choice of $4k-2$ constants. In particular, if $k=1$ and $\theta=1$, then (up to a shift
 of the lattice) we arrive at the example of Section \ref{Section_toy}.

 If we now assume that all the parameters are chosen so that as $N\to\infty$
 $$
  a_i(N)=N\hat a_i +O(1),\quad b_i(N)= N\hat b_i +O(1),\quad n_i(N)=N \hat
  n_i+O(1),
 $$
 $$
  \mathfrak c_i(N)=\exp(N \hat {\mathfrak c}_i),\quad i=1,\dots,k,
 $$
 then using Stirling's formula for the factorials appearing in the explicit
 expressions for $w(x;N)$, it is easy to see that the model satisfies Assumptions
 \ref{Assumptions_basic}, \ref{Assumption_filling}, \ref{Assumptions_ratio}.
 Further,
 $$
  \phi^+(z)=-\prod_{i=1}^k (z-\hat b_i),\quad \phi^-(z)=\prod_{i=1}^k (z-\hat a_i).
 $$
 The function $R_\mu(z)$ is an analytic function in $z$, which is $O(z^{k-1})$ as
 $z\to\infty$. Therefore, by Liouville's theorem $R_\mu(z)$ is a polynomial of degree
 at most $k-1$. Hence, the quadratic equation \eqref{eq_quadratic_on_Q} implies that
 $Q_\mu(z)$ is the square root of a degree $2k$ polynomial. In other words,
 decomposing into linear factors we get
 \begin{equation}
 \label{eq_Q_multi_Kr_form}
  Q_\mu(z)=2 \sqrt{\prod_{i=1}^k (z-\alpha_i)(z-\beta_i)}.
 \end{equation}
 On the other hand, our choice of $\hat n_i$ guarantees, that the equilibrium measure $\mu(x)dx$
 in each interval $[\hat a_i, \hat b_i]$ has some points where $\mu(x)>0$
 and some points where $\mu(x)<\theta^{-1}$. Combining this with the observation (that is immediate from \eqref{eq_def_Q}) that
 $Q_\mu(\hat a_i)$ and $Q_\mu(\hat b_i)$ have different signs for each
 $i=1,\dots,k$, we conclude that $\alpha_i,\beta_i\in[\hat a_i,\hat b_i]$. For each
 $i$ there are two options: either $\mu(x)$ has a band inside $[\hat a_i,\hat b_i]$,
 and then $\alpha_i$ and $\beta_i$ are endpoints inside this band, or $[\hat
 a_i,\hat b_i]$ is a union of a void and a saturated region. Let us explain that the latter is
 impossible. Indeed, then there must be a point $x\in\mathbb R$ and $\eps>0$ such that $\mu(x)=0$ on
 $(x-\eps,x)$ and $\mu(x)=\theta^{-1}$ on $(x,x+\eps)$ (or vice versa which is considered in the
 same way). Therefore, as $z$ approaches $x$ along the real axis from the left, the Stieltjes
 transform $G_\mu(z)$ explodes:
 $$
  \lim_{z\to x-} G_\mu(z)=\lim_{z\to x-} \int \frac{\mu(t)}{z-t} dt = -\infty.
 $$
 This implies $\lim_{z\to x-} \exp(G_\mu(z))=0$ and $\lim_{z\to x-} \exp(-G_\mu(z))=+\infty$. But
 then the definition \eqref{eq_def_R} of $R_\mu(z)$ implies that $R_\mu(z)$ has a singularity at
 $z=x$, which contradicts the fact that $R_\mu(z)$ is a polynomial.
 Note that it is crucial in this argument that $\phi^{\pm}$ does not have zeros
 inside $\bigcup( \hat a_i, \hat b_i)$, and indeed the end--points $\hat a_i$,
 $\hat b_i$ might separate voids and saturated regions.

 The conclusion is that this class of
 probability models satisfies Assumptions \ref{Assumptions_basic}--\ref{Assumption_vanishing} and
 Theorem \ref{Theorem_main_general}, Theorem \ref{Theorem_CLT},
 Corollary \ref{Corollary_CLT_linear_general} are valid for them.
 As far as we know, these results are new with the exception of the case
 $\theta=k=1$.

\subsection{Lozenge tilings}
\label{Section_hex} The second example demonstrates that at least two instances of probability
measures arising in the study of uniformly random lozenge tilings fit into our framework.

Consider an $A\times B\times C$ hexagon drawn on the regular triangular lattice. We
 tile this hexagon with three types of elementary lozenges (which are unions of
 adjacent triangular faces of the lattice), cf.\ Figure \ref{Fig_tiling_hex}.
 There are finitely many such tilings, and we
 are interested in the asymptotic behavior of uniformly random tiling as
 $A,B,C\to\infty$. This is a well-studied model, with many results available, cf.\
 \cite{CLP}, \cite{BKMM}, \cite{JN}, \cite{Gor}, \cite{BG}, \cite{Petrov_Airy},
 \cite{Petrov_GFF}.

\begin{figure}[t]
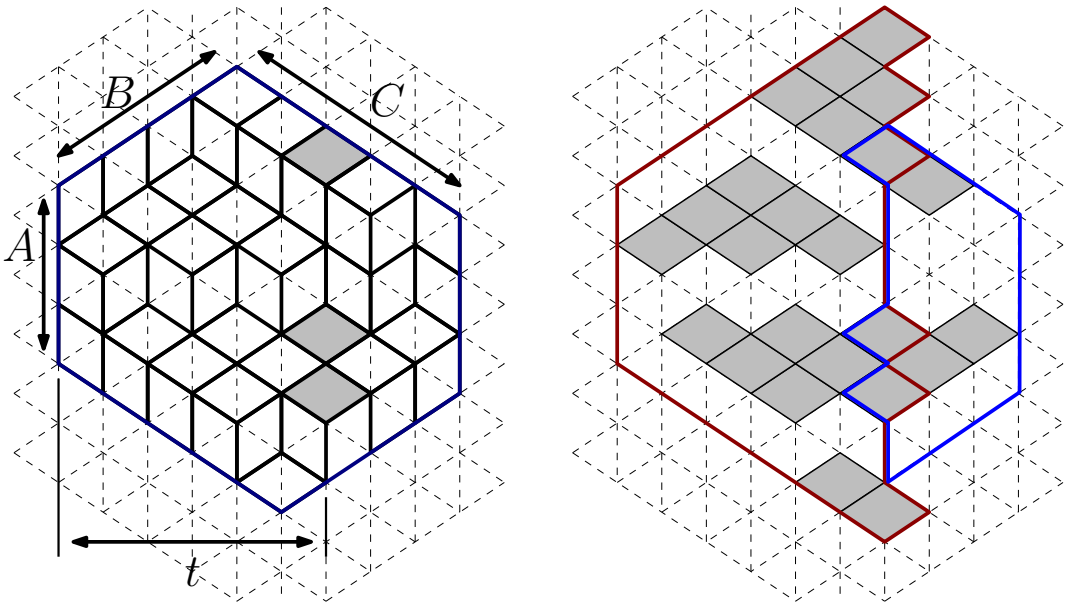

\center \scalebox{0.7}{\includegraphics{hexagon_1.pdf}} \qquad
\scalebox{0.7}{\includegraphics{hexagon_2.pdf}} \caption{Left panel: Lozenge tiling
of the $3\times 4\times 5$ hexagon and $3$ horizontal lozenges on the sixth from the
left vertical line. Right panel: two Gelfand--Tsetlin patterns (inside blue and red
contours) corresponding to each tiling. \label{Fig_tiling_hex}}
\end{figure}

Let us dissect the hexagon by a vertical line at distance $t$ from the left boundary. There will be
a fixed (depending on $A,B,C,t$) number $N$ of horizontal lozenges on this line;  let $\P$ denote
the distribution of these lozenges. This distribution can be computed by noticing that the tiling
can be viewed as two \emph{Gelfand--Tsetlin patterns} glued together, as shown in the right panel
of Figure \ref{Fig_tiling_hex}. The enumeration of Gelfand--Tsetlin patterns is well--known and can
be used to compute the distribution $\P$ (see \cite{CLP}, \cite{Gor}, \cite{BP_lectures},
\cite{BuGo} for more details). Assuming $t>\max(B,C)$ as in Figure \ref{Fig_tiling_hex}, which
yields $N=B+C-t$, and introducing the coordinate system such that the lowest possible position for
horizontal lozenges on the $t$th vertical line is $1$ and the highest one is $A+B+C-t$, we obtain
the formula
\begin{equation}
\label{eq_Hahn}
 \P(\ell_1,\dots,\ell_N)=\frac{1}{Z_N} \prod_{i<j} (\ell_i-\ell_j)^2 \prod_{i=1}^N
 \biggl[ (A+B+C+1-t-\ell_i)_{t-B} \, (\ell_i)_{t-C} \biggr],
\end{equation}
where $(a)_n$ is the Pochhammer symbol, $(a)_n=a(a+1)\cdots (a+n-1)$, and $Z_N$ is a normalizing
constant (which can be computed explicitly in this case). The distribution of the form
\eqref{eq_Hahn} is known as \emph{Hahn orthogonal polynomial ensemble}.

Take a large parameter $L$ and suppose that
\begin{equation}
\label{eq_hex_scaling}
 A=\hat A L +O(1),\quad B=\hat B L+ O(1),\quad C=\hat C L+O(1),\quad t=\hat t L+
 O(1),
\end{equation}
 for positive constants $\hat A$, $\hat B$, $\hat C$, $\hat t$. We assume that $\hat t>
 \max(\hat B,\hat C)$ --- other possibilities for $t$ are considered similarly. Let us check
 that under such choice of parameters the ensemble \eqref{eq_Hahn} satisfies
 Assumptions \ref{Assumptions_basic} - \ref{Assumption_vanishing}.
 \begin{itemize}
  \item Assumption \ref{Assumptions_basic} is satisfied due to Stirling's formula applied to
  Pochhammer symbols. The potential $V(u)$ has the form:
  \begin{multline*}
   V(u)=-(\hat A+\hat C-u)\ln(\hat A+\hat C-u)+(\hat A + \hat B + \hat C -\hat
   t-u)\ln(\hat A + \hat B + \hat C -\hat t-u)\\-(\hat t-\hat C +u)\ln(\hat t-\hat C
   +u)+u\ln(u).
  \end{multline*}
  \item Assumption \ref{Assumption_filling} is empty, as there is only one filling fraction,
  $n_1(N)=N$.
  \item Assumption \ref{Assumptions_ratio} is immediate from the definitions, and we have
 $$
   \phi^+(z)= (\hat t-\hat C +z) (\hat A+\hat B+\hat C-\hat
   t-z), \quad \phi^-(z)=  z (\hat A +\hat C -z).
$$
  \item For Assumption \ref{Assumption_simple} note that $R_\mu(z)$ is an analytic function which
  grows as $O(z^2)$ as $z\to\infty$ and therefore it is a polynomial of degree at most two.
  Hence, by \eqref{eq_quadratic_on_Q}, the function $Q_\mu(z)$ is a square root of a
  polynomial. The definition implies that $Q_\mu(z)$ is $O(z)$ as $z\to\infty$. Thus $Q_\mu(z)$ is
  a square root of degree two polynomial and has the form
  $$
   Q_\mu(z)={\rm const} \cdot \sqrt{(z-a)(z-b)}.
  $$
  The points $a$ and $b$ are necessarily endpoints of the band of the equilibrium measure $\mu(x)dx$.
  \item Assumption \ref{Assumption_vanishing} is immediate from the formula
  \eqref{eq_Hahn}.
 \end{itemize}
The conclusion is that Theorem \ref{Theorem_main_general}, Theorem \ref{Theorem_CLT},
 Corollary \ref{Corollary_CLT_linear_general} are valid.
 This implies a form of the Central Limit Theorem for fluctuations of lozenge tilings of a hexagon.
 We remark that the same CLT (and even stronger statement concerning joint asymptotic Gaussianity
 for several values of $\hat t$) can be also established by other methods, cf.\
 \cite{Petrov_GFF},  \cite{BD}, \cite{BuGo2}.

\begin{remark} One can probably use the fact that $R_\mu(z)$ is a degree two polynomial
to find an explicit formula for $R_\mu(z)$ and thus, also for $G_\mu(z)$, and for the equilibrium
measure $\mu(x)dx$ describing the \emph{limit shape} for lozenge tilings, cf.\ Section
\ref{Section_zw-measures} for a similar argument. Explicit formulas for $\mu(x)$ were previously
found by other methods in \cite{CLP}, \cite{BKMM}, \cite{KO}, \cite{Gor}, \cite{Petrov_Airy}.
\end{remark}

\begin{figure}[t]
\center \scalebox{0.7}{\includegraphics{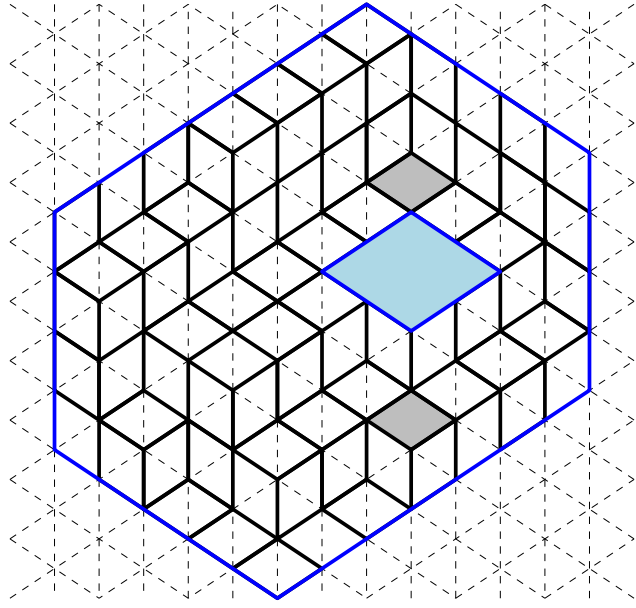}} \caption{Lozenge tiling of the $4\times
7\times 5$ hexagon with a rhombic $2\times 2$ hole (shown in blue). The center of the hole is at
distance $t$ from the left side of the hexagon. The remaining horizontal lozenges on $t$th vertical
line are shown in gray. \label{Fig_tiling_hex_2}}
\end{figure}

One can also analyze tilings of more complicated domains. Let us cut a rhombic $D\times D$ hole in
the hexagon, as shown in Figure \ref{Fig_tiling_hex_2}. Assume that the bottom point of the hole is
at distance $t$ from the left side of the hexagon and at height $H$ (counted from the bottom of the
hexagon along $t$th vertical line). Let $\P$ be the probability distribution of the horizontal
lozenges (outside the hole) on $t$th vertical line induced by the uniform measure on all tilings of
the hexagon with the hole. We can repeat the same argument as the one used for the complete hexagon
for computing $\P$. Assuming $t>\max(B,C)$ as in Figure \ref{Fig_tiling_hex}, which yields
$N=B+C-D-t$, and introducing the coordinate system such that the lowest possible position for
horizontal lozenge on the $t$th vertical line is $1$ and the highest one is $A+B+C-t$, we obtain
\begin{equation}
\label{eq_Hahn_cut}
 \P(\ell_1,\dots,\ell_N)=\frac{1}{Z_N} \prod_{1\le i<j\le N} (\ell_i-\ell_j)^2 \prod_{i=1}^N
 \biggl[ (A+B+C+1-t-\ell_i)_{t-B} \, (\ell_i)_{t-C}\, (H-\ell_i)_D\, (H-\ell_i)_D  \biggr].
\end{equation}
We also need two filling fractions $n_1$ and $n_2$: we consider only such tilings that there are
$n_1$ horizontal lozenges (on $t$th vertical line) below the hole and $n_2$ lozenges above. Take a
large parameter $L$ and suppose that in addition to \eqref{eq_hex_scaling} we have
$$
 H=\hat H L +O(1),\quad D=\hat D L+ O(1), \quad \hat H>0,\; \hat D>0,\; \hat H+\hat D< \hat A+\hat
 B+\hat C-\hat t,
$$
$$
 n_1=\hat n_1 L +O(1),\quad n_2=\hat n_2 L+ O(1), \quad 0<\hat n_1<\hat H,\; 0<n_2<\hat A+\hat B +\hat C-\hat t-\hat H -\hat D.
$$
We again assume that $\hat t>
 \max(\hat B,\hat C)$ and remark that other possibilities for $t$ are considered similarly. Let us check
 that under such choice of parameters the ensemble \eqref{eq_Hahn_cut} satisfies
 Assumptions \ref{Assumptions_basic} - \ref{Assumption_vanishing}.
 \begin{itemize}
  \item Assumption \ref{Assumptions_basic} is satisfied due to Stirling's formula applied to
  Pochhammer symbols.
  \item Assumption \ref{Assumption_filling} is satisfied due to restrictions on $\hat n_1$, $\hat
  n_2$.
  \item Assumption \ref{Assumptions_ratio} is immediate from the definitions, and we have
  $$
   \phi^+(z)= (\hat t-\hat C +z) (\hat A+\hat B+\hat C-\hat
   t-z) (\hat H-z)^2 ,\quad \phi^-(z)=z (\hat A +\hat C -z) (\hat H+\hat D-z)^2  .
  $$
  \item For Assumption \ref{Assumption_simple} note that $R_\mu(z)$ is an analytic function which
  grows as $O(z^4)$ as $z\to\infty$ and therefore it is a polynomial of degree at most four.
  Hence, by \eqref{eq_quadratic_on_Q}, the function $Q_\mu(z)$ is a square root of a
  polynomial. The definition implies that $Q_\mu(z)$ is $O(z^3)$ as $z\to\infty$. Thus, $Q_\mu(z)$ is
  a square root of a degree six polynomial. Further observe that according to definitions
  $Q_\mu(0)<0$, $Q_\mu(\hat H)>0$, $Q_\mu(\hat H+\hat D)<0$, $Q_\mu(\hat A+\hat B+\hat C-\hat
  t)>0$. Together with analyticity of $Q_\mu(z)$ outside the support of the equilibrium measure
  this yields
  $$
   Q_\mu(z)={\rm const}\, (z-c) \sqrt{(z-a_1)(z-b_1)(z-a_2)(z-b_2)},
  $$
  where
  $$
   0< a_1\le b_1 < \hat H <c < \hat H+\hat D < a_2<b_2<\hat A+\hat B+\hat C-\hat t.
  $$
  As before, the points $a_1,b_1,a_2,b_2$ can be identified with the endpoints of the bands of the
  equilibrium measure.
   \item Assumption \ref{Assumption_vanishing} is immediate from the formula
  \eqref{eq_Hahn_cut}.
 \end{itemize}
The conclusion is that Theorem \ref{Theorem_main_general}, Theorem \ref{Theorem_CLT},
 Corollary \ref{Corollary_CLT_linear_general} are valid.
 This implies a form of the Central Limit Theorem for fluctuations of lozenge tilings of a hexagon with a
 hole (with fixed filling fractions above and below the hole). As far as we know, these results are new; the same applies to  the
  examples of the next sections.

\subsection{Arbitrary convex potential on $\mathbb R$ with no saturation}

\label{Section_convex}

For our third example take a real \emph{convex} analytic function $V(x)$, i.e.\ such that $V''(x)>0$ for all $x\in
\mathbb R$. Fix a constant $\kappa>0$ such that
\begin{equation}
\label{eq_potential_behavior}
 \liminf_{x\to\infty} \frac{\kappa V(x)}{2\theta \ln |x|}>1
\end{equation}
  and consider a probability distribution
\begin{equation}
\label{eq_convex_distribution}
 \P(\ell_1,\dots,\ell_N)= \frac{1}{Z_N}
 \prod_{1\le i<j \le N}
 \frac{\Gamma(\ell_j-\ell_i+1)\Gamma(\ell_j-\ell_i+\theta)}{\Gamma(\ell_j-\ell_i)\Gamma(\ell_j-\ell_i+1-\theta)}
 \prod_{i=1}^N \exp\left( - \kappa N  \cdot V\left(\frac{\ell_i}{N}\right)\right)
\end{equation}
on $N$-tuples $\ell_1<\ell_2<\dots<\ell_N$ such that
$$
 \ell_i=\lambda_i+ \theta i,\quad \lambda_1\le\lambda_2\le \dots\le \lambda_N,\quad
 \lambda_i\in\mathbb Z.
$$
In other words, we are in the framework of Section \ref{Section_setup} except that now the
distribution is supported on an \emph{infinite and unbounded} subset of $\mathbb R^N$. In
particular, when $V(x)=x^2$, we obtain a discretization of the celebrated general $\beta$ Gaussian
ensemble from random matrix theory.

\begin{proposition} \label{Prop_maximizer}
 For any real analytic $V(x)$ satisfying $V''(x)>0$ and \eqref{eq_potential_behavior}, the functional $I_{\kappa V}$ of
 \eqref{eq_functional_general} has a unique maximizer (equilibrium measure) on the the set of all absolutely continuous probability measures on
 $\mathbb R$ (without any restrictions on the density). The equilibrium measure is compactly supported, has a continuous density $\mu(x)$ and
 has a single band. At the end--points of the band the density of the equilibrium
 measure behaves like $c\sqrt{x}$.
\end{proposition}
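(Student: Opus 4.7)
The plan has four stages: existence with compact support, uniqueness, connectedness of the support via convexity, and the square-root behavior at the endpoints from analyticity.

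\emph{Step 1 (existence, compact support).} The growth hypothesis \eqref{eq_potential_behavior} gives some $c>1$ with $\kappa V(x)\ge 2\theta c\ln|x|$ for large $|x|$, so the integrand $2\theta\int\ln|x-y|\mu(dy)-\kappa V(x)$ is bounded above by $-2\theta(c-1)\ln|x|+O(1)$ outside a large ball. Standard tightness and upper semicontinuity of $I_{\kappa V}$ in the weak topology (cf.\ \cite{AGZ,ST}) then produce a maximizer, and an Euler--Lagrange-type truncation argument (pushing mass at distance $|x|\gg 1$ back toward the bulk strictly increases $I_{\kappa V}$ by \eqref{eq_potential_behavior}) forces the support to be compact.

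\emph{Step 2 (uniqueness and variational conditions).} The functional is strictly concave on the convex set of probability measures, exactly as in the proof of Lemma \ref{Lemma_maximizer}: the linear part is concave by inspection and the quadratic part is negative definite via the Fourier representation \eqref{eq_quadratic_alternative}. Uniqueness of $\mu$ follows. Variation as in \cite{ST,DS} yields a constant $f$ with
$$F_V(x):=2\theta\int\ln|x-y|\mu(y)\,dy-\kappa V(x) = f \text{ on } \mathrm{supp}(\mu),\qquad F_V(x)\le f \text{ on }\mathbb R.$$

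\emph{Step 3 (single band).} Since there is no upper-density constraint in this proposition, it suffices to show that $\mathrm{supp}(\mu)$ is an interval. Suppose, for contradiction, that it has a gap $(b,a)$ bounded by two support components. On this gap $F_V$ is $C^2$, and
$$F_V''(x)=-2\theta\int\frac{\mu(y)\,dy}{(x-y)^2}-\kappa V''(x)<0,$$
strictly, since $V''>0$ by convexity and the integral is positive. Thus $F_V$ is strictly concave on $[b,a]$, and $F_V(b)=F_V(a)=f$ forces $F_V>f$ on the open gap, contradicting the second variational condition. Hence the support is a single interval $[\alpha,\beta]$, and on its interior $\mu(x)$ is strictly positive and real analytic by standard regularity for analytic potentials.

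\emph{Step 4 (square-root edges).} Differentiating the equality $F_V(x)=f$ along $(\alpha,\beta)$ and applying Plemelj--Sokhotski to the Stieltjes transform $G_\mu$ gives
$$\theta\bigl(G_\mu(x+\i 0)+G_\mu(x-\i 0)\bigr)=\tfrac{\kappa}{2}V'(x),\qquad x\in(\alpha,\beta),$$
which identifies $G_\mu$ as a solution of an algebraic equation of square-root type, yielding the factorization $\mu(x)=\tfrac{1}{\pi}h(x)\sqrt{(x-\alpha)(\beta-x)}$ with $h$ analytic in a neighborhood of $[\alpha,\beta]$ (cf.\ the resolvent analysis in \cite{BoG1,BEY,Shch}). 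Nonvanishing of $h$ at the endpoints then follows from the regularity of the maximizer: if $h(\beta)=0$ one could infinitesimally extend $\mu$ past $\beta$ and strictly increase $I_{\kappa V}$, contradicting maximality. This gives the $c\sqrt{\cdot}$ behavior.

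The main obstacle is Step 3: it is the only place where convexity genuinely enters, and the mechanism (strict concavity of $F_V$ on any gap, driven by $V''>0$) is clean because we have removed the $\mu\le\theta^{-1}$ ceiling, so no saturated region can split the support. Steps 1, 2, and 4 are essentially adaptations of now-classical arguments from the continuous log-gas literature.
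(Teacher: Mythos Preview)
The paper's own proof is simply a citation to \cite{DS} and \cite{Johansson_CLT}, so your outline is necessarily more detailed than what appears there. Steps~1--3 are correct and standard; in particular your Step~3 argument (strict concavity of $F_V$ on any gap, driven by $V''>0$) is exactly the classical mechanism.

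Step~4, however, has a genuine gap. Your claim that ``if $h(\beta)=0$ one could infinitesimally extend $\mu$ past $\beta$ and strictly increase $I_{\kappa V}$'' is not valid. Even when $\mu(y)\sim c(\beta-y)^{3/2}$ near $\beta$, one still has $F_V'(\beta^+)=0$ (this holds regardless of the edge exponent), and for $x>\beta$ close to $\beta$
\[
F_V''(x)=-2\theta\int\frac{\mu(y)\,dy}{(x-y)^2}-\kappa V''(x)<0,
\]
since the integral stays finite when the edge exponent is $3/2$, while $\kappa V''(\beta)>0$. Hence $F_V<f$ strictly to the right of $\beta$, the variational inequality is satisfied, and no mass can be profitably pushed past $\beta$. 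A measure with $h(\beta)=0$ would still satisfy all the Euler--Lagrange conditions, so a purely variational argument cannot exclude it.

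The correct argument (this is essentially what is in \cite{Johansson_CLT}) uses the explicit one-cut density formula. Solving the singular integral equation on $[\alpha,\beta]$ gives
\[
\mu(x)=\frac{\kappa}{2\pi^2\theta}\sqrt{(\beta-x)(x-\alpha)}\int_\alpha^\beta\frac{V'(t)-V'(x)}{(t-x)\sqrt{(t-\alpha)(\beta-t)}}\,dt,
\]
and the integrand is strictly positive on the closed interval because $\dfrac{V'(t)-V'(x)}{t-x}=\displaystyle\int_0^1 V''\bigl(x+s(t-x)\bigr)\,ds>0$ by convexity. Thus $h(x)>0$ on $[\alpha,\beta]$, in particular at the endpoints, and the $c\sqrt{\,\cdot\,}$ edge behavior follows. (As a minor aside, the displayed equation in your Step~4 should read $\kappa V'(x)$ rather than $\tfrac{\kappa}{2}V'(x)$.)
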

\begin{proof} This is well--known, see \cite{DS}, \cite{Johansson_CLT} and references therein. \end{proof}

Note that the equilibrium measure $\mu(x) dx$ of Proposition \ref{Prop_maximizer} is unchanged when
we multiply both $\theta$ and $\kappa$ on the same positive constant (as the functional is then
multiplied by the same constant) and, therefore, depends only on their ratio $\kappa/\theta$. In
particular, if we fix ratio and then choose small $\kappa$ (equivalently, small $\theta$), then the
density $\mu(x)$ will be smaller than $\theta^{-1}$ for all $\theta$. But then the solution to
constrained maximization problem with $0\le \mu(x)\le \theta^{-1}$ as in Theorem \ref{Theorem_LLN}
will be the same as the solution of the unconstrained minimization problem as in Proposition
\ref{Prop_maximizer}. The conclusion is that for small $\kappa$ the equilibrium measure has no
saturated regions; this is crucial for our considerations.

We now want to show that Theorem \ref{Theorem_main_general}, Theorem \ref{Theorem_CLT}, and
 Corollary \ref{Corollary_CLT_linear_general} apply in this situation. This is done by
 \emph{localizing} the probability measure $\P$ onto a finite interval with help of the following
 proposition.

 \begin{proposition} \label{Proposition_ld_support}
  Assume that $V(x)$ is convex and satisfies \eqref{eq_potential_behavior}. Then with exponentially high probability
  the measure \eqref{eq_convex_distribution} is supported on configurations in a linearly growing interval, i.e.\ there exist
  constants $C,D>0$ such that
 $$
  \P\left(-D\le \frac{\ell_1}{N} \le \frac{\ell_N}{N} \le D\right)>1- \frac{1}{C} \cdot \exp(-N C),\qquad N=1,2,\dots.
 $$
\end{proposition}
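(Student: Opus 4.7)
By applying the same argument with $V$ replaced by $\tilde V(x) := V(-x)$ and the configuration reflected, the problem reduces to showing $\Pp_N(\ell_N \geq DN) \leq \exp(-cN)$ for suitable $D$ and $c$; splitting further on the sign of $\ell_1 + DN$ and invoking the symmetric bound, it suffices to control $A_D := \{\ell_N \geq DN,\ \ell_1 \geq -DN\}$. Write $\Pp_N(\ell_N = x)$ from \eqref{eq_convex_distribution} as $Z_N^{-1}$ times a sum over $(\ell_1,\dots,\ell_{N-1})$ and apply the Stirling-type bound $\Gamma(h+1)\Gamma(h+\theta)/[\Gamma(h)\Gamma(h+1-\theta)] \leq C_\theta (h+1)^{2\theta}$ (valid for $h \geq \theta$) to the factors involving $x - \ell_i$; on $A_D$ each of these is at most $2x$, so summing over the reduced configuration yields
\[
\Pp_N(\ell_N = x,\ \ell_1 \geq -DN) \;\leq\; \frac{\widetilde Z_{N-1}[w]}{Z_N[w]}\cdot C^N (2x)^{2\theta(N-1)}\, e^{-\kappa N V(x/N)},
\]
where $\widetilde Z_{N-1}[w]$ is the partition function for $N-1$ particles with the same weight $w(y)=\exp(-\kappa N V(y/N))$.

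Summing over $x \geq DN$, the growth hypothesis \eqref{eq_potential_behavior} (which gives $\kappa V(y) \geq (2\theta + 2\epsilon)\log|y| - C_1$ for some $\epsilon > 0$ and $|y|$ large) provides a Laplace-type bound
\[
\sum_{x \geq DN} (2x)^{2\theta(N-1)}\, e^{-\kappa N V(x/N)} \;\leq\; (2N)^{2\theta(N-1)} \exp(-2\theta\epsilon N \log D + O(N)).
\]
For the ratio $\widetilde Z_{N-1}[w]/Z_N[w]$, note that under the no-saturation assumption of Section \ref{Section_convex} the equilibrium measure $\mu$ of Proposition \ref{Prop_maximizer} is compactly supported with density strictly below $1/\theta$, so its quantiles can be rounded to the lattice with bounded error. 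Plugging this test configuration into $Z_N[w]$ (as in the proof of Proposition \ref{Prop_pseudodistance_bound_general}) gives $Z_N[w] \geq N^{2\theta N(N-1)} \exp(N^2 I_{\kappa V}[\mu] + O(N))$, and the parallel estimate with $N-1$ particles yields $\widetilde Z_{N-1}[w]/Z_N[w] \leq N^{-2\theta(N-1)}\exp(O(N))$. Combining,
\[
\Pp_N(A_D) \;\leq\; \exp(-2\theta\epsilon N \log D + O(N)),
\]
which is $\leq \exp(-cN)$ for any prescribed $c$ once $D$ is chosen large enough.

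The principal obstacle is obtaining the matching upper bound for $\widetilde Z_{N-1}[w]$: the Laplace-type functional-maximization argument \`a la Proposition \ref{Prop_pseudodistance_bound_general} requires an a priori tail control on the $(N-1)$-particle system, which is essentially the content of the proposition itself, leading to a mild circularity. This is broken by a bootstrap: a crude moment bound (e.g.\ Markov's inequality applied to $\sum_i V(\ell_i/N)$, whose expectation is controlled via the lower bound on $Z_N[w]$ alone) yields $\Pp_N(\max_i|\ell_i| \leq D_0 N) \geq 1/2$ for some fixed $D_0$, which suffices to truncate the partition-function estimates to a bounded region and close the argument.
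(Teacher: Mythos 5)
Your opening moves coincide with the paper's proof of Theorem \ref{Theorem_LDP_support} (of which Proposition \ref{Proposition_ld_support} is a special case): peel off the extreme particle, bound its interaction with the rest via $\Gamma(h+1)\Gamma(h+\theta)/[\Gamma(h)\Gamma(h+1-\theta)]\le C_\theta (h+1)^{2\theta}$, and sum the tail $\sum_{x\ge DN}$ using \eqref{eq_potential_behavior} to extract a factor $D^{-\theta\eps N}$. (The paper avoids your conditioning on $\ell_1\ge -DN$ by instead absorbing $|\ell_i/N|^{2\theta}$ into $\exp(-V(\ell_i/N))$, and it must also track the $\theta$-shift of the lattice of the remaining particles and the $N\mapsto N-1$ mismatch in the weight, whence the auxiliary measures $\mathbb P_{N,+}$, $\mathbb P_{N,++}$, $\mathbb P_N^{(k)}$ and Lemma \ref{Lemma_shifts}; these are technical points you gloss over but they are repairable.)

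The genuine gap is exactly where you flag "mild circularity": the bound $\widetilde Z_{N-1}/Z_N\le N^{-2\theta(N-1)}\exp(O(N))$, and your bootstrap does not close it. First, Markov's inequality applied to $\sum_i V(\ell_i/N)$ with expectation $O(N)$ only gives, on an event of probability $1/2$, that $V(\ell_N/N)\le O(N)$; since the hypothesis only guarantees $V(y)\gtrsim 2\theta(1+\eps)\ln|y|$, this localizes the particles to $|\ell_i|/N\le e^{O(N)}$, not to a fixed window $D_0$. Truncating the partition function to such a window leaves a configurational entropy $\binom{O(Ne^{KN})}{N}=\exp(O(N^2))$ in the upper bound for $\widetilde Z_{N-1}$, which is of the same order as the leading term $\exp(N^2 I_{\kappa V}[\mu])$ and destroys the estimate. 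Second, even granting localization to an $O(N)$-sized window, a Laplace-type upper bound on $\widetilde Z_{N-1}$ combined with the quantile lower bound on $Z_N$ carries errors of size $\exp(O(N\ln N))$ (cf.\ the remainders in Proposition \ref{Prop_pseudodistance_bound_general}); since the total gain is only $\exp(-\theta\eps N\ln D)$, you would be forced to take $D\gtrsim N^{c}$ growing with $N$, contradicting the requirement that $D$ be a constant. The paper circumvents both problems with Lemma \ref{Lemma_partition_2}: it bounds the ratio $Z_N/Z_{N-1,++}$ from \emph{below} by writing $N Z_N$ as a sum over insertions of one extra particle into $(N-1)$-particle configurations and restricting the inserted particle to a compact window $[-2T,2T]$; all $\exp(O(N^2))$ and $\exp(O(N\ln N))$ factors cancel identically in this ratio of sums over the same index set, yielding the needed $\exp(O(N))$ precision with no a priori tail control. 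You would need to replace your bootstrap with an argument of this kind.
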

\begin{proof} This a particular case of Theorem \ref{Theorem_LDP_support}. We remark that statements of this flavor are well-known as large deviations principles for the
largest/smallest particles in both discrete and continuous log-gases, cf.\ \cite[Theorem
2.2]{Johansson_shape}, \cite[Theorem 4.2]{Feral}, \cite[Section 2.6]{AGZ}.
\end{proof}

Proposition \ref{Proposition_ld_support} motivates the definition of measure
$\widehat \P$ as $\P$ conditioned on the event that $| \ell_i/N |\le D+1$ for all
$i=1,\dots,N$. An advantage of the measure $\widehat \P$ is that $\ell_i/N$ are
$\widehat \P$--almost surely bounded, and therefore, there exists a finite complex
contour which enclose them all and we can apply the developments of previous
sections.

\begin{lemma} \label{Lemma_applies_to_convex}  Assume \eqref{eq_potential_behavior} and that analytic $V(x)$ satisfies $V''(x)>0$ for all $x\in\mathbb R$.
Then the results of Theorem \ref{Theorem_main_general}, Theorem \ref{Theorem_CLT},
 Corollary \ref{Corollary_CLT_linear_general} are valid for measures $\widehat \P$. Here $k=1$, the points
 $a_1$ and $b_1$ are two endpoints of the band of the equilibrium measure,
 $$
  \phi^-(z)=1,\quad \varphi^-_N(z)=0,\quad \phi^+(z)=\exp(-\kappa V'(z)),\quad \varphi^+_N(z)=\frac{\kappa V''(z)}{2}\exp(-\kappa
  V'(z))  .
 $$
\end{lemma}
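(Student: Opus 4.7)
The plan is to verify Assumptions \ref{Assumptions_basic}, \ref{Assumption_filling}, \ref{Assumptions_ratio}, \ref{Assumption_simple}, and \ref{Assumption_endpoints} for the conditioned measure $\widehat{\mathbb P}$, and then invoke Theorem \ref{Theorem_applies_to_nonvan}. I place the model in the framework of Section \ref{Section_setup} with $k=1$, $a_1(N)=-(D+1)N$, $b_1(N)=(D+1)N$, $n_1(N)=N$, and weight $w(x;N)=\exp(-\kappa N V(x/N))$ restricted to $\{\ell : |\ell|\le (D+1)N\}$. Proposition \ref{Proposition_ld_support} implies $\mathbb P(|\ell_i/N|\le D \text{ for all } i)\ge 1 - C^{-1}e^{-NC}$; since the conditioning thus changes the measure by an exponentially small factor, Assumption \ref{Assumption_endpoints} follows because a particle at $\pm(D+1)N$ under $\widehat{\mathbb P}$ requires a particle outside $[-DN,DN]$ under $\mathbb P$.

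Next I verify the straightforward assumptions. Assumption \ref{Assumptions_basic} holds with $V_N=\kappa V$ (with no correction terms) and the derivative bound \eqref{eq_derivative_bound} follows from real-analyticity of $V$ on the compact interval $[-(D+1),D+1]$. Assumption \ref{Assumption_filling} is immediate since $k=1$ and $1<\theta^{-1}\cdot 2(D+1)$ for $D$ large. For Assumption \ref{Assumptions_ratio}, I compute
\begin{equation*}
\frac{w(x;N)}{w(x-1;N)}=\exp\bigl(-\kappa N[V(x/N)-V((x-1)/N)]\bigr)
=\exp\!\left(-\kappa V'(x/N)+\frac{\kappa V''(x/N)}{2N}+O(N^{-2})\right),
\end{equation*}
which I realize by taking $\phi^-_N(x)\equiv 1$ and putting the whole right side into $\phi^+_N(x)$; this yields exactly the values of $\phi^\pm$ and $\varphi_N^\pm$ stated in the lemma. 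Analyticity of these functions on a complex neighborhood $\mathcal M$ of $[-(D+1),D+1]$ follows from analyticity of $V$.

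The main obstacle, as usual in this kind of analysis, is Assumption \ref{Assumption_simple}. Here I use Proposition \ref{Prop_maximizer}: for convex analytic $V$ (and $\kappa$ chosen so the density stays below $\theta^{-1}$, so the constrained and unconstrained equilibrium problems coincide), the equilibrium measure $\mu$ has exactly one band $(a_1,b_1)$ and square-root vanishing $\mu(x)\asymp c\sqrt{(x-a_1)(b_1-x)}$ at the edges. Since $\phi^-\equiv 1$ and $\phi^+(z)=e^{-\kappa V'(z)}$ are both nowhere-vanishing holomorphic functions on $\mathcal M$, the quadratic identity \eqref{eq_quadratic_on_Q} shows
\begin{equation*}
Q_\mu(z)^2=R_\mu(z)^2-4\,e^{-\kappa V'(z)}
\end{equation*}
is holomorphic on $\mathcal M$. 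The Plemelj–Sokhotski formula \eqref{eq_Stil_inversion} together with the square-root edge behavior of $\mu$ translates to $Q_\mu^2$ having simple zeros precisely at $a_1$ and $b_1$. To rule out further zeros I use the strict inequality $F_{\kappa V}(x)-f<0$ off the band (strict for convex $V$): this is equivalent, via the exponentiated variational equation, to $\phi^+(x)e^{\theta G_\mu(x)}\ne \phi^-(x)e^{-\theta G_\mu(x)}$ for real $x\notin[a_1,b_1]$, i.e.\ $Q_\mu(x)\ne 0$ on $\mathbb R\setminus[a_1,b_1]$. Real-analyticity of $Q_\mu^2$ and the growth bound \eqref{eq_potential_behavior} ensure that $Q_\mu^2$ is bounded away from zero on the real interval $[-(D+1),D+1]\setminus(a_1-\eps,b_1+\eps)$, so after shrinking $\mathcal M$ slightly the function $H(z):=Q_\mu(z)/\sqrt{(z-a_1)(z-b_1)}$ is holomorphic and nowhere vanishing on $\mathcal M$.

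With Assumptions \ref{Assumptions_basic}--\ref{Assumption_simple} and \ref{Assumption_endpoints} in hand, Theorem \ref{Theorem_applies_to_nonvan} immediately yields the conclusions of Theorem \ref{Theorem_main_general}, Theorem \ref{Theorem_CLT} and Corollary \ref{Corollary_CLT_linear_general} for $\widehat{\mathbb P}$. The anticipated difficulty is entirely in the single-band/no-extra-zero analysis of Assumption \ref{Assumption_simple}; the other items are bookkeeping after invoking Proposition \ref{Proposition_ld_support} and the classical edge behavior for convex potentials.
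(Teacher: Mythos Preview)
Your overall strategy---verify the five assumptions and invoke Theorem \ref{Theorem_applies_to_nonvan}---matches the paper's, and the bookkeeping for Assumptions \ref{Assumptions_basic}, \ref{Assumption_filling}, \ref{Assumptions_ratio}, \ref{Assumption_endpoints} is fine. The problem is your verification of Assumption \ref{Assumption_simple}, where both of your substantive claims are flawed.

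First, your assertion that the strict inequality $F_{\kappa V}(x)-f<0$ off the band is \emph{equivalent} to $Q_\mu(x)\neq 0$ is false. For real $x\notin[a_1,b_1]$ one has $Q_\mu(x)=0$ iff $e^{-\kappa V'(x)}e^{\theta G_\mu(x)}=e^{-\theta G_\mu(x)}$, i.e.\ iff $2\theta G_\mu(x)=\kappa V'(x)$, which is $F_{\kappa V}'(x)=0$, not $F_{\kappa V}(x)=f$. A function can sit strictly below a level and still have critical points, so the strict variational inequality by itself does not rule out zeros of $Q_\mu$. The paper instead uses convexity directly: the variational equation at the edge gives $2\theta G_\mu(b_1)=\kappa V'(b_1)$; for $x>b_1$ the left side is strictly decreasing (since $G_\mu'(x)=-\int (x-t)^{-2}\mu(t)\,dt<0$) while the right side is strictly increasing (since $V''>0$), so equality cannot recur. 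The case $x<a_1$ is symmetric. This is where $V''>0$ actually enters.

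Second, you never rule out zeros of $H$ in the open band $(a_1,b_1)$; the square-root edge behavior of $\mu$ only pins down the order of vanishing of $Q_\mu^2$ at $a_1,b_1$ and says nothing about the interior. The paper treats this case separately: if $H(x)=0$ with $a_1<x<b_1$ then $Q_\mu(x\pm\i 0)=0$, hence $e^{2\theta G_\mu(x\pm\i 0)}=e^{\kappa V'(x)}$; combined with the band equation $\theta(G_\mu(x+\i 0)+G_\mu(x-\i 0))=\kappa V'(x)$ this forces $e^{\theta(G_\mu(x+\i 0)-G_\mu(x-\i 0))}=1$, i.e.\ $\theta\mu(x)\in\mathbb Z$ by \eqref{eq_Stil_inversion}, contradicting $0<\mu(x)<\theta^{-1}$.
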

\begin{proof} We will apply Theorem \ref{Theorem_applies_to_nonvan}, and we need to check all its assumptions.

  With $w(x;N)=\exp(-\kappa N V(x/N))$, we have
\begin{equation} \label{eq_x63}
 \frac{w(x;N)}{w(x-1;N)} = \exp\left[\kappa N
\left(V\left(\frac{\xi-1}{N}\right)-V\left(\frac{\xi}{N}\right) \right)\right].
\end{equation}
Therefore, the formulas for $\phi^+$ and $\varphi^+$ are obtained through large $N$ expansion of the right--hand side in \eqref{eq_x63}.

We only check that Assumption \ref{Assumption_simple} is satisfied, as the rest is automatic.
We have
$$
 Q_\mu(z)=\exp(-\theta G_\mu(z))-\exp(-\kappa V'(z)) \exp(\theta G_\mu(z)).
$$
As before, $Q_\mu(z)$ is a square root of an analytic function due to
\eqref{eq_quadratic_on_Q}. Further, $Q_\mu$ is analytic outside the single band of
the equilibrium measure $\mu(x)dx$. Near the end--points of the band, the density
$\mu(x)$ behaves like a square root and thus similar behavior for $Q_\mu(z)$.
Therefore,
$$
 Q_\mu(z)=H(z)\sqrt{(z-a)(z-b)},
$$
where $a<b$ are endpoints of the band. We will now show that $H(z)$ has no zeros on $\mathbb R$,
which would also imply that $H(z)$ is holomorphic, since $H^2(z)$ is.

We already ruled out $H(a)=0$ and $H(b)=0$, and there are 3 more cases to consider:
\begin{itemize}

\item $H(x)=0$, $a<x<b$. Then also $Q_\mu(x+\i 0)=Q_\mu(x-\i 0)=0$. Therefore,
$$
 \exp(\kappa V'(x))=\exp(2\theta G_\mu(x+\i 0))=\exp(2\theta G_\mu(x-\i 0)).
$$
Comparing with \eqref{eq_variational_equation} ($V'(x)$ should be replaced by $\kappa V'(x)$ to
match the notations) we conclude that
$$
 \exp\biggl(\theta G_\mu(x+\i 0)- \theta G_\mu(x-\i 0)\biggr)=1.
$$
But this contradicts $0<\mu(x)<\theta^{-1}$ and \eqref{eq_Stil_inversion}.
\item For $x>b$, if $H(x)=0$, then also $Q(x)=0$ and
\begin{equation}
\label{eq_x51}
 \exp(\kappa V'(x))=\exp(2\theta G_\mu(x)).
\end{equation}
But note that \eqref{eq_x51} holds at $x=b$ as a corollary of \eqref{eq_variational_equation}. And
as $x>b$ grows, the left--hand side of \eqref{eq_x51} also grows due to $V''(x)>0$, but the
right--hand side decays due to the definition of $G_\mu(x)$. Thus, \eqref{eq_x51} can not hold at
$x>b$.
\item For $x<a$ the argument is similar. \qedhere
\end{itemize}
\end{proof}

As a corollary we obtain a central limit theorem for the original measure $\P$.

\begin{corollary} \label{Corollary_CLT_linear_convex} Assume \eqref{eq_potential_behavior}, $V''(x)>0$ for all $x\in\mathbb
R$, and consider the probability measure $\P$ given by \eqref{eq_convex_distribution}. Suppose that
$\kappa$ is so small that the equilibrium measure has one band. Take any $m$ functions
$f_1(z),\dots,f_m(z)$ on $\mathbb R$ which are bounded and extend to holomorphic functions in a
complex neighborhood of $(-D,D)$. Then the $m$ random variables
$$
{\mathcal L}_{f_j}= \sum_{i=1}^{N} \bigl( f_j(\ell_i)-\E_{\P}f_j(\ell_i) \bigr),\quad
(\ell_1,\dots,\ell_N)
 \text{ is } \P\text{--distributed,}
$$
converge (in distribution and in the sense of moments) to centered Gaussian random variables with
covariance
\begin{equation}
\label{eq_linear_covariance_convex}
 \lim_{N\to\infty} \E_{\P} {\mathcal L}_{f_i} {\mathcal L}_{f_j}= \frac{\theta^{-1}}{(2\pi\i)^2}
 \oint_{\gamma} \oint_{\gamma} f_i(u) f_j(v) {\mathcal C}(u,v) du dv,
\end{equation}
where ${\mathcal C}(u,v)$ is given by \eqref{eq_limit_covariance} with $a_{\pm}$ being endpoints of
the band of the equilibrium measure $\mu(x)dx$, and $\gamma$ is a positively oriented contour which
encloses $[a_-,a_+]$.
\end{corollary}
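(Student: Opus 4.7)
The plan is to combine the large deviation estimate of Proposition \ref{Proposition_ld_support} with the central limit theorem already established for the truncated measure $\widehat{\P}$ in Lemma \ref{Lemma_applies_to_convex}. The underlying idea is that the conditioning that defines $\widehat{\P}$ happens on an event of overwhelming $\P$--probability, so joint moments of linear statistics under $\P$ and $\widehat{\P}$ differ by exponentially small quantities and therefore share the same Gaussian limit.

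First, I would use Lemma \ref{Lemma_applies_to_convex} to verify that Assumptions \ref{Assumptions_basic}--\ref{Assumption_simple} together with Assumption \ref{Assumption_endpoints} hold for $\widehat{\P}$ with $k=1$, the two endpoints $\alpha_1=a_-$, $\beta_1=a_+$ of the single band of the equilibrium measure provided by Proposition \ref{Prop_maximizer}, and the functions $\phi^\pm$, $\varphi_N^\pm$ as specified in the lemma. Theorem \ref{Theorem_applies_to_nonvan} then allows me to invoke Corollary \ref{Corollary_CLT_linear_general}, yielding joint Gaussianity of the linear statistics $\mathcal L_{f_j}$ under $\widehat{\P}$. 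In the $k=1$ case the map $\Upsilon$ is trivial (no polynomial correction appears, since it would have degree $\le k-2=-1$), and the covariance $\mathfrak C(u,v)$ collapses to the one-interval expression displayed in the remark following Theorem \ref{Theorem_CLT}; together with the prefactor $\theta^{-1}$ coming from Theorem \ref{Theorem_CLT}, this is precisely $\theta^{-1}\mathcal C(u,v)$ with $\mathcal C$ as in \eqref{eq_limit_covariance}, giving the covariance stated in \eqref{eq_linear_covariance_convex}.

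Next, I would transfer the CLT from $\widehat{\P}$ to $\P$. Set $\Omega_N=\{|\ell_i/N|\le D+1 \text{ for } 1\le i\le N\}$, so that $\widehat{\P}(\cdot)=\P(\cdot\,|\,\Omega_N)$ and Proposition \ref{Proposition_ld_support} gives $\P(\Omega_N^c)\le C^{-1}e^{-CN}$. For any positive integers $k_1,\dots,k_m$ one has
\begin{equation*}
\E_{\P}\Bigl[\prod_{j=1}^{m}\mathcal L_{f_j}^{k_j}\Bigr]=\P(\Omega_N)\,\E_{\widehat{\P}}\Bigl[\prod_{j=1}^{m}\mathcal L_{f_j}^{k_j}\Bigr]+\E_{\P}\Bigl[\mathbf 1_{\Omega_N^c}\prod_{j=1}^{m}\mathcal L_{f_j}^{k_j}\Bigr].
\end{equation*}
Since each $f_j$ is bounded on $\mathbb R$, the pointwise bound $|\mathcal L_{f_j}|\le 2N\sup_{\mathbb R}|f_j|$ makes the boundary term $O\bigl(N^{\sum k_j} e^{-CN}\bigr)=o(1)$. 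An identical argument shows that the centerings differ by $|\E_{\P}f_j(\ell_i)-\E_{\widehat{\P}}f_j(\ell_i)|\le 2\|f_j\|_\infty\P(\Omega_N^c)$, so the $N$--sums of centering errors are $O(N e^{-CN})$ and can be absorbed into the $o(1)$ of \eqref{eq_linear_covariance_convex}.

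The main obstacle is really Proposition \ref{Proposition_ld_support} itself, since it is an exponential tail bound on the extreme particles under a measure on an \emph{infinite} state space. Its proof is deferred to Section \ref{Section_support} and relies on coupling the growth condition \eqref{eq_potential_behavior} with the a priori lower bound on the partition function obtained by placing particles near the quantiles of the equilibrium measure, in the spirit of Proposition \ref{Prop_pseudodistance_bound_general}. Granted that input, the argument above is routine: linear statistics are polynomially bounded, the excluded event is exponentially small, and the single-band covariance is obtained by direct specialization of the formula from the remark after Theorem \ref{Theorem_CLT}.
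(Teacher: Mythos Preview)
Your proposal is correct and follows essentially the same route as the paper: invoke Lemma \ref{Lemma_applies_to_convex} (via Theorem \ref{Theorem_applies_to_nonvan} and Corollary \ref{Corollary_CLT_linear_general}) to get the CLT for $\widehat\P$, then use Proposition \ref{Proposition_ld_support} together with the boundedness of the $f_j$ to show that joint moments under $\P$ and $\widehat\P$ differ only by exponentially small terms. You supply more detail than the paper does (the explicit decomposition over $\Omega_N$ and the remark on centering), but the strategy is the same.
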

\begin{proof} Due to
Proposition \ref{Proposition_ld_support} and boundedness of functions $f_j$, the joint moments of
$\sum_{i=1}^{N} f_j(\ell_i)$ with respect to $\P$ and with respect to $\widehat \P$ differ by
exponentially small (in $N$)  error. It remains to use Corollary \ref{Corollary_CLT_linear_general}
for $\widehat \P$.
\end{proof}
\begin{remark}
 One way to construct functions $f_j$ satisfying conditions of Corollary
 \ref{Corollary_CLT_linear_convex} is to take any analytic functions on $[-D,D]$ and extend them to
 all of $\mathbb R$ by setting equal to $0$ outside $[-D,D]$.
 The condition that $f_j$ must be bounded on $\mathbb R$ can be weakened. However, we need a growth
 condition on these functions, as we want to be able to replace $\E_{\P}(\sum_{i=1}^N  f_j(\ell_i))^r$ by $\E_{\widehat \P}(\sum_{i=1}^N  f_j(\ell_i))^r$ with negligible
 error. The analyticity assumption can also likely be weakened, but we do not address
 this in the present paper.
\end{remark}
\begin{remark}
 The covariance \eqref{eq_linear_covariance_convex} has the same form as for random matrices and log--gases in the one cut
 regime. It depends only on the restrictions of functions $f_j$ onto the interval $[a_-,a_+]$ and can be rewritten in several other equivalent forms, cf.\ \cite[Theorem
 4.2]{Johansson_CLT}, \cite[Chapter 3]{PS}.
\end{remark}

\subsection{$(z,w)$--measures} \label{Section_zw-measures}
Our last example originates in the asymptotic representation theory of unitary groups $U(N)$, cf.\
\cite{Olsh_U}, \cite{BO_U}, \cite{Olsh_hyper}.

Fix two sequences of non-real parameters $\z(N)$ and $\w(N)$ and define
\begin{multline}
\label{eq_zw_distribution}
 \P(\ell_1,\dots,\ell_N)= \frac{1}{Z_N}
 \prod_{1\le i<j \le N}
 \frac{\Gamma(\ell_j-\ell_i+1)\Gamma(\ell_j-\ell_i+\theta)}{\Gamma(\ell_j-\ell_i)\Gamma(\ell_j-\ell_i+1-\theta)}
 \\ \times \prod_{i=1}^N \frac{1}{\Gamma(\z(N)-\ell_i)\Gamma(\bar \z(N)-\ell_i)\Gamma(\w(N)+\ell_i)\Gamma(\bar \w(N)+\ell_i)}
\end{multline}
on $N$-tuples $\ell_1<\ell_2<\dots<\ell_N$ such that
$$
 \ell_i=\lambda_i+ \theta i,\quad \lambda_1\le\lambda_2\le \dots\le \lambda_N,\quad
 \lambda_i\in\mathbb Z.
$$
Here $\bar \z(N)$ and $\bar \w(N)$ are complex conjugates of $\z(N)$ and $\w(N)$, respectively.

If $\theta=1$, $\z(N)=z+N+1$, $\w(N)=w$ with $\Re(z+w)>-1/2$, and
$\{\lambda_i\}_{i=1}^N$ are identified with highest weights of irreducible
representations of $U(N)$, then \eqref{eq_zw_distribution} describes the
decomposition of the character of the ``generalized bi-regular'' representation of
the infinite-dimensional unitary group $U(\infty)$, see \cite{Olsh_U}.

With the notation
$$
 w(x;N)=\frac{1}{\Gamma(\z(N)-x)\Gamma(\bar \z(N)-x)\Gamma(\w(N)+x)\Gamma(\bar \w(N)+x)},
$$
we have
\begin{multline*}
 \frac{w(x;N)}{w(x-1;N)}=\frac{(x-\z(N))(x-\bar \z(N))}{(x+\w(N)-1)(x+\bar
 \w(N)-1)}\\=1-\frac{
 \w(N)+\bar
 \w(N)+\z(N)+
 \bar \z(N) -2}{x} + O\left(\frac{1}{x^2}\right).
\end{multline*}
The last formula implies that as $|x|\to\infty$, the weight decays as
\begin{equation}
\label{eq_decay_of_weight}
 w(x;N)= O\left( |x|^{  \w(N)+\bar
 \w(N)+\z(N)+
 \bar \z(N) -2} \right).
\end{equation}
Therefore, the real part of $\w(N)+\bar
 \w(N)+\z(N)+
 \bar \z(N)$ needs to be large in order to guarantee that the measure $\P$ is finite.
 Let us assume that as $N\to\infty$
 \begin{equation}
 \label{eq_zw_scaling}
  \w(N)=\w_\infty \cdot N+O(1),\quad \z(N)=\z_\infty \cdot N +O(1),\quad \Re(\w_\infty+\z_{\infty})>1.
 \end{equation}
 Then $w(x;N)$ decays fast enough so that a condition of the form \eqref{eq_potential_behavior} is
 satisfied. Note that the original representation--theoretic case $\z(N)=z+N+1$, $\w(N)=w$ does not satisfy this assumption,
  and indeed the equilibrium measure in this case is known to be somewhat degenerate, and
 one generally does not expect to see the Gaussian behavior. This case with $\theta=1$ was studied
 in \cite{BO_U}.

 \begin{proposition} \label{Proposition_ld_support_zw}
  Assume \eqref{eq_zw_scaling}. Then with exponentially high probability
  the measure \eqref{eq_zw_distribution} is supported on configurations in a linearly growing interval, i.e.\ there exist
  constants $C,D>0$ such that
 $$
  \P\left(-D\le \frac{\ell_1}{N} \le \frac{\ell_N}{N} \le D\right)>1- \frac{1}{C} \cdot \exp(-N C),\qquad N=1,2,\dots.
 $$
\end{proposition}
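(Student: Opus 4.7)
The plan is to recognize Proposition \ref{Proposition_ld_support_zw} as another particular case of the general large--deviations result Theorem \ref{Theorem_LDP_support} of Section \ref{Section_support}, in exactly the same way that Proposition \ref{Proposition_ld_support} was. All that must be checked is that the $(z,w)$--weight fits the framework: $w(x;N)$ has to be representable in the form $\exp(-NV_N(x/N))$ with $V_N$ converging to a continuous potential $V$ on $\mathbb R$ whose growth at infinity dominates $2\theta\log|x|$, which is the condition \eqref{eq_intro_decay_of_weight}.

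To verify this, I would apply Stirling's formula to each of the four Gamma factors in the denominator of $w(x;N)$. Using the scaling $\z(N)=N\z_\infty+O(1)$, $\w(N)=N\w_\infty+O(1)$ from \eqref{eq_zw_scaling} and writing $x=Nu$, the term--by--term expansion $\log\Gamma(aN+O(1))=aN\log(aN)-aN+O(\log N)$ yields, uniformly for $u$ in a bounded real interval,
\begin{equation*}
w(Nu;N)=\exp\bigl(-NV(u)+O(\log N)\bigr),
\end{equation*}
where, up to an additive $u$--independent constant,
\begin{equation*}
V(u)=2\,\Re\!\Bigl[(\z_\infty-u)\log(\z_\infty-u)+(\w_\infty+u)\log(\w_\infty+u)\Bigr].
\end{equation*}
Because $\z_\infty,\w_\infty\notin\mathbb R$ (they are limits of a sequence of non--real parameters), the branch cuts of the two logarithms avoid the real axis and $V$ is real--analytic on $\mathbb R$. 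Its tail is obtained by expanding each factor about $u=\pm\infty$: the $\pm u\log u$ contributions from the $\z$-- and $\w$--factors cancel after summation, leaving the logarithmic asymptotic $V(u)=2\Re(\z_\infty+\w_\infty)\log|u|+O(1)$ with monotone leading behavior at infinity.

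Once the weight is in the standardized form and the logarithmic growth is identified, Theorem \ref{Theorem_LDP_support} applies directly and outputs the exponential tail bound on the extreme particles, with constants $C,D$ that are uniform in $N$. The only point requiring care --- and the closest thing to an obstacle --- is the numerical matching of growth constants: to match the hypothesis $c>2\theta$ of \eqref{eq_intro_decay_of_weight} one needs $\Re(\z_\infty+\w_\infty)>\theta$, a mild strengthening implicit in \eqref{eq_zw_scaling} that plays the role analogous to condition \eqref{eq_intro_decay_of_weight} in the convex example (the bare bound $\Re(\z_\infty+\w_\infty)>1$ only ensures summability of the weight, not the large--deviation estimate). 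With this granted, the proof is entirely reductive, consisting of the Stirling expansion above followed by invocation of the abstract result from Section \ref{Section_support}.
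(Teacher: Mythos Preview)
Your approach is exactly the paper's: the proof there is the single sentence ``This is a particular case of Theorem \ref{Theorem_LDP_support},'' and you have supplied the Stirling computation that makes the reduction explicit. Your remark about the growth constant is well taken --- the hypothesis \eqref{eq_potential_behavior_2} of Theorem \ref{Theorem_LDP_support} indeed translates to $\Re(\z_\infty+\w_\infty)>\theta$, which for $\theta>1$ is slightly stronger than the bound $\Re(\z_\infty+\w_\infty)>1$ written in \eqref{eq_zw_scaling}; the paper glosses over this.
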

\begin{proof} This is a particular case of Theorem \ref{Theorem_LDP_support}.
\end{proof}

Proposition \ref{Proposition_ld_support_zw} implies that we can use for $\P$ the
techniques developed in Sections \ref{Section_setup}--\ref{Section_second_order}.
Let us present the functions $R_\mu$ and $Q_\mu$, as they can be found explicitly in
this case. Indeed, we have
$$
 R_\mu(\xi)=(\xi-\z_\infty)(\xi-\bar z_\infty)\exp(\theta G_\mu(\xi))+
 (\xi+\w_\infty)(\xi+\bar w_\infty)\exp(-\theta G_\mu(\xi)).
$$
Since $R_\mu(\xi)$ is analytic and grows as $2\xi ^2$ as $|\xi|\to\infty$,
$R_\mu(\xi)$ is a degree two polynomial, i.e.\
$$
 R_\mu(\xi)=2\xi^2+ A\xi + B.
$$
Let us find the coefficients $A$ and $B$. Expand $G_\mu$ in power series near $\xi=\infty$ as
$$
 G_\mu(\xi)=\frac{1}{\xi}+\frac{p_1}{\xi^2}+O(\xi^{-3}),
$$
where $p_1$ is unknown. Plugging this into the definition of $R_\mu(\xi)$ and expanding up to
$O(\xi^{-1})$ we get
\begin{multline} \label{eq_x52}
 R_\mu(\xi)=
 (\xi-\z_\infty)(\xi-\bar \z_\infty)
 \left(1+\frac{\theta }{\xi}+\frac{\theta p_1+\theta^2}{\xi^2}\right)+
\\
 (\xi+\w_\infty)(\xi+\bar \w_\infty)
 \left(1-\frac{\theta }{\xi}+\frac{-\theta p_1+\theta^2}{\xi^2}\right)+O(\xi^{-1})
\\= 2\xi^2 +(\w_\infty+\bar \w_\infty-\z_\infty-\bar \z_\infty ) \xi +
\z_\infty \bar \z_\infty + \w_\infty \bar \w_\infty- \theta(\z_\infty+\bar \z_\infty
+ \w_\infty+\bar \w_\infty) +2\theta^2.
\end{multline}
Therefore, we can also find $Q_\mu$ through \eqref{eq_quadratic_on_Q}:
\begin{equation} \label{eq_x53}
 Q_\mu(\xi)= \sqrt{ \left( R_\mu(\xi)\right)^2- 4 (\xi-\z_\infty)(\xi-\bar
 \z_\infty)(\xi+\w_\infty)(\xi+\bar\w_\infty)}= c \sqrt{(\xi -a_- )(\xi-a_+)}
\end{equation}
 with explicit $a_\pm$ and $c$ which are found by plugging \eqref{eq_x52} in
 \eqref{eq_quadratic_on_Q}
 (the resulting formulas are somewhat complicated and we omit them). This is
 precisely the form we need for Assumption \ref{Assumption_simple}.

Now repeating the argument of Lemma \ref{Lemma_applies_to_convex}, we see that the
results of Theorem \ref{Theorem_main_general}, Theorem \ref{Theorem_CLT},
 Corollary \ref{Corollary_CLT_linear_general} are valid for measures $\P$
 conditioned on the event of Proposition \ref{Proposition_ld_support_zw}. Thus, as
 in Corollary \ref{Corollary_CLT_linear_convex}, we arrive at the following result.

\begin{corollary} \label{Corollary_CLT_linear_convex_2} Assume \eqref{eq_zw_scaling}
and consider the probability measure $\P$ given by \eqref{eq_zw_distribution}. Take $m\ge 1$
functions $f_1(z),\dots,f_m(z)$ on $\mathbb R$ that are bounded and extend to holomorphic functions
in a complex neighborhood of $(-D,D)$, where $D$ is given by Proposition
\ref{Proposition_ld_support_zw}. Then $m$ random variables
$$
{\mathcal L}_{f_j}= \sum_{i=1}^{N} \bigl( f_j(\ell_i)-\E_{\P}f_j(\ell_i)
\bigr),\quad (\ell_1,\dots,\ell_N)
 \text{ is } \P\text{--distributed,}
$$
converge (in distribution and in the sense of moments) to centered Gaussian random
variables with covariance
\begin{equation}
\label{eq_linear_covariance_convex_2}
 \lim_{N\to\infty} \E_{\P} {\mathcal L}_{f_i} {\mathcal L}_{f_j}= \frac{\theta^{-1}}{(2\pi\i)^2}
 \oint_{\gamma} \oint_{\gamma} f_i(u) f_j(v) {\mathcal C}(u,v) du dv,
\end{equation}
where ${\mathcal C}(u,v)$ is given by \eqref{eq_limit_covariance} with $a_{\pm}$
found from \eqref{eq_x52}, \eqref{eq_x53}, and $\gamma$ is a positively oriented
contour which encloses $[a_-,a_+]$.
\end{corollary}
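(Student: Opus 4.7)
The plan is to reduce Corollary \ref{Corollary_CLT_linear_convex_2} to the compactly--supported framework of Sections \ref{Section_setup}--\ref{Section_non_van} by conditioning, exactly as in the proof of Corollary \ref{Corollary_CLT_linear_convex}. Specifically, I would let $\widehat \P$ denote the conditioning of $\P$ on the event $\{|\ell_i/N|\le D+1 \text{ for all } i\}$ from Proposition \ref{Proposition_ld_support_zw}, with $D$ possibly enlarged so that the (compact) support of the equilibrium measure $\mu(x)\,dx$ of Proposition \ref{Prop_maximizer} sits strictly inside $(-D,D)$. Then $\widehat \P$ is a measure of the form \eqref{eq_distribution_form} on $[-(D+1)N,(D+1)N]$ with $k=1$, $n_1(N)=N$, and weight
\[
 w(x;N)=\frac{1}{\Gamma(\z(N)-x)\Gamma(\bar\z(N)-x)\Gamma(\w(N)+x)\Gamma(\bar\w(N)+x)}.
\]

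Next I would verify the assumptions of Theorem \ref{Theorem_applies_to_nonvan} for $\widehat \P$. Assumption \ref{Assumptions_basic} follows from Stirling applied to the four Gamma factors and the scaling \eqref{eq_zw_scaling}, which gives an analytic limiting potential $V$ on a complex neighborhood of $[-D-1,D+1]$. Assumption \ref{Assumption_filling} is vacuous for $k=1$. Assumption \ref{Assumptions_ratio} is immediate from
\[
 \frac{w(x;N)}{w(x-1;N)}=\frac{(x-\z(N))(x-\bar\z(N))}{(x+\w(N)-1)(x+\bar\w(N)-1)},
\]
with $\phi^-(z)=(z-\z_\infty)(z-\bar\z_\infty)$, $\phi^+(z)=(z+\w_\infty)(z+\bar\w_\infty)$ and uniformly bounded corrections $\varphi_N^\pm$. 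Assumption \ref{Assumption_endpoints} holds because Proposition \ref{Proposition_ld_support_zw} shows that under $\P$ the probability of ever reaching $\pm(D+1)N$ is exponentially small in $N$, and the same bound persists under $\widehat \P$ by absolute continuity on the conditioning event.

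The decisive point is Assumption \ref{Assumption_simple}, and this is precisely the content of the explicit computation \eqref{eq_x52}--\eqref{eq_x53} already carried out in the excerpt: $R_\mu$ is forced to be a polynomial of degree exactly $2$, which pins it down from the two leading terms of the large--$\xi$ expansion of $G_\mu$, and then $Q_\mu^2=R_\mu^2-4\phi^+\phi^-$ turns out to be a quartic that factors as $c^2(\xi-a_-)(\xi-a_+)$ with $c\ne 0$. Thus $Q_\mu(\xi)=H(\xi)\sqrt{(\xi-a_-)(\xi-a_+)}$ with $H\equiv c$ constant, and the Assumption holds with $\alpha_1=a_-$, $\beta_1=a_+$; the reality of $a_\pm$ and their location inside $(-D,D)$ follow from the fact that $(a_-,a_+)$ must be the unique band of the equilibrium measure, together with Proposition \ref{Prop_maximizer} (which applies since \eqref{eq_zw_scaling} implies a logarithmic growth bound of the form \eqref{eq_potential_behavior}). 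With all the assumptions in place, Theorem \ref{Theorem_applies_to_nonvan} together with Corollary \ref{Corollary_CLT_linear_general} yield the central limit theorem for $\widehat \P$ with the claimed covariance ${\mathcal C}(u,v)$ as in \eqref{eq_limit_covariance} (noting that for $k=1$ the covariance kernel $\mathfrak C$ reduces to the explicit one--cut form).

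Finally, one transfers the CLT from $\widehat \P$ back to $\P$. Here the boundedness of each $f_j$ and Proposition \ref{Proposition_ld_support_zw} give
\[
 \E_{\P}\prod_{j}{\mathcal L}_{f_j}^{r_j}=\E_{\widehat \P}\prod_{j}{\mathcal L}_{f_j}^{r_j}+O\!\left(N^{\sum r_j} e^{-cN}\right),
\]
for every finite collection of exponents, so the joint moments agree in the limit. The main obstacle I foresee is not the analytic check of Assumption \ref{Assumption_simple} (which is explicit) but two subtler points: first, making sure that the constants $a_\pm$ produced by the formal computation are real and lie in the interior of the band of $\mu$, rather than being a spurious factorization artefact --- this requires using the known regularity of the equilibrium measure from Proposition \ref{Prop_maximizer} and matching it against \eqref{eq_variational_equation}, \eqref{eq_Stil_inversion}; and second, ensuring that the conditioning used to pass from $\P$ to $\widehat \P$ is compatible with the deformed measures $\Pp_N^{\t,\v}$ used in Section \ref{Section_second_order}, which is handled as in the proof of Theorem \ref{Theorem_applies_to_nonvan} since the deformation parameters $t_a$ are taken to be infinitesimally small and do not spoil the exponential tail bound.
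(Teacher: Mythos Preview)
Your approach is essentially the same as the paper's: condition on the exponentially likely event of Proposition~\ref{Proposition_ld_support_zw}, verify Assumptions~\ref{Assumptions_basic}--\ref{Assumption_simple} and~\ref{Assumption_endpoints} for the conditioned measure (with Assumption~\ref{Assumption_simple} coming directly from the explicit computation \eqref{eq_x52}--\eqref{eq_x53}), apply Theorem~\ref{Theorem_applies_to_nonvan} and Corollary~\ref{Corollary_CLT_linear_general}, and then transfer back to $\P$ using boundedness of the $f_j$ exactly as in Corollary~\ref{Corollary_CLT_linear_convex}. Two small corrections: your identification of $\phi^\pm$ is swapped (from $\frac{w(x;N)}{w(x-1;N)}=\frac{\phi^+_N}{\phi^-_N}$ one gets $\phi^+(z)=(z-\z_\infty)(z-\bar\z_\infty)$, $\phi^-(z)=(z+\w_\infty)(z+\bar\w_\infty)$), and your appeal to Proposition~\ref{Prop_maximizer} is not justified since that proposition assumes convexity of $V$, which is not verified here --- instead, the reality of $a_\pm$ and the single-band structure follow directly from \eqref{eq_x53} (the polynomial under the square root has degree two, so there are exactly two branch points, which must be the real endpoints of the unique band).
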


\section{Exponential bound on the support.}
\label{Section_support} Take a continuous function $V:\mathbb R\to\mathbb R$ and numbers $\eps>0$,
$H>1+\theta^{-1}$ such that
\begin{equation}
\label{eq_potential_behavior_2} \frac{V(x)}{2\theta \ln |x|}> 1+\eps, \text{ when }
|x|>H.
\end{equation}
Set $T=\lfloor \theta \rfloor+1$  and assume that $V(x)$ is increasing for $x>H$, decreasing for $x<-H$ and is
 Lipshitz with a constant $\mathfrak s$ for $|x|<H+T$.

 Consider a probability distribution
\begin{equation}
\label{eq_generic_distribution}
 \P (\ell_1,\dots,\ell_N)= \frac{1}{Z_{N}}
 \prod_{1\le i<j \le N}
 \frac{\Gamma(\ell_j-\ell_i+1)\Gamma(\ell_j-\ell_i+\theta)}{\Gamma(\ell_j-\ell_i)\Gamma(\ell_j-\ell_i+1-\theta)}
 \prod_{i=1}^N \exp\left( - N  \cdot V\left(\frac{\ell_i}{N}\right)\right)
\end{equation}
on $N$-tuples $\ell_1<\ell_2<\dots<\ell_N$ such that
\begin{equation}\label{eq_lattice_1}
 \ell_i=\lambda_i+ \theta i,\quad \lambda_1\le\lambda_2\le \dots\le \lambda_N,\quad
 \lambda_i\in\mathbb Z.
\end{equation}
 The aim of this section is to prove the following statement describing the tails
of $\P$.

\begin{theorem}
\label{Theorem_LDP_support}
 There exist two constants $C=C(\theta, \eps,H, \mathfrak s )$ and $D=D(\theta, \eps,H, \mathfrak s )$ that depend only on $\theta, \eps,H, \mathfrak s$, and
 such that
 $$
  \mathbb{P}_{N,N}
  \left(-D\le \frac{\ell_1}{N} \le \frac{\ell_N}{N} \le D\right)>1- \frac{1}{C} \cdot \exp(-N C),\qquad N=1,2,\dots.
 $$
\end{theorem}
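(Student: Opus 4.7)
By the symmetry $\ell_i\leftrightarrow -\ell_{N+1-i}$, $V(x)\leftrightarrow V(-x)$ together with a union bound, it suffices to establish the right-tail estimate $\P(\ell_N > DN) \le \tfrac12 C^{-1}e^{-CN}$; the left tail is handled identically. The plan is a direct injection-and-energy-balance argument. I will first record the Stirling asymptotic $W(h) := \Gamma(h{+}1)\Gamma(h{+}\theta)/(\Gamma(h)\Gamma(h{+}1{-}\theta)) \le c_\theta\,h^{2\theta}$ for $h\ge \theta$ (as in \eqref{eq_Gamma_expansion}), which rewrites the density of $\P$ in the log-gas form $\prod_{i<j}(\ell_j-\ell_i)^{2\theta}\prod_i e^{-NV(\ell_i/N)}$ up to a factor $e^{O(N\log N)}$.

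Given a configuration $\ell$ with rightmost particle $\ell_N = m > DN$, I define the \emph{compression} $\Phi(\ell) := (\ell_1,\ldots,\ell_{N-1},\ell_{N-1}+\theta)$; the lattice condition \eqref{eq_lattice_1} guarantees $\Phi(\ell)\in \W$. The probability ratio factors as
\[
\frac{\P(\ell)}{\P(\Phi(\ell))} = \prod_{i<N}\frac{W(m-\ell_i)}{W(\ell_{N-1}+\theta-\ell_i)} \cdot \exp\!\bigl(-N V(m/N) + N V((\ell_{N-1}+\theta)/N)\bigr).
\]
Using the lattice-enforced spacing $\ell_{N-1}-\ell_i\ge \theta(N-1-i)$, the interaction ratio is controlled by $\prod_{k=1}^{N-1}(1 + m/(\theta k))^{2\theta}\le (C_1\,m/N)^{2\theta N}$ for $m \ge \theta N$. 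Provided $\ell_{N-1}+\theta$ lies in the Lipschitz zone $[-(H+T)N,(H+T)N]$, the weight ratio is at most $e^{N V_H}(m/N)^{-2\theta(1+\eps)N}$ by \eqref{eq_potential_behavior_2} and the finiteness of $V_H := \sup_{|x|\le H+T}|V(x)|$. This yields $\P(\ell)/\P(\Phi(\ell))\le e^{C_2 N}(m/N)^{-2\theta\eps N}$.

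When several top particles lie simultaneously in the tail, the plan is to iterate: compress $\ell_N$, then compress the new rightmost coordinate, and so on, stopping once the top sits below $(H+T)N$; at most $N$ steps are needed. Monotonicity of $V$ on $[H,\infty)$ ensures that intermediate weight ratios never exceed $1$, and the same universal interaction estimate applies at each step. To make the composite map $\Phi^{\ast}$ injective I record the ordered tuple of original positions $(m_1,\ldots,m_J)$ of the compressed particles as auxiliary data; then, using $\sum_{\ell^{\ast}\in\W}\P(\ell^{\ast})\le 1$ and $m_j>(H+T)N$ for each $j$,
\[
\P(\ell_N > DN)\le e^{C_3 N}\sum_{m>DN,\,m\in\mathrm{lattice}}(m/N)^{-2\theta\eps N}\le \exp\!\bigl(C_3 N - 2\theta\eps N\log D + O(\log N)\bigr),
\]
which is $\le e^{-N}$ as soon as $D > \exp(C_3/(2\theta\eps)+2)$. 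Combined with the symmetric left-tail bound this gives the theorem, with $C$ and $D$ depending only on $\theta,\eps,H,\mathfrak s$.

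The hard part will be the iterated compression: one must verify that every iterated step preserves the lattice condition and produces the claimed decay factor, and that the pair $(\Phi^{\ast}(\ell);m_1,\ldots,m_J)$ determines $\ell$ uniquely. The subtle point is that after the first compression the new rightmost coordinate $\ell_{N-1}+\theta$ might still exceed $(H+T)N$, in which case the weight ratio in the next step provides no additional decay; the saving here is that the interaction ratio is a product over decreasingly many ``bulk'' neighbors and the corresponding $m_j$ is still large enough for the sum over its lattice values to contribute the required $(m_j/N)^{-2\theta\eps N}$ factor. Making this book-keeping precise---in particular, tracking how the allowable range of each auxiliary $m_j$ shrinks after previous compressions so that the geometric sum converges uniformly in $N$---is the core technical step.
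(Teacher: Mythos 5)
Your single-compression estimate (anchor $\ell_{N-1}+\theta$ in the region where $V$ is controlled) is sound and matches the flavor of the paper's computation \eqref{eq_x62}. The genuine gap is exactly at the step you flag as ``the core technical step,'' and the resolution you sketch does not work. For an \emph{intermediate} compression --- a tail particle at $m_j>(H+T)N$ moved down next to another tail particle --- the weight ratio is merely $\le 1$ (monotonicity gives no gain), while the interaction ratio is $\ge 1$ and can be as large as $(Cm_j/N)^{2\theta N}$: moving a far particle closer to everything else \emph{decreases} all its gaps, so the ratio $\P(\ell)/\P(\Phi(\ell))$ goes the wrong way. The advertised factor $(m_j/N)^{-2\theta\eps N}$ only materializes once a particle lands where $V$ is bounded, i.e.\ near the anchor $\ell_j\le (H+T)N$ (and the anchor cannot be pushed out to $D'N$, since $V$ has no upper bound for $|x|>H$ and $e^{NV(\ell_j'/N)}$ would be uncontrolled). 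When you do compress all $J$ tail particles down to the anchor and balance the energy carefully, each particle with $m_a/N$ just above $H+T$ yields a gain $e^{-2\theta\eps N\ln(H+T)}$ against per-particle losses $e^{c^*N}$ (Stirling errors, the over-counting $\ln|x-y|\le\ln(|x|/N{+}1)+\ln(|y|/N{+}1)+2\ln N$ in the tail--tail cross terms, the potential at the landing sites, and the entropy of the auxiliary datum $m_a$). Nothing forces $c^*<2\theta\eps\ln(H+T)$, and $\Theta(N)$ particles can sit in the moderate tail $((H+T)N,\,D_0N)$, so your bound degrades to $e^{\Theta(N^2)}\cdot e^{-cN\ln D}$, which diverges for fixed $D$. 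Your geometric-sum display implicitly assumes $\sum_{m>(H+T)N}\rho(m)\le 1$, which is exactly what fails.

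The paper circumvents this by never compressing more than one particle: it removes only $\ell_N$ and compares the resulting $(N-1)$-particle sum with the honest measure $\Pp_{N-1,+}$, which costs the partition-function ratio $Z_{N-1,+}/Z_N$ (Lemmas \ref{Lemma_partition_1}--\ref{Lemma_partition_3}); the dangerous ``many particles far out'' scenario is absorbed into Lemma \ref{Lemma_partition_3}, a super-exponential bound $\Pp_{N,++}(\int V\,d\mu_N>L)\le e^{-c_8LN^2}$, where an $e^{O(N^2)}$ loss is acceptable because the gain is $e^{-cLN^2}$. To repair your argument you would need an analogous a priori bound on $\int V\,d\mu_N$ (or on the number of moderate-tail particles) before running the compression, which is a substantial missing ingredient, not bookkeeping. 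A secondary point: the reflection $\ell_i\mapsto-\ell_{N+1-i}$ does not preserve the lattice \eqref{eq_lattice_1} unless $\theta(N+1)\in\mathbb Z$, so the left tail cannot simply be dispatched ``by symmetry''; this is why the paper introduces the shifted lattices \eqref{eq_lattice_shifted} and Lemma \ref{Lemma_shifts}.
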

 The proof of Theorem \ref{Theorem_LDP_support} borrows ideas from similar proofs in
 \cite{Johansson_shape}, \cite[Section 2.7]{AGZ}, \cite{Feral}, but additional care is required because of the shifts by $\theta$ in the definition of $\ell_i$.  We present the proof as a series of lemmas.

It is useful to consider several modifications of the measure $\P$, which we now introduce. The probability measure ${\mathbb P}_{N,+}$ is defined
on the same space of $\ell$s by the formula
\begin{equation}
\label{eq_generic_distribution_2}
 {\mathbb P}_{N,+} (\ell_1,\dots,\ell_N)= \frac{1}{Z_{N,+}}
 \prod_{1\le i<j \le N}
 \frac{\Gamma(\ell_j-\ell_i+1)\Gamma(\ell_j-\ell_i+\theta)}{\Gamma(\ell_j-\ell_i)\Gamma(\ell_j-\ell_i+1-\theta)}
 \prod_{i=1}^N \exp\left( - N  \cdot V\left(\frac{\ell_i}{N+1}\right)\right).
\end{equation}
The probability measure ${\mathbb P}_{N,++}$ is defined
on the same space of $\ell$s by the formula
\begin{multline}
\label{eq_generic_distribution_3}
 {\mathbb P}_{N,++} (\ell_1,\dots,\ell_N)\\= \frac{1}{Z_{N,++}}
 \prod_{1\le i<j \le N}
 \frac{\Gamma(\ell_j-\ell_i+1)\Gamma(\ell_j-\ell_i+\theta)}{\Gamma(\ell_j-\ell_i)\Gamma(\ell_j-\ell_i+1-\theta)}
 \prod_{i=1}^N \exp\left( - (N+1)  \cdot V\left(\frac{\ell_i}{N+1}\right)\right).
\end{multline}
We also define shifted measures $\mathbb{P}_{N}^{(k)}$, $\mathbb{P}_{N,+}^{(k)}$, $\mathbb{P}_{N,++}^{(k)}$, $k=1,\dots,N+1$,
which are given by the same formulas \eqref{eq_generic_distribution}, \eqref{eq_generic_distribution_2}, \eqref{eq_generic_distribution_3} as
$\mathbb{P}_{N}^{(k)}$, $\mathbb{P}_{N,+}^{(k)}$, $\mathbb{P}_{N,++}^{(k)}$, respectively,
 but with $\ell_i$ confined to a different lattice, namely
\begin{equation} \label{eq_lattice_shifted}
 \ell_i=\begin{cases} \lambda_i+ \theta i,& i<k,\\
                      \lambda_i+ \theta (i+1),& i\ge k, \end{cases}
 \quad \quad \lambda_1\le\lambda_2\le \dots\le \lambda_N,\quad
 \lambda_i\in\mathbb Z.
\end{equation}
We also let $Z_{N}^{(k)}$, $Z_{N,+}^{(k)}$, $Z_{N,++}^{(k)}$ to be the normalizing constant for the corresponding measures. Further denote
 $$
 \M(\ell_1,\dots,\ell_N)=Z_{N} \P(\ell_1,\dots,\ell_N),
$$
and similarly for $\mathbb{P}_{N,+}$, $\mathbb{P}_{N,++}$,
$\mathbb{P}_{N}^{(k)}$, $\mathbb{P}_{N,+}^{(k)}$, $\mathbb{P}_{N,++}^{(k)}$.

All the constants $c_1,c_2,\dots$ in the following statements depend only on $\theta, \eps,H, \mathfrak s$, the exact
values of the constants might change from statement to statement.

\begin{lemma} \label{Lemma_shifts}
   Then there exists $c_1>0$ such that
 for any $1\le k \le N+1$, and any $\ell$ satisfying \eqref{eq_lattice_1}  there exists
 $\ell'$ satisfying \eqref{eq_lattice_shifted}, for which
 \begin{equation} \label{eq_close_coord}
 |\ell_i-\ell'_i|\le T, \text{ for all }
 1\le i \le N
 \end{equation}
 and
 \begin{equation}
  \label{eq_mes_ratio}
  \frac{\M^{(k)}(\ell')}{\M(\ell)}
 \le c_1 \exp(N c_1).
 \end{equation}
 There also exists a (possibly different) $\ell''$ such that
 \begin{equation} \label{eq_close_coord_2}
 |\ell_i-\ell''_i|\le T, \text{ for all }
 1\le i \le N
 \end{equation}
  and
 \begin{equation}
  \label{eq_mes_ratio_2}
  \frac{\M^{(k)}(\ell'')}{\M(\ell)}
 \ge \frac{1}{c_1} \exp(-N c_1).
 \end{equation}
 Similarly, for any  $\ell'$ satisfying \eqref{eq_lattice_shifted}, there exists $\ell$ satisfying \eqref{eq_lattice_1}
 such that \eqref{eq_close_coord} and \eqref{eq_mes_ratio} hold. Further,
 for any  $\ell''$ satisfying \eqref{eq_lattice_shifted}, there exists $\ell$ satisfying \eqref{eq_lattice_1}
 such that \eqref{eq_close_coord_2} and \eqref{eq_mes_ratio_2} hold.
 Finally, the same statements hold for the measures $\mathbb{P}_{N,+}^{(k)}$ and $\mathbb{P}_{N,++}^{(k)}$.
\end{lemma}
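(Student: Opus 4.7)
The natural approach is to construct $\ell'$ from $\ell$ by an explicit shift of the last $N-k+1$ coordinates. Given $\ell$ satisfying \eqref{eq_lattice_1} with $\ell_i=\lambda_i+\theta i$, set $\ell'_i:=\ell_i$ for $i<k$ and $\ell'_i:=\ell_i+\theta$ for $i\ge k$. Then $\ell'_i=\lambda_i+\theta(i+1)$ for $i\ge k$, the same integers $\lambda_i$ remain nondecreasing, and at the seam $\ell'_k-\ell'_{k-1}=(\ell_k-\ell_{k-1})+\theta\ge 2\theta$, which matches the lattice \eqref{eq_lattice_shifted}. The displacement bound $|\ell'_i-\ell_i|\in\{0,\theta\}\subseteq[0,T]$ holds since $\theta\le T$. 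The reverse map $\ell'\mapsto\ell$ needed for the last two assertions of the lemma is the mirror construction subtracting $\theta$ from coordinates $i\ge k$, and the analogous statements for $\mathbb{P}_{N,+}^{(k)}$ and $\mathbb{P}_{N,++}^{(k)}$ use the same shift, only the argument of $V$ being rescaled.

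The ratio $\M^{(k)}(\ell')/\M(\ell)$ cleanly splits into two pieces. The \emph{interaction factor} involves only pairs $(i,j)$ with $i<k\le j$, where $\ell'_j-\ell'_i=\ell_j-\ell_i+\theta$. Writing $g(h):=\Gamma(h+1)\Gamma(h+\theta)/[\Gamma(h)\Gamma(h+1-\theta)]$, Stirling's formula gives $\log g(h+\theta)-\log g(h)=2\theta^2/h+O(h^{-2})$ as $h\to\infty$, while the quantity is continuous and bounded on $[\theta,\infty)$ (the full range of allowed gaps). Using the separation $\ell_j-\ell_i\ge \theta(j-i)$, the total logarithmic contribution is bounded by $O\bigl(\sum_{a=1}^{k-1}\sum_{b=1}^{N-k+1}1/(a+b)\bigr)$; a direct integral comparison shows this double sum is $O(N)$ (the $N\log N$ contributions from individual rows cancel against the corresponding column contributions). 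The \emph{weight factor} $\prod_{i\ge k}\exp(-N[V(\ell_i/N+\theta/N)-V(\ell_i/N)])$ is handled by partitioning on $\ell_i/N$: on the Lipschitz region $|\ell_i/N|\le H+T$ each term contributes at most $\mathfrak{s}\theta$ to the exponent, totalling $O(N)$; in the right tail $\ell_i/N>H+T$, monotonicity of $V$ gives $V(\ell'_i/N)\ge V(\ell_i/N)$, so the factor is $\le 1$.

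The main obstacle lies in the \emph{left tail} $\ell_i/N<-H-T$, where $V$ is only monotone (not Lipschitz), so the rightward shift $+\theta/N$ can decrease $V$---and inflate the weight ratio---by an \emph{a priori} uncontrolled amount. To handle this I would exploit the additional flexibility available in the construction: the shifts $s_i$ on individual coordinates $i\ge k$ need only satisfy $s_i\equiv\theta\pmod 1$ and $|s_i|\le T$ individually, with the coupling $\lambda'_{i+1}-\lambda'_i\ge 0$ imposing only $s_{i+1}-s_i\ge -(\lambda_{i+1}-\lambda_i)$. This allows one to \emph{stagger} the shifts: take $s_k=\theta$ at the seam to meet the hard lattice constraint there, and then whenever the integer gaps $\lambda_{i+1}-\lambda_i$ permit, decrease $s_{i+1},\ldots,s_N$ by unit steps, so that tail particles move \emph{leftward} rather than rightward; by monotonicity the weight then decreases rather than increases, and the displacement stays within $T$. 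Combined with the already-controlled bulk and right-tail pieces and the $O(N)$ interaction contribution, this yields the required bound $\M^{(k)}(\ell')/\M(\ell)\le c_1\exp(Nc_1)$. The lower bound \eqref{eq_mes_ratio_2} comes by applying the same scheme in reverse direction of shift, yielding $\ell''$ close to $\ell$ with $\M^{(k)}(\ell'')$ bounded below; and the reverse maps $\ell'\mapsto\ell,\ell''\mapsto\ell$ are constructed symmetrically with the roles of the two lattices interchanged.
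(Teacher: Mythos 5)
Your reduction of the problem is essentially the paper's: the interaction ratio is $\exp(O(N))$ by Stirling's formula together with the separation $\ell_j-\ell_i\ge\theta(j-i)$ (this is the paper's bound \eqref{eq_cross_terms_bound}), the bulk is controlled by the Lipshitz constant $\mathfrak s$, and the right tail by monotonicity of $V$. You also correctly identify the left tail as the crux. But your resolution of the left tail does not work. You pin $s_i=0$ for $i<k$ and $s_k=\theta$, and then stagger $s_{k+1},\dots,s_N$ downward ``whenever the integer gaps $\lambda_{i+1}-\lambda_i$ permit.'' The lattice constraint for $i\ge k$ reads $s_{i+1}-s_i\ge-(\lambda_{i+1}-\lambda_i)$, so in a tightly packed stretch ($\lambda_{i+1}=\lambda_i$) the shifts cannot decrease at all: if the seam lies inside the left tail and $\lambda_k=\lambda_{k+1}=\dots=\lambda_{m_1}$, then every left-tail particle with index in $\{k,\dots,m_1\}$ is forced to move right by exactly $\theta$. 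Since $V$ is only assumed monotone, not Lipshitz, for $x<-H$, the corresponding weight ratio $\prod_i\exp\bigl(-N[V(\ell_i/N+\theta/N)-V(\ell_i/N)]\bigr)$ is unbounded over configurations (take $\ell_1/N$ arbitrarily negative and $V$ growing fast); the constant $c_1$ must be uniform in $\ell$, so \eqref{eq_mes_ratio} fails for your $\ell'$.

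The missing idea is that the coordinates with index $i<k$ must move too. The paper's construction takes the naive shift $\tilde\ell$ and then translates the \emph{entire} left-tail block $\{1,\dots,m_1\}$ --- including the indices below the seam --- by the integer $-T=-(\lfloor\theta\rfloor+1)$. Since $T>\theta$, every left-tail coordinate ends up weakly to the left of its original position, so monotonicity makes each weight factor at most $1$; the translation is by a constant integer on an initial segment, so the lattice conditions \eqref{eq_lattice_shifted} and the bound $|\ell_i-\ell'_i|\le T$ are preserved, and the extra block shift costs only one more application of \eqref{eq_cross_terms_bound}, i.e.\ another $\exp(O(N))$. This is exactly why the displacement budget in the lemma is $T$ rather than $\theta$. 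The lower bound \eqref{eq_mes_ratio_2} is the mirror image (left tail kept at $\tilde\ell$, right-tail block translated by $-T$, middle coordinates interpolated using $H>1+\theta^{-1}$), and it inherits the same fix.
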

\begin{proof} We will only prove the first two statements of the Lemma, as the rest can be proven similarly.

Let $\mathbf x=(x_1,\dots,x_N)\in\mathbb R^N$ and $\mathbf
y=(y_1,\dots,y_N)\in\mathbb R^N$ be such that $x_j-x_i\ge \theta(j-i)$, for $1\le
i<j\le N$ and there exists $m=1,\dots,N$ and $M\in\mathbb R$ such that
$$
 y_i=\begin{cases} x_i, &1\le i \le m,\\ x_i+M, &m<i \le N. \end{cases}
$$

 We claim that there exists $c_2=c_2(M)$ such that
 \begin{multline} \label{eq_cross_terms_bound}
  \frac{1}{c_2} \exp(-Nc_2) \\ \le
  \prod_{1\le i<j \le N} \left(
 \frac{\Gamma(x_j-x_i+1)\Gamma(x_j-x_i+\theta)}{\Gamma(x_j-x_i)\Gamma(x_j-x_i+1-\theta)}
 \cdot\frac{\Gamma(y_j-y_i)\Gamma(y_j-y_i+1-\theta)}{\Gamma(y_j-y_i+1)\Gamma(y_j-y_i+\theta)}\right)
  \le c_2 \exp(N c_2).
 \end{multline}
 Indeed, \eqref{eq_Gamma_expansion} implies that
 $$
  \prod_{i<j} \left(
 \frac{\Gamma(x_j-x_i+1)\Gamma(x_j-x_i+\theta)}{\Gamma(x_j-x_i)\Gamma(x_j-x_i+1-\theta)}
 \cdot\frac{\Gamma(y_j-y_i)\Gamma(y_j-y_i+1-\theta)}{\Gamma(y_j-y_i+1)\Gamma(y_j-y_i+\theta)}\right)
  = \prod_{i\le m < j}\left(1+ O\left(\frac{1}{x_j-x_i}\right)\right)
 $$
 Since $x_j-x_i\ge \theta(j-i)$, the last product is bounded below by $ \frac{1}{c_2}
 \exp(-Nc_2)$ and above by ${c_2} \exp(-Nc_2)$ for some $c_2>0$.

\smallskip
 Take $\ell$ satisfying \eqref{eq_lattice_1}. Suppose that $\ell_i<-H$ for $i=1,\dots,m_1$, $|\ell_i|\le H$ for $i=m_1+1,\dots, m_2$,
 and $\ell_i>H$ for $i=m_2+1,\dots,N$.

 Define $\tilde \ell=(\tilde \ell_1,\dots,\tilde \ell_N)$ through
 $$
  \tilde \ell_i=\begin{cases} \ell_i,& i< k,\\ \ell_i+\theta, & i\ge k.\end{cases}
 $$
 And further define $\ell'=(\ell'_1,\dots,\ell'_N)$ through
 $$
  \ell'_i=\begin{cases} \tilde \ell_i-T,& i\le m_1,\\ \tilde \ell_i, & i>m_1
  \end{cases}
 $$

 We claim that \eqref{eq_mes_ratio} holds.
 Indeed, the ratio of the factors in double product $\prod_{i<j}$
 is bounded by two applications of \eqref{eq_cross_terms_bound}, and it remains to bound
$$
  \prod_{i=1}^N \exp\left( N V\left(\frac{\ell_i}{N}\right) - N V\left(\frac{\ell'_i}{N}\right)\right).
$$
 If $i\le m_1$, then $\ell'_i\le \ell_i$ and the monotonicity of $V(x)$ implies that corresponding factors are less than $1$. If $i>m_2$,
 then $\ell'_i \ge \ell_i$ and again the monotonicity of $V(x)$ implies that corresponding factors are less than $1$. Finally, if $m_1<i\le m_2$,
 then the Lipshitz property of $V(x)$ gives the desired bound.

 \smallskip

 Next, we construct $\ell''$. For $i\le m_1$ we set $\ell''_i:=\tilde \ell_i$. For $i>m_2$ we set $\ell''_i=\tilde \ell_i-T$. Since $H>1+\theta^{-1}$, we can always choose the remaining coordinates $\ell''_i$, $m_1<i\le m_2$ in such a way that $\ell''$ satisfies \eqref{eq_lattice_shifted} and
 \eqref{eq_close_coord}. We claim that \eqref{eq_mes_ratio_2} holds. Indeed, the ratio of the factors in double product $\prod_{i<j}$ is bounded by \eqref{eq_cross_terms_bound}, and it remains to bound
$$
  \prod_{i=1}^N \exp\left( N V\left(\frac{\ell_i}{N}\right) - N V\left(\frac{\ell'_i}{N}\right)\right).
$$
 If $i\le m_1$, then $\ell'_i\ge \ell_i$ and the monotonicity of $V(x)$ implies that corresponding factors are greater than $1$. If $i>m_2$,
 then $\ell'_i \le \ell_i$ and again the monotonicity of $V(x)$ implies that corresponding factors are greater than $1$. Finally, if $m_1<i\le m_2$,
 then the Lipshitz property of $V(x)$ gives the desired bound.
\end{proof}

\begin{lemma} \label{Lemma_partition_1}
There exists $c_2>0$ such that
 \begin{equation} \label{eq_change_of_lat}
 \frac{Z_{N,+}^{(k)}}{Z_{N,+}} \le c_2\exp\bigl( c_2 N \bigr),
\quad 1\le k \le N+1.
 \end{equation}
\end{lemma}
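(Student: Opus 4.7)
The plan is to prove the bound by constructing a map from configurations on the shifted lattice \eqref{eq_lattice_shifted} to configurations on the original lattice \eqref{eq_lattice_1}, and controlling both the pointwise distortion of the measure and the number of preimages.

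First, I would invoke the ``reverse'' direction of Lemma \ref{Lemma_shifts}: for every $\ell'$ satisfying \eqref{eq_lattice_shifted}, choose (once and for all) some $\ell = \ell(\ell')$ satisfying \eqref{eq_lattice_1} with $|\ell_i - \ell'_i|\le T$ for all $i$ and
$$\frac{\mathbb M_{N,+}^{(k)}(\ell')}{\mathbb M_{N,+}(\ell(\ell'))} \le c_1 \exp(c_1 N).$$
This gives us a well-defined map $\ell' \mapsto \ell(\ell')$.

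Second, I would observe the key counting fact: for any fixed $\ell$ satisfying \eqref{eq_lattice_1}, the number of configurations $\ell'$ satisfying \eqref{eq_lattice_shifted} with $|\ell_i - \ell'_i| \le T$ for all $i$ is at most $(2T+1)^N$, since each coordinate of $\ell'$ is independently constrained to an interval of length $2T$. Hence the preimage of any $\ell$ under the map $\ell' \mapsto \ell(\ell')$ has cardinality at most $(2T+1)^N = \exp(N\log(2T+1))$.

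Combining these two facts, I would simply write
$$Z_{N,+}^{(k)} = \sum_{\ell'} \mathbb M_{N,+}^{(k)}(\ell') \le c_1 e^{c_1 N} \sum_{\ell'} \mathbb M_{N,+}(\ell(\ell')) \le c_1 e^{c_1 N}\cdot (2T+1)^N \sum_{\ell} \mathbb M_{N,+}(\ell),$$
and the right-hand side equals $c_1(2T+1)^N e^{c_1 N} Z_{N,+}$, which is of the form $c_2 \exp(c_2 N) Z_{N,+}$ for a suitable constant $c_2$ depending only on $\theta, \eps, H, \mathfrak s$. Since $T = \lfloor\theta\rfloor+1$ is independent of $N$ and $c_1$ comes from Lemma \ref{Lemma_shifts}, all constants have the required dependence.

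There is no real obstacle here; the lemma is essentially a repackaging of Lemma \ref{Lemma_shifts} combined with a trivial volume count. The only subtle point to verify is that the map $\ell'\mapsto\ell(\ell')$ from Lemma \ref{Lemma_shifts} (applied in the ``reverse'' direction stated there, and to the measure family $\mathbb P_{N,+}^{(k)}$ as guaranteed by the last sentence of that lemma) indeed lands inside the configuration space of $\mathbb P_{N,+}$, which it does by construction.
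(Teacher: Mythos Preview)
Your proof is correct and essentially identical to the paper's own argument: both invoke the reverse direction of Lemma \ref{Lemma_shifts} to map each $\ell'$ on the shifted lattice to some $\ell$ on the original lattice with $\mathbb M_{N,+}^{(k)}(\ell') \le c_1 e^{c_1 N}\mathbb M_{N,+}(\ell)$, then bound the multiplicity of each $\ell$ by $(2T+1)^N$ and sum. There is nothing to add.
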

\begin{proof}
 For $\ell'$ satisfying \eqref{eq_lattice_shifted} let $s(\ell)$ denote the corresponding $\ell$ of Lemma \ref{Lemma_shifts}. Then
 \begin{multline*}
  Z_{N,+}^{(k)}=\sum_{\ell'} {\mathbb M}_{N,+}^{(k)} (\ell') \le c_1 \exp(Nc_1) \sum_{\ell'} {\mathbb M}_{N,+}(s(\ell'))\\ \le
  c_1 \exp(Nc_1) (2T+1)^N \sum_{\ell} {\mathbb M}_{N,+}(\ell)=   c_1 \exp(Nc_1) (2T+1)^N Z_{N,+}.\qedhere
 \end{multline*}
\end{proof}

\begin{lemma} \label{Lemma_partition_2}
 There exists $c_3>0$ such that
 $$
  \frac{Z_{N-1,++}}{Z_{N}} \le  c_3 \exp(Nc_3) \cdot N^{-2\theta N} , \quad N=1,2,\dots.
 $$
\end{lemma}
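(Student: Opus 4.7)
The plan is to exploit the natural bijection between valid $N$-tuples $\ell$ for $\P$ and pairs $(\ell',m)$, where $\ell'=(\ell_1,\ldots,\ell_{N-1})$ is a valid $(N-1)$-tuple for $\mathbb{P}_{N-1,++}$ and $m\in\mathbb{Z}_{\ge 0}$ encodes the new particle via $\ell_N=\ell'_{N-1}+\theta+m$. Under this bijection,
\[
Z_N \;=\; \sum_{\ell'}\M_{N-1,++}(\ell')\sum_{m\ge 0} R(\ell';m),
\]
where
\[
R(\ell';m)\;=\;\prod_{i=1}^{N-1}\frac{\Gamma(h_i+1)\Gamma(h_i+\theta)}{\Gamma(h_i)\Gamma(h_i+1-\theta)}\,\exp\!\Bigl(-NV\!\bigl(\tfrac{\ell'_{N-1}+\theta+m}{N}\bigr)\Bigr),\quad h_i=\ell'_{N-1}+\theta+m-\ell'_i\ge\theta(N-i)+m.
\]
I would retain only the term $m=0$ and use the uniform bound $\Gamma(h+1)\Gamma(h+\theta)/[\Gamma(h)\Gamma(h+1-\theta)]\ge c_0\,h^{2\theta}$ valid for all $h\ge\theta$ (immediate from \eqref{eq_Gamma_expansion} for large $h$ and from positivity and continuity for small $h$) together with Stirling's formula, to obtain
\[
R(\ell';0)\;\ge\; c_1 e^{-c_1 N}\,N^{2\theta N}\,\exp\!\Bigl(-NV\!\bigl(\tfrac{\ell'_{N-1}+\theta}{N}\bigr)\Bigr).
\]

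Substituting, the lemma reduces to showing
\[
\sum_{\ell'}\M_{N-1,++}(\ell')\exp\!\Bigl(-NV\!\bigl(\tfrac{\ell'_{N-1}+\theta}{N}\bigr)\Bigr)\;\ge\; c_2 e^{-c_2 N}\,Z_{N-1,++}.
\]
I would split $Z_{N-1,++}=Z^{\mathrm{bulk}}+Z^{\mathrm{tail}}$ according to whether $\max_i |\ell'_i|/N\le D_0$ or not, for a constant $D_0>H$ depending only on $\theta,\eps,H,\mathfrak s$. On the bulk, the Lipschitz bound on $V$ over $[-H-T,H+T]$ together with continuity on $[-D_0,D_0]$ yields $V((\ell'_{N-1}+\theta)/N)\le C_V$, so $\exp(-NV)\ge e^{-C_V N}$ and the displayed inequality holds provided $Z^{\mathrm{bulk}}\ge\tfrac12 Z_{N-1,++}$.

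The main obstacle is establishing $Z^{\mathrm{tail}}\le\tfrac12 Z^{\mathrm{bulk}}$ without circularly invoking Theorem~\ref{Theorem_LDP_support}. I would do this by direct summation using the growth \eqref{eq_potential_behavior_2}. Since the ordering constraint $\lambda'_1\le\cdots\le\lambda'_{N-1}$ forces $\ell'_1\le\ell'_j\le\ell'_{N-1}$, the tail decomposes into the cases $\ell'_{N-1}/N>D_0$ and $\ell'_1/N<-D_0$; the second is analogous by the symmetric growth of $V$, so focus on the first. For any frozen values of $\ell'_1,\ldots,\ell'_{N-2}$, the contribution to $Z^{\mathrm{tail}}$ from $\ell'_{N-1}>D_0N$ is bounded by applying $\exp(-NV(\ell'_{N-1}/N))\le(\ell'_{N-1}/N)^{-2\theta(1+\eps)N}$ against the upper Stirling bound $\le C\,h^{2\theta}$ on each Gamma ratio involving $\ell'_{N-1}$; the resulting $\ell'_{N-1}^{2\theta(N-2)-2\theta(1+\eps)N}=\ell'_{N-1}^{-2\theta\eps N-4\theta}$ is summable, and its sum is bounded by $D_0^{-2\theta\eps N}$ times the corresponding bulk sum $\sum_{\ell'_{N-1}\in[\ell'_{N-2}+\theta,D_0N]}$. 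Choosing $D_0$ large enough (depending only on $\theta,\eps,H,\mathfrak s$) makes this ratio less than $\tfrac12$, yielding $Z^{\mathrm{bulk}}\ge\tfrac12 Z_{N-1,++}$ and completing the proof.
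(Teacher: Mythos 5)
Your starting identity is sound and is in fact the $k=N$ piece of the paper's \eqref{eq_x60}: since $\mathbb P_{N-1,++}$ carries the per-particle weight $\exp(-NV(\ell_i/N))$, deleting the top particle gives exactly $Z_N=\sum_{\ell'}\mathbb M_{N-1,++}(\ell')\sum_{m\ge0}R(\ell';m)$, and your lower bound on the $m=0$ Gamma product via \eqref{eq_Gamma_expansion} and Stirling is correct. The gap is in the step you yourself flag as the main obstacle. First, the bound $\prod_{i\le N-2}(\ell'_{N-1}-\ell'_i)^{2\theta}\le C^N(\ell'_{N-1})^{2\theta(N-2)}$ is false: when some $\ell'_i$ lie far to the left, $\ell'_{N-1}-\ell'_i$ is of order $|\ell'_i|$ rather than $\ell'_{N-1}$, and the correct estimate carries an extra factor $\prod_i(1+|\ell'_i|)^{2\theta}$ that must be absorbed into the other particles' potentials via \eqref{eq_potential_behavior_2} --- precisely the manipulation carried out in the proof of Lemma \ref{Lemma_partition_3}, and not a step one can skip. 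Second, the frozen-coordinate comparison collapses when $\ell'_{N-2}+\theta>D_0N$: the ``bulk sum'' over $\ell'_{N-1}\in[\ell'_{N-2}+\theta,D_0N]$ is then empty, so summing these conditional estimates cannot yield $Z^{\mathrm{tail}}\le\tfrac12 Z^{\mathrm{bulk}}$. What your route really requires is exponential localization of the extreme particle under $\mathbb P_{N-1,++}$, i.e.\ the analogue of Theorem \ref{Theorem_LDP_support} for that measure; but in the paper that theorem is deduced \emph{from} Lemma \ref{Lemma_partition_2}, so without setting up a genuine induction on $N$ your argument is circular.

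The paper's proof is engineered exactly to avoid any a priori localization. Instead of always appending the new particle above $\ell'_{N-1}$, it writes $NZ_N$ as a sum over all $N$ deletion positions $k$ (formula \eqref{eq_x60}). This costs the introduction of the shifted lattices \eqref{eq_lattice_shifted} and the comparison Lemma \ref{Lemma_shifts}, but it buys the key point: for \emph{every} $(N-1)$-configuration there is at least one index $k$ and one admissible insertion slot $m$ with $|m|\le 2T$, so the inserted particle's weight $\exp(-NV(m/N))$ is bounded below by $e^{-Nv}$ with $v=\max_{[-2T,2T]}V$, and no information about the location of the remaining particles is needed. If you wish to keep the simpler top-insertion decomposition, you must either import this insertion-near-the-origin device or first prove the tail estimate for $\mathbb P_{N-1,++}$ by the global method of Lemma \ref{Lemma_partition_3} together with the full machinery of Section \ref{Section_support}, at which point nothing has been simplified.
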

\begin{proof}
 We have
\begin{multline} \label{eq_x60}
 N \frac{Z_N}{Z_{N-1,++}} =N \sum_{\ell} \frac{{\mathbb M}_{N} (\ell)}{Z_{N-1,++}}
 \\= \sum_{k=1}^N  \sum_{\ell^{(k)}} \frac{ {\mathbb M}_{N-1,++}^{(k)}(\ell^{(k)}) } {Z_{N-1,++}}
  \sum_{m=\ell^{(k)}_{k-1}+\theta}^{\ell^{(k)}_k-\theta}
  \prod_{i=1}^{N-1} \frac{\Gamma(|m-\ell^{(k)}_i|+1)\Gamma(|m-\ell^{(k)}_i|+\theta)}{\Gamma(|m-\ell^{(k)}_i|)\Gamma(|m-\ell^{(k)}_i|+1-\theta)}
  \exp\left(-NV\left(\frac{m}{N}\right)\right),
\end{multline}
where $\ell^{(k)}$ varies over \eqref{eq_lattice_shifted} with $N$ replaced by
$N-1$, and we use the notation  $\ell^{(1)}_0=-\infty$, $\ell^{(N)}_N=+\infty$. Note
that by the definitions, the sum
$\sum_{m=\ell^{(k)}_{k-1}+\theta}^{\ell^{(k)}_k-\theta}$ is always non-empty. Take
$\ell$ satisfying \eqref{eq_lattice_1} with $N$ replaced by $N-1$ and let
$s^{(k)}(\ell)$ denote the corresponding $\ell'$ of Lemma \ref{Lemma_shifts}. Then
\eqref{eq_x60}  implies
\begin{multline} \label{eq_x61}
 N \frac{Z_{N}}{Z_{N-1,++}} \ge (2T+1)^{-N}
  \sum_{k=1}^N \sum_{\ell} \frac{ {\mathbb M}_{N-1,++}^{(k)}(s^{(k)}(\ell)) } {Z_{N-1,++}}
    \\ \times \sum_{m={s^{(k)}(\ell)}_{k-1}+\theta}^{s^{(k)}(\ell)_k-\theta}
  \prod_{i=1}^{N-1} \frac{\Gamma(|m-s^{(k)}(\ell)_i|+1)\Gamma(|m-s^{(k)}(\ell)_i|+\theta)}{\Gamma(|m-s^{(k)}(\ell)_i|)\Gamma(|m-s^{(k)}(\ell)_i|+1-\theta)}
  \exp\left(-NV\left(\frac{m}{N}\right)\right),
\end{multline}
 where  $\ell$ varies over \eqref{eq_lattice_1}.

 Using Stirling's formula, its corollary \eqref{eq_Gamma_expansion}, and definition of the state space \eqref{eq_lattice_shifted},  we see that the product of Gamma functions in the above formula is bounded from below by
 $$
   \frac{\exp(-Nc_4)}{c_4} \cdot N^{2\theta N}
 $$
 with a constant $c_4>0$.
 Let $v$ be the maximum of $V(x)$ over $[-2T,2T]$. Then \eqref{eq_x61} and Lemma \ref{Lemma_shifts} imply that for $c_5>0$
$$
 N \frac{Z_{N}}{Z_{N-1,++}} \ge N^{2\theta N} \, \frac{\exp(-c_5 N)}{c_5}
  \sum_{k=1}^N \sum_{\ell} {\mathbb P}_{N-1,++}(\ell)
 \sum_{m={s^{(k)}(\ell)}_{k-1}+\theta}^{s^{(k)}(\ell)_k-\theta}
 I_{-2T\le m \le 2T} \exp(-Nv)
$$
 Note that for every $\ell$ satisfying \eqref{eq_lattice_1}
  there exists at least one $k$, such that for at least one
   $m\in s^{(k)}(\ell)_{k-1}+\theta, s^{(k)}(\ell)_{k-1}+\theta+1,\dots, s^{(k)}(\ell)_k-\theta$,
   we have $|m|\le 2T$ (here again we use the notation  $s^{(1)}(\ell)_0=-\infty$, $s^{(N)}(\ell)_N=+\infty$). Therefore,
$$
 N \frac{Z_{N}}{Z_{N-1,++}} \ge  N^{2\theta N} \, \frac{\exp(-c_5 N)}{c_5} \sum_{\ell}   {\mathbb P}_{N-1,++}(\ell) \exp(-Nv)=
N^{2\theta N} \,  \frac{\exp(-c_5 N)}{c_5} \exp(-Nv). \qedhere
$$
\end{proof}

\begin{lemma} \label{Lemma_partition_3}
 There exists $c_4>0$ such that
 \begin{equation}
 \label{eq_part_ratio_1}
  \frac{Z_{N,+}}{Z_{N,++}} \le c_4 \exp(c_4N), \quad N=1,2,\dots.
 \end{equation}
\end{lemma}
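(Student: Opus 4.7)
The starting point is the identity
\begin{equation*}
\frac{Z_{N,+}}{Z_{N,++}} \;=\; \mathbb{E}_{\mathbb{P}_{N,++}}\!\left[\exp\!\Bigl(\sum_{i=1}^N V(\ell_i/(N+1))\Bigr)\right],
\end{equation*}
which is immediate from the fact that the densities of $\mathbb{P}_{N,+}$ and $\mathbb{P}_{N,++}$ differ precisely by the factor $\prod_{i=1}^N e^{V(\ell_i/(N+1))}$. Thus the lemma reduces to showing that this expectation is at most $c_4\exp(c_4 N)$.

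My plan is to split the state space into a ``bulk'' region $\mathcal A_{D_0} = \{\ell : |\ell_i/(N+1)|\le D_0 \text{ for all } i\}$ and its complement, for a threshold $D_0>H+T$ to be fixed in terms of $\theta,\eps,H,\mathfrak s$. On $\mathcal A_{D_0}$ we have $V(\ell_i/(N+1))\le M_0:=\sup_{|x|\le D_0}V(x)$ for each $i$, so the expectation restricted to this event is at most $\exp(N M_0)$, which is already of the required form $c_4\exp(c_4 N)$.

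The harder step is the tail contribution, i.e., the expectation restricted to the event that some $|\ell_k/(N+1)|>D_0$. I would union-bound over $k$ and rewrite each piece as $Z_{N,++}^{-1}\sum_{\ell:\,|\ell_k/(N+1)|>D_0} \mathbb{M}_{N,+}(\ell)$. To estimate this inner sum, I expand the Gamma factors involving $\ell_k$ using Stirling \eqref{eq_Gamma_expansion}, yielding polynomial growth of order $|\ell_k|^{2\theta(N-1)}$, and use \eqref{eq_potential_behavior_2} to bound the $\ell_k$-weight by $|\ell_k/(N+1)|^{-2\theta N(1+\eps)}$. The net factor of $|\ell_k|^{-2\theta(1+N\eps)}$ summed over $|\ell_k|>D_0(N+1)$ is tiny, and once $\ell_k$ is summed out the remaining $(N-1)$-particle sum is an ensemble on a shifted lattice. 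This reduced sum is then compared to $Z_{N,++}$ via Lemma \ref{Lemma_shifts} (to handle the lattice shift) together with the $(N{+}1)$-scale analogue of Lemma \ref{Lemma_partition_2} (to pass from the $(N-1)$-particle to the $N$-particle partition function with matching weight), producing a bound $\le \exp(cN)\cdot Z_{N,++}$.

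The main obstacle is the combinatorial bookkeeping in balancing the constants. The factor $(N+1)^{2\theta N(1+\eps)}$ that appears in the tail bound (from rescaling $\ell_k$ in the weight estimate) must be offset by the decaying tail $D_0^{-2\theta N\eps}$ produced by the summation over $|\ell_k|>D_0(N+1)$, forcing $D_0$ to be chosen large enough relative to $\eps$ for this balance to hold. At the same time, $D_0$ cannot be taken too large, since $M_0$ in the bulk bound must be controlled through the Lipschitz behavior of $V$ on $[-H-T,H+T]$. Once $D_0$ is fixed as a suitable constant depending only on $\theta,\eps,H,\mathfrak s$, both contributions are $\le c_4\exp(c_4 N)$, and the union-bound factor of $N$ is absorbed into the exponential.
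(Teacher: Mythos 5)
Your starting identity and the bulk/tail decomposition are fine (the bulk contribution is indeed at most $\exp(N\sup_{|x|\le D_0}V)$), but the mechanism you propose for the tail does not close; the gap is exactly at the step ``this reduced sum is then compared to $Z_{N,++}$.'' After you sum out the extremal particle from $\sum_{\ell}\mathbb M_{N,+}(\ell)$, the remaining $N-1$ particles carry the per-particle weight $\exp(-NV(x/(N+1)))$; moreover, to tame the factors $|m-\tilde\ell_i|^{2\theta}$ coming from Stirling you must peel off a further factor $\exp(-V(\tilde\ell_i/(N+1)))$ from each remaining particle (otherwise particles far on the opposite side make the Gamma ratios unboundedly large), leaving weight $\exp(-(N-1)V(x/(N+1)))$. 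The target $Z_{N,++}$ has per-particle weight $\exp(-(N+1)V(x/(N+1)))$. The particle-insertion argument behind Lemma \ref{Lemma_partition_2} compares partition functions with the \emph{same} per-particle weight and different particle numbers; it cannot change the exponent of the weight. Converting $\exp(-(N-1)V)$ into $\exp(-(N+1)V)$ costs exactly the factor $\exp\bigl(2\sum_i V(\ell_i/(N+1))\bigr)$, whose expectation is (essentially the square of) the quantity you are trying to bound, so the argument becomes circular; the same circularity reappears if you aim at $Z_{N,+}$ instead. Note also that Theorem \ref{Theorem_LDP_support}, whose proof you are emulating, itself relies on Lemma \ref{Lemma_partition_3} to control $Z_{N-1,+}/Z_N$, so no tail estimate derived from it is available here.

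This is precisely why the paper proves Lemma \ref{Lemma_partition_3} by a different, softer route: the whole point of the lemma is to relate two different powers of the weight at the \emph{same} particle number, which particle removal cannot do. Starting from your identity $Z_{N,+}/Z_{N,++}=\E_{\mathbb P_{N,++}}[\exp(N\int V\,d\nu_N)]$, the paper establishes the tail bound $\mathbb P_{N,++}\bigl(\int V\,d\nu_N>L\bigr)\le\exp(-c_8LN^2)$ for $L>L_0$. This follows from the pointwise inequality $V(x)+V(y)-2\theta\ln|x-y|\ge\frac{\eps}{1+\eps}(V(x)+V(y))-c_7$ (a consequence of \eqref{eq_potential_behavior_2}), which bounds each configuration's unnormalized weight by $\exp\bigl(CN^2-\frac{\eps}{2(1+\eps)}N^2\int V\,d\nu_N\bigr)\prod_i\exp\bigl(-\frac{\eps}{2(1+\eps)}NV(\ell_i/(N+1))\bigr)$, combined with the crude one-configuration lower bound $Z_{N,++}\ge\mathbb M_{N,++}(\theta,2\theta,\dots,N\theta)$. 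The crucial feature is the $N^2$ in the exponent of this tail bound, which dominates the factor $\exp(NL)$ in the exponential moment; a single-particle removal argument can only produce tails of order $\exp(-cN\ln D_0)$ in the threshold, which is a genuinely weaker, one-particle statement. If you want to keep your bulk/tail strategy, you would still need this energy-functional estimate (or an equivalent a priori bound on the exponential moment) to control the tail, at which point the decomposition becomes superfluous.
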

\begin{proof} Define the random probability measure $\nu_N$ through
$$
 \nu_N=\frac{1}{N}\sum_{i=1}^{N} \delta_{\ell_i/(N+1)},\quad (\ell_1,\dots,\ell_N)\text{ is } {\mathbb P}_{N,++} \text{-distributed}
$$
Then
\begin{equation}
\label{eq_part_ratio_2}
\frac{Z_{N,+}}{Z_{N,++}}=\sum_{\ell} \frac{{\mathbb M}_{N,+}(\ell)}{Z_{N,++}}=\E_{{\mathbb P}_{N,++}} \left[\exp\left(N \int V(x) \nu_N(dx)\right) \right].
\end{equation}
In order to bound \eqref{eq_part_ratio_2} we start with a lower bound
$$Z_{N,++}\ge {\mathbb M}_{N,++} (\theta, 2\theta,\dots, N\theta)\ge \exp\bigl(-c_5 N^2\bigr)\cdot N^{2\theta N^2}$$
for some $c_5>0$. On the other hand, using \eqref{eq_Gamma_expansion} we obtain
\begin{multline*}
\ln {\mathbb M}_{N,++}(\ell_1,\dots,\ell_N)\\ \le \exp\Biggl(2\theta N^2\ln N- N^2 \iint_{x\neq y} \left(\frac{V(x)+V(y)}{2}- \theta
\ln |x-y|\right)\nu_N (dx) \nu_N(dy)
 +c_6 N^2\Biggr),
\end{multline*}
for some $c_6>0$.
 Assumption \eqref{eq_potential_behavior_2} and inequality $\ln|x-y|\le \ln (|x|+1)+\ln(|y|+1)$
imply that  that there exist $c_7>0$   such that for all $x\ne y$,
\begin{multline*}
V(x)+V(y)- 2\theta \ln |x-y|\\ \ge
\frac{\eps}{1+\eps}( V(x)+V(y)) + \left( \frac{V(x)}{1+\eps}-2\theta\ln(|x|+1)\right)
+ \left( \frac{V(y)}{1+\eps}-2\theta\ln(|y|+1)\right)
\\ \ge
 \frac{\eps}{1+\eps}( V(x)+V(y))- c_7.\end{multline*}
Therefore, for each $L>0$ we have
\begin{multline*}\mathbb E_{\mathbb P_{N,++}}\left[  \int  V(x) \mu_N(dx) >L\right]
\\ \le \exp\left(c_5 N^2+ c_6 N^2 + \frac{c_7}{2} N^2 - \frac{\eps}{2(1+\eps)} N^2 L \right) \prod_{i=1}^N \left( \sum_{m\in \mathbb
Z+i\theta} \exp\left(-\frac{\eps}{2(1+\eps)} N  V(m/N)\right)\right).
\end{multline*}
Hence, there exists $c_8>0$, and $L_0$ such that for all $L>L_0$,
$$\mathbb E_{\mathbb P_{N,++}}\left[  \int  V(x) \mu_N(dx) >L\right]
\le \exp\left(- c_8 L N^2 \right).
$$
Together with \eqref{eq_part_ratio_2} this implies \eqref{eq_part_ratio_1}.
\end{proof}

\begin{proof}[Proof of Theorem \ref{Theorem_LDP_support}] We have
\begin{multline} \label{eq_x61_2}
 {\mathbb P}_{N} (\ell_N> DN)
  = \frac{Z_{N-1,+}}{Z_{N}}
 \sum_{ \tilde \ell} {\mathbb P}_{N-1,+} ( \tilde \ell)
 \sum_{m>\max(\tilde \ell_{N-1},\, DN) }
  \exp\left(-NV\left(\frac{m}{N}\right)\right)
 \\ \times \prod_{i=1}^{N-1} \left[ \frac{\Gamma(m-\tilde \ell_i+1)\Gamma(m-\tilde \ell_i+\theta)}{\Gamma(m-\tilde \ell_i)\Gamma(m-\tilde \ell_i+1-\theta)}
 \exp\left(-V \left(\frac{\tilde \ell_i} {N} \right)\right) \right]
 ,
\end{multline}
where $\tilde \ell$ ranges over \eqref{eq_lattice_1} with $N$ replaced by $N-1$.
The combination of Lemmas \ref{Lemma_partition_1} and \ref{Lemma_partition_3} implies that for $c_5>0$ we have a bound
$$
\frac{Z_{N-1,+}}{Z_{N}} \le c_5 \exp(Nc_5) \cdot N^{-2\theta N}.
$$
For the product of gamma-functions in \eqref{eq_x61_2} we use Stirling's formula and its corollary
\eqref{eq_Gamma_expansion}, which yields
\begin{equation*}
\begin{split}
\prod_{i=1}^{N-1} \left[ \frac{\Gamma(m-\tilde \ell_i+1)\Gamma(m-\tilde
\ell_i+\theta)}{\Gamma(m-\tilde \ell_i)\Gamma(m-\tilde \ell_i+1-\theta)}\right] &\le c_6 \exp(c_6
N) N^{2\theta N} \prod_{i=1}^{N-1}\left|\frac{m}{N}-\frac{\tilde \ell_i}{N}\right|^{2\theta}
\\ &\le c_6 \exp(c_6 N) N^{2\theta N}  2^{(2\theta+1)(N-1)}\prod_{i=1}^{N-1}\left[  \biggl|\frac{m}{N}\biggr|^{2\theta}+\biggl|\frac{\tilde \ell_i}{N}\biggr|^{2\theta} \right].
\end{split}
\end{equation*}
It follows that for $c_7>0$,
\begin{multline} \label{eq_x62}
 {\mathbb P}_{N} (\ell_N> DN) \le
 c_7 \exp(c_7N)
 \sum_{\tilde \ell} {\mathbb P}_{N-1,+} (\tilde \ell)
 \sum_{m>\max(\tilde \ell_{N-1},DN)}
  \exp\left(-NV\left(\frac{m}{N}\right)\right)
 \\ \times \prod_{i=1}^{N-1}\left[\left(  \biggl|\frac{m}{N}\biggr|^{2\theta}+\biggl|\frac{\tilde \ell_i}{N}\biggr|^{2\theta} \right)
 \exp\left(-V \left(\frac{\tilde \ell_i} {N} \right)\right) \right].
\end{multline}
Further, take $\delta>(2\theta)^{-1}$, and observe that when $D>H+1$,
\begin{equation*}
\begin{split}
 \sum_{m>\max(\tilde \ell_{N-1},DN)}
   \exp\left(-\delta V\left(\frac{m}{N}\right)\right)&\le
 \sum_{m=\lfloor DN \rfloor}^{\infty} \exp(-2 \delta \theta \ln(m/N))\
 =N^{2\delta\theta}\sum_{m=\lfloor DN \rfloor}^{\infty} \frac{1}{m^{2\delta\theta}}\\ &\le
 2 N D^{1-2\delta\theta}.
\end{split}
\end{equation*}
Also due to \eqref{eq_potential_behavior_2}, when $m>DN>(H+1)N$, we have
\begin{multline*}
 \left(  \biggl|\frac{m}{N}\biggr|^{2\theta}+\biggl|\frac{\tilde \ell_i}{N}\biggr|^{2\theta} \right)
 \exp\left(-V \biggl(\frac{\tilde \ell_i} {N} \biggr) -\frac{1+\eps/2}{1+\eps} V\biggl(\frac{m}{N}\biggr)\right)
 \\ \le{\rm const}\cdot \biggl|\frac{m}{N}\biggr|^{2\theta}   \exp\left( - \frac{1+\eps/2}{1+\eps} V\biggl(\frac{m}{N}\biggr)\right)
 \le {\rm const} \biggl|\frac{N}{m}\biggr|^{\theta\eps } \le {\rm const} \cdot D^{-\theta\eps}
\end{multline*}
Therefore, writing $ N(V(\frac{m}{N}))$ in \eqref{eq_x62} as
$$
 N\left(V\left(\frac{m}{N}\right)\right)= \frac{1+\eps/2}{1+\eps} (N-1) V\left(\frac{m}{N}\right) + \delta V\left(\frac{m}{N}\right) + \left(N-\frac{1+\eps/2}{1+\eps} (N-1) -\delta\right)V\left(\frac{m}{N}\right),
$$
and noticing that the last term is positive when $m>DN$ and $N$ is large, we conclude that for
  some $c_8>0$, all $D>H+1$ and all $N>N_0$
$$
{\mathbb P}_{N} (\ell_N> DN) \le c_8 \exp(c_8 N) \cdot N^2 \cdot D^{-\theta\eps N }.
$$
Choosing $D$ large enough, we obtain the desired exponential estimate for $\P(\ell_N>DN)$. The
estimate for $\P(\ell_1<-DN)$ is obtained in the same way --- the only difference is that we now
need to bound $\frac{Z_{N-1,+}^{(1)}}{Z_{N}}$ instead of $\frac{Z_{N-1,+}}{Z_{N}}$, but for that we
use Lemma \ref{Lemma_partition_2}.
\end{proof}

\begin{remark}
 It is very plausible that one can similarly establish an analogue of Theorem \ref{Theorem_LDP_support} for more
  general models in the framework of Section \ref{Section_setup} with $a_1(N)=-\infty$ and $b_k(N)=+\infty$.  The only
  necessary modification in the above proofs is in Lemma \ref{Lemma_shifts}, where we should take into account that the
  Lipshitz property of $V(x)$ might fail near the endpoints $a_i(N)$, $b_i(N)$, cf.\ \eqref{eq_derivative_bound}. We will
   not address here the exact conditions on $V(x)$ under which an analogue of Lemma \ref{Lemma_shifts} holds for the models in framework of Section \ref{Section_setup}.
\end{remark}

\end{document}